\newcommand{\namedlabel}[2]{%
  \begingroup
  \def\@currentlabel{#2}
  \label{#1}%
  \endgroup
}
\newcommand{\NN}{\mathbb{N}}
\newcommand{\RR}{\mathbb{R}}
\newcommand{\R}{\mathbb{R}}
\newcommand{\QQ}{\mathbb{Q}}
\newcommand{\TT}{\mathbb{T}}
\newcommand{\ZZ}{\mathbb{Z}}
\newcommand{\Z}{\mathbb{Z}}
\newcommand{\norm}[1]{\lVert#1\rVert}
\newcommand{\abs}[1]{\lvert#1\rvert}
\newtheorem{theorem}{Theorem}[section]
\newtheorem{corollary}[theorem]{Corollary}
\newtheorem*{theorem*}{Theorem}
\newtheorem*{conjecture*}{Conjecture}
\newtheorem{lemma}[theorem]{Lemma}
\newtheorem{proposition}[theorem]{Proposition}
\newcommand{\comment}[1]{}
\theoremstyle{definition}
\newtheorem{definition}[theorem]{Definition}
\newtheorem{remark}[theorem]{Remark}
\newtheorem{nnnnnnnnsect}{}
\newtheorem{cect6}{}
\newtheorem{cect7}{}
\numberwithin{equation}{section}
\DeclareMathOperator{\Vol}{Vol}
\def\Lie{\mathrm{Lie}}
\renewcommand*\env@matrix[1][*\c@MaxMatrixCols c]{%
  \hskip -\arraycolsep
  \let\@ifnextchar\new@ifnextchar
  \array{#1}}
\begin{document}
\title[Uniform bounds of correlations]
{Multiple Fractional Cohomological Equations and Quantitative Mixing on Nilmanifolds}

\thanks{ $^1$ Based on research supported by NSF grant DMS-2452194}

\thanks{{\em Key words and phrases:} Fractional cohomological equation, representation theory, multiple exponential mixing}

\author[]{ Zhenqi Jenny Wang$^1$ }

\address{Department of Mathematics\\
        Michigan State University\\
        East Lansing, MI 48824,USA}
\email{wangzq@math.msu.edu}

\begin{abstract}

 We develop a new analytic method for quantitative mixing of automorphisms on nilmanifolds. The method is based on the introduction and solvability of \emph{multiple fractional cohomological equations of Type~$I$} (sum type). We prove that these equations are solvable in a cohomology-free range governed by the spectral behavior at the edge \(0\), with estimates in partial Sobolev/H\"older norms along (weak) stable/unstable subgroup directions only.

As consequences, we obtain exponential decay of order-two correlations under partial regularity, without transverse derivatives, and quantitative mixing of all orders (a quantitative Rokhlin theorem) with rates explicit in the dynamical data. In particular, we show that irrational automorphisms exhibit super-exponential mixing of all orders for $C^\infty$ observables. To our knowledge, these are the first examples of super-exponential mixing beyond the torus, and the first examples of all-orders super-exponential mixing.


\end{abstract}


\maketitle

\setcounter{tocdepth}{2}

\tableofcontents

\section{Introduction}
Suppose $N$ is a simply connected nilpotent group and $\Gamma$ a discrete cocompact
subgroup. Set $\mathcal{X}=N/\Gamma$. Then $\mathcal X$ is a compact nilmanifold carrying a unique $N$-invariant probability measure $\varrho$, locally given by Haar measure on $N$. Let $\text{Aut}(\mathcal X)$ denote the group of smooth automorphisms of $\mathcal X$.

Every $a\in \text{Aut}(\mathcal X)$ induces a linear automorphism $da$ of $\Lie(N)$. Let $\alpha:\ZZ^\ell\to \text{Aut}(\mathcal X)$ be an action of $\ZZ^\ell$ by automorphisms on $\mathcal X$.  Define the Lyapunov exponents of $\alpha$ as the logarithms of the absolute values of the eigenvalues of $d\alpha$. We get linear functionals $\chi:\,\ZZ^\ell\to\RR$, which  are called \emph{Lyapunov functionals} of $d\alpha$.

Given \( n \ge 2 \), we say that $\alpha:\ZZ^l\to \text{Aut}(\mathcal{X})$ is:
\begin{itemize}
  \item \emph{\(n\)-mixing} if for every $f_1,\cdots, f_n\in L^\infty(\mathcal{X})$ and every $z_1,\cdots, z_n\in \ZZ^\ell$,  we have
\[
\int_{\mathcal{X}} \Pi_{i=1}^{n} f_i\big(\alpha(z_i)x\big) \, d\varrho(x) \longrightarrow \Pi_{i=1}^{n} \int_{\mathcal{X}} f_i(x) \, d\varrho(x)
\]
as $\min_{i \neq j} \norm{z_iz_j^{-1}} \to +\infty$.

\smallskip
  \item \emph{\(n\)-exponential-mixing with rate $\eta$} if there exist $\eta,\,s,\,C>0$ such that for every $f_1,\cdots, f_n\in C_c^\infty(\mathcal{X})$ and every $z_1,\cdots, z_n\in \ZZ^\ell$,  we have
  \begin{align*}
   &\Big|\int_{\mathcal{X}} \Pi_{i=1}^{n} f_i\big(\alpha(z_i)x\big) \, d\varrho(x) - \Pi_{i=1}^{n} \int_{\mathcal{X}} f_i(x) \, d\varrho(x)\Big|\\
   &\leq Ce^{-\eta\min_{i \neq j} \norm{z_iz_j^{-1}}}\Pi_{i=1}^{n} \|f_i\|_s
  \end{align*}
  where $\|f_i\|_s$ denotes the Sobolev
norm of order $s$.

\smallskip

 \item \emph{\(n\)-super-exponential-mixing} for any $\eta>0$ there exist  $s(\eta),\,C(\eta)>0$ such that for every $f_1,\cdots, f_n\in C_c^\infty(\mathcal{X})$ and every $z_1,\cdots, z_n\in \ZZ^\ell$,  we have
  \begin{align*}
   &\Big|\int_{\mathcal{X}} \Pi_{i=1}^{n} f_i\big(\alpha(z_i)x\big) \, d\varrho(x) - \Pi_{i=1}^{n} \int_{\mathcal{X}} f_i(x) \, d\varrho(x)\Big|\\
   &\leq Ce^{-\eta\min_{i \neq j} \norm{z_iz_j^{-1}}}\Pi_{i=1}^{n} \|f_i\|_s.
  \end{align*}
\end{itemize}
We say that mixing rate $\eta$  is \emph{effective} if it is explicitly determined by the Lyapunov exponents of $\alpha$ and $n$. We say that the Sobolev order $s$ is \emph{effective} if it is explicitly determined by $\eta$.

\subsection{Background, previous results, and difficulties}

Mixing is a key idea in understanding randomness in dynamical systems. Informally, it asserts that events become asymptotically independent as time
tends to infinity: the system ``forgets" its initial state and correlations
decay.  Higher order mixing also appears in many math problems, with useful applications in fields such as combinatorics (e.g., recurrence and partition regularity), arithmetic (e.g., equidistribution in number theory), and probability (e.g., establishing limit theorems).
Obtaining exponential mixing rates for algebraic partially hyperbolic actions
is a central problem in dynamical systems and representation theory.

For nilmanifold automorphisms, qualitative higher-order mixing is well understood under natural ergodicity assumptions. In the case of tori, this goes back to Parry \cite{P} (for rank one) and Schmidt-Ward \cite{SW} (for higher rank), while for general nilmanifolds it was proved by Gorodnik and Spatzier \cite{GR}. Quantitative mixing is also known in broad generality: in the toral case, the results are due to Lind \cite{L} (for order $2$) and Dolgopyat \cite{Do} (for higher orders); for general nilmanifolds, exponential multiple mixing was established by Gorodnik and Spatzier \cite{GR1}, \cite{GR}, and Timoth\'ee and Varj\'u \cite{TV}.

Despite this extensive history, obtaining \emph{effective} exponential rates and \emph{effective} regularity thresholds remains a central challenge. At present, explicit decay rates and explicit Sobolev regularity are known only in two special cases: order-$2$ mixing on tori and order-$2$ mixing on the three-dimensional Heisenberg nilmanifold~\cite{Forni1}, both obtained by direct calculation \cite{L}, \cite{Forni1}. For general nilmanifolds, the descent scheme employed in \cite{GR1}, \cite{GR} and \cite{TV} is based on quantitative equidistribution results for certain affine subnilmanifolds, deduced from deep number-theoretic inputs such as Green-Tao and Schmidt's Subspace Theorem. As a consequence, the resulting decay rates and Sobolev regularity are ineffective from the dynamical point of view.

A second, conceptually different difficulty is the absence of a general framework for \emph{super-exponential} mixing. To our knowledge, prior to the present work, super-exponential decay was known only for order-$2$ mixing on the torus for $C^\infty$ observables. There was no method proving super-exponential mixing beyond the torus case, nor higher-orders super-exponential mixing in the nilmanifold setting.

\subsection{Multiple fractional cohomological equation}\label{sec:13} We now introduce the main analytic tool. Unlike the classical cohomological equation $X\xi=\omega$, which is typically obstructed on non-abelian nilmanifolds, the fractional equation becomes solvable in a nontrivial range of orders governed by the spectral behavior near $0$.

Suppose $S$ is a Lie group. Let $(\rho,\mathcal{R})$ be a unitary representation of $S$. Given $X\in \text{Lie}(S)$, set $S_1=\{\exp(tX),\,t\in \RR\}$, the one-parameter subgroup of $S$ generated by $X$, and
consider the restricted representation $\rho\mid_{S_1}$.

By spectral theory, there exists a regular Borel measure \(\sigma\) on \(\widehat{\RR}\) (called the associated measure of \(\rho\) with respect to \(\widehat{\RR}\)) such that every vector \(\xi\in\mathcal{R}\) can be decomposed as $\xi=\int_{\widehat{\RR}}\xi_\chi\,d\sigma(\chi)$, and for all \(t\in\RR\),
\[
\rho(\exp(tX))\xi=\int_{\widehat{\RR}}\chi(t)\,\xi_\chi\,d\sigma(\chi),
\]
where \(\chi(t)=e^{\mathrm{i}\chi t}\) (here we identify \(\RR\) with \(\widehat{\RR}\)).

For $r\in \RR^+$, we define the \emph{fractional derivative} associated with $X$ by the spectral calculus:
\begin{align*}
  |X|^{r}(\xi):=\int_{\widehat{\RR}}|\chi|^{r}\xi_\chi d\sigma(\chi).
\end{align*}
This is a positive self-adjoint operator with domain
\[
\mathrm{Dom}(|X|^{\,r})=\Bigl\{\xi\in\mathcal{R}:\ \int_{\widehat{\mathbb R}} |\chi|^{2r}\|\xi_\chi\|^2d\sigma(\chi)<\infty\Bigr\}.
\]
In the direct-integral model,
\[
\bigl(|X|^{\,r}\xi\bigr)_\chi = |\chi|^{\,r}\,\xi_\chi\quad\text{for $\sigma$-a.e.\ }\chi\in\widehat{\RR}.
\]
Given $\omega\in\mathcal{R}$, $\mathfrak{u}_i\in \text{Lie}(S)$ and $r_i>0$, $1\leq i\leq m$,
 we say that $\omega$ is a $\{\mathfrak{u}_1,\cdots,\mathfrak{u}_m; r_1,\cdots,r_m\}$-coboundary if
  we can find $\xi_1,\ldots,\xi_m\in\mathcal{R}$ such that
\begin{align}\label{for:41}
 \sum_{j=1}^m |\mathfrak u_j|^{r_j}\,\xi_j=\omega.
\end{align}
We call \eqref{for:41} a multiple fractional cohomological equation of \emph{Type I (sum type)}. Here the vectors $\{\mathfrak u_1,\ldots,\mathfrak u_m\}$ are typically linearly independent in $\text{Lie}(S)$, but the Lie subalgebra they generate (and the corresponding connected subgroup) need not be abelian.

\begin{remark}
Even when \(r\in\mathbb{N}\), the above definition is similar to, yet different from, the standard Lie derivative along $X$. Writing $d\rho(X)$ for the infinitesimal generator (on its natural domain), one has
\[
d\rho(X)^{r}(\xi)
:= \int_{\widehat{\mathbb{R}}} (\mathrm{i}\chi)^{r}\xi_\chi\, d\sigma(\chi).
\]
Thus $|X|^{\,r}$ usually differs from $d\rho(X)^{\,r}$ by the phase $(\mathrm{i})^{\,r}$ and the absolute value in the symbol.

When $S=\mathbb R$ and $\rho$ is the left-regular representation on $L^2(\mathbb R)$, the above operator is the Fourier multiplier
\[
f\longmapsto \mathcal F^{-1}(\,|\chi|^{r}\widehat f\,).
\]
This coincides, up to normalization, with the Riesz fractional derivative. Thus the present definition extends the one-dimensional directional Riesz fractional derivative to general Lie groups via the spectral theory of unitary representations.
\end{remark}

\begin{remark}
Many problems in dynamics and ergodic theory can be reduced to the study of the classical cohomological equation
\begin{align}\label{for:10}
 X\xi:=d\rho(X)\xi=\omega.
\end{align}
In contrast, this paper studies \emph{multiple fractional} cohomological equations, which differ substantially from the classical setting.

To our knowledge, in the setting of unitary representations of possibly non-abelian Lie groups, such multiple fractional cohomological equations defined via the spectral theory of one-parameter subgroups are new. We are not aware of prior work on them, even for the case $S=\RR^m$, beyond treatments of a \emph{single} fractional equation.
\end{remark}

\subsubsection{Limitations of cohomological-equation methods} The work most closely related to our approach is Forni's technique \cite{Forni1}: using the study of the cohomological equation to obtain mixing.
G. Forni combined the analysis of
cohomological equations with explicit descriptions of $(i)$ the
spaces of iterated coboundaries, $(ii)$ the spaces of invariant distributions, and $(iii)$ the spectrum of the action on these spaces, to deduce information about Ruelle resonances and their
asymptotics for the action. This strategy has been carried out
successfully in settings where those spaces are explicit, such as the
geodesic flow on compact hyperbolic surfaces (i.e., on $\Gamma\backslash
SL(2,\mathbb R)$ with $\Gamma$ a cocompact lattice) and for partially hyperbolic automorphisms
on the three-dimensional Heisenberg nilmanifold.

In general, however, it is difficult to describe spaces of iterated
coboundaries and their invariant distributions, and even more difficult to
identify the induced spectrum on these spaces.  Consequently, a direct
Ruelle-resonance analysis is typically unavailable beyond the few settings
where explicit models exist.  Moreover, this approach does not readily adapt
to quantitative \emph{higher-order} mixing, where one must control families of
multi-point correlations rather than a single transfer operator/resolvent.

\subsubsection{Difference between classical cohomological equations and fractional equations} Flaminio and Forni developed a uniform representation-theoretic scheme for the study of cohomological equation
 on nilmanifolds \cite{F} and in the $SL(2,\RR)$ setting \cite{Forni}:
\begin{enumerate}
  \item  classify invariant distributions (the precise obstructions to smooth solvability);

  \item obtain Sobolev bounds for the Green operator (typically via the Laplacian);
  \item when invariant distributions vanish, construct smooth solutions via the Green operator and estimate their Sobolev norms.
\end{enumerate}
In the \emph{fractional} equation problem considered here, the situation is different in two essential ways.
\begin{enumerate}
  \item The obstruction is not characterized by invariant distributions. Instead, solvability of the \emph{fractional} equation is governed by the spectral behavior near the origin,
leading to a cohomology-free property for a whole range of fractional orders.

  \item  The use of Laplacian forces \emph{full} Sobolev regularity. This is incompatible with our goal of obtaining  \emph{partial} Sobolev norms along (weak) stable/unstable  subgroup directions.
\end{enumerate}
For these reasons, the fractional framework requires a mechanism that is substantially different from the classical scheme.

\subsection{Main contributions} The mixing problem for nilmanifolds is often recast as a problem of the quantitative equidistribution of rational submanifolds, whose resolution depends heavily on results from number theory. We develop a new analytic method. The central tool is a new fractional cohomological mechanism, whose solvability governs quantitative mixing rates. Consequently, we obtain the following new results:
\begin{enumerate}

 \item\label{for:45} \emph{Super-exponential mixing beyond tori}
We give, to our knowledge, the first super-exponential mixing examples beyond the torus, and the first examples of \emph{all-orders} super-exponential mixing (Theorems~\ref{th:4}, \ref{th:12}, and~\ref{th:11}).

\item \emph{Fractional cohomology and obstructions} The new method replaces the classical invariant-distribution approach: solvability is determined by the spectral behavior near the origin. This yields a \emph{cohomology-free range} of fractional orders, within which every sufficiently smooth vector is a fractional coboundary (Theorems \ref{th:13}).

  \item \emph{Effective rates and regularity} We obtain \emph{effective} decay rates and \emph{effective} Sobolev orders, with the dependencies made explicit (on Lyapunov data and, where applicable, on the order $n$ of mixing). In particular, we give explicit order-$2$ decay rates and explicit Sobolev exponents $s(\eta)$ in both settings (Theorem~\ref{th:4}), and explicit higher-order rates in rank-one and higher-rank actions (Theorems~\ref{th:12} and~\ref{th:11}).

  \item \emph{Partial regularity suffices} Mixing (exponential and, in certain cases, super-exponential) requires only \emph{partial Sobolev regularity} along weak stable/unstable directions, with no transverse derivatives. This contrasts with previous approaches, which require full smoothness. The partial-norm phenomenon is  present at order~2 (Theorem~\ref{th:4}) and propagates to all orders (see \eqref{for:211} of Remark \ref{re:8}).

  \item \emph{Time-separation phenomenon for order~$3$} For triple correlations with zero averages, the relevant time separation is governed by the \emph{maximal} pairwise gap (up to a universal constant), whereas for higher orders no uniform bound in terms of the maximal gap can hold in general.
\end{enumerate}

At a structural level, our approach proceeds in three steps.
First, we solve suitable fractional cohomological equations with estimates in
\emph{partial} Sobolev/H\"older norms, involving only derivatives along weak
stable/unstable directions.
Second, we convert this fractional solvability into \emph{order-$2$} quantitative
decay of correlations under partial regularity (and hence without transverse
derivatives).
Third, we propagate order-$2$ decay to higher orders via a \emph{two-block
decomposition} of multiple correlations and an induction on the number of
factors: the ``expanding derivative cost'' is assigned to one block and the
``contracting derivative cost" to the other, so that the order-$2$ estimate
yields an exponential gain that is uniform in the remaining variables.
This bootstraps order-$2$ decay to quantitative mixing of all orders (a
quantitative Rokhlin theorem), with rates explicit in Lyapunov and spectral
data.

\subsection{Overview of the main results}

 Suppose $N$ is a simply connected nilpotent group of step $k$ and $\Gamma$ a discrete cocompact
subgroup. Set $\mathcal{X}=N/\Gamma$ and let $\varrho$ denote the Haar measure on $\mathcal{X}$.

Let $\mathcal{H}=L_0^2(\mathcal{X},\,\varrho)$ (the $L^2$ functions on $\mathcal{X}$ with average $0$) and let $\pi$ be the regular representation of $N$ on $\mathcal H$. Let $W^\infty(\mathcal H)$ denote the  space of smooth vectors for $\pi$. If \(H\) is a closed subgroup,  we denote by \(W^{s,H}(\mathcal{H})\) the order-\(s\) Sobolev space defined using only differential operators coming from $\text{Lie} (H)$ (i.e., only derivatives along directions tangent to \(H\) are taken into account);
  the corresponding norm is \(\|\cdot\|_{H,s}\). \(\|\cdot\|_{H,C^s}\) is defined similarly. The precise definitions are given in Section \ref{sec:9}.

Let $a$ be an ergodic automorphism of $\mathcal{X}$. Denote by $da$ the induced Lie algebra automorphism on $\text{Lie}(N)$. We have a decomposition
\[
    \mathrm{Lie}(N)=\; W_{-,a}\oplus W_{0,a}\oplus W_{+,a},
  \]
 into the stable, neutral, and unstable subspaces  of $da$. Let $H_{-,a}$ and $H_{+,a}$ be the connected subgroups of $N$ corresponding to the Lie subalgebras $W_{-,a}$ and $W_{+,a}$, respectively. Similarly, let $H_{-0,a}$ and $H_{+0,a}$ be the connected subgroups corresponding to $W_{-,a} \oplus W_{0,a}$ and $W_{+,a} \oplus W_{0,a}$.

\subsubsection{Quantitative mixing of order two }
\begin{theorem}\label{th:4} There exist $\eta(a),\,s(\eta)>0$ (with $s$ depending explicitly on $\eta$) such that for any $m\in\ZZ$ and any
\[
\xi\in W^{s,H_{-0,a}}(\mathcal H),\ \psi\in W^{s,H_{+,a}}(\mathcal H)\quad\text{if } m\ge0,
\]
and
\[
\xi\in W^{s,H_{+0,a}}(\mathcal H),\ \psi\in W^{s,H_{-,a}}(\mathcal H)\quad\text{if } m\le0,
\]
we have
\begin{align*}
 \Big|\int_{\mathcal{X}} \psi(a^mx)\xi(x) d\varrho(x)\Big|\leq C_\eta e^{-|m|\eta}\|\psi\|_{H_{\mp,a},\,s}\,\|\xi\|_{H_{\pm 0,a},\,s}.
\end{align*}

\end{theorem}
The precise statement is given in Theorems \ref{th:9}.
\begin{remark}  (About $\eta(a)$ and $s(\eta)$) We define two types of automorphisms on nilmanifolds: rational and irrational ones (the precise definition is given in \eqref{for:32} of Section \ref{sec:33}). Simply speaking, in the nilmanifold setting we call \(a\) \emph{irrational} if the
induced linear action on the relevant rational structure admits no nontrivial
root-of-unity eigenvalues.
In the torus case this is equivalent to ergodicity.

\smallskip
\noindent The decay rate $\eta(a)$ has the following features:
\begin{enumerate}
  \item  If $a$ is irrational, then the decay rate $\eta$ can be chosen to be any positive real number. This means that irrational $a$ has super-exponential mixing
  for order $2$. To our knowledge, this provides the first example in the literature where super-exponential  mixing  has been observed beyond the torus case.

  \item  If $a$ is rational, then $\eta$ can be expressed explicitly in terms of the Lyapunov exponents of $a$.  This is, to our knowledge,  the first \emph{effective} decay rate for general nilmanifolds.

\end{enumerate}

\noindent The Sobolev order $s(\eta)$ is also explicit:
\begin{enumerate}
  \item If $a$ is irrational,  \(s\) can be given explicitly as a function of \(\eta\) and \(\dim \text{Lie}(N)\).

  \item If $a$ is rational,
\(s\) is independent of \(\eta\) and can be bounded explicitly by \(\dim \text{Lie}(N)\).

\end{enumerate}

\end{remark}

\begin{remark} (\emph{Partial regularity}) In sharp contrast with much previous work where at least \emph{global} H\"{o}lder regularity of both test functions is required for exponential mixing, our results show that only \emph{partial} H\"{o}lder regularity is needed (see Corollaries \ref{cor:8}). To the best of our knowledge, within the nilmanifold settings considered here, such a ``partial smoothness only" hypothesis on test functions has not appeared before.
\end{remark}

\begin{remark} (\emph{Role in the paper}) Theorem \ref{th:4} is the order-$2$ input for our higher-order results; equivalently, we address the \emph{quantitative} Rokhlin Problem.
Its partial-regularity mechanism is essential in the proof of quantitative multiple mixing. Beyond effectivity, the same mechanism also leads to various new phenomena, including  super-exponential mixing described below.

The same partial-smoothness mechanism  is expected to be useful in the study of smooth rigidity in future work, which will be further discussed in Section \ref{sec:50}.

\end{remark}

\subsubsection{Multiple quantitative mixing: rank one actions} Let $\eta(a)$ and $s(\eta)$ be as described in Theorem \ref{th:4}.
Our next result upgrades order-$2$ decay to quantitative mixing of all orders in the rank-one case, with no loss in the decay rate or in the required regularity.
\begin{theorem}\label{th:12}  Then for any $n\geq2$ and any $f_1,\cdots, f_n\in C_c^\infty(\mathcal{X})$ and any $z_1,\cdots, z_n\in\ZZ$ we have
\begin{align*}
   &\Big|\int_{\mathcal{X}} \Pi_{i=1}^{n} f_i\big(a^{z_i}x\big) \, d\varrho(x) - \Pi_{i=1}^{n} \int_{\mathcal{X}} f_i(x) \, d\varrho(x)\Big|\\
   &\leq C_{n,\eta}e^{-\eta\min_{i \neq j} |z_i-z_j|}\Pi_{i=1}^{n} \norm{f_i}_{C^s}.
  \end{align*}

\end{theorem}
The precise statement is given in Theorems \ref{th:1}.
\begin{remark}\label{re:8}
\begin{enumerate}
  \item \emph{Uniformity in \(n\).} For rank-one actions, the higher-order decay rate coincides with the order-2 rate, and the required Sobolev order is the same as for order 2; both are uniform in $n$. To our knowledge, such a uniform-in-$n$ decay rate and Sobolev regularity have not been available previously; one typically expects the rate to deteriorate with $n$ and the required regularity to increase with $n$.

      \smallskip
  \item \emph{All orders super exponential mixing.}   If $a$ is irrational, then the decay rate $\eta$ can be chosen to be any positive real number. This means that irrational $a$ has super-exponential mixing
  for any order.  Our result applies to \emph{non-abelian} nilmanifolds as well.

      \smallskip
  \item \emph{New time-separation phenomenon.} For $n=3$ and test functions with zero average, the relevant time separation is measured by $\max_{i\ne j}|z_i-z_j|$ (see \eqref{for:342} of Theorems \ref{th:1}), in contrast with earlier bounds formulated using $\min_{i\ne j}|z_i-z_j|$.
      However, for $n\ge 4$ we construct counterexamples showing that no uniform bound in terms of $\max_{i\ne j}|z_i-z_j|$ can hold in general.

      \smallskip
      \item\label{for:211} \emph{Partial Sobolev norm in higher order mixing.} For test functions with zero average, partial Sobolev norms are sufficient for exponential mixing of all orders (see \eqref{for:206} of Theorems \ref{th:1}).
 To our knowledge, this is the first quantitative higher-order mixing result using only partial Sobolev norms.
\end{enumerate}

\end{remark}

\subsubsection{Multiple quantitative mixing of higher rank action} Our next theorem exhibits a different phenomenon in higher rank. Assuming only the existence of a single ergodic element, we obtain quantitative mixing along a density-one set of time directions. In the irrational case this becomes super-exponential mixing along a density-one family of directions.

Let $\ell \in \mathbb{N}$ and $\alpha : \mathbb{Z}^\ell \to \text{Aut}(\mathcal{X})$ be an action of $\mathbb{Z}^\ell$ by automorphisms on $\mathcal{X}$.   We define two types of abelian algebraic actions on nilmanifolds: rational and irrational ones (for precise definition see \eqref{for:32} of Section \ref{sec:33}). Roughly speaking, an element \(z\in\ZZ^\ell\) is called \emph{regular}
if the Lyapunov subspace decomposition of \(\alpha\) coincides with that of \(\alpha(z)\); and
\(\alpha\) is called \emph{irrational} if the action has
no common invariant subspace on which all \(d\alpha(z)\) have only roots of unity as eigenvalues.
For any set $\mathcal{R}\subseteq \mathbb{Z}^{n\ell}$ we write $\mathcal{AD}(\mathcal{R})$ for its asymptotic (counting) density (see Section~\ref{sec:44} for the precise definition).

\begin{theorem}\label{th:11} Suppose there is $z\in \ZZ^\ell$ such that $\alpha(z)$ is ergodic. Then:

\begin{enumerate}
  \item\label{for:209} For any $n\geq2$, there exists a set $\mathcal{R}_n\subset\ZZ^{n\ell}$ with $\mathcal{AD}(\mathcal{R}_n)=1$
  such that: for any $f_1,\cdots, f_n\in C^\infty(\mathcal{X})$ and any $z_1,\cdots, z_n\in\ZZ^\ell$ with $\mathfrak{z}:=(z_1,\cdots,z_n)\in\mathcal{R}_n$, there exists  $\eta(\frac{\mathfrak{z}}{\norm{\mathfrak{z}}})>0$ and $s(\eta)>0$, such that
\begin{align*}
 &\Big|\int_{\mathcal{X}} \prod_{i=1}^{n} f_i\big(\alpha(z_i)x\big) \, d\varrho(x) - \prod_{i=1}^{n} \int_{\mathcal{X}} f_i(x) \, d\varrho(x)\Big|\notag\\
&\leq C_{\frac{\mathfrak{z}}{\norm{\mathfrak{z}}},n,\eta} e^{-\eta\min_{1\leq p\neq j\leq n}\norm{z_p-z_j}}\,\Pi_{i=1}^n\norm{f_i}_{C^{s}}.
\end{align*}
  \item\label{for:210} For any $n\geq2$ and any $\varepsilon>0$, there exist $\eta(\varepsilon),\,s(\varepsilon)>0$ and a set $\mathcal{R}_{n,\varepsilon}\subset\ZZ^{n\ell}$ with $\mathcal{AD}(\mathcal{R}_{n,\varepsilon})\geq1-\varepsilon$
  such that: for any $f_1,\cdots, f_n\in C^\infty(\mathcal{X})$ and any $z_1,\cdots, z_n\in\ZZ^\ell$ with $\mathfrak{z}:=(z_1,\cdots,z_n)\in\mathcal{R}_{n,\varepsilon}$, we have
\begin{align*}
 &\Big|\int_{\mathcal{X}} \prod_{i=1}^{n} f_i\big(\alpha(z_i)x\big) \, d\varrho(x) - \prod_{i=1}^{n} \int_{\mathcal{X}} f_i(x) \, d\varrho(x)\Big|\notag\\
&\leq C_{n,\varepsilon} e^{-\eta\min_{1\leq p\neq j\leq n}\norm{z_p-z_j}}\,\Pi_{i=1}^n\norm{f_i}_{C^{s}}.
\end{align*}
\end{enumerate}

\end{theorem}

The precise statement is given in Theorems \ref{th:2}.

\begin{remark} (\emph{Directional density-one mixing.})
Part \eqref{for:209} shows that if $\alpha$ contains one nontrivial ergodic element, then for almost all time $n$-tuples $(z_1,\cdots,z_n)\in\ZZ^{n\ell}$ (in the sense of asymptotic (counting) density), $\alpha$ is exponential mixing. We point out the constant $C_{\frac{\mathfrak{z}}{\norm{\mathfrak{z}}},n,\eta}$, as well as the decay rate $\eta$, are not uniform for any time $\mathfrak{z}=(z_1,\cdots,z_n)$, instead, they both depend on the direction of the time vector $\frac{\mathfrak{z}}{\norm{\mathfrak{z}}}$ (non-uniformity in direction is intrinsic here).

   If $\alpha$ is \emph{irrational}, then the decay rate $\eta$ can be chosen to be any positive real number along a density-one set of directions. This means that irrational $\alpha$ has super-exponential mixing
   for a density-one set of time vectors.

     If $\alpha$ is \emph{rational}, then $\eta$ (see \eqref{for:205} of Section \ref{sec:14}) can be expressed explicitly in terms of the Lyapunov exponents of $\alpha$ along the time direction $\frac{\mathfrak{z}}{\norm{\mathfrak{z}}}$.  Here \(s\) is independent of \(\eta\) and can be bounded explicitly by \(\dim \text{Lie}(N)\).
\end{remark}

\begin{remark}  (\emph{Uniformity on large subsets.}) Part \eqref{for:210} strengthens the previous statement: for every $\varepsilon>0$ there exists a set
$\mathcal R_{n,\varepsilon}\subset\mathbb Z^{n\ell}$ of density $\ge 1-\varepsilon$
on which one has \emph{uniform} exponential mixing with a single rate $\eta=\eta(\varepsilon)>0$
and a single constant $C_{n,\varepsilon}$ (and $s=s(\varepsilon)$), valid for all $\mathfrak z\in\mathcal R_{n,\varepsilon}$.

    If $\alpha$ is \emph{irrational}, then the decay rate $\eta$ can be chosen to be any positive real number. This means that irrational $\alpha$ has super-exponential mixing (with the uniform rate)
  on a set of density $\ge 1-\varepsilon$.

     If $\alpha$ is \emph{rational}, then $\eta$ can be expressed explicitly in terms of the Lyapunov exponents of $\alpha$ along the direction $\frac{\mathfrak{z}}{\norm{\mathfrak{z}}}$. In this case, the set
     \begin{align*}
     \{ \frac{\mathfrak{z}}{\norm{\mathfrak{z}}}: \,\mathfrak{z}=(z_1,\cdots,z_n)\in \mathcal{R}_{n,\varepsilon}\subset\ZZ^{n\ell}\}
     \end{align*}
     avoids the directions along which the rate degenerates.  Hence there exists $\eta_\varepsilon>0$ with $\eta(\frac{\mathfrak{z}}{\norm{\mathfrak{z}}})\ge \eta_\varepsilon$ uniformly for
$\mathfrak z\in\mathcal R_{n,\varepsilon}$. Here again $s$ is independent of $\varepsilon$ and bounded by \(\dim \text{Lie}(N)\).

\end{remark}

\begin{remark} (\emph{Comparison with previous results.}) In \cite{GR1}, \cite{GR} and \cite{TV}, a decay rate and constant \emph{uniform over all time vectors} are obtained under the \emph{strong} assumption that $\alpha(z)$ is ergodic for every $z\in \ZZ^\ell\backslash 0$. The decay rate there is not explicit. By contrast, we assume only the existence of a single ergodic element, and still obtain exponential (even super\mbox{-}exponential in the
irrational case) mixing for a \emph{density-one} family of time vectors, together with \emph{explicit} rates in the rational case. Moreover, under this weaker assumption one cannot, in general, expect uniform bounds for all time vectors, see Section~\ref{sec:52}.

In particular, if $\alpha$ is irrational, super\mbox{-}exponential mixing holds for a density-one set of time vectors; if $\alpha$ is rational, one has explicit exponential mixing for a density-one set.
Moreover, by restricting to sets of density $\ge 1-\varepsilon$ (arbitrarily close to $1$), the decay rate and constants become uniform. Our results are even new for the torus case.

\end{remark}

\subsubsection{Multiple fractional equation (Type I)  } We now state the fractional solvability Theorem \ref{th:13} which underlies the mixing results above.
A key obstruction in the non-abelian nilmanifold setting is that the classical
cohomological equation $d\pi(X)\xi=\omega$ typically fails to have solutions for
smooth zero-average data $\omega$ (cf.~\cite{F}).  Our first main result shows
that this obstruction disappears after passing to \emph{fractional} derivatives:
for every $0<r<\tfrac12$, one can solve suitable \emph{multiple} fractional
equations of the form
\[
\sum_{j=1}^m |X_j|^{\,r}\,\xi_j=\omega
\]
with \emph{quantitative bounds in partial Sobolev norms} (only derivatives along
specified subgroup directions are required).  The solvability range is governed
by a sharp threshold at $r=\tfrac12$ in the genuinely non-abelian case.

More precisely, we exploit the descending central series
$\mathfrak n=\mathfrak n_1\supset\cdots\supset\mathfrak n_{k+1}=0$ and decompose
$\omega$ into components adapted to the successive abelian quotients
$\mathfrak n_i/\mathfrak n_{i+1}$.  Diophantine subspaces $V_i\subset \mathfrak n_i$
encode how the lattice $\mathfrak p_i(\log\Gamma)$ sits in each quotient, and
Theorem~\ref{th:13} yields a decomposition
$\omega=\sum_{i=1}^k(\omega_{i,1}+\omega_{i,2})$ in which each component is a
fractional coboundary with explicit partial-Sobolev control: the ``main"
parts $\omega_{i,2}$ are solvable for all $r>0$, while the genuinely non-abelian
parts $\omega_{i,1}$ are solvable precisely in the cohomology-free range
$0<r<\tfrac12$.

\noindent \emph{Main result. } Suppose $N$ is of  $k$-step with descending central series $\mathfrak n=\mathfrak n_1\supset\cdots\supset\mathfrak n_{k+1}=0$. We use $\mathfrak{p}_i$ to denote the natural projection from $\mathfrak{n}_i$ to $\mathfrak{n}_i/\mathfrak{n}_{i+1}$.
 For each $1\le i\le k$ we fix a subspace $E_i\subset \mathfrak n_i$ and introduce
a Diophantine condition on subspaces $V\subset \mathfrak n_i$ relative to the
projected lattice $\mathfrak p_i(\log\Gamma)$ (see Section~\ref{for:30} for the
precise definition). Informally, $V$ is \emph{Diophantine of type $i$} if
$\mathfrak p_i(V)\subset \mathfrak p_i(E_i)$ and the coordinates of
$\mathfrak p_i(V)$ in a lattice basis of
$\mathfrak p_i(\log\Gamma)\cap \mathfrak p_i(E_i)$ satisfy the usual Diophantine
 bounds.

Suppose $H\leq N$ is a subgroup. We say that $\omega\in C^\infty(\mathcal X)$ is \emph{$r$-related to $V$ with respect to $H$} if $\omega$ is a $\{\mathfrak{u}_{1},\cdots,\mathfrak{u}_{\dim V}; r,\cdots,r\}$-coboundary for any basis $\{\mathfrak{u}_{1},\cdots,\mathfrak{u}_{\dim V}\}$ of $V$: there exist $\xi_{j,r}\in L^2(\mathcal{X})$ such that
\[
\sum_{j=1}^{\dim V} |\mathfrak u_{j}|^{r}\,\xi_{j,r}=\omega. \qquad \text{(Solvability)}
\]
Moreover, the solutions $\xi_{j,r}$ satisfy Sobolev estimates: there exist $s=s(r)\ge 0$ and $C_{r,V,H}>0$ (depending on $r$, $V$, $H$, and the chosen
basis of $V$) such that
\[
\|\xi_{j,r}\|\ \le\ C_{r,V,H}\,\|\omega\|_{H,s},\quad \quad 1\leq j\leq \dim V, \qquad \text{(Partial Sobolev bound)}
\]
where $\|\cdot\|_{H,s}$ denotes the order-$s$ Sobolev norm using only derivatives along $\mathrm{Lie}(H)$.

 \begin{theorem}\label{th:13} Suppose
 \begin{enumerate}
   \item for each $1\leq i\leq k$, $V_i$ is a Diophantine subspace (for $E_i$) of type $i$ and $H_{i,1},\,H_{i,2}\leq N$ are subgroups of $N$;
   \item $V_0$ is a Diophantine subspace (for $E_1$) of type $1$.
 \end{enumerate}
 Then  for any $\omega\in C^\infty(\mathcal X)$ with $\int_{\mathcal X}\omega d\varrho=0$, there is a decomposition
 \begin{align*}
  \omega=\sum_{i=1}^k(\omega_{i,1}+\omega_{i,2})
 \end{align*}
 such that for each $1\leq i\leq k$:
 \begin{itemize}
   \item $\omega_{i,2}$ is $r$-related to $V_i$ with respect to $H_{i,2}$ for any $r>0$;
   \item $\omega_{i,1}$ is $r$-related to $V_0$ with respect to $H_{i,1}$ for any $0<r<\frac{1}{2}$.
 \end{itemize}
Moreover, if $E_i=\mathfrak{n}_i$ for all $1\leq i\leq k$, then $\omega_{i,1}=0$ for all $1\leq i\leq k$.
\end{theorem}
The precise statement is given in Theorem \ref{th:14}.

\begin{remark}\label{re:2} (\emph{Comparison with previous results}) It is well known that if $N=\RR^n$ (so $k=1$ and $\mathcal{X}$ is a torus) and $X\in\RR^n$ is a Diophantine vector ($X$ spans a Diophantine subspace of type $1$ in our sense), then for any $C^\infty$ function $\omega$ of average zero there
exists a $C^\infty$ solution $\xi$ of equation \eqref{for:10}. This further implies that for any $r\in\NN$, the $r$-th cohomological equation
\begin{align*}
 d\pi(X)^r\xi_r=\omega
\end{align*}
has a $C^\infty$ solution $\xi_r$ for any $C^\infty$ function $\omega$ of average zero. Thus any $C^\infty$ function $\omega$ of average zero is an $\{X,r\}$-coboundary for any Diophantine $X$ and $r\in\NN$, in agreement with the special case of Theorem \ref{th:13}
where $E_1=\mathfrak{n}$ and $V_1=\mathrm{span}\{X\}$.

In contrast, if $N$ is non-abelian,  the equation \eqref{for:10} generally \emph{does not} have a solution (obstructions to solvability are classified in \cite{F}), see Theorem \ref{th:17}. Then our result shows, however, that for any $0<r<\frac{1}{2}$ the fractional equation
\begin{align*}
 |X|^r\xi_r=\omega
\end{align*}
has a solution $\xi_r\in L^2(\mathcal{X},\,\varrho)$ (corresponding to the
$V_0$-part in Theorem \ref{th:13}); by contrast, for $r\geq \frac{1}{2}$, there exists $C^\infty$ $\omega$ of average zero such that solvability fails (see Lemma \ref{for:286}). Thus $r=\tfrac12$ is a sharp threshold for
$L^2$-solvability in the non-abelian setting.



\end{remark}

\subsubsection{New ingredients in the fractional theory}

The results above rest on three conceptual inputs.

\smallskip\noindent
\emph{(1) Edge behavior at \(\chi=0\).}
Although the classical cohomological equation \eqref{for:10} is typically
obstructed beyond the torus case, these obstructions do not prevent
\emph{fractional} solvability: there exists a threshold \(r>0\) such that for
every \(t\in(0,r)\) the fractional equation \( |X|^{t}\xi=\omega \) is solvable
in \(\mathcal H\) (under the relevant Diophantine assumptions in the nilmanifold
case).  Conceptually, this is driven by a quantitative \emph{low-frequency
decay} of the spectral mass near \(\chi=0\) for sufficiently smooth \(\omega\),
at an explicit power-law rate; this edge control is exactly what makes the
fractional equation solvable.

\smallskip\noindent
\emph{(2) Systems vs.\ single equations.} A second, genuinely new feature is that \emph{systems} of fractional equations
can exhibit cohomology-free behavior even when a single equation does not.
 A conjecture due to A. Katok states:
\begin{conjecture*}
Let \(\mathcal{M}\) be a compact, connected manifold. If \(\mathcal{M}\) admits a cohomology free vector field \(X\), then \(\mathcal{M}\) is a torus and \(X\) is smoothly conjugate to a constant Diophantine vector field.
\end{conjecture*}
Within the class of nilflows, Theorem~1.3 of~\cite{F} supports this conjecture.
By contrast, Theorem~\ref{th:13} (in particular, the case \(E_i=\mathfrak n_i\))
shows that for \emph{non-abelian} nilmanifolds one can recover a cohomology-free
property at the level of \emph{multiple} fractional operators of Type~$I$:
even though a single equation \(d\pi(X)\xi=\omega\) may be obstructed, suitable
sum-type systems \(\sum_j |X_j|^{\,r}\xi_j=\omega\) are solvable for all smooth
zero-average data in a nontrivial fractional range.  In this sense, Katok's
single-vector-field phenomenon does not extend to the multiple-operator
fractional setting.

\smallskip\noindent
\emph{(3) Partial Sobolev estimates.} The \(L^2\)-bounds for solutions are controlled by \emph{partial} Sobolev norms of
\(\omega\) (derivatives along specified subgroup directions only).  This
directional control is the key input that later converts fractional
solvability into exponential mixing under partial regularity assumptions (cf.\
Theorem~\ref{th:4}).

\subsection{Motivation and future work}\label{sec:50}

Obtaining exponential mixing for algebraic actions is important both for smooth rigidity and for representation theory. Our results provide a new method for establishing exponential mixing, and we expect this method to have further applications.

Another motivation comes from the $C^\infty$ rigidity programme. A central goal in rigidity theory is to bootstrap a weak notion of equivalence between two dynamical systems, such as topological or \(C^1\) conjugacy, to a much stronger one, such as \(C^\infty\) conjugacy, under the assumption that the natural obstructions vanish. In the partially hyperbolic setting on nilmanifolds, it has long been conjectured that if one of the systems is algebraic and satisfies a suitable irreducibility hypothesis, then a \(C^1\) conjugacy must in fact be \(C^\infty\). Recently, Kalinin, Sadovskaya, and the author proved this conjecture for the torus case \cite{Zhenqi4}.

Extending this picture to general nilmanifolds faces two main obstacles. First, current methods often yield only partial H\"older control of the derivative of the conjugacy along stable and unstable directions, but do not provide sufficient control in neutral directions; see, for example, \cite{Zhenqi3}. Since the differentiated conjugacy satisfies a twisted cohomological equation, exponential mixing is a natural tool. Our results are relevant here because they require only partial H\"older or Sobolev regularity, rather than full regularity of the observables.

Second, effective decay rates for automorphisms of general nilmanifolds have been unavailable. In the torus case, explicit rates play an essential role in solving twisted cohomological equations, and the lack of analogous estimates on nilmanifolds has been a serious obstruction. The effective order-$2$ decay established in Theorem~\ref{th:4} is motivated in part by this problem.

We expect that the two new inputs developed here---effective decay rates and exponential mixing under partial regularity assumptions---will be useful toward rigidity results on nilmanifolds, and in particular toward the conjectural \(C^1\Rightarrow C^\infty\) bootstrap.

\smallskip
\noindent{\bf Acknowledgements.}
I would like to thank G.~Forni for several valuable discussions, and F. Bassam for the invitation to visit the University of Maryland and the hospitality of the Brin Center, where the initial ideas for this work were developed.

\section{Proof strategy: ideas and new ingredients}

\subsection{Why fractional}

Solving the classical cohomological equation \eqref{for:10} is obstructed by a low-frequency singularity. In the direct-integral model one needs
\[
\int_{\mathbb R} |\chi|^{-2}\|\omega_\chi\|^2\,d\mu(\chi)<\infty,
\]
which typically fails near $\chi=0$. Replacing $X$ by the positive fractional operator $|X|^{\,r}$, with $0<r<1$, lowers the singularity to
\begin{align}\label{for:16}
 \int_{\mathbb R} |\chi|^{-2r}\|\omega_\chi\|^2\,d\mu(\chi),
\end{align}
thereby easing the integrability requirement at $\chi=0$.

Even in the case $S=\mathbb R$, the convergence of \eqref{for:16} remains nontrivial after lowering the order. A key novelty of the paper is that, for a suitable range of $r$, we obtain solvability for \emph{every} $\omega$ orthogonal to the $S$-invariant vectors. To the best of our knowledge, this gives the first cohomology-free result in this fractional framework for general, possibly non-abelian, Lie groups beyond the abelian case.

\subsection{From fractional solvability to mixing}\label{sec:8}

We illustrate the basic mechanism by a toy computation. Assume:
\begin{enumerate}
  \item $a$ is an automorphism of $\mathcal X$ and $da(v)=\lambda v$, where $|\lambda|<1$, so that $v$ lies in the stable direction of $a$;
  \item $f,\omega\in C^\infty(\mathcal X)$, and there exist $r\in\NN$ and $\xi\in L^2(\mathcal X,\varrho)$ such that
  \begin{align}\label{for:14}
    v^r\xi=\omega
  \end{align}
  with the estimate $\|\xi\|_{L^2(\mathcal X,\varrho)}\leq C_r\|\omega\|_{H,s}$ for some $s>0$, where $H\le N$ is a subgroup.
\end{enumerate}
Then for any $n\ge 1$,
\begin{align}\label{for:18}
\big|\langle f \circ a^n,\, \omega\rangle_{L^2}\big|
&= \big|\langle (v^*)^{r}(f \circ a^n),\, \xi\rangle_{L^2}\big|
 = \big|\langle v^{\,r}(f \circ a^n),\, \xi\rangle_{L^2}\big| \notag\\
&= |\lambda|^{nr}\,\big|\langle (v^{\,r}f)\circ a^n,\, \xi\rangle_{L^2}\big| \notag\\
&\le |\lambda|^{nr}\,\|(v^{\,r}f)\circ a^n\|_{L^2}\,\|\xi\|_{L^2} \notag\\
&= |\lambda|^{nr}\,\|v^{\,r}f\|_{L^2}\,\|\xi\|_{L^2} \notag\\
&\le C_r\,|\lambda|^{nr}\,\|f\|_{H_{+,a},r}\,\|\omega\|_{H,s}.
\end{align}
Thus solvability of the cohomological equation yields exponential decay. If \eqref{for:14} is solvable for every $r\in\NN$, then by choosing $r$ large one obtains arbitrarily fast exponential decay, hence super-exponential decay.

In general, however, the classical equation \eqref{for:14} is not solvable even for $r=1$. The basic idea is therefore to replace it by the fractional equation
\begin{align}\label{for:19}
  |v|^{\,r}\,\xi=\omega,\qquad 0<r<1.
\end{align}
As explained above, this lowers the singularity at the spectral edge $\chi=0$ and makes fractional solvability possible in a nontrivial range of orders.

\medskip
\noindent\textbf{Fractional version.}
If the order-$r$ cohomology-free property holds along $v$, namely if for every $\omega\in W^{H,s}$ there exists $\xi\in L^2(\mathcal X,\varrho)$ with $|v|^{r}\xi=\omega$, then the same argument as above gives, for all $n\ge 1$,
\[
\big|\langle f \circ a^n,\, \omega\rangle_{L^2}\big|
\le
C_r\,|\lambda|^{nr}\,\|f\|_{H_{+,a},r}\,\|\omega\|_{H,s}.
\]
Here we use the conjugation relation
\[
U_n h := h\circ a^n,
\qquad
U_n^{-1}\,|v|^r\,U_n = |\lambda|^{rn}\,|v|^r
\]
proved in \eqref{for:169} of Lemma \ref{le:17}. Thus fractional solvability implies exponential decay for $n\ge1$. To obtain decay for $n\le -1$, one uses the analogous fractional solvability along an unstable direction of $a$.

\subsection{Quantitative Rokhlin problem and partial Sobolev norms}

We next use a rank-one example to illustrate how partial Sobolev norms enter the quantitative Rokhlin problem at order $3$. Assume:
\begin{enumerate}
  \item $a$ is an automorphism of $\mathcal X$ preserving a probability measure $\varrho$; $f_i\in C^\infty(\mathcal X)$; and $n_i\in\mathbb Z$ for $1\le i\le 3$;
  \item after reindexing we may suppose $n_1\ge n_2\ge n_3$ and $\int_{\mathcal X} f_i\,d\varrho=0$ for $1\le i\le 3$;
  \item there exist $r>0$ and $s=s(r)>0$ such that for every $n\ge 0$ and every $f,g\in C^\infty(\mathcal X)$, with at least one of $\int_{\mathcal X}f\,d\varrho$ and $\int_{\mathcal X}g\,d\varrho$ equal to $0$, one has
  \begin{equation}\label{for:29}
    \big|\langle f\circ a^n,\,g\rangle_{L^2}\big|
    \le C_r\,e^{-nr}\,\|f\|_{H_{-,a},s}\,\|g\|_{H_{+0,a},s}.
  \end{equation}
\end{enumerate}

\noindent\emph{Decay controlled by the minimal time separation.}
We have
\begin{align}\label{for:386}
 \Big|\int_{\mathcal X} f_1 \circ a^{n_1}\cdot f_2 \circ a^{n_2}\cdot f_3\circ a^{n_3}\,d\varrho\Big|
 &=\Big|\int_{\mathcal X} (f_1 \circ a^{n_1-n_2}\cdot f_2) \circ a^{n_2-n_3}\cdot f_3\,d\varrho\Big| \notag\\
 &\leq C_r\,e^{-(n_2-n_3)r}\,\big\|f_1 \circ a^{n_1-n_2}f_2\big\|_{H_{-,a}, s}\,\|f_3\|_{H_{+0,a},s}\notag\\
 &\leq C_r\,e^{-(n_2-n_3)r}\,\big\|f_1 \big\|_{C^s}\big\|f_2\big\|_{C^s}\,\|f_3\|_{H_{+0,a},s}\notag\\
 &\leq C_r\,e^{-\min_{1\leq i\neq j\leq3}|n_i-n_j|r}\,\big\|f_1 \big\|_{C^s}\big\|f_2\big\|_{C^s}\,\|f_3\|_{C^s}.
\end{align}
Here we use \eqref{for:29} together with the fact that, since $n_1-n_2\ge0$, the map $a^{n_1-n_2}$ contracts the $H_{-,a}$ directions. The same argument extends to order-$n$ mixing, with a rate determined by the smallest pairwise separation.

\smallskip
\noindent\emph{Decay controlled by the maximal time separation.}
For order $3$ one can improve the blocking. Since $\max_{1\leq i\neq j\leq3}|n_i-n_j|=n_1-n_3$, we choose the decomposition so that the larger gap drives the exponential gain.

If $n_2-n_3\geq\frac{1}{2}(n_1-n_3)$, then \eqref{for:386} already gives
\[
 \Big|\int_{\mathcal X} f_1 \circ a^{n_1}\cdot f_2 \circ a^{n_2}\cdot f_3\circ a^{n_3}\,d\varrho\Big|
 \leq
 C_r\,e^{-\frac{1}{2}\max_{1\leq i\neq j\leq3}|n_i-n_j|r}\,\prod_{i=1}^3\|f_i\|_{C^s}.
\]
If instead $n_2-n_3<\frac{1}{2}(n_1-n_3)$, then $n_1-n_2>\frac{1}{2}(n_1-n_3)$ and we regroup as
\[
\int_{\mathcal X} f_1 \circ a^{n_1}\cdot f_2 \circ a^{n_2}\cdot f_3\circ a^{n_3}\,d\varrho
=
\int_{\mathcal X} f_1 \circ a^{n_1-n_2}\cdot \bigl(f_2 \cdot f_3\circ a^{n_3-n_2}\bigr)\,d\varrho.
\]
Applying \eqref{for:29} and using that $a^{n_3-n_2}$ expands along $H_{+0,a}$ at most polynomially, one obtains
\[
 \Big|\int_{\mathcal X} f_1 \circ a^{n_1}\cdot f_2 \circ a^{n_2}\cdot f_3\circ a^{n_3}\,d\varrho\Big|
 \leq
 C_{r,s,\varepsilon}\,e^{-(\frac{1}{2}-\varepsilon)\max_{1\leq i\neq j\leq3}|n_i-n_j|r}\,\prod_{i=1}^3\|f_i\|_{C^s}.
\]
This illustrates the mechanism behind the order-$3$ maximal-gap phenomenon.

\medskip
\noindent\textbf{Multi-fractional version.}
In general we do not show that $\omega$ is a single coboundary. Instead, we prove that $\omega$ is a $\{\mathfrak u_1,\ldots,\mathfrak u_m; r_1,\ldots,r_m\}$-coboundary of Type~$I$, where each $\mathfrak u_i$ lies entirely either in the stable or in the unstable direction of $a$. This is the input that yields quantitative mixing using only partial Sobolev control.

\subsection{Fractional solvability and partial Sobolev estimates}

We now explain, at a rough level, how the fractional equations are solved and how the corresponding partial Sobolev estimates arise.

\subsubsection{\(da\) on \(\mathfrak n\) has no eigenvalues that are roots of unity}

In this case every \(\omega\in C^\infty(\mathcal X)\) admits an orthogonal \(L^2\)-decomposition
\[
\omega=\omega_0+\omega_1+\cdots+\omega_k,
\]
where \(\omega_j\) is \(N_{j+1}\)-invariant and orthogonal to all \(N_j\)-invariant functions. Thus each \(\omega_j\) may be viewed as a function on the compact manifold
\[
\mathcal{X}_j:=(N/N_{j+1})\big/(\Gamma/\Gamma_{j+1}).
\]
The quotient
\[
\mathbb T_j := (N_j/N_{j+1})\big/(\Gamma_j/\Gamma_{j+1})\subset \mathcal{X}_j
\]
is a torus bundle over $\mathcal{X}_j$. On each $\mathbb T_j$-Fourier mode indexed by $\ell$ in the dual lattice, the fractional operator $|\mathfrak u|^{r}$ acts by the scalar $|\langle \ell,\mathfrak u\rangle|^{r}$.

Using the absence of root-of-unity eigenvalues, which forces uniform separation of the $da$-action on $\mathfrak n_j/\mathfrak n_{j+1}$, one can choose directions
\[
\mathfrak u_{1,j},\dots,\mathfrak u_{m,j}\in\mathfrak n_j
\]
lying entirely in either the stable or unstable subspace of $da$, and whose images in $\mathfrak n_j/\mathfrak n_{j+1}$ lie in the corresponding stable or unstable subspace.

The key point is that the solvability analysis splits into two regimes. When the divisor $|\langle \ell,\mathfrak u\rangle|^{r}$ is uniformly bounded away from \(0\), the fractional equation is solved directly with an \(L^2\) bound depending only on the corresponding coboundary terms. When the divisor is small, the Diophantine property of the chosen stable/unstable directions yields a quantitative lower bound. In this regime, the resulting loss of regularity is confined to directions complementary to those generated by the chosen \(\mathfrak u_{i,j}\); this is the source of the partial Sobolev estimates, and it arises directly from the Fourier analysis of the small divisors (see Proposition \ref{le:1}). Consequently,
\[
\omega_j \text{ is a } \{\mathfrak u_{1,j},\ldots,\mathfrak u_{m,j};\, r,\ldots,r\}\text{-coboundary of Type~}I
\quad\text{for every } r>0.
\]

\subsubsection{\(da\) on \(\mathfrak n\) has eigenvalues that are roots of unity}

In this case the action of \(da\) on \(\mathfrak n/[\mathfrak n,\mathfrak n]\) still has no root-of-unity eigenvalues, but the action on \([\mathfrak n,\mathfrak n]\) may have only root-of-unity eigenvalues; this occurs, for example, for partially hyperbolic automorphisms of the three-dimensional Heisenberg nilmanifold.

We prove that for every \(\omega\in C^\infty(\mathcal X)\),
\[
\omega \text{ is a } \{\mathfrak u_{1,j},\ldots,\mathfrak u_{m,j};\, r,\ldots,r\}\text{-coboundary of Type~}I
\]
for every $0<r<\frac{1}{2}$, where the vectors \(\mathfrak u_{i,j}\), as well as their images in \(\mathfrak n/[\mathfrak n,\mathfrak n]\), lie entirely in either the stable or unstable subspace of \(da\).

The proof uses Kirillov theory and Mackey theory for semidirect products. Rather than the induced-representation model, which is well adapted to invariant distributions and the exact cohomological equation, we work in the dual model, which is better suited to extracting the spectral asymptotics near \(0\) needed for fractional solvability. The partial Sobolev estimates arise first from direct analysis in the dual model (see Lemma \ref{le:7}), and then from the Diophantine control described in the previous case.

\section{Notations and preparative steps}

\subsection{Basic notations}\label{sec:33}
Suppose $N$ is a simply connected nilpotent group of step $k$ and $\Gamma$ a discrete cocompact subgroup. Then $\mathcal{X}=N/\Gamma$ is a compact nilmanifold. Set $\mathfrak n=\Lie(N)$. Then $\mathfrak n$ is a $k$-step nilpotent Lie algebra. Let $\ell\in\mathbb N$ and $\alpha:\mathbb Z^\ell\to Aut(\mathcal X)$ be an action of $\mathbb Z^\ell$ by automorphisms on $\mathcal X$. Denote by $d\alpha$ the induced action of $\alpha$ on $\mathfrak n$, i.e.\ the linear part of $\alpha$.

Throughout the rest of the paper, we use the notation introduced in this section. We use $\epsilon>0$ to denote a sufficiently small constant.    We write $C$ for a constant depending only on the nilmanifold $\mathcal{X}$ and the group action $\alpha$ (its value may vary from one occurrence to the next). When a constant depends on additional parameters, we indicate this with subscripts; for example, $C_{x,y,z}$ denotes a constant that may depend on $x,y,z$ (in addition to $\mathcal{X}$ and $\alpha$).

\begin{cect6}\label{for:32} The nilmanifold $\mathcal{X}$ carries a unique $N$-invariant
probability measure $\varrho$, locally given by the Haar measure of $N$. The group of automorphisms of $\mathcal{X}$ is defined by
\begin{align*}
 \text{Aut}(\mathcal{X})=\{A\in \text{Aut}(N):\,A(\Gamma)=\Gamma\}
\end{align*}
and acts by measure-preserving transformations on $(\mathcal{X},\varrho)$.
 Let $(\pi,\,L^2(\mathcal{X}))$ denote the regular representation of $N$ on $L^2(\mathcal{X})$. The subspace
\[
\mathcal H \;=\; L_0^2(\mathcal X,\varrho)
:= \Bigl\{f\in L^2(\mathcal X,\varrho): \int_{\mathcal X} f\,d\varrho = 0\Bigr\}
\]
is $N$-invariant, so $(\pi,\mathcal H)$ is the restriction of $(\pi,\,L^2(\mathcal{X}))$ to zero-mean functions.

For any $z\in \ZZ^\ell$, we have a decomposition of $\mathfrak{n}$ into $d\alpha(z)$-invariant subspaces:
\begin{align*}
 \mathfrak{n}=\mathfrak{n}^{(z,1)}+\mathfrak{n}^{(z,2)},
\end{align*}
where $d\alpha(z)|_{\mathfrak{n}^{(z,1)}}$ has no eigenvalues that are roots of unity and $d\alpha(z)|_{\mathfrak{n}^{(z,2)}}$ has only eigenvalues that are roots of unity. Set
\begin{align}\label{for:349}
 \mathfrak{n}^{(2)}(d\alpha)=\bigcap_{z\in\ZZ^\ell}\mathfrak{n}^{(z,2)}.
\end{align}
We say that the action $\alpha$ is of \emph{rational type} if $\mathfrak n^{(2)}(d\alpha)\neq\{0\}$, and of \emph{irrational type} if $\mathfrak n^{(2)}(d\alpha)=\{0\}$.

We say that an automorphism $a$ of $\mathcal{X}$ is of \emph{rational} (resp.\ \emph{irrational}) type if the rank-one action
$\alpha:\mathbb Z\to\text{Aut}(\mathcal X)$ generated by $a$ is of rational (resp.\ irrational) type in the above sense.

\end{cect6}

\begin{cect6}\label{for:123} For any $a\in\text{Aut}(\mathcal{X})$ we have a decomposition
\[
    \text{Lie}(N)=\; W_{-,a}\oplus W_{0,a}\oplus W_{+,a},
\]
into the stable, neutral, and unstable subspaces of $da$. Concretely, if $\lambda$ ranges over the eigenvalues of $da$, then
$W_{-,a}$ (resp.\ $W_{0,a}$, $W_{+,a}$) is the sum of generalized eigenspaces with $|\lambda|<1$ (resp.\ $|\lambda|=1$, $|\lambda|>1$).

Let $H_{-,a}$, $H_{+,a}$, $H_{0,a}$ be the connected subgroups of $N$ corresponding to the Lie subalgebras $W_{-,a}$, $W_{+,a}$ and $W_{0,a}$, respectively. Similarly, let $H_{-0,a}$ and $H_{+0,a}$ be the connected subgroups corresponding to $W_{-,a} \oplus W_{0,a}$ and $W_{+,a} \oplus W_{0,a}$.

\end{cect6}

\begin{cect6}\label{for:101}Since $[\mathfrak{n},\mathfrak{n}]$ is invariant under $da$, $da$ descends to an automorphism
on $\mathfrak{n}/[\mathfrak{n},\mathfrak{n}]$, which we denote by $da|_{\mathfrak{n}/[\mathfrak{n},\mathfrak{n}]}$. Let $p$ be the characteristic polynomial of $da|_{\mathfrak{n}/[\mathfrak{n},\mathfrak{n}]}$, and let $p=\prod_{i=1}^{k_0}p_i^{d_i}$
be its prime decomposition over $\QQ$. Then we have the corresponding splitting
\begin{align}\label{for:58}
 \mathfrak{n}/[\mathfrak{n},\mathfrak{n}]=\oplus_{i=1}^{k_0} V_i
\end{align}
into rational $da|_{\mathfrak{n}/[\mathfrak{n},\mathfrak{n}]}$-invariant subspaces $V_i=\ker p_i^{d_i}(da|_{\mathfrak{n}/[\mathfrak{n},\mathfrak{n}]})$.

Let $\rho_{i,\max}$ be the maximal Lyapunov exponent and $\rho_{i,\min}$ be the minimal Lyapunov exponent of $(da|_{\mathfrak{n}/[\mathfrak{n},\mathfrak{n}]})|_{V_i}$ respectively. Let $\chi_{1}<\cdots<\chi_{l}$ be the Lyapunov exponents of $da$ on $\mathfrak{n}$.  Set
\begin{align}\label{for:59}
\rho=\min_{1\leq i\leq k_0}\max\{\rho_{i,\max},|\rho_{i,\min}|\}\quad\text{and}\quad \chi=\min\{|\chi_{i}|:\, |\chi_{i}|\neq0\}.
\end{align}
We point out that none of the eigenvalues of $da|_{\mathfrak{n}/[\mathfrak{n},\mathfrak{n}]}$ is a root of unity if $a$ is ergodic \cite{P}. Hence $\rho>0$.

\emph{Note}. When needed, we write $p_a$, $p_{i,a}$, $d_{i,a}$, $V_{i,a}$, $\rho_{i,\max,a}$, $\rho_{i,\min,a}$, $\rho_a$, $\chi_a$ to emphasize the dependence on $a$.

\end{cect6}

\begin{cect6}\label{sec:4} \textbf{Partial $C^s$-norms.}
 For any function  $f$ on $\mathcal{X}$ and any subgroup $H$ of $N$ or $G$,
we write $\|f\|_{H,C^s}$ for the $C^s$-norm of $f$ taken using only
derivatives along vector fields from $\Lie(H)$. Write $s=m+\theta$ with $m\in\NN$ and $0\le\theta<1$. Fix a basis
$(Y_1,\dots,Y_d)$ of $\Lie(H)$ and a compact set $\mathcal C\subset\Lie(H)$
whose linear span equals $\Lie(H)$. Define
\[
\|f\|_{H,C^{s}}
:=\|f\|_{H,C^{m}}
\;+\;
\sum_{|\alpha|=m}\;
\sup_{Y\in\mathcal C}\;
\sup_{0<|t|\le 1}\;
\frac{\big\|\pi(\exp(tY))(Y^\alpha f) - Y^\alpha f\big\|_{C^0}}{|t|^{\theta}},
\]
where $Y^\alpha:=Y_{\alpha_1}\cdots Y_{\alpha_m}$ for a multi-index $\alpha = (\alpha_1, \dots, \alpha_m)$. Denote by $C_c^{s, H}(\mathcal{X})$ the space of compactly supported functions on $\mathcal{X}$ for which $\|f\|_{H,C^{s}} < \infty$.

\end{cect6}

\subsection{Nilpotent structure and $\Gamma$-rational subspaces}\label{sec:28} Let $\mathfrak{n}_j$, $1\leq j\leq k$ denote the descending central series of $\mathfrak{n}$:
\begin{align*}
\mathfrak{n}_1=\mathfrak{n},\,\,\mathfrak{n}_2=[\mathfrak{n},\mathfrak{n}],\cdots, \mathfrak{n}_j=[\mathfrak{n}_{j-1},\mathfrak{n}],\cdots, \mathfrak{n}_k\subset Z(\mathfrak{n})
\end{align*}
where $Z(\mathfrak{n})$ is the center of $\mathfrak{n}$. Set $ \mathfrak{n}_{k+1}=\{0\}$.

Let $N_j$ be the connected subgroup of $N$ with Lie algebra $\mathfrak{n}_j$,  $2\leq j\leq k$. Set $\Gamma_1=\Gamma$ and $\Gamma_j=[\Gamma_{j-1},\Gamma]$, $2\leq j\leq k$.
 Then $N_j=\exp\mathfrak{n}_j=[N_{j-1},N]$ and $\Gamma_j$ is a lattice of $N_j$ for each $j$.

\subsubsection{Malcev basis}\label{sec:42}

Let $E_1^1,E_2^1,\cdots, E_{n_1}^1,E_1^2,\cdots, E_{n_2}^2,\cdots,E_1^k,\cdots, E_{n_k}^k$, a
Malcev basis for $\mathfrak{n}$ through the descending central series $\mathfrak{n}_j$ and strongly
based at $\Gamma$, that is, a basis of $\mathfrak{n}$ satisfying the following properties:
\begin{enumerate}
  \item if we drop the first $p$ elements of the basis we obtain a basis of a
subalgebra of codimension $p$ of $\mathfrak{n}$.

\smallskip
  \item If we set $\mathcal{E}^j=\{E_1^j,\cdots, E_{n_j}^j\}$, the elements of the set $\mathcal{E}^j\cup \mathcal{E}^{j+1}\cup\cdots\cup\mathcal{E}^k$ form a basis of $\mathfrak{n}_j$. Let $V_{\mathcal{E}^j}$ denote the linear space spanned by vectors in $\mathcal{E}^j$.

      \smallskip
  \item every element of $\Gamma$ can be written as a product
  \begin{align*}
  \exp m_1^1E_1^1\cdots\exp m_{n_1}^1E_{n_1}^1\cdots \exp m_{1}^kE_{1}^k\cdots \exp m_{n_k}^kE_{n_k}^k
  \end{align*}
with integral coefficients $m^j_i$.
\end{enumerate}
The lattice $\Gamma$ determines a rational structure on $\mathfrak{n}_j$ for each $j$.
Specifically,  the elements of the set $\mathcal{E}^j\cup \mathcal{E}^{j+1}\cup\cdots\cup\mathcal{E}^k$ generate a $\ZZ^{\dim\mathfrak{n}_j}$-lattice in $\mathfrak{n}_{j}$.
We say that a subspace $E\subset \mathfrak n_j$ is $\Gamma$-\emph{rational} if it admits a basis contained in this lattice,
equivalently if it is spanned by vectors with integer coordinates in the Malcev basis.

\subsection{Central torus factors associated with $E\subseteq \mathfrak n_k$}\label{sec:7}    Suppose $E\subset \mathfrak{n}_k$ is a $\Gamma$-rational subspace. Let $\exp(E)$ denote the connected subgroup with
Lie algebra $E$. Then $\exp(E)\cap \Gamma$ is a lattice of $\exp(E)$. Set
\begin{align*}
 \TT_E=\exp(E)/\exp(E)\cap \Gamma.
\end{align*}
 Then $\TT_E$ is  a $\dim E$-dimensional torus.

We note that $\pi\big(\exp(E)\cap \Gamma\big)$ acts trivially on $L^2(\mathcal{X}, \varrho)$. Therefore, the restriction
$\pi|_{\exp(E)}$ naturally induces a unitary representation of $\TT_E$ on $L^2(\mathcal{X})$; we continue to denote this representation by
$\pi|_{\TT_E}$. In particular, for any $f\in L^2(\mathcal{X})$ we have
\begin{align*}
 (\pi(t)f)(x)=f([t^{-1}]x),\qquad \forall\,t\in \TT_E.
\end{align*}
where $[t^{-1}]$ denotes the equivalence
class of $t^{-1}$ in $\exp(E)$.

Let $\varrho_E$ be the $\exp(E)$-invariant
probability measure on $\TT_E$. We say that a function $f\in L^2(\mathcal{X})$ is a $\TT_E$-\emph{fixed} if
\begin{align*}
 \pi(t)f=f,\qquad \forall\,t\in \TT_E.
\end{align*}
 For any $f\in L^2(\mathcal{X})$ set
\begin{align*}
 f_{E,o}=\int_{\TT_E}\pi(t)fd\varrho_E(t)\quad\text{and}\quad f_{E,\bot}=f-f_{E,o}.
\end{align*}
Finally, we define
\begin{align*}
 L_{E,o}=\{f_{E,o}: f\in  L^2(\mathcal{X})\}\quad\text{and}\quad L_{E,\bot}=\{f-f_{E,o}: f\in  L^2(\mathcal{X})\}.
\end{align*}
It is clear that $f_{E,o}$ is $\TT_E$-fixed and
\begin{align}\label{for:40}
 f_{E,o}(x)=f_{E,o}(vx),\qquad \forall\,v\in \exp(E).
\end{align}
This implies that if $f_1,\,f_2\in L_{E,o}$, then
\begin{align}\label{for:78}
 f_1\cdot f_2\in L_{E,o}.
\end{align}
Let \(\mathcal U(\mathfrak n)\) denote the universal enveloping algebra of \(\mathfrak n\). If \(\mathcal P\in \mathcal U(\mathfrak n)\) is viewed as a left-invariant differential operator, then \(\mathcal P\) commutes with \(\pi(t)\) for \(t\in\TT_E\), hence with the averaging:
\[
 \mathcal P(f_{E,o})=\mathcal P\!\left(\int_{\TT_E}\pi(t)f\,d\varrho_E(t)\right)
 =\int_{\TT_E}\pi(t)\,(\mathcal P f)\,d\varrho_E(t)=(\mathcal P f)_{E,o}.
\]
Therefore, since \(f\mapsto f_{E,o}\) is the orthogonal projection onto \(L_{E,o}\) (see \eqref{for:54} of Lemma \ref{cor:2}) and has operator norm \(1\) on \(L^2(\mathcal X)\),
\begin{align}\label{for:83}
\|\mathcal P(f_{E,o})\|=\|(\mathcal P f)_{E,o}\|\le \|\mathcal P f\|,\qquad \forall\,f\in C^\infty(\mathcal X).
\end{align}
Likewise $f\mapsto f_{E,\bot}=f-f_{E,o}$ is the orthogonal projection onto $L_{E,\bot}$, so
\begin{equation}\label{for:84}
 \|\mathcal P(f_{E,\bot})\|=\|(\mathcal P f)_{E,\bot}\|\le \|\mathcal P f\|,\qquad f\in C^\infty(\mathcal X).
\end{equation}



\subsection{Diophantine subspaces} We now introduce several notions of Diophantine subspaces adapted to different settings.

\subsubsection{Diophantine subspaces of $\Gamma$-rational subspaces }\label{sec:6} Suppose $E\subseteq \mathfrak{n}$ is a $\Gamma$-rational subspace. Choose an integer basis $\{W_1,\cdots,W_{\dim E} \}$ of $E$.   Then for any $w\in E$ there exists $(w_1,\cdots, w_{\dim E})\in\RR^{\dim E}$
such that
\begin{align*}
 w=w_1W_1+\cdots+w_{\dim E}W_{\dim E}.
\end{align*}
For any $m=(m_1,\cdots,m_{\dim E})\in\ZZ^{\dim E}$ set
\begin{align*}
 m\cdot w=\sum_{i=1}^{\dim E}m_iw_i.
\end{align*}
We say that a subspace $V\subset E$ is a \emph{Diophantine subspace of $E$} if for any $m\in\ZZ^{\dim E}$ and any basis $\{v_1,\cdots, v_{\dim V}\}$ of $V$, we have
\begin{align}\label{for:3}
 \sum_{i=1}^{\dim V}|m\cdot v_i|\geq C_{v_1,\cdots,v_{\dim V}}\norm{m}^{-\dim E},
\end{align}
where $\norm{\cdot}$ is induced by the Euclidean distance. It is clear
that this definition does not depend on the choice of the $\mathbb{Z}$-basis
$\{W_1,\dots,W_{\dim E}\}$.

\subsubsection{Diophantine subspace of type $i$}\label{for:30} For any $1\leq i\leq k$ we use $\mathfrak{p}_i$ to denote the natural projection from $\mathfrak{n}_i$ to $\mathfrak{n}_i/\mathfrak{n}_{i+1}$. We say that a subspace $V$ of $\mathfrak{n}_i$ is a \emph{Diophantine subspace (for $E$) of type $i$} if there exists a $\Gamma$-rational subspace $E\subseteq V_{\mathcal{E}^i}$ (see Section \ref{sec:42})  and there is a Diophantine subspace $F$ of $E$ such that $\mathfrak{p}_i(F)=\mathfrak{p}_i(V)$.

\subsubsection{Linear-algebraic Diophantine subspaces}\label{sec:39} Suppose $M\in GL(m,\ZZ)$. Let $\mathfrak{q}$ be the characteristic polynomial of $M$, and let $\mathfrak{q}=\prod_{i=1}^{l_0}\mathfrak{q}_i^{c_i}$
be its prime decomposition over $\QQ$. Then we have the corresponding splitting
\begin{align}\label{for:293}
 \RR^m=\oplus_{i=1}^{l_0} \mathcal{F}_i
\end{align}
into rational $M$-invariant subspaces $\mathcal{F}_i=\ker\mathfrak{q}_i^{c_i}$.

On each $\mathcal{F}_i$, $1\leq i\leq l_0$, let $\chi_{i,1}<\cdots<\chi_{i,L(i)}$ be the Lyapunov exponents of $M|_{\mathcal{F}_i}$ and let
\begin{align}\label{for:294}
 \mathcal{F}_i=\mathcal{L}_{i,1}\oplus\cdots \oplus\mathcal{L}_{i,L(i)}
\end{align}
be the corresponding Lyapunov subspace decomposition. Let
\begin{align*}
 \mathcal{L}_{\text{block}\max}=\oplus_{i=1}^{l_0} \mathcal{L}_{i,L(i)}\quad\text{and}\quad \mathcal{L}_{\text{block}\min}=\oplus_{i=1}^{l_0} \mathcal{L}_{i,1}.
\end{align*}
We have  a decomposition of $\RR^m$ into unstable, neutral, and stable subspaces,  denoted by $W_+$, $W_0$, and $W_-$, respectively:
$\RR^m \;=\; W_+\oplus W_0\oplus W_-,$ where
\begin{align}\label{for:296}
 W_+=\oplus_{\chi_{i,j}>0}\mathcal{L}_{i,j},\quad W_0=\oplus_{\chi_{i,j}=0}\mathcal{L}_{i,j}, \quad W_-=\oplus_{\chi_{i,j}<0}\mathcal{L}_{i,j}.
\end{align}
\emph{Note}. When needed, we write $\mathcal{F}_i(M)$, $\mathcal{L}_{i,j}(M)$, $W_+(M)$, $\mathcal{L}_{\text{block}\max,M}$, etc., to emphasize the dependence on $M$.

\smallskip

Let $E$ be a rational subspace of $\RR^m$. We say that a subspace $V\subset E$ is a \emph{Diophantine subspace of $E$} if for any $z\in\ZZ^{m}\bigcap E$ and any basis $\{v_1,\cdots, v_{\dim V}\}$ of $V$, we have
\begin{align}
 \sum_{i=1}^{\dim V}|z\cdot v_i|\geq C_{v_1,\cdots,v_{\dim V}}\norm{z}^{-\dim E},
\end{align}
where $\norm{\cdot}$ is induced by the Euclidean distance.

  \begin{lemma}\label{le:9} Suppose $M\in GL(m,\ZZ)$ and $M$ is ergodic. Then:
  \begin{enumerate}
    \item\label{for:292} The following subspaces are Diophantine subspaces of $\RR^m$: $\mathcal{L}_{\text{block}\max}$, $\mathcal{L}_{\text{block}\min}$, $ W_+$, $W_-$.

    \item\label{for:295} For any $1\leq i\leq l_0$ and $1\leq j\leq L(i)$, $\mathcal{L}_{i,j}(M)$ is a Diophantine subspace of $\mathcal{F}_i(M)$.
  \end{enumerate}

\end{lemma}

The proof is left for Appendix \ref{sec:11}.

\section{Preliminaries on unitary representation theory}
\subsection{Distributions and Sobolev spaces}\label{sec:9}
Let $\rho$ be a unitary representation of a Lie group $S$ with Lie algebra $\mathfrak{S}$ on a Hilbert space $H_\rho$. We call a vector $\xi\in H_\rho$ $C^\infty$ for $S$ if the map
$s\to \rho(s)\xi$ is a $C^\infty$ function from $S$ to $H_\rho$. We use $W^\infty(H_\rho)$ to denote the set of $C^\infty$ vectors. The derived representation $\rho_*$ of $\rho$  is the Lie algebra representation of $\mathfrak{S}$ on $H_\rho$ defined as follows. For every $X \in \mathfrak{S}$,

\begin{equation}
\rho_*(X) := \lim_{t \to 0} \frac{\rho(\exp tX) - I}{t}
\end{equation}
defined on the domain of all $\xi\in H_\rho$ for which the above strong limit exists. In particular, we write
\[
X\xi:=\rho_*(X)\xi.
\]
The subspace $W^\infty(H_\rho)\subset H_\rho$ is endowed with the Fr\'{e}chet $C^\infty$ topology, that is, the topology defined by the family of seminorms
\[
\left\{\, \| \cdot \|_{v_1,v_2,\dots,v_m} \;\middle|\; m \in \mathbb{N} \text{ and } v_1, \ldots, v_m \in \mathfrak{S} \,\right\}
\]
defined as follows:
\[
\| \xi\|_{v_1,v_2,\dots,v_m} := \left\|\, \rho_*(v_1)\rho_*(v_2) \cdots \rho_*(v_m)\xi \,\right\|, \quad \text{for } \xi \in W^\infty(H_\rho) .
\]
For simplicity, we often write
\begin{align*}
 v_1v_2\cdots v_m\xi:=\rho_*(v_1)\rho_*(v_2) \cdots \rho_*(v_m)\xi, \quad \text{for } \xi \in W^\infty(H_\rho).
\end{align*}
The space of \emph{Schwartz distributions} for the representation $\rho$ is defined as the dual space of $W^\infty(H_\rho)$ (endowed with the Fr\'{e}chet $C^\infty$ topology). The space of Schwartz distributions for the representation $\rho$ will be denoted $\mathcal{E}'(H_\rho)$.

The representation $\rho_*$ extends in a canonical way to a representation (denoted by the same symbol) of the enveloping algebra $\mathcal{U}(\mathfrak{S})$ of $\mathfrak{S}$ on the Hilbert space $H_\rho$. Let $\Delta_S \in \mathcal{U}(\mathfrak{S})$ be a left-invariant second-order, positive elliptic operator on $S$ fixed once and for all. For example, the operator
\begin{align*}
 \Delta_S = -\left( v_1^2 + \cdots + v_d^2 \right)
\end{align*}
where $\{ v_1, \ldots, v_d \}$ is a basis of $\mathfrak{S}$ as a vector space.

For each \(\alpha>0\), let
\[
W^{\alpha}(H_\rho)
\;=\;
\mathrm{Dom}\bigl((I + \Delta_{S})^{\alpha/2}\bigr)
\;\subset\;
H_{\rho}
\]
be the Sobolev space of order \(\alpha\) with respect to the group \(S\) on the Hilbert space \(H_{\rho}\).  Equip it with the full norm
\[
\|\xi\|_{S,\alpha,H_{\rho}}
\;=\;
\bigl\|(I + \Delta_{S})^{\alpha/2}\,\xi\bigr\|_{H_{\rho}},
\qquad
\xi\in W^{\alpha}(H_\rho).
\]
When \(S\) and \(H_{\rho}\) are clear from context, we may abbreviate
\[
\|\xi\|_{S,\alpha,H_{\rho}}
=\|\xi\|_{S,\alpha}
=\|\xi\|_{\alpha}.
\]
If instead we restrict to a subgroup \(P\subset S\), we write
\[
W^{\alpha,P}(H_{\rho})
=\mathrm{Dom}\bigl((I + \Delta_{P})^{\alpha/2}\bigr),
\]
with norm
\[
\|\xi\|_{P,\alpha,H_{\rho}}
=\bigl\|(I + \Delta_{P})^{\alpha/2}\,\xi\bigr\|_{H_{\rho}},
\]
and similarly drop subscripts when no ambiguity arises.

Let also $W^{-\alpha}(H_\rho) \subset \mathcal{E}'(H_\rho)$ be the dual Hilbert space of $W^\alpha(H_\rho)$. Then $W^\infty(H_\rho)$ is the projective limit of the spaces $W^\alpha(H_\rho)$ (and consequently $\mathcal{E}'(H_\rho)$ is the inductive limit of $W^{-\alpha}(H_\rho)$) as $\alpha \to +\infty$. A distribution $\mathcal{D} \in W^{-\alpha}(H_\rho)$ will be called a distribution of order at most $\alpha \in \mathbb{R}^+$.

We recall the space $C_c^{s, H}(\mathcal{X})$ in \ref{sec:4} of Section \ref{sec:33}. It is clear that if $f\in C_c^{s,H}(\mathcal{X})$, then $f\in W^{s,H}(L^2(\mathcal{X}))$ (with respect to $\pi$) we have
\begin{align}\label{for:189}
 \norm{\xi}_{H,s}\leq \norm{\xi}_{H,C^s}.
\end{align}

We list the well-known elliptic regularity theorem which will be frequently
used in this paper (see \cite[Chapter I, Corollary 6.5 and 6.6]{Robinson}):
\begin{theorem}\label{th:15}
Fix a basis $\{Y_j\}$ for $\mathfrak{S}$ and set $L_{2m}=\sum Y_j^{2m}$, $m\in\NN$. Then for any $\xi\in W^\infty(H_\rho)$, we have
\begin{align*}
    \norm{\xi}_{2m}\leq C_m(\norm{L_{2m}\xi}+\norm{\xi}),\qquad \forall\, m\in\NN
\end{align*}
where $C_m$ is a constant only dependent on $m$ and $\{Y_j\}$.
\end{theorem}
There exists a collection of smoothing operators $\mathfrak{s}_b: H_\rho\to W^\infty(H_\rho)$, $b>0$, such that for any $s, s_1,s_2\geq0$ and any $\xi\in W^s(H_\rho)$ the following holds (see \cite{Hamilton}):
\begin{align}
 \norm{\mathfrak{s}_b\xi}_{s+s_1}&\leq C_{s,s_1}b^{s_1}\norm{\xi}_{s},\quad \text{and}\label{for:197}\\
 \norm{(I-\mathfrak{s}_b)\xi}_{s-s_2}&\leq C_{s,s_2}b^{-s_2}\norm{\xi}_{s},\quad \text{if }s\geq s_2.\label{for:198}
\end{align}

\subsection{Direct decompositions of Sobolev space}\label{sec:20}
For any Lie group $S$ of type $I$ and its unitary representation $(\rho, H_\rho)$, there is a decomposition of $\rho$ into a direct integral
\begin{align*}
  \rho=\int_Z\rho_zd\mu(z)
\end{align*}
of irreducible unitary representations $(\rho_z,\,(H_\rho)_z)$ for some measure space $(Z,\mu)$ (we refer to
\cite[Chapter 2.3]{Zimmer} or \cite{margulis1991discrete} for more detailed account for the direct integral theory). All the operators in the enveloping algebra are decomposable with respect to the direct integral decomposition. Hence there exists for all $s\in\RR$ an induced direct
decomposition of the Sobolev spaces
\begin{align*}
 W^s(H_\rho)=\int_Z W^s((H_\rho)_z)d\mu(z)
\end{align*}
with respect to the measure $d\mu(z)$.

The existence of the direct integral decompositions allows us to reduce our analysis of the
cohomological equation to irreducible unitary representations. This point of view is
essential for our purposes.

\subsection{Representation of nilpotent groups}\label{sec:30} By Kirillov theory, all irreducible
unitary representations of $N$ are parametrized by the \emph{coadjoint orbits} $\mathcal{O}\subset \mathfrak{n}^*$, i.e. by the orbits of the \emph{coadjoint action} of $N$ on
$\mathfrak{n}^*$ defined by
\begin{align*}
\text{Ad}^*(g)\lambda=\lambda\circ \text{Ad}(g^{-1}),\qquad g\in N,\,\lambda\in \mathfrak{n}^*.
\end{align*}
For $\lambda\in \mathfrak{n}^*$, the skew-symmetric bilinear form
\begin{align}\label{for:9}
B_\lambda(X,Y)=\lambda([X,Y])
\end{align}
has a \emph{radical} $\tau_\lambda$ which coincides with the Lie subalgebra of the subgroup
of $N$ stabilizing $\lambda$; thus the form \eqref{for:9} is non-degenerate on the tangent space
to the orbit $\mathcal{O}\subset \mathfrak{n}^*$ and defines a symplectic form on $\mathcal{O}$.
A polarizing (or maximal subordinate) subalgebra for $\lambda$ is a maximal
isotropic subspace $\mathfrak{m}\subset \mathfrak{n}$ for the form $B_\lambda$ which is also a subalgebra of $\mathfrak{n}$.
In particular any polarizing subalgebra for the linear form $\lambda$ contains the
radical $\tau_\lambda$.  If $\mathfrak{m}$ is a polarizing subalgebra for the linear form $\lambda$, the map
\begin{align*}
 \exp t\to e^{2\pi \mathbf{i}\lambda(t)},\qquad t\in \mathfrak{m},
\end{align*}
yields a one-dimensional representation, which we denote by $e^{2\pi \mathbf{i}\lambda(t)}$, of
the subgroup $M=\exp(\mathfrak{m})\subset N$.

To a pair $\Lambda:=(\lambda,\mathfrak{m})$ formed by a linear form $\lambda\in \mathfrak{n}^*$ and a polarizing
subalgebra $\mathfrak{m}$ for $\lambda$, we associate the unitary representation
\begin{align*}
  \pi_\Lambda=\text{Ind}_{\exp(\mathfrak{m})}^Ne^{2\pi \mathbf{i}\lambda(t)}
\end{align*}
These unitary representations are irreducible; up to unitary equivalence, all
unitary irreducible representations of $N$ are obtained in this way. Furthermore,
two pairs $\Lambda:=(\lambda,\mathfrak{m})$ and $\Lambda':=(\lambda',\mathfrak{m}')$ yield unitarily equivalent
representations $\pi_\Lambda$ and $\pi_{\Lambda'}$ if and only if $\Lambda$ and $\Lambda'$ belong to the same coadjoint
orbit $\mathcal{O}\subset \mathfrak{n}^*$.

The unitary equivalence class of the representations of the group $N$ determined
by the coadjoint orbit $\mathcal{O}$ will be denoted by $\Pi_\mathcal{O}$, while we set
\begin{align*}
 \Pi_\lambda=\{\pi_\Lambda|\,\Lambda=(\lambda,\mathfrak{m}),\,\text{with  $\mathfrak{m}$  polarizing subalgebra for }\lambda\}.
\end{align*}
\begin{definition}\label{de:1}
A linear form $\lambda\in \mathfrak{n}^*$ is called
weakly integral (with respect to $\Gamma$) if $\lambda(Y)\in\ZZ$ for all $Y\in\log Z(\Gamma)$. A
coadjoint orbit $\mathcal{O}\subset \mathfrak{n}^*$ will be called weakly integral (with respect to $\Gamma$) if
some $\lambda\in \mathcal{O}$ is weakly integral (hence all $\lambda\in \mathcal{O}$ are).
\end{definition}
  The Hilbert space $L^2(\mathcal{X},d\varrho)$ decomposes under $\pi$ into
a countable Hilbert sum $\bigoplus_i \mathcal{H}_i$ of irreducible closed subspaces $\mathcal{H}_i$. The Howe-Richardson multiplicity formula tells us that:
\begin{lemma}\label{le:5}
 All irreducible unitary subrepresentations
occurring in $L^2(\mathcal{X},d\varrho)$ correspond, via Kirillov's theory,
to weakly integral coadjoint orbits.
\end{lemma}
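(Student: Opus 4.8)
The statement is the containment half of the Howe--Richardson multiplicity theorem for compact nilmanifolds (Howe; Richardson; Corwin--Greenleaf): every irreducible occurring in $L^2(\mathcal{X},\varrho)$ is $\pi_\Lambda$ for some weakly integral pair $\Lambda=(\lambda,\mathfrak{m})$. The plan is to obtain this from a central-character obstruction together with Kirillov's classification of $\widehat{N}$, invoking the general theorem only for the finer information (finiteness of multiplicities and the exact multiplicity count), which is not needed elsewhere.

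So let $(\pi_{\mathcal{O}},\mathcal{H}_{\mathcal{O}})$ be an irreducible subrepresentation of $(\pi,L^2(\mathcal{X},\varrho))$ with $\lambda\in\mathcal{O}$. First I would observe that every $z\in Z(N)\cap\Gamma$ acts as the identity on $L^2(\mathcal{X},\varrho)$: since $\pi$ is the left regular representation, $z$ is central, and $z^{-1}\in\Gamma$,
\[
(\pi(z)f)(x\Gamma)=f(z^{-1}x\Gamma)=f(xz^{-1}\Gamma)=f(x\Gamma),\qquad f\in L^2(\mathcal{X},\varrho).
\]
By Schur's lemma $\pi_{\mathcal{O}}|_{Z(N)}$ is the scalar character $\exp Y\mapsto e^{2\pi\mathbf{i}\lambda(Y)}$ (every polarizing subalgebra for $\lambda$ contains $\log Z(N)$, so this character is well defined), and the identity above forces it to be trivial on $Z(N)\cap\Gamma$, i.e. $\lambda(Y)\in\ZZ$ for all $Y\in\log(Z(N)\cap\Gamma)$. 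It remains to identify $Z(N)\cap\Gamma$ with $Z(\Gamma)$: the inclusion $Z(N)\cap\Gamma\subseteq Z(\Gamma)$ is immediate, and conversely any $\gamma\in Z(\Gamma)$ centralizes $\Gamma$, hence centralizes $N$ because $\Gamma$ is Zariski dense in the simply connected nilpotent group $N$, so $\gamma\in Z(N)$. Thus $\lambda(Y)\in\ZZ$ for all $Y\in\log Z(\Gamma)$, which is precisely the condition of Definition \ref{de:1} that $\mathcal{O}$ be weakly integral; combined with Kirillov's theorem that $\pi_{\mathcal{O}}=\pi_\Lambda$ for a suitable polarizing subalgebra $\mathfrak{m}$, this proves the lemma.

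The containment direction carries no real obstacle --- it is the short computation above --- so the only ingredient that must be taken as a black box is Kirillov's classification of the unitary dual of $N$. If one wanted the sharper statement that the multiplicities are finite and given by the Howe--Richardson formula, the substantive point would be to construct, for each weakly integral $\lambda$, a $\Gamma$-rational polarizing subalgebra and to periodize the vectors of $\mathrm{Ind}_{\exp(\mathfrak{m})}^{N}e^{2\pi\mathbf{i}\lambda}$ over $(\exp(\mathfrak{m})\cap\Gamma)\backslash\Gamma$, the multiplicity then emerging from a Poisson-summation identity; I would cite that theorem rather than reproduce the argument.
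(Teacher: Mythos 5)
Your argument is correct, and it is worth noting that the paper itself offers no proof of this lemma: it simply invokes the Howe--Richardson multiplicity formula as a black box. You instead give a self-contained proof of the one direction actually asserted (every irreducible constituent of $L^2(\mathcal{X},\varrho)$ comes from a weakly integral orbit), which is the standard central-character argument: central lattice elements act trivially on the regular representation, Schur's lemma identifies $\pi_{\mathcal{O}}|_{Z(N)}$ with the character $\exp Y\mapsto e^{2\pi\mathbf{i}\lambda(Y)}$ (well defined because $Z(\mathfrak{n})$ lies in the radical of $B_\lambda$, hence in every polarizing subalgebra), and triviality on $Z(N)\cap\Gamma$ forces $\lambda(\log Z(\Gamma))\subseteq\ZZ$. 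The only ingredients you take on faith are Kirillov's classification of $\widehat N$ and the identification $Z(\Gamma)=Z(N)\cap\Gamma$; the latter is the standard fact that a lattice in a simply connected nilpotent Lie group is Zariski dense (Mal'cev), so its centralizer in $N$ is $Z(N)$ --- you should cite this rather than assert it, but it is true. What your route buys is that the lemma, as stated, does not actually require the multiplicity formula at all; the paper's citation of Howe--Richardson is stronger than needed. You are also right that the converse direction (each weakly integral orbit occurs, with the stated multiplicity) is the genuinely hard half and would require the rational-polarization/periodization construction, but the lemma does not claim it.
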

\subsubsection{Maximal rank}\label{sec:5} Let $\lambda\in \mathfrak{n}^*$ and let
\begin{align*}
 \mathfrak{n}_{k-1}^\bot(\lambda)&=\{Y\in \mathfrak{n}|\, B_\lambda(Y,\mathfrak{n}_{k-1})=0\},\\
 \mathfrak{n}_{k-1}^\bot(\mathcal{O})&=\mathfrak{n}_{k-1}^\bot(\lambda),\qquad \text{for any }\lambda\in \mathcal{O}.
\end{align*}
Denote by $Z(\mathfrak{n})$ the center of $\mathfrak{n}$. Since $\lambda|_{Z(\mathfrak{n})}$ does not depend on
the choice of the linear form $\lambda\in \mathcal{O}$ and since $[\mathfrak{n}, \mathfrak{n}_{k-1}]=\mathfrak{n}_k\subset Z(\mathfrak{n})$,
the restriction $B_\lambda|_{\mathfrak{n}\times \mathfrak{n}_{k-1}}$ and the subspace $\mathfrak{n}_{k-1}^\bot(\lambda)$ depend only on the
coadjoint orbit $\mathcal{O}$. Consequently, $\mathfrak{n}_{k-1}^\bot(\mathcal{O})$ is well-posed.

\begin{lemma} (Lemma 2.3. of \cite{F}) Let $\mathcal{O}$ be a coadjoint orbit and $\lambda\in \mathcal{O}$. The following properties
are equivalent:
\begin{enumerate}
  \item the restriction $\lambda|_{\mathfrak{n}_k}$ is identically zero;

  \item $\mathfrak{n}_{k-1}^\bot(\mathcal{O})=\mathfrak{n}$.
\end{enumerate}
\end{lemma}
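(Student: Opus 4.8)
The plan is to observe that the equivalence is an immediate consequence of the defining relation $\mathfrak{n}_k=[\mathfrak{n},\mathfrak{n}_{k-1}]$ for the descending central series. First note that both conditions are well posed on the level of the orbit: as recorded in the lines preceding the statement, $B_\lambda$ restricted to $\mathfrak{n}\times\mathfrak{n}_{k-1}$ (hence the subspace $\mathfrak{n}_{k-1}^\bot(\lambda)$) depends only on $\mathcal{O}$, and $\lambda|_{\mathfrak{n}_k}$ depends only on $\mathcal{O}$ because $\mathfrak{n}_k\subset Z(\mathfrak{n})$. Thus it suffices to verify the equivalence for one fixed representative $\lambda\in\mathcal{O}$.

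For the implication $(1)\Rightarrow(2)$, assume $\lambda|_{\mathfrak{n}_k}=0$. For arbitrary $Y\in\mathfrak{n}$ and $W\in\mathfrak{n}_{k-1}$ we have $[Y,W]\in[\mathfrak{n},\mathfrak{n}_{k-1}]=\mathfrak{n}_k$, so $B_\lambda(Y,W)=\lambda([Y,W])=0$. Since $W\in\mathfrak{n}_{k-1}$ was arbitrary, $Y\in\mathfrak{n}_{k-1}^\bot(\lambda)$, and since $Y\in\mathfrak{n}$ was arbitrary, $\mathfrak{n}_{k-1}^\bot(\lambda)=\mathfrak{n}$, i.e. $\mathfrak{n}_{k-1}^\bot(\mathcal{O})=\mathfrak{n}$. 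Conversely, for $(2)\Rightarrow(1)$, assume $\mathfrak{n}_{k-1}^\bot(\mathcal{O})=\mathfrak{n}$, so that $\lambda([Y,W])=B_\lambda(Y,W)=0$ for all $Y\in\mathfrak{n}$ and all $W\in\mathfrak{n}_{k-1}$. The brackets $[Y,W]$ with $Y\in\mathfrak{n}$, $W\in\mathfrak{n}_{k-1}$ span $[\mathfrak{n},\mathfrak{n}_{k-1}]=\mathfrak{n}_k$, so by linearity of $\lambda$ we conclude $\lambda|_{\mathfrak{n}_k}=0$.

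There is essentially no obstacle here: the statement is a repackaging of the identity $\mathfrak{n}_k=[\mathfrak{n},\mathfrak{n}_{k-1}]$, and the only point deserving a word of care is the independence of both conditions from the choice of $\lambda\in\mathcal{O}$, which has already been established above. Consequently the argument reduces to the two short computations just indicated.
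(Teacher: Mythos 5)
Your proof is correct: the paper quotes this as Lemma 2.3 of Flaminio--Forni without reproducing a proof, and your argument is the standard one, reducing both directions to the identity $\mathfrak{n}_k=[\mathfrak{n},\mathfrak{n}_{k-1}]$ together with $B_\lambda(X,Y)=\lambda([X,Y])$. The remark on well-posedness (independence of the choice of $\lambda\in\mathcal{O}$, since $\mathfrak{n}_k\subset Z(\mathfrak{n})$) is exactly the point the paper itself records just before the statement, so nothing is missing.
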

Let $\mathcal{O}$ be a coadjoint orbit and $\lambda\in \mathcal{O}$. If the restriction $\lambda|_{\mathfrak{n}_k}$ is not identically zero, we will say that the coadjoint orbit $\mathcal{O}$, any linear form $\lambda\in\mathcal{O}$ and any
irreducible representation $\pi\in \Pi_{\mathcal{O}}$ have \emph{maximal rank} (equal to $k$).

For all $(X,Y)\in \mathfrak{n}\times \mathfrak{n}_{k-1}$, let
\begin{align*}
  \delta_\mathcal{O}(X,Y)&:=|B_\lambda(X,Y)|,\qquad\text{for any }\lambda\in\mathcal{O},\quad\text{and}\\
  \delta_\mathcal{O}(X)&:=\max\{\delta_\mathcal{O}(X,Y)|\,Y\in \mathfrak{n}_{k-1}\text{ and }\norm{Y}=1\}.
\end{align*}
\emph{Note}: Since $[X,Y]\in Z(\mathfrak{n})$, the value $\delta_\mathcal{O}(X,Y)$ does not depend on $\lambda\in \mathcal{O}$.

By definition we have $\delta_\mathcal{O}(X)>0$ if and only if $X\notin \mathfrak{n}_{k-1}^\bot(\mathcal{O})$. The latter
condition is non-empty if and only if $\mathcal{O}$ has maximal rank, in which case it
holds except for a subspace of positive codimension.

The following lemma (see Lemma $2.5$ of \cite{F}) of L. Flaminio and G. Forni provides, for any coadjoint orbit $\mathcal{O}$ of maximal
rank and any $X\in \mathfrak{n}\backslash \mathfrak{n}_{k-1}^\bot(\mathcal{O})$, a special irreducible unitary representation.
\begin{lemma}\label{le:11}
 Let $X\in \mathfrak{n}\backslash \mathfrak{n}_{k-1}^\bot(\mathcal{O})$ and let $Y\in \mathfrak{n}_{k-1}$ be any element such
that $B_\lambda(X,Y)\neq0$ for all $\lambda\in \mathcal{O}$. There exists a unitary (irreducible) representation $\rho\in \Pi_{\mathcal{O}}$ with
the following properties:
\begin{enumerate}
  \item the representation space of $\rho$ is $\mathcal{H}_{\rho}=L^2(\RR, H',dt)$ where $H'$ is a Hilbert space;

\smallskip
  \item\label{for:48} the derived representation $\rho_*$ of the Lie algebra $\mathfrak{n}$ satisfies
  \begin{align*}
   X\xi=\text{\tiny$\frac{\partial}{\partial t}$}\xi\quad\text{and}\quad Y\xi=2\pi \mathbf{i}B_\lambda(X,Y)t \cdot \xi
  \end{align*}
  for any $\xi\in W^\infty(\mathcal{H}_{\rho})$.

\end{enumerate}

\end{lemma}

\subsection{Study of cohomological equation}\label{sec:18}
Flaminio and Forni studied the cohomological equation over the Diophantine nilflow in  \cite{F}. We recall notations in \ref{for:32} of Section \ref{sec:33}. Suppose $X\in \mathfrak{n}$ and the one-dimensional space spanned by $X$ is a
Diophantine subspace for $V_{\mathcal{E}^1}$ of type $1$ (see Section \ref{for:30}).
\begin{theorem}\label{th:17} (Theorem 1.3 of \cite{F})
Let $\mathcal{H}$ denote the set of functions in $L^2(N/\Gamma)$ with $0$ averages. Let $\mathcal{I}_X$ denote the set of $X$-invariant distributions in $W^{-\infty}(\mathcal{H})$. Define
  \[
  W_0^\infty(\mathcal{H}) = \{f \in W^\infty(\mathcal{H}) : \mathcal{D}(f) = 0 \text{ for all } \mathcal{D} \in \mathcal{I}_X\}.
  \] Then there exists a linear bounded operator
\[
G_X : W_0^\infty(\mathcal{H}) \to W^\beta(\mathcal{H})
\]
such that, for all \( f\in W_0^\infty(\mathcal{H}) \), the function  \( u= G_X f\) is a solution of the cohomological equation $Xu=f.$

\end{theorem}

\subsection{Basic facts for fractional operators} Suppose $a\in\text{Aut}(N/\Gamma)$ and $0\neq v\in \text{Lie}(N)$. Denote by $\mathcal{A}$ the one-parameter subgroup of $N$ generated by $v$.  Then for any $f\in W^{1,\,\mathcal{A}}(\mathcal{H})$ (recall Section \ref{sec:9}), we have
\begin{align}\label{for:125}
v(f\circ a^m)
=\|(da)^{-m}v\|(\tilde vf)\circ a^m,
\qquad
\tilde v:=\frac{(da)^{-m}v}{\|(da)^{-m}v\|}.
\end{align}
This factorization allows us to extract the norm factors $\|(da)^{-m}v\|$ from the directional derivative. Moreover, we have the following results:
\begin{lemma}\label{le:17}  For any $t\in\RR^+$ and $f\in W^{\infty,\,\mathcal{A}}(\mathcal{H})$, we have:
\begin{enumerate}
  \item\label{for:163} $|v|^t$ is a self-adjoint operator and if $t\in\NN$ $\norm{|v|^tf}=\norm{v^tf}$.

\smallskip
\item\label{for:169} $|v|^t(f\circ a^m)
=\|(da)^{-m}v\|^t(|\tilde v|^tf)\circ a^m$.
\end{enumerate}

\end{lemma}
\begin{proof} \eqref{for:163} follows directly from the definition. \eqref{for:169}:  Let $(U_{m}\xi)(x):=\xi(a^mx)$ for any $\xi\in \mathcal{H}$. Then
\begin{align*}
U_{m}^{-1}(iv)U_{m}=c(i\tilde{v}),\qquad\text{where }c=\|(da)^{-m}v\|.
\end{align*}
We note that both $iv$ and $i\tilde{v}$ are self-adjoint. Applying functional calculus to the Borel function $\lambda\mapsto |\lambda|^r$, we obtain
\[
U_{m}^{-1}|v|^rU_{m}=U_{m}^{-1}|iv|^rU_{m}=|c(i\tilde{v})|^r=c^r|\tilde{v}|^r.
\]
Applying both sides to $f$ yields
\[
|v|^t(f\circ a^m)=c^t\bigl(|\tilde{v}|^tf\bigr)\circ a^m.
\]
Then we get the result.

\end{proof}

\section{Notations throughout the paper}\label{sec:38} We list all the notations introduced in previous parts in this section for the readers's convenience.
\begin{enumerate}

\item $(\pi,\mathcal H)$ and $(\pi, L^2(\mathcal{X}))$: see \eqref{for:32} of Section \ref{sec:33}.

\smallskip

\item $\rho$, $\chi$: see \eqref{for:59} of Section \ref{sec:33}.

\smallskip

  \item $\mathfrak{n}^{(2)}$, $\mathfrak{n}^{(z,1)}$, $\mathfrak{n}^{(z,2)}$: see \eqref{for:32} of Section \ref{sec:33}.

\smallskip
  \item $H_{-,a}$, $H_{+,a}$, $H_{-0,a}$ and $H_{+0,a}$: see \eqref{for:123} of Section \ref{sec:33}.

\smallskip

\item $W_{+,a}$, $W_{-,a}$, $W_{0,a}$: see Section \ref{sec:33}.

\smallskip

  \item $\epsilon$, $a$, $da$, $a$ of rational type and irrational type:  see Section \ref{sec:33}.
\smallskip

\item $\mathcal{E}^j$, $V_{\mathcal{E}^j}$, $\Gamma$-rational subspace and $E_i^j$: see Section \ref{sec:42}.

\smallskip

\item    $\mathfrak{n}_i$, $\mathfrak{n}$ and $N_i$: see Section \ref{sec:28}. For each $1\le i\le k$ and $r>0$, set  $s_i(r)= r(\dim\mathfrak n_i-\dim\mathfrak n_{i+1})$ and
 $s(r)=\max_{1\leq i\leq k}s_i(r)$.

\smallskip
  \item $\TT_E$, $L_{E,o}$, $L_{E,\bot}$, $f_{E,\bot}$ and $f_{E,o}$: see Section \ref{sec:7}.

\smallskip

\item $\Gamma$-rational subspace: see Section \ref{sec:42}.
\smallskip

  \item Diophantine subspace of type $i$ and $\mathfrak{p}_i$: see Section \ref{for:30}.
  \smallskip

  \item Diophantine subspace of rational subspaces:  see Section \ref{sec:6}.

\smallskip
  \item\label{for:110} For any subgroup $H$ of $N$ let   $\text{Inv}_{H}$ denote the set of the $H$-invariant functions in $\mathcal{H}$; equivalently, $\varphi\in\mathrm{Inv}_{H}$ iff $\pi(h)\varphi=\varphi$ for all $h\in H$. For any subspace $\mathfrak{u}\subseteq \mathfrak{n}$, we say that  $\varphi\in \mathcal{H}$ is $\mathfrak{u}$-invariant if $v\varphi=0$ for any $v\in \mathfrak{u}$.

\smallskip

  \item $\mathcal{U}(\mathfrak{n})$ is the universal enveloping algebra of $\mathfrak{n}$.

  \smallskip
  \item Weakly integral (with respect to $\Gamma$): see Definition \ref{de:1}.
  \smallskip
  \item $\mathfrak{n}_{k-1}^\bot(\mathcal{O})$, $\delta_\mathcal{O}(X,Y)$, $\delta_\mathcal{O}(X)$ and $B_\lambda(X,Y)$: see Section \ref{sec:5}.
  \smallskip
  \item Maximal rank: see Section \ref{sec:5}.
  \smallskip

\item $W^{s,H}(\mathcal{H})$: see Section \ref{sec:9}.

\end{enumerate}

\section{Study of multiple fractional equation}\label{sec:35}

\subsection{Main result: Type $I$ (sum) equation}
We use the notation collected in Section \ref{sec:38}. In particular, we retain the notation for $(\pi,\mathcal H)$, the descending central series $\mathfrak n_i$ and the corresponding subgroups $N_i$, $\Gamma$-rational subspaces, Diophantine subspaces of type $i$, the projections $\mathfrak p_i$, the spaces $\TT_E$, $L_{E,o}$, $L_{E,\bot}$, $f_{E,o}$, $f_{E,\bot}$, the invariant subspaces $\text{Inv}_{H}$, and the universal enveloping algebra $\mathcal U(\mathfrak n)$.

 \begin{theorem}\label{th:14} For each $1\le i\le k$, fix a $\Gamma$-rational subspace $E_i\subset \mathfrak n_i$, and assume the following:
  \begin{enumerate}
  \item[$(\mathcal{O}_1)$] $E_1=V_{\mathcal{E}^1}$, and $\mathcal{D}\subset \mathfrak{n}_1$ is a Diophantine subspace for $E_1$ of type $1$  with basis  $\{\mathfrak{u}_{1},\cdots,\mathfrak{u}_{\dim \mathcal{D}}\}$.

    \item [$(\mathcal{O}_2)$] For each $1\leq i\leq k$, $\mathcal{D}_i\subset \mathfrak{n}_i$ is a Diophantine subspace for $E_i$ of type $i$  with basis  $\{\mathfrak{u}_{i,1},\cdots,\mathfrak{u}_{i,\dim \mathcal{D}_i}\}$.

        \item [$(\mathcal{O}_3)$] For each $1\le i\le k$, there exists a subspace $V_i\subset \mathfrak n_i$   such that $\mathfrak{p}_i(E_i)=\mathfrak{p}_i(V_i+ \mathcal{D}_i)$.

 \item [$(\mathcal{O}_4)$] For each $2\leq i\leq k$, there exists a subspace $U_i\subset \mathfrak n_{i-1}$ such that $\mathfrak{n}_{i-1}=U_i\oplus Q_i$, where $Q_i=\{w\in \mathfrak{n}_{i-1}: [w,\mathcal{D}]\in E_i+\mathfrak{n}_{i+1}\}$.

  \end{enumerate}
\emph{Note}. In the conclusion below, the subspace $\mathcal D$ is used to solve the $E_i$-invariant part, whereas $\mathcal D_i$ is used to solve the complementary part.

  \smallskip

  Then  for any $\xi\in W^\infty(\mathcal H)$, there is a decomposition $\xi=\sum_{i=1}^k\xi_i$ with $\xi_i=\xi_{i,1}+\xi_{i,2}$
such that:
  \begin{enumerate}

    \item\label{for:96} For each $1\leq i\leq k$,  the functions $\xi_{i,1}$ and $\xi_{i,2}$ are $N_{i+1}$\emph{-invariant} and orthogonal to $\mathrm{Inv}_{N_i}$, and
    \begin{align*}
    \|\mathcal P(\xi_{i,1})\|\le \|\mathcal P \xi\|\quad\text{and}\quad \|\mathcal P(\xi_{i,2})\|\le \|\mathcal P \xi\|,\quad\forall\,\mathcal{P}\in \mathcal{U}(\mathfrak{n}).
    \end{align*}
    \item\label{for:97} For each $1\leq i\leq k$, then  $\xi_{i,1}$ is $E_i$-invariant. Moreover, for  \textcolor{red}{any $0<r<\frac{1}{2}$} and any $2\leq i\leq k$,  $\xi_{i,1}$ is a $\{\mathfrak{u}_{1},\cdots,\mathfrak{u}_{\dim\mathcal{D}}; r,\cdots,r\}$-coboundary, i.e.,
there exist $\omega_{i,j,r}\in\mathcal H$, $1\leq j\leq \dim\mathcal{D}$ such that
\begin{align*}
\sum_{j=1}^{\dim\mathcal{D}} |\mathfrak{u}_{j}|^{r}\,\omega_{i,j,r}=\xi_{i,1}\,\text{ and }\, \|\omega_{i,j,r}\|\ \le\ C_{U_i, \mathcal{D},r}\norm{\xi_{i}}_{\exp(U_i\oplus \mathfrak{n}_i),\,\dim\mathfrak{n}+1}.
\end{align*}

    \item\label{for:99} For each $1\leq i\leq k$, $\xi_{i,2}$ is orthogonal to $\text{Inv}_{\exp(E_i)N_{i+1}}$, and  for \textcolor{red}{any $r>0$}, it is a $\{\mathfrak{u}_{i,1},\cdots,\mathfrak{u}_{i,\dim\mathcal{D}_i}; r,\cdots,r\}$-coboundary, i.e.,
there exist $\varpi_{i,j,r}\in\mathcal H$ such that: for any $1\leq j\leq \dim\mathcal{D}_i$
\begin{align}\label{for:35}
\sum_{j=1}^{\dim\mathcal{D}_i} |\mathfrak u_{i,j}|^{r}\,\varpi_{i,j,r}=\xi_{i,2}\,\text{ and }\, \|\varpi_{i,j,r}\|\ \le\ C_{r,V_i,\mathcal{D}_i}\,\|\xi_{i}\big\|_{\exp(V_i),\,s_i(r)}.
\end{align}

\end{enumerate}
If $r\in\NN$, then \eqref{for:35}  holds with the fractional operators
replaced by integer powers of Lie derivatives: $\sum_{j=1}^{\dim\mathcal{D}_i} \mathfrak u_{i,j}^{r}\varpi_{i,j,r}=\xi_{i,2}$, $1\leq i\leq k$.

\end{theorem}

\begin{remark}\label{re:11} We have the following comments for Theorem \ref{th:14}.

\begin{cect7}We point out that $\xi_{1,1}=0$. By \eqref{for:97}, $\xi_{1,1}$ is $E_1$-invariant, and by \eqref{for:96} it is $N_2$-invariant. Since $E_1=V_{\mathcal E^1}$, we have $\exp(E_1)N_2=N$; hence $\xi_{1,1}\in\mathrm{Inv}_{\exp(E_1)N_{2}}=\mathrm{Inv}_{N}$. On the other hand, item \eqref{for:96} yields $\xi_{1,1}\perp \mathrm{Inv}_{N}$. Hence $\xi_{1,1}=0$.

\end{cect7}

\begin{cect7}\label{re:6} \emph{Dependence of constants.}
The constants $C_{r,V_i,\mathcal D_i}$ depend on the chosen bases of $V_i$ and $\mathcal D_i$.

\emph{Sobolev norms on quotients.} Since $\xi_{i,1}$ is $N_{i+1}$-invariant, it descends to $\widetilde{\xi}_{i,1}\in L^2(\mathcal X_i,\lambda_i)$, where
\[
\mathcal X_i := (N/N_{i+1})\Big/\big(\Gamma/(\Gamma\cap N_{i+1})\big),\qquad
\lambda_i := (\mathfrak j)_*\varrho,\qquad
\xi_{i,1}=\widetilde{\xi}_{i,1}\circ \mathfrak{j},
\]
and $\mathfrak{j}:\mathcal X\to\mathcal X_i$ is the natural quotient map. Set
$W_i:=(U_i\oplus \mathfrak{n}_i)/\mathfrak{n}_{i+1}$, which is a subalgebra of $\mathfrak{n}/\mathfrak{n}_{i+1}$.  We can therefore take Sobolev
norms of $\widetilde{\xi_{i,1}}$ along $\exp_{N/N_{i+1}}(W_i)$ on $\mathcal X_i$; by pullback along $\mathfrak{j}$,
\[
\|\widetilde{\xi_{i,1}}\|_{\exp_{N/N_{i+1}}(W_i),\,\dim\mathfrak{n}+1,\,L^2(\mathcal X_i, \lambda_i)}
=\norm{\xi_{i,1}}_{\exp(U_i\oplus \mathfrak{n}_i),\,\dim\mathfrak{n}+1,L^2(\mathcal X, \varrho)}.
\]
Thus, by abuse of notation, we use the latter expression for the corresponding quotient Sobolev norm on $\mathcal X$.

Likewise, since $\xi_{i,2}$ is also $N_{i+1}$-invariant, it descends to $\mathcal X_i$. Since $V_i\subset \mathfrak n_i$, the space $(V_i+\mathfrak n_{i+1})/\mathfrak n_{i+1}$
is a subalgebra of $\mathfrak n/\mathfrak n_{i+1}$, and the norm $\|\xi_{i,2}\|_{\exp(V_i),\,s_i(r)}$
is defined analogously through the corresponding subgroup of $N/N_{i+1}$.

\emph{About $\mathcal{D}$ and $\mathcal{D}_1$.} For the future application in Section \ref{sec:34}, we will choose $\mathcal{D}$ and $\mathcal{D}_1$ to be distinct Diophantine subspaces for $E_1$ of type $1$.

\end{cect7}

\begin{cect7}\label{re:5} \emph{The first extreme case: $E_i=V_{\mathcal E^i}$, $2\leq i\leq k$.} Here $\mathrm{Inv}_{\exp(E_i)N_{i+1}}=\mathrm{Inv}_{N_i}$.
By \eqref{for:96} and \eqref{for:97}, each $\xi_{i,1}$ is both $N_i$-invariant and orthogonal to $\mathrm{Inv}_{N_i}$; hence $\xi_{i,1}=0$ for $1\le i\le k$. Then $\xi=\sum_{i=1}^k\xi_{i,2}$, and for \textcolor{red}{any $r>0$}
\begin{align*}
\sum_{j=1}^{\dim\mathcal{D}_i} |\mathfrak u_{i,j}|^{r}\,\varpi_{i,j,r}=\xi_{i,2},\qquad 1\leq i\leq k
\end{align*}
with estimates stated  in \eqref{for:99}.

If $N$ is abelian and $\mathcal{D}_1=\{u\}$ is spanned by a Diophantine vector, then the above case reduces to the classical cohomology-free property for Diophantine vectors on the torus. Even in this case, the \emph{partial} Sobolev norm upperbound for the solution is new.

\end{cect7}

\begin{cect7}\label{for:381}\emph{The second extreme case: $E_i=\{0\}$, $2\leq i\leq k$ and $\mathcal{D}=\mathcal{D}_1$.} In this case  $V_i=\{0\}$ and $\mathrm{Inv}_{\exp(E_i)N_{i+1}}=\mathrm{Inv}_{N_{i+1}}$.
Then each $\xi_{i,2}$ is orthogonal to (see \eqref{for:99}) and also invariant under $N_{i+1}$ (see \eqref{for:96}); hence $\xi_{i,2}=0$ for $2\le i\le k$. Then $\xi=\xi_{1,2}+\sum_{i=2}^k\xi_{i,1}$, and for \textcolor{red}{any $0<r<\frac{1}{2}$} and \textcolor{green}{any $\gamma>0$}
\begin{align*}
\sum_{j=1}^{\dim\mathcal{D}_1} |\mathfrak{u}_{j}|^{r}\,\omega_{i,j,r}=\xi_{i,1},\qquad 2\leq i\leq k\quad\text{ and }\sum_{j=1}^{\dim\mathcal{D}_1} |\mathfrak{u}_{j}|^{\textcolor{green}{\gamma}}\,\varpi_{1,j,\gamma}=\xi_{1,2}
\end{align*}
with estimates stated in \eqref{for:97} and \eqref{for:99}.

\end{cect7}

\end{remark}

\subsection{Motivation for Theorem \ref{th:14}} Let $a$ be an ergodic automorphism of $\mathcal X$.
As explained in Section~\ref{sec:8}, to obtain quantitative mixing estimates for $a$ one is naturally led
to solve sum-type multiple fractional equations of the form
\[
\sum_{j} |v_j|^{\,r}\,\omega_{j,r}=\xi,
\qquad
\xi\in C^\infty(\mathcal X)\quad\text{with }\int_{\mathcal X}\xi\,d\varrho=0,
\]
where all $v_j$ lie entirely in the stable distribution or entirely in the unstable distribution of $a$.

\medskip
\noindent\emph{Why partial Sobolev norms are intrinsic.}
For mixing, existence of solutions is not sufficient: one needs \emph{a priori bounds} that can be fed into
correlation estimates of the form in \eqref{for:18} of Section \ref{sec:8}. In non-abelian settings, the operator $|v|^{\,r}$
is defined spectrally along the one-parameter subgroup generated by $v$, and its inverse has a low-frequency
singularity. Controlling this singularity typically requires differentiating $\xi$ only in \emph{selected}
directions that ``see" the small spectrum (e.g. central directions and specific commutators), rather than
in all directions of $T\mathcal X$. This is precisely why the paper works with Sobolev norms
$\|\cdot\|_{\exp(H),\,s}$ along carefully chosen subgroups $\exp(H)\le N$:
the estimates produced by solvability take the form
\[
\|\omega_{j,r}\|\ \le\ C\,\|\xi\|_{\exp(H),\,s},
\]
for an appropriate $H$ determined by the algebraic position of the $v_j$ (stable/unstable, and how they
interact with the central series). Theorem~\ref{th:14} is designed to provide exactly these
\emph{partial-norm} bounds in the general nilpotent situation. This motivates a closer look at the algebraic structure of the stable/unstable distributions.

\medskip
\noindent\emph{Solvability of the fractional equations and their role in exponential mixing.} We look at two toy cases at first.

\noindent \textbf{Example $1$} \emph{(Torus)} Let $a$ be an ergodic automorphism of $\TT^3$ with three distinct real eigenvalues
$|\lambda_1|>1>|\lambda_2|>|\lambda_3|$, and let $v_i$ be the corresponding eigenvectors.
Then $v_1$ spans the unstable distribution and $\{v_2,v_3\}$ span the stable distribution.
In this setting, $v_1$ is a Diophantine vector and $\{v_2,v_3\}$ is a Diophantine subspace.
Consequently, for \textcolor{red}{any $r>0$} and any $\xi\in C^\infty(\TT^3)$ with $\int_{\TT^3}\xi=0$, one can solve
\[
|v_1|^{\,r}\omega_{1,r}=\xi
\quad\text{and}\quad
|v_2|^{\,r}\omega_{2,r}+|v_3|^{\,r}\omega_{3,r}=\xi
\]
with $\omega_{j,r}\in C^\infty(\TT^3)$ (see \eqref{re:5} of Remark \ref{re:11}). Here the underlying group is abelian, so the relevant partial norms coincide with standard Sobolev norms.

For any $\psi\in C^\infty(\TT^3)$,  as we explained in Section \ref{sec:8}, for any $m>0$ (resp. $m<0$) to show exponential decay of
$\big|\int_{\TT^3} \psi\circ a^{m}\cdot\xi\,d\varrho\big|$, we use $|v_2|^{\,r}\omega_{2,r}+|v_3|^{\,r}\omega_{3,r}=\xi$ (resp. $|v_1|^{\,r}\omega_{1,r}=\xi$) since $a^m$ contracts along $v_2$ and $v_3$  (resp. since $a^m$ then contracts along $v_1$).

\medskip

\noindent \textbf{Example $2$} \emph{(Heisenberg)} Consider the $3$-dimensional Heisenberg group with $[X,Y]=Z$, $[X,Z]=0$, and $[Y,Z]=0$.  Let $a$ be an ergodic automorphism of the $3$-dimensional Heisenberg nilmanifold $\mathcal{X}$ with
$da(X)=\lambda X$, $da(Y)=\lambda^{-1}Y$, and $da(Z)=Z$, where $\lambda>1$. Thus $X$ spans the unstable direction, $Y$ spans the stable direction, and $Z$ is neutral.
We note that $X$ (resp. $Y$) generates the Diophantine subspace of type $1$, and $Z$ generates the Diophantine subspace of type $2$. Suppose $\xi\in C^\infty(\mathcal{X})$ with $\int_{\mathcal{X}}\xi=0$.

Case I: For any $r>0$,  $\xi$ is a $\{X,Z; r,r\}$-coboundary (resp. $\{Y,Z; r,r\}$-coboundary), i.e.,
one can solve
\[
|X|^{\,r}\omega_{1,r}+|Z|^{\,r}\omega_{2,r}=\xi
\quad(\text{resp.}\quad
|Y|^{\,r}\omega_{3,r}+|Z|^{\,r}\omega_{4,r}=\xi)
\]
with $\omega_{j,r}\in C^\infty(\mathcal{X})$, $1\leq j\leq 4$ (see \eqref{re:5} of Remark \ref{re:11}).

Case II: For \textcolor{red}{any $0<r<\tfrac12$},  $\xi$ is a $\{X; r\}$-coboundary (resp. $\{Y; r\}$-coboundary), i.e.,
\[
|X|^{\,r}\omega_{1,r}=\xi
\quad(\text{resp.}\quad
|Y|^{\,r}\omega_{2,r}=\xi)
\]
have $L^2$ solutions for $\xi$ orthogonal to constants (see \eqref{for:381} of Remark \ref{re:11}),
and the threshold $r=\tfrac12$ is sharp (see Lemma \ref{for:286}).
Here $da$ has a root-of-unity eigenvalue on the center direction $Z$, but the fractional equations ignore the $Z$-direction; this is what changes the solvability threshold. However, this is the relevant case for mixing: for $m>0$ one uses the stable direction $Y$, since $a^m$ contracts along $Y$, while for $m<0$ one uses the unstable direction $X$, since $a^m$ then contracts along $X$.

By contrast, the classical equations $X\omega=\xi$ and $Y\omega=\xi$ generally have no $L^2$ solutions
(see Theorem~\ref{th:17}). Moreover, the available bounds are naturally \emph{partial}: the $L^2$ size of $\omega_{j,r}$ is controlled
by Sobolev norms of $\xi$ only along specific subgroups (generated by commutators/central directions),
rather than by full Sobolev norms on $\mathcal X$. This phenomenon is exactly what persists in higher-step
nilpotent groups and is encoded in the estimates in Theorem~\ref{th:14}.

\smallskip

\emph{Presence of root-of-unity eigenvalues} The ergodicity of $a$ implies, by Parry's criterion \cite{P}, that $da$ has no root-of-unity eigenvalues on the abelianization
$\mathfrak n/[\mathfrak n,\mathfrak n]$.
Hence the first layer of the descending central series is always non-resonant, and one can always find Diophantine directions of type $1$. By contrast, root-of-unity eigenvalues may still occur on deeper central layers. When such resonant directions are present, the fractional equations can no longer be solved using only higher-type Diophantine subspaces if one wants estimates relevant for mixing, since those directions may lie partly in the neutral component. One is then forced to use Diophantine subspaces of type $1$, coming from the non-resonant abelianization. This is precisely what leads to the different solvability mechanism and to the threshold $0<r<\frac12$.

\smallskip

\emph{From the examples to the general nilpotent picture}.
The key difference between the two examples is the presence of root-of-unity behavior on central directions.
This motivates separating, at each step of the descending central series, the part on which $da$
has no root-of-unity eigenvalues from the part on which it does. Concretely, at the top central layer $\mathfrak n_k$ decompose
\[
\mathfrak n_k = F_1\oplus F_2,
\]
where $da|_{F_1}$ has no root-of-unity eigenvalues and $da|_{F_2}$ has only root-of-unity eigenvalues.
Correspondingly, decompose
\[
\xi=\xi_{F_1,\bot}+\xi_{F_1,o}.
\]
On the ``hyperbolic (non-resonant)" part $F_1$, let $V$ be the stable (or unstable) subspace of $da|_{F_1}$. Then $V$ is a Diophantine subspace of type $k$.
Fix a basis $\{v_1,\dots,v_{\dim V}\}$ of $V$ and solve, for any $r>0$,
\[
\xi_{F_1,\bot}=\sum_{i=1}^{\dim V} |v_i|^{\,r}\,\omega_{i,r},
\]
which is the mechanism captured by Proposition~\ref{le:1}.

Accordingly, let  $\tilde V$ be a subspace of the stable (or unstable) distribution in $\mathfrak n$
whose projection to $\mathfrak n/[\mathfrak n,\mathfrak n]$ equals the stable (or unstable) distribution there.
Then $\tilde V$ is a Diophantine subspace of type~$1$.
Fix a basis $\{u_1,\dots,u_{\dim\tilde V}\}$ and solve, for any $0<r<\tfrac12$,
\[
\xi_{F_1,o}-\xi_{\mathfrak n_k,o}=\sum_{i=1}^{\dim\tilde V} |u_i|^{\,r}\,\tilde\omega_{i,r},
\]
which is precisely the role of Proposition~\ref{le:8}. Thus we obtain the decomposition
\[
\xi=\xi_{F_1,\bot}+(\xi_{F_1,o}-\xi_{\mathfrak n_k,o})+\xi_{\mathfrak n_k,o}
=: \xi_{k,2}+\xi_{k,1}+\xi_{\mathfrak n_k,o},\quad \text{where}
\]
\begin{enumerate}
  \item both $\xi_{k,2}$ and $\xi_{k,1}$ are $N_{k+1}$-invariant (since $N_{k+1}=\{e\}$);
  \item both $\xi_{k,2}$ and $\xi_{k,1}$ are orthogonal to $\text{Inv}_{\exp(\mathfrak{n}_k)}$. Indeed, $\xi_{k,2}\perp \text{Inv}_{\exp(F_1)}$ and $\text{Inv}_{\exp(\mathfrak n_k)}\subset \text{Inv}_{\exp(F_1)}$, hence
  $\xi_{k,2}\perp \text{Inv}_{\exp(\mathfrak n_k)}$.
  Moreover, since $(\xi_{F_1,o})_{\mathfrak n_k,o}=\xi_{\mathfrak n_k,o}$ we have
  $\xi_{k,1}=(\xi_{F_1,o})_{\mathfrak n_k,\bot}$, hence $\xi_{k,1}\perp \text{Inv}_{\exp(\mathfrak n_k)}$.

\end{enumerate}
Iterating the same decomposition on the successive central layers $\mathfrak n_{i-1}/\mathfrak n_i$ in the quotients $N/N_i$
(using Theorems \ref{cor:3} and \ref{cor:6}) yields the inductive structure underlying Theorem~\ref{th:14},
and, crucially, produces the \emph{partial Sobolev norm} bounds needed for mixing applications.

\subsubsection{Proof strategy at a glance} The next two sections establish the two main ingredients needed for Theorem \ref{th:14}: the case $r>0$ (Section \ref{sec:1}) and the case $0<r<\frac12$ (see Section \ref{sec:45}). The proof of Theorem \ref{th:14} is then completed in Section \ref{sec:24}.

The proof proceeds by induction down the descending central series.
Starting from $\xi$, we first split off its $\mathfrak n_k$-Fourier nonzero part, which is automatically
orthogonal to $\mathrm{Inv}_{N_k}$, and keep the $\mathfrak n_k$-invariant remainder.
On each quotient $N/N_{i+1}$ we repeat the same step for the next central layer
$\mathfrak n_i/\mathfrak n_{i+1}$, producing pieces $\xi_i$ that are $N_{i+1}$-invariant and orthogonal to
$\mathrm{Inv}_{N_i}$ while preserving partial Sobolev control.
Each $\xi_i$ is then decomposed into an $E_i$-invariant part and a complement:
the $E_i$-invariant part is solved using the type $1$ Diophantine directions
(Corollary~\ref{cor:6}, with the sharp range $0<r<\tfrac12$), while the complementary part
is solved using the type $i$ Diophantine subspace inside $\mathfrak n_i$
(Theorem~\ref{cor:3}, valid for all $r>0$).
The final statement follows by summing these solutions and tracking the subgroup Sobolev norms at each stage.

\section{The $r>0$ part: equations along type $i$ Diophantine directions}\label{sec:1}
We use the notation collected in Section \ref{sec:38}. In particular, we retain the notation for $(\pi,\mathcal H)$, the descending central series $\mathfrak n_i$ and the corresponding subgroups $N_i$, the quantities $s_i(r)$, $\Gamma$-rational subspaces, Diophantine subspaces of type $i$, the projections $\mathfrak p_i$, the spaces $\TT_E$, $L_{E,o}$, $L_{E,\bot}$, $f_{E,o}$, $f_{E,\bot}$, the invariant subspaces $\text{Inv}_{H}$, and the universal enveloping algebra $\mathcal U(\mathfrak n)$.

\subsection{Main result} The purpose of this section is to prove the following result:

\begin{theorem}\label{cor:3}Suppose $1\leq i\leq k$, $\xi\in C^\infty(\mathcal X)$ and
  \begin{enumerate}
    \item $\mathcal{D}\subset \mathfrak{n}_i$ is a Diophantine subspace for $E$ of type $i$  with basis  $\{\mathfrak{u}_{1},\cdots,\mathfrak{u}_{\dim \mathcal{D}}\}$.

    \item $V$ is a subspace  of $\mathfrak{n}_i$,  such that $\mathfrak{p}_i(E)=\mathfrak{p}_i(V+ \mathcal{D})$.

        \item $\xi$ is $N_{i+1}$-invariant and orthogonal to $\mathrm{Inv}_{N_i}$.

  \end{enumerate}
  Then there is a decomposition $\xi=\xi_{1}+\xi_{2}$
such that:
\begin{enumerate}
\item Both $\xi_{1}$ and $\xi_{2}$ are $N_{i+1}$-invariant and orthogonal to $\mathrm{Inv}_{N_i}$.

  \item $\xi_{1}$ is $E$-invariant and $\xi_{2}$ is orthogonal to $\text{Inv}_{\exp(E)N_{i+1}}$, and
    \begin{align*}
    \|\mathcal P(\xi_{1})\|\le \|\mathcal P \xi\|\quad\text{and}\quad \|\mathcal P(\xi_{2})\|\le \|\mathcal P \xi\|,\quad\forall\,\mathcal{P}\in \mathcal{U}(\mathfrak{n}).
    \end{align*}
  \item For every $r>0$, there exist $\varpi_{j,r}\in\mathcal H$ such that
\begin{align}\label{for:100}
\sum_{j=1}^{\dim\mathcal{D}} |\mathfrak u_{j}|^{r}\,\varpi_{j,r}=\xi_2,
\end{align}
and, for $1\le j\le \dim\mathcal D$,
\begin{align*}
\|\varpi_{j,r}\|\ \le\ C_{r,V,\mathcal{D}}\,\|\xi\big\|_{\exp(V),\,s_i(r)}.
\end{align*}

\end{enumerate}
If $r\in\NN$, then \eqref{for:100} holds with the fractional operators
replaced by integer powers of Lie derivatives: $\sum_{j=1}^{\dim\mathcal{D}} \mathfrak u_{j}^{r}\,\varpi_{j,r}=\xi_{2}$.

\end{theorem}
\emph{Note}. As explained \ref{re:6} of Remark \ref{re:11}, here $\|\xi\|_{\exp(V),\,s_i(r)}$ denotes the Sobolev norm of the function descended from $\xi$ to
\[
(N/N_{i+1})\big/\big(\Gamma/(\Gamma\cap N_{i+1})\big)
\]
along the subgroup of $N/N_{i+1}$ with Lie algebra $(V+\mathfrak n_{i+1})/\mathfrak n_{i+1}$.

\subsection{Fractional equations on central torus factors}
Suppose $E\subseteq \mathfrak{n}_k$ is a $\Gamma$-rational subspace. The purpose of this part is to prove  the following result, which is a key step toward the proof of Theorem \ref{cor:3}:

\begin{proposition}\label{le:1} Suppose $V\subset E$ is a Diophantine subspace  and $V_1$ is a subspace of $E$ such that
\begin{align*}
  E=V+V_1.
\end{align*}
Fix a basis $\{v_1,\cdots, v_{\dim V}\}$ of $V$. For any $f\in W^{\infty,\exp(E)}(L^2(\mathcal{X}))$ we can write
 \begin{align*}
  f_{E,\bot}=\sum_{i=1}^{\dim V} f_i,
 \end{align*}
where $f_i\in L_{E,\bot}$, $1\leq i\leq \dim V$, such that the following are satisfied:
\begin{enumerate}
  \item\label{for:256} for any $1\leq i\leq \dim V$ we have
  \begin{align*}
   \norm{f_i}_{\exp(E),\,s}\leq C\norm{f}_{\exp(E),\,s},\qquad \forall\,s\geq0;
  \end{align*}

  \item\label{for:7} for any $r>0$ and any $1\leq i\leq \dim V$, the equation
  \begin{align}\label{for:36}
   |v_i|^r\varphi_{i,r}=f_i
  \end{align}
  has a solution $\varphi_{i,r}\in L_{E,\bot}$    with the estimate
  \begin{align*}
   \norm{\varphi_{i,r}}\leq C_{V_1,v_1,\cdots,v_{\dim V},r}\norm{f}_{\exp(V_1),\,r\dim E},
  \end{align*}
where $\exp(V_1)$ is the subgroup of $N$ with Lie algebra $V_1$.
\end{enumerate}
If $r\in\NN$, then \eqref{for:36} holds with $|v_i|^r$
replaced by $v_i^r$: $v_i^r\varphi_{i,r}=f_i$.

\end{proposition}

\begin{remark}If $\mathfrak{n}$ is abelian, $E=\mathfrak{n}$ and $V$ is spanned by a Diophantine vector $v$, then for any $f\in W^{\infty}(L^2(\mathcal{X}))\cap L_{E,\bot}$, Proposition \ref{le:1} shows that for any $r\in\NN$, the equation $v^r\varphi_{r}=f$ has a solution $\varphi_{r}\in L^2(\mathcal{X})$. This is consistent with the classical solvability of cohomological equations for Diophantine translations on tori (see Remark \ref{re:2}),
while the partial Sobolev-norm control is new even in the torus case.

\end{remark}
\emph{Proof strategy of Proposition \ref{le:1} } We recall Section \ref{sec:20}. We work with the induced $\TT_E$-representation $\pi|_{\TT_E}$ and use the Fourier decomposition
$L^2(\mathcal X)=\bigoplus_{z\in\ZZ^{\dim E}}\mathcal H_z$.
Since $f_{E,\bot}$ is the sum of the nonzero Fourier modes, we decompose $f_{E,\bot}$ further according to whether the Fourier frequency $z$ is ``large" or  ``small" in the $V$-directions, i.e.\ whether $\sum_{j=1}^{\dim V}|z\cdot v_j|\ge 1$ or $<1$.
On the large part, the divisor $|z\cdot v_i|^r$ is uniformly bounded away from $0$ after choosing an index $i$ with $|z\cdot v_i|$ comparable to $\sum_j|z\cdot v_j|$, so the fractional equation is solved with an $L^2$ bound depending only on $\|f\|$.
On the small part, the Diophantine property of $V\subset E$ yields the lower bound $|z\cdot v_i|\gg \|z\|^{-\dim E}$, which controls the small divisor by a polynomial weight $\|z\|^{r\dim E}$ and hence by a Sobolev norm of order $r\dim E$.
Finally, using the sum $E=V+ V_1$, we estimate the required full Sobolev norm along $\exp(E)$ on the small part by a partial Sobolev norm along $\exp(V_1)$, which produces the stated bound.

\subsubsection{Proof of Proposition \ref{le:1}} \emph{\textbf{Step 1}: Decomposition of $(\pi|_{\TT_E},\,L^2(\mathcal{X}))$ }

   Since the unitary dual of $\TT_E$ is isomorphic to $\ZZ^{\dim E}$ via the characters:
$ z\to e^{2\pi \mathbf{i} \, z\cdot t}$, $t\in \TT_E$, the unitary representation $(\pi|_{\TT_E},\,L^2(\mathcal{X}))$ has a decomposition:
\begin{align}\label{for:259}
 L^2(\mathcal{X})=\bigoplus_{z\in \ZZ^{\dim E}}  \mathcal{H}_z,
\end{align}
such that for any $\xi\in L^2(\mathcal{X})$, we can write
\begin{align*}
 \xi=\oplus_{z\in \ZZ^{\dim E}} \xi_z
\end{align*}
where $\xi_z\in \mathcal{H}_z$  for any $z\in \ZZ^{\dim E}$; and
\begin{align}\label{for:282}
\pi(t)\xi=\oplus_{z\in \ZZ^{\dim E}} e^{2\pi \mathbf{i} \, z\cdot t} \xi_z,\qquad \forall\,t\in \TT_E.
\end{align}
Moreover, for any $s\in\RR$, if $\xi=\oplus_{z\in \ZZ^{\dim E}} \xi_z\in L^2(\mathcal{X})$ belongs to $W^{s,\,\exp(E)}(L^2(\mathcal{X}))$ if and
only if
\begin{align*}
\sum_{z\in \ZZ^{\dim E}} (1+4\pi^2\|z\|^2)^{s}\norm{\xi_z}^2<\infty,
\end{align*}
 where \(\|z\|^2=\sum_{i=1}^{\dim E} z_i^2\). Equivalently, there exists $C_{E,s}\ge1$ such that
 \begin{align}\label{for:6}
C_{E,s}^{-1}(\norm{\xi}_{\exp(E),\,s})^2\leq\sum_{z\in \ZZ^{\dim E}} (1+4\pi^2\|z\|^2)^{s}\norm{\xi_z}^2\leq C_{E,s}(\norm{\xi}_{\exp(E),\,s})^2.
\end{align}
The decomposition \eqref{for:259} induces a decomposition of the subspace of the Sobolev spaces (see Section \ref{sec:20}): for each $s\in\RR$, there is an orthogonal splitting
\begin{align}\label{for:268}
W^{s, \,\exp(E)}(L^2(\mathcal{X}))=\bigoplus_{z\in \ZZ^{\dim E}} W^{s, \,\exp(E)}(\mathcal{H}_z).
\end{align}
\emph{\textbf{Step 2}: Various decompositions of vectors in $L^2(\mathcal{X})$ }

For any $\xi\in L^2(\mathcal{X})$, define
\[
  \xi_{[1]} \;=\; \bigoplus_{z\neq 0} (\xi_{[1]})_z,
  \qquad
  (\xi_{[1]})_z \;=\;
  \begin{cases}
    \xi_z, & \text{if } \sum_{j=1}^{\dim V}|z\cdot v_j|\geq 1,\\[2pt]
    0, & \text{otherwise},
  \end{cases}
\]
and
\[
  \xi_{[2]} \;=\; \bigoplus_{z\neq 0} (\xi_{[2]})_z,
  \qquad
  (\xi_{[2]})_z \;=\;
  \begin{cases}
    \xi_z, & \text{if } \sum_{j=1}^{\dim V}|z\cdot v_j|<1,\\[2pt]
    0, & \text{otherwise}.
  \end{cases}
\]
It follows immediately from the decomposition \eqref{for:268} that for any $s\geq0$, any $\xi\in W^{s,\,\exp(E)}(L^2(\mathcal{X}))$ and any subgroup $H\subseteq \exp(E)$ we have
\begin{align}\label{for:276}
 \norm{\xi_{[i]}}_{H,s}\leq \norm{\xi}_{H,s},\qquad i=1,\,2.
\end{align}
Let \(\xi_0\) denote the \(z=0\) (invariant) component. Then
\begin{align*}
 \xi \;=\; \xi_{[1]} \;+\; \xi_{[2]} \;+\; \xi_0.
\end{align*}
Next, we show that for any $\xi\in L^2(\mathcal{X})$
\begin{align}\label{for:43}
  \xi_{E,o}=\xi_0\quad\text{and}\quad \xi_{E,\bot}=\xi_{[1]} \;+\; \xi_{[2]}.
\end{align}
This sets up the identification of \(\xi_{E,o}\) and \(\xi_{E,\bot}\). In fact, we have
\begin{align*}
  \xi_{E,o}&=\int_{\TT_E}\pi(t)\xi d\varrho_E(t)=\int_{\TT_E}\oplus_{z\in \ZZ^{\dim E}} e^{2\pi \mathbf{i} \, z\cdot t} \xi_zd\varrho_E(t)\\
  &=\oplus_{z\in \ZZ^{\dim E}}\big(\int_{\TT_E} e^{2\pi \mathbf{i} \, z\cdot t}d\varrho_E(t)\big)\xi_z\overset{(1)}{=}\xi_0.
\end{align*}
Here in $(1)$ we note that the integral vanishes unless $z=0$. Consequently,
\begin{align*}
\xi_{E,\bot}=\xi-\xi_{E,o}=\xi_{[1]} \;+\; \xi_{[2]}.
\end{align*}
Then we get the result.

\begin{lemma}\label{le:2} For any $s>0$ and any $\xi\in L^2(\mathcal{X})$, we have
\begin{enumerate}
  \item\label{for:39} $\norm{\xi_{[2]}}_{\exp(V),s}\leq C_{v_1,\cdots,v_{\dim V},s}\norm{\xi}$.
  \smallskip
  \item\label{for:283} If $\xi\in W^{s,\,\exp(E)}(L^2(\mathcal{X}))$, then
  \begin{align*}
 \norm{\xi_{[2]}}_{\exp(E),s}\leq C_{V_1,v_1,\cdots,v_{\dim V},s}\norm{\xi}_{\exp(V_1),s}.
\end{align*}
\end{enumerate}

\end{lemma}
\begin{remark} The \emph{full} Sobolev norms of $\xi_{[2]}$ along $\exp(E)$ are controlled by \emph{partial} Sobolev norms along the subgroup $\exp(V_1)$ (applied to $\xi$). This is the mechanism behind the partial Sobolev bounds that appear later.
\end{remark}
\begin{proof} \eqref{for:39}: Let $\Delta_{V}=I-\sum_{i=1}^{\dim V}v_i^2$. From \eqref{for:282}, we see that for any $s>0$
  \begin{align*}
(\Delta_{V})^{\frac{s}{2}}\xi_{[2]}=\oplus_{z\in \ZZ^{\dim E}} \big(1+\sum_{i=1}^{\dim V}4\pi^2|z\cdot v_i|^2\big)^{\frac{s}{2}} (\xi_{[2]})_z.
\end{align*}
By definition, if $(\xi_{[2]})_z\neq0$, then $\sum_{j=1}^{\dim V}|z\cdot v_j|<1$.
Then we have
\begin{align*}
 \norm{\xi_{[2]}}_{\exp(V),s}\leq C_{v_1,\cdots,v_{\dim V},s}\big\|(\Delta_{V})^{\frac{s}{2}}\xi_{[2]}\big\|\leq C_{v_1,\cdots,v_{\dim V},s,1}\norm{\xi_{[2]}}\overset{(x)}{\leq}C_{v_1,\cdots,v_{\dim V},s,1}\norm{\xi}.
\end{align*}
Here in $(x)$ we use \eqref{for:276}. Then we complete the proof.

\eqref{for:283}: We recall that $E=V_1+ V$. Then by Theorem \ref{th:15}, we have
\begin{align*}
 \norm{\xi_{[2]}}_{\exp(E),s}&\leq C_{s,V,V_1}\norm{\xi_{[2]}}_{\exp(V),s}+C_{s,V,V_1}\norm{\xi_{[2]}}_{\exp(V_1),s}\notag\\
 &\overset{(x)}{\leq} C_{V_1,v_1,\cdots,v_{\dim V},s}\norm{\xi}+C_{s,V,V_1}\norm{\xi_{[2]}}_{\exp(V_1),s}\\
 &\overset{(y)}{\leq} C_{V_1,v_1,\cdots,v_{\dim V},s}\norm{\xi}+C_{s,V,V_1}\norm{\xi}_{\exp(V_1),s}\\
 &\overset{(x_1)}{\leq} C_{V_1,v_1,\cdots,v_{\dim V},s}\norm{\xi}_{\exp(V_1),s}.
\end{align*}
Here in $(x)$ we use \eqref{for:39}; in $(y)$ we use \eqref{for:276}; in $(x_1)$ we use the fact that $\|\xi\|\le \|\xi\|_{\exp(V_1),s}$.
\end{proof}
\begin{remark} Recall notations in Section \ref{sec:13}. We will show that for every $r>0$, both $f_{[1]}$ and $f_{[2]}$ are
$\{v_1,\dots,v_{\dim V};r,\dots,r\}$-coboundaries (see \eqref{for:278}), hence $f$ is as well.
Moreover, the $L^2$ norms of solutions for $f_{[1]}$ depend only on $\|f\|$ and $r$
(even when $f\in L^2$; see \eqref{for:288}). For $f_{[2]}$, the $L^2$ bounds depend on full Sobolev norms of $f_{[2]}$ and $r$; since those norms are controlled by partial Sobolev norms of $f$, the $L^2$ norm of a solution for $f$ is bounded by a partial Sobolev norm of $f$. This explains why partial Sobolev norms suffice.
\end{remark}
 We define a map $\varsigma:\ZZ^{\dim E}\to \{1,2,3,\cdots, \dim V\}$ by assigning to each \(z\) an index \(\varsigma(z)\) such that
 \begin{align}\label{for:2}
|z\cdot v_{\varsigma(z)}|\geq \frac{1}{\dim V}\sum_{j=1}^{\dim V}|z\cdot v_j|.
\end{align}
\emph{Note}. Although this index $\varsigma(z)$ may not be unique for every \(z\), one may always choose one.

\smallskip

For $j\in\{1,2\}$ and $1\le i\le \dim V$, define
\[
  \xi_{[j],i}\;=\;\bigoplus_{z\in\mathbb Z^{\dim E}}(\xi_{[j],i})_z,
  \qquad
  (\xi_{[j],i})_z \;=\;
  \begin{cases}
    (\xi_{[j]})_z, & \text{if }\varsigma(z)=i,\\
    0, & \text{otherwise}.
  \end{cases}
\]
From the decomposition \eqref{for:268}, for $s\geq0$ and any subgroup $H\subseteq \exp(E)$ we have
\begin{align}\label{for:287}
   \norm{\xi_{[j],i}}_{H,s}\leq \norm{\xi}_{H,s},\qquad \forall\, 1\leq i\leq \dim V.
  \end{align}
\noindent\emph{\textbf{Step 3}: Solving for fractional equations for  $f_{[j],i}$, $j=1,2$ and $1\leq i\leq \dim V$ }  We show that: for any $r>0$ the equation
\begin{align}\label{for:278}
  |v_i|^r\varphi_{[j],i,r}=f_{[j],i},\qquad j=1,2,\quad 1\leq i\leq \dim V
\end{align}
has a solution $\varphi_{[j],i,r}\in L_{E,\bot}$ with the estimate
\begin{align}\label{for:288}
\norm{\varphi_{[j],i,r}}\leq
\begin{cases}
C_{r}\|f\| \qquad & \text{if } j=1, \\
C_{v_1,\cdots,v_{\dim V},r,V_1} \big\|f\big\|_{\exp(V_1),\,r\dim E} \qquad & \text{if } j=2.
\end{cases}
\end{align}
 \eqref{for:278} is equivalent, on Fourier coefficients, to
\begin{align*}
 |2\pi \mathbf{i} \, z\cdot v_i|^r(\varphi_{[j],i,r})_z=(f_{[j],i})_z,\qquad \forall \,z\in\ZZ^{\dim E}.
\end{align*}
Thus, we define
\begin{align*}
(\varphi_{[j],i,r})_z=
\begin{cases}
\frac{(f_{[j],i})_z}{|2\pi \mathbf{i} \, z\cdot v_i|^r} \qquad & \text{if } 0\neq z\in\ZZ^{\dim E}\text{ and }(f_{[j],i})_z\neq0, \\
0 \qquad & \text{if } z=0 \text{ or }(f_{[j],i})_z=0.
\end{cases}
\end{align*}
 We note that if $(f_{[j],i})_z\neq0$, then
\begin{align}\label{for:5}
 (f_{[j],i})_z=(f_{[j]})_z;\quad\text{and}\quad z\cdot v_{i}=z\cdot v_{\varsigma(z)}.
\end{align}
Thus,
\begin{align}\label{for:37}
|z\cdot v_{i}|=|z\cdot v_{\varsigma(z)}|\overset{(x)}{\geq} \frac{1}{\dim V}\sum_{j=1}^{\dim V}|z\cdot v_j|.
\end{align}
Here in $(x)$ we use \eqref{for:2}.

\emph{Case $j=1$.}  If $(f_{[1]})_z\neq0$, then $\sum_{j=1}^{\dim V}|z\cdot v_j|\geq1$. Then \eqref{for:37} gives $|z\cdot v_{i}|\geq\frac{1}{\dim V}$.
Then for non-zero $(\varphi_{[1],i,r})_z$, we have
\begin{align*}
 \|(\varphi_{[1],i,r})_z\|_{\mathcal{H}_z}&=\frac{\|(f_{[1],i})_z\|_{\mathcal{H}_z}}{|2\pi \mathbf{i} \, z\cdot v_i|^r}\leq C_r\|(f_{[1],i})_z\|_{\mathcal{H}_z}.
\end{align*}
Consequently, if we set
\begin{align*}
 \varphi_{[1],i,r}=\oplus_{z\in \ZZ^{\dim E}} (\varphi_{[1],i,r})_z,
\end{align*}
then $\varphi_{[1],i,r}$ is a solution of \eqref{for:278} when $j=1$ with the estimate:
\begin{align}
  \norm{\varphi_{[1],i,r}}&=\Big(\sum_{z\in\ZZ^{\dim E}}\big(\|(\varphi_{[1],i,r})_z\|_{\mathcal{H}_z}\big)^2\Big)^{\frac{1}{2}}\leq \Big(\sum_{z\in\ZZ^{\dim E}}\big(C_{r}\|(f_{[1],i})_z\|_{\mathcal{H}_z}\big)^2\Big)^{\frac{1}{2}}\notag\\
  &= C_{r}\|f_{[1],i}\|\overset{(x)}{\leq} C_{r}\|f\|.
\end{align}
Here in $(x)$ we use \eqref{for:287}.

\emph{Case $j=2$.} If $(f_{[2]})_z\neq0$,  since $V\subset E$ is a Diophantine subspace, the Diophantine property (see \eqref{for:3} of Section \ref{sec:6}) gives
\begin{align}\label{for:253}
 \sum_{j=1}^{\dim V}|z\cdot v_j|\geq C_{v_1,\cdots,v_{\dim V}}\norm{z}^{-\dim E}, \qquad \forall\,z\in\ZZ^{\dim E}\backslash\{0\}.
\end{align}
By \eqref{for:37}, we have
\begin{align}\label{for:4}
|z\cdot v_{i}|\geq C_{v_1,\cdots,v_{\dim V}}\norm{z}^{-\dim E}.
\end{align}
Then for non-zero $(\varphi_{[2],i,r})_z$, we have
\begin{align*}
 \|(\varphi_{[2],i,r})_z\|_{\mathcal{H}_z}&=\frac{\|(f_{[2],i} )_z\|_{\mathcal{H}_z}}{|2\pi \mathbf{i} \, z\cdot v_i|^r}\overset{(x)}{\leq} C_{v_1,\cdots,v_{\dim V},r}\|(f_{[2],i} )_z\|_{\mathcal{H}_z}\cdot \norm{z}^{r\dim E}.
\end{align*}
Consequently, if we set
\begin{align*}
 \varphi_{[2],i,r}=\oplus_{z\in \ZZ^{\dim E}} (\varphi_{[2],i,r})_z,
\end{align*}
then $\varphi_{[2],i,r}$ is a solution of \eqref{for:278} when $j=2$ with the estimate:
\begin{align}\label{for:289}
  \norm{\varphi_{[2],i,r}}&=\Big(\sum_{z\in\ZZ^{\dim E}}\big(\|(\varphi_{[2],i,r})_z\|_{\mathcal{H}_z}\big)^2\Big)^{\frac{1}{2}}\notag\\
  &\leq
  C_{v_1,\cdots,v_{\dim V},r}\Big(\sum_{z\in\ZZ^{\dim E}}\big(\|(f_{[2],i})_z\|_{\mathcal{H}_z}\big)^2\cdot \norm{z}^{2r\dim E}\Big)^{\frac{1}{2}}\notag\\
  &\overset{(x)}{\leq} C_{v_1,\cdots,v_{\dim V},r,1}\big\|f_{[2],i}\big\|_{\exp(E),\,r\dim E}\notag\\
  &\overset{(y)}{\leq} C_{v_1,\cdots,v_{\dim V},r,1}\big\|f_{[2]}\big\|_{\exp(E),\,r\dim E}\notag\\
  &\overset{(x_1)}{\leq} C_{V_1,v_1,\cdots,v_{\dim V},r} \big\|f\big\|_{\exp(V_1),\,r\dim E}.
\end{align}
Here in $(x)$ we recall \eqref{for:6}; in $(y)$ we use \eqref{for:287}; in $(x_1)$ we use \eqref{for:283} of Lemma \ref{le:2}.

By recalling \eqref{for:43}, we see that both $\varphi_{[1],i,r}$ and $\varphi_{[2],i,r}$ are in $L_{E,\bot}$. Then we complete the proof.

\smallskip
\noindent\emph{\textbf{Step 4}: From $f_{[j],i}$ to $f_i$. } For any $1\leq i\leq \dim V$ and $r\in\NN$, let
\begin{align*}
 f_i=f_{[1],i}+f_{[2],i}\quad\text{and}\quad \varphi_{i,r}=\varphi_{[1],i,r}+\varphi_{[2],i,r}.
\end{align*}
From \eqref{for:287} we have
\begin{align*}
   \norm{f_i}_{\exp(E),r}\leq C\norm{f}_{\exp(E),r},\qquad \forall\,r\geq0.
  \end{align*}
Thus, we get \eqref{for:256}. It is clear that $\varphi_{i,r}\in L_{E,\bot}$.  It follows from \eqref{for:278} an  \eqref{for:288} that
\begin{align*}
  |v_i|^r\varphi_{i,r}=f_{i},\qquad 1\leq i\leq \dim V
\end{align*}
with the estimate
\begin{align*}
\norm{\varphi_{i,r}}\leq C_{V_1,v_1,\cdots,v_{\dim V},r} \big\|f\big\|_{\exp(V_1),\,r\dim E}.
\end{align*}
Then we get \eqref{for:7}. For $r\in\mathbb N$ the same holds with $|v_i|^r$ replaced by $v_i^r$. Then we complete the proof.

\subsection{Proof of Theorem \ref{cor:3}} We consider the quotient \(N/N_{i+1}\), which is simply connected of step \(i\) with descending central series $\mathfrak{n}_1/\mathfrak{n}_{i+1},\mathfrak{n}_2/\mathfrak{n}_{i+1},\cdots, \mathfrak{n}_{i}/\mathfrak{n}_{i+1}$.
Let \(\mathfrak p:\mathfrak n\to\mathfrak n/\mathfrak n_{i+1}\) be the projection. Let
\begin{align*}
 \mathcal{X}_{i} := (N/N_{i+1})\bigl/ \bigl(\Gamma/(N_{i+1}\cap \Gamma)\bigr).
\end{align*}
Let $q:\mathcal U(\mathfrak n)\to\mathcal U(\mathfrak n/\mathfrak n_{i+1})$ denote the induced
algebra map and $\mathfrak{j}:\mathcal X\to\mathcal X_i$ the quotient map. Then:
\begin{enumerate}
 \item $\mathfrak{p}(\mathcal{D})\subset \mathfrak{p}(\mathfrak{n}_i)$ is a Diophantine subspace of $\mathfrak{p}(E)$. Choose a basis
$\{\mathfrak p(\mathfrak u_1),\dots,\mathfrak p(\mathfrak u_l)\}$ of $\mathfrak p(\mathcal D)$, $l\leq \dim \mathcal{D}$.

\item $\mathfrak{p}(V)$ is a  subspace  of $\mathfrak{p}(\mathfrak{n}_i)$ such that $\mathfrak{p}(E)=\mathfrak{p}(V)+ \mathfrak{p}(\mathcal{D})$.
        \item $s_i(r)=r(\dim\mathfrak n_i-\dim\mathfrak n_{i+1})= r\dim \mathfrak{p}(\mathfrak n_i)\geq r\dim \mathfrak{p}(E)$.

  \item Any function $f\in \text{Inv}_{N_{i+1}}$ descends to \(\tilde f\in \mathcal H_i:=L^2(\mathcal X_i,\varrho_i)\) with $\varrho_i:=(\mathfrak j)_*\varrho$,
   and \(q(\mathcal P)\tilde f=\widetilde{\mathcal P f}\) together with
   \begin{align}\label{for:86}
    \|\mathcal P f\|_{\mathcal H}=\|q(\mathcal P)\tilde f\|_{\mathcal H_i},\qquad \forall\,\mathcal P\in\mathcal U(\mathfrak n).
   \end{align}

\end{enumerate}
Apply Proposition \ref{le:1} on $\mathcal X_i$ (with the torus $\TT_{\mathfrak p(E)}$) to $\tilde\xi$:
\[
  \tilde\xi=(\tilde\xi)_{\mathfrak{p}(E),o}+(\tilde\xi)_{\mathfrak{p}(E),\bot},\qquad
  \sum_{j=1}^{l} |\mathfrak{p}(\mathfrak u_{j})|^{r}\,\varpi_{j,r}'=(\tilde\xi)_{\mathfrak{p}(E),\bot},
\]
(if $r\in\mathbb N$, replace $\big|\mathfrak p(\mathfrak u_j)\big|^{r}$ by $\mathfrak p(\mathfrak u_j)^{\,r}$) with  Sobolev estimates: for any $1\leq j\leq l$
\begin{align*}
 \|\varpi_{j,r}'\|_{\mathcal{H}_i}\ \le\ C_{r,\mathfrak{p}(V),\mathfrak{p}(\mathcal{D})} \norm{\tilde{\xi}}_{\exp(\mathfrak{p}(V)),\,s_i(r),\mathcal{H}_i}.
\end{align*}
Pull back to $\mathcal X$ by $\mathfrak j$: set
\[
  \xi_1:=(\tilde\xi)_{\mathfrak p(E),o}\circ \mathfrak j,\quad
  \xi_2:=(\tilde\xi)_{\mathfrak p(E),\bot}\circ \mathfrak j,\quad
  \varpi_{j,r}:=\varpi_{j,r}'\circ \mathfrak j.
\]
Then $\xi=\xi_1+\xi_2$, $\xi_1$ is $E$-invariant, and $\xi_2\perp \mathrm{Inv}_{\exp(E)N_{i+1}}$. Both $\xi_{1}$ and $\xi_{2}$ are $N_{i+1}$-invariant and orthogonal to $\mathrm{Inv}_{N_i}$. Since $\mathrm{Inv}_{N_i/N_{i+1}}\subset \mathrm{Inv}_{\exp(\mathfrak p(E))}$ on $\mathcal X_i$, the orthogonal projection onto
$\mathrm{Inv}_{\exp(\mathfrak p(E))}$ preserves orthogonality to $\mathrm{Inv}_{N_i/N_{i+1}}$; pulling back, both $\xi_1$ and $\xi_2$ are orthogonal to $\mathrm{Inv}_{N_i}$.
Moreover,
$|\mathfrak p(\mathfrak u_j)|^r$ (resp. $\mathfrak p(\mathfrak u_j)^r$) acts as $|\mathfrak u_j|^r$ (resp. $\mathfrak u_j^r$) under pullback, and Sobolev norms agree:
\[
  \|\tilde\xi\|_{\exp(\mathfrak{p}(V)),\,s_i(r),\mathcal H_i}
  =\|\xi\|_{\exp(V),\,s_i(r),\mathcal H}.
\]
From \eqref{for:83} and \eqref{for:84} of Section \ref{sec:7} and \eqref{for:86}, we have
\begin{align*}
  \|\mathcal P(\xi_{1})\|_{\mathcal{H}}&= \|(q\mathcal P)(\tilde{\xi}_{\mathfrak{p}(E),o})\|_{\mathcal{H}_i}\le \|(q\mathcal P) \tilde{\xi}\|_{\mathcal{H}_i}= \|\mathcal P \xi\|_{\mathcal{H}}\qquad\text{and}\\
  \|\mathcal P(\xi_{2})\|_{\mathcal{H}}&= \|(q\mathcal P)(\tilde{\xi}_{\mathfrak{p}(E),\bot})\|_{\mathcal{H}_i}\le \|(q\mathcal P) \tilde{\xi}\|_{\mathcal{H}_i}= \|\mathcal P \xi\|_{\mathcal{H}}.
  \end{align*}
Then we  complete the proof.

\subsection{Two lemmas for later use} At the end of this section, we list two results that will be used in the subsequent part:

\begin{lemma}\label{cor:2} For any $f,\,g\in L^2(\mathcal{X})$ we have:
\begin{enumerate}

  \item \label{for:54} $\int_{\mathcal{X}}g_{E,\bot}\cdot f_{E,o}d\varrho=0$.

  \smallskip
  \item\label{for:42} $f_{E,o}\cdot g_{E,\bot}= (f_{E,o}\cdot g)_{E,\bot}$.

\end{enumerate}
\end{lemma}

\begin{proof}  We use the notation from the proof of Proposition \ref{le:1}. \eqref{for:54} is a direct consequence of \eqref{for:43}:  $f_{E,o}$ is the $z=0$ Fourier component and
$g_{E,\bot}$ is a sum of $z\neq 0$ Fourier components, hence they are orthogonal in $L^2(\mathcal X,\varrho)$.

\smallskip

\eqref{for:42}: Since $f_{E,o}$ is $\TT_E$-fixed, we have $\pi(t)f_{E,o}=f_{E,o}$ for all $t\in\TT_E$, and therefore
$\pi(t)(f_{E,o}\cdot g)=f_{E,o}\cdot \pi(t)g$. Thus
\begin{align*}
 f_{E,o}\cdot g_{E,\bot}&=f_{E,o}\cdot g-f_{E,o}\cdot\int_{\TT_E}\pi(t)g \,d\varrho_E(t)=f_{E,o}\cdot g-\int_{\TT_E}f_{E,o}\cdot\pi(t)g \,d\varrho_E(t)\\
 &=f_{E,o}\cdot g-\int_{\TT_E}\pi(t)(f_{E,o}\cdot g) \,d\varrho_E(t)=(f_{E,o}\cdot g)_{E,\bot}.
\end{align*}
\end{proof}

\section{The $0<r<\frac12$ part: equations along type $1$ Diophantine directions }\label{sec:45}
\subsection{Notations}\label{sec:31} We use the notation collected in Section \ref{sec:38}. In particular, we retain the notation for $(\pi,\mathcal H)$, the descending central series $\mathfrak n_i$ and the corresponding subgroups $N_i$, $\Gamma$-rational subspaces, Diophantine subspaces of type $i$, the projections $\mathfrak p_i$, the spaces $\TT_E$, $L_{E,o}$, $L_{E,\bot}$, $f_{E,o}$, $f_{E,\bot}$ and the invariant subspaces $\text{Inv}_{H}$.

 For the proof of Proposition \ref{le:8}, we also use the notation from Section \ref{sec:38} concerning weakly integral (with respect to $\Gamma$), coadjoint orbits, maximal rank, $\mathfrak{n}_{k-1}^\bot(\mathcal{O})$, and the quantities $\delta_\mathcal{O}(X,Y)$, $\delta_\mathcal{O}(X)$ and $B_\lambda(X,Y)$. In addition, we introduce the following notation.

\begin{enumerate}
\item\label{re:1} $\rho\in \Pi_{\mathcal{O}}$ \emph{respects $E^{[1]}$}: Let $\mathcal{O}$ be a coadjoint orbit and $\lambda\in \mathcal{O}$ (see Section \ref{sec:30}).
  If the restriction $\lambda|_{E^{[1]}}$ is not identically zero, we will say that the orbit $\mathcal O$ respects $E^{[1]}$, and likewise any $\lambda\in\mathcal O$ and any irreducible representation $\rho\in\Pi_{\mathcal O}$ respect $E^{[1]}$. We note that if $\mathcal{O}$ respects $E^{[1]}$, then $\mathcal{O}$ has maximal rank.
\smallskip

\item $\Sigma\subseteq\NN$: see Section \ref{for:62}.
\end{enumerate}

For the proof of Proposition \ref{le:8}, we also use the notation from Section \ref{sec:38} concerning weakly integral coadjoint orbits, maximal rank, $\mathfrak n_{k-1}^\bot(\mathcal O)$, and the quantities $\delta_{\mathcal O}(X,Y)$, $\delta_{\mathcal O}(X)$, and $B_\lambda(X,Y)$.

\subsection{Main result} The purpose of this section is to prove the following result:

\begin{theorem}\label{cor:6} Fix $2\le i\le k$ and let $\xi\in C^\infty(\mathcal X)$. Assume that:
  \begin{enumerate}

  \item $\mathcal{D}\subset \mathfrak{n}_1$ is a Diophantine subspace for $V_{\mathcal{E}^1}$ of type $1$  with basis  $\{\mathfrak{u}_{1},\cdots,\mathfrak{u}_{\dim \mathcal{D}}\}$.

    \item $E$ is a $\Gamma$-rational subspace of $\mathfrak{n}_i$,  $U$ is a subspace of $\mathfrak{n}_{i-1}$ such that $\mathfrak{n}_{i-1}=U\oplus Q$, where $Q=\{w\in \mathfrak{n}_{i-1}: [w,\mathcal{D}]\in E+\mathfrak{n}_{i+1}\}$.

        \item $\xi\in \text{Inv}_{\exp(E)N_{i+1}}$ and $\xi\perp \text{Inv}_{N_i}$.

       \end{enumerate}
  Then  for  any $0<r<\frac{1}{2}$,
there exist $\omega_{j,r}\in\mathcal H$ such that
\begin{align*}
 \sum_{j=1}^{\dim\mathcal{D}} |\mathfrak{u}_{j}|^{r}\,\omega_{j,r}=\xi
\end{align*}
 with the Sobolev estimates: for any $1\leq j\leq \dim\mathcal{D}$
\begin{align*}
 \|\omega_{j,r}\|\ \le\ C_{U, \mathcal{D}, r}\norm{\xi}_{\exp(U\oplus \mathfrak{n}_i),\,\dim\mathfrak{n}+1}.
\end{align*}
\end{theorem}

\subsection{A key proposition for the type $1$ case} The next result plays a crucial role in the proof of Theorem \ref{cor:6}.

\begin{proposition}\label{le:8} Suppose
\begin{enumerate}
 \item\label{for:66} $E\subset V_{\mathcal{E}^1}$ is a $\Gamma$-rational subspace and $E^{[1]}=[E, \mathfrak{n}_{k-1}]\neq\{0\}$.

\item\label{for:44} $V\subseteq \mathfrak{n}$ is a Diophantine subspace for $E$ of type $1$ (i.e., there is a Diophantine subspace $F$ of $E$ such that $\mathfrak{p}_1(F)=\mathfrak{p}_1(V)$). Fix a spanning set $\{v_1,\dots,v_\tau\}$ of $V$.

\item\label{for:60} $V_1$ is a subspace of $\mathfrak{n}_{k-1}$ such that $\mathfrak{n}_{k-1}=V_1+ P$, where $P=\{w\in \mathfrak{n}_{k-1}: [w,V]=0\}$.
\end{enumerate}
Let $V_2$ denote the \emph{Lie} subalgebra of $\mathfrak n$ generated by $V_1$ and $E^{[1]}$, and let $\exp(V_2)\le N$ be the subgroup whose Lie algebra is $V_2$. Then, for  any $f\in W^{\dim E+1,\,\exp(V_2)}(L^2(\mathcal{X}))$ we can write
 \begin{align*}
  f_{E^{[1]},\bot}&=\sum_{i=1}^{\tau} f_i,\qquad\text{where}\\
  f_i\in L_{E^{[1]},\bot}&\cap W^{\dim E+1,\,\exp(V_2)}(L^2(\mathcal{X})),\quad 1\leq i\leq \tau
 \end{align*}
 such that the following are satisfied:
\begin{enumerate}
  \item\label{for:20} for any $1\leq i\leq \tau$ we have
  \begin{align*}
   \norm{f_i}_{\exp(V_2),\,\dim E+1}\leq C\norm{f}_{\exp(V_2),\,\dim E+1};
  \end{align*}

  \item\label{for:21}  for any $0<r<\frac{1}{2}$ and any $1\leq i\leq \tau$, the equation
  \begin{align*}
   |v_{i}|^{r}\varphi_i=f_i
  \end{align*}
  has a solution $\varphi_{i}\in L_{E^{[1]},\bot}$ with the estimate
  \begin{align*}
 \norm{\varphi_i}\leq C_{v_1,\cdots,v_{\tau},V_1, r}\,\norm{f}_{\exp(V_2),\,\dim E+1}.
\end{align*}

\end{enumerate}

\end{proposition}
\emph{Note}. \eqref{for:66} implies that $k\geq2$.

\begin{lemma}\label{for:286} We say that $v\in \mathfrak{n}$ is a Diophantine vector of type $1$ if the one-dimensional space spanned by $v$ is a Diophantine subspace for $V_{\mathcal{E}^1}$ of type $1$.
   For any $r\geq \frac{1}{2}$ and any Diophantine vector $v$ of type $1$, there is $\xi\in W^{\infty}(L^2(\mathcal{X}))$ such that the equation
\begin{align*}
   |v|^{r}\varphi=\xi-\int_{\mathcal X} \xi\,d\varrho
  \end{align*}
  has no solution $\varphi\in L^2(\mathcal{X})$.
\end{lemma}

\begin{remark}If $V$ is spanned by a Diophantine vector $v$ of type $1$ and $k\geq2$, Theorem \ref{th:17} shows that the cohomological equation $v\varphi=\xi$ need not admit an $L^2$ solution. Proposition~\ref{le:8} shows that, for $0<r<\tfrac12$, the fractional equation $|v|^r\varphi=\xi$ is solvable with quantitative bounds, and that the threshold $r=\tfrac12$ is sharp (cf.  Lemma~\ref{for:286}).

We emphasize that the behavior of solutions to $|v|^{r}\varphi=\xi$ differs markedly from that of $v\varphi=\xi$.
For the first-order equation, once the obstructions vanish one expects $C^\infty$ solutions. In the fractional case, even
when the cohomological obstructions vanish (for $0<r<\tfrac12$), in general one can only hope for an $L^2$ solution (see Section \ref{sec:23}).
\end{remark}

\subsubsection{Proof strategy for Proposition~\ref{le:8}.}
Decompose $L^2(\mathcal X)$ into $N$-irreducible and restrict to
$L_{E^{[1]},\bot}$, so that on each irreducible summand $\mathcal H_\ell$ we may apply the model
in Lemma~\ref{le:7}.  The fractional estimate there contains the orbit-dependent factor
$\delta_{\mathcal O_\ell}(v_i,Y)^{-1}$, so we first use Lemma~\ref{po:3} to choose, for each $\ell$,
an index $i=\varsigma(\ell)$ and a unit vector $Y_\ell\in V_1$ such that
$\delta_{\mathcal O_\ell}(v_i,Y_\ell)^{-1}$ is controlled (uniformly in $\ell$) by a \emph{Sobolev norm along} $\exp(E^{[1]})$ on $\mathcal H_\ell$; the Diophantine property of $V$ is essential here.
We then split $f_{E^{[1]},\bot}$ into finitely many pieces $f_i$ by grouping the components
$(f_{E^{[1]},\bot})_\ell$ according to $\varsigma(\ell)=i$, which gives the decomposition
$f_{E^{[1]},\bot}=\sum_i f_i$ and preserves Sobolev norms by orthogonality. Finally, for each $\ell$ with $\varsigma(\ell)=i$ we solve $|v_i|^r(\varphi_i)_\ell=(f_i)_\ell$
using \eqref{for:290} of Lemma~\ref{le:7}; the resulting bound involves $\|Y_\ell(f_i)_\ell\|$ and $\|(f_i)_\ell\|$.
Since $Y_\ell\in V_1$ and $E^{[1]}\subset V_2$, these terms are controlled by the partial Sobolev norm
$\|f\|_{\exp(V_2),\,\dim E+1}$. Summing over $\ell$ gives $\|\varphi_i\|\ll \|f\|_{\exp(V_2),\,\dim E+1}$,
establishing the claimed partial-norm bounds.



\subsubsection{Decomposition of $\pi$}\label{for:62} We recall that $\pi$ is the left action of $N$ on $L^2(\mathcal{X},d\varrho)$. Let
\begin{align}\label{for:57}
  L^2(\mathcal{X},d\varrho)=\bigoplus_{\ell\in\NN} \mathcal{H}_\ell
\end{align}
be the decomposition
into irreducible representations of  $N$. Let $\pi_\ell\in \Pi_{\mathcal{O}_\ell}$ be an irreducible representation
of $N$ on $\mathcal{H}_\ell$.

We consider the restricted representation $\pi|_{L_{E^{[1]},\bot}}$, i.e.,  the left action of $N$ on
$L_{E^{[1]},\bot}$. Then we have a decomposition
\begin{align}\label{for:266}
L_{E^{[1]},\bot}=\bigoplus_{\ell\in\Sigma}\mathcal H_\ell,
  \qquad
  \pi|_{L_{E^{[1]},\bot}}=\bigoplus_{\ell\in\Sigma}\pi_\ell,
\end{align}
 where $\Sigma$ is a subset of $\NN$.  We note that
\begin{enumerate}
  \item\label{for:55} Each $\mathcal{O}_\ell$ respects $E^{[1]}$. \eqref{for:54} of Lemma \ref{cor:2} shows that the subspaces $L_{E^{[1]},\bot}$ and $L_{E^{[1]},o}$ are orthogonal to each  other. We note that $L_{E^{[1]},o}$ contains all $E^{[1]}$-invariant vectors; hence $L_{E^{[1]},\bot}$ has no nontrivial $E^{[1]}$-invariant vectors. This implies \eqref{for:55}.

  \smallskip
  \item\label{for:56} Each $\mathcal{O}_\ell$ is a non-trivial weakly integral coadjoint orbit (with respect to $\Gamma$), by Lemma \ref{le:5}.
\end{enumerate}
The decomposition \eqref{for:57} induces a decomposition of the subspace of the Sobolev spaces (see Section \ref{sec:20}): for each $r\in\RR$, there is an orthogonal splitting
\begin{align}\label{for:267}
W^{r, \,\exp(V_2)}(L_{E^{[1]},\bot})=\bigoplus_{\ell\in \Sigma}W^{r, \,\exp(V_2)}(\mathcal{H}_\ell).
\end{align}

\subsubsection{Fractional cohomological equation}\label{sec:23}

Lemma~\ref{le:11} (cf.\ \cite{F}) provides a convenient model for the restriction of an
irreducible representation to the Heisenberg subalgebra generated by $X$ and $Y$,
but it does not directly yield estimates for the \emph{fractional} equation $|X|^{r}\omega=\xi$.
We therefore pass (via Fourier transform) to the Schr\"odinger model, in which the operator
$X$ becomes multiplication by $\mathbf{i}x$ and hence $|X|^{r}$ becomes multiplication by $|x|^{r}$.
In this model the formal solution is $\omega(x)=\xi(x)/|x|^{r}$, so $L^{2}$-solvability hinges on the
integrability of $|x|^{-2r}$ near $x=0$, yielding the sharp threshold $r=\tfrac12$.

The key point is that the $L^{2}$-norm of $\omega$ can be controlled using only \emph{partial} Sobolev
information in the $Y$-direction: a one-dimensional Sobolev embedding bounds
$\|\xi\|_{C^{0}}$ by $\|\xi\|+\|Y\xi\|$ (up to the scalar $B_{\lambda}(X,Y)$), which is sufficient to
control $\int_{|x|\le 1}|\xi(x)|^{2}|x|^{-2r}\,dx$ when $0<r<\tfrac12$.
We emphasize that, in general, one can only expect an $L^{2}$ solution:
indeed $\omega(x)=\xi(x)/|x|^{r}$ is typically not smooth unless $\xi$ vanishes to high order at $x=0$.
This partial-Sobolev mechanism will be crucial later when solving \eqref{for:21} in
Proposition~\ref{le:8}.

\begin{lemma}\label{le:7} Suppose $\mathcal{O}$ is a coadjoint orbit of maximal
rank. Let $X\in \mathfrak{n}\backslash \mathfrak{n}_{k-1}^\bot(\mathcal{O})$ and let $Y\in \mathfrak{n}_{k-1}$ be any element such
that $B_\lambda(X,Y)\neq0$ for all $\lambda\in \mathcal{O}$. Then:
 \begin{enumerate}
   \item\label{for:290}  For any $\kappa\in \Pi_{\mathcal{O}}$ and any $\xi\in \mathcal{H}_{\kappa}$ with $Y\xi\in \mathcal{H}_{\kappa}$ and any $0<r<\frac{1}{2}$,  the fractional cohomological equation $|X|^r\omega=\xi$ has a solution $\omega\in \mathcal{H}_{\kappa}$ with the estimate
  \begin{align*}
 \norm{\omega}\leq C_r\delta_\mathcal{O}(X,Y)^{-1}\norm{Y\xi}+ C\norm{\xi}.
\end{align*}
   \item\label{for:51} For any $\kappa\in \Pi_{\mathcal{O}}$ and any $r\geq\frac{1}{2}$, there exists  $\xi\in W^\infty(\mathcal{H}_{\kappa})$ such that the fractional cohomological equation $|X|^r\omega=\xi$ has no solution $\omega\in \mathcal{H}_{\kappa}$.
 \end{enumerate}

\end{lemma}

\emph{Note}. $\delta_\mathcal{O}(X,Y)=|B_\lambda(X,Y)|$.

\begin{proof} As $\kappa$ is unitarily equivalent to $\rho$ described in Lemma \ref{le:11},  it is harmless to assume $\kappa=\rho$. Set $W=[X,Y]$. A straightforward computation by \eqref{for:48} of Lemma \ref{le:11} shows that
\begin{align*}
   W\xi=(XY-YX)\xi=2\pi \mathbf{i}B_\lambda(X,Y)\cdot \xi.
  \end{align*}
We note that the elements $X$, $Y$, and $W$ generate a Heisenberg algebra, and hence a Heisenberg group $\mathbb{H}$. We have a direct integral decomposition of $\rho|_{\mathbb{H}}$: $\rho|_{\mathbb{H}}=\int_Z \rho_zdz$, where $\{\rho_z\}_{z\in Z}$ is a family of irreducible unitary representations of
$\mathbb{H}$ over a measure space $(Z,\mu)$ (see Section \ref{sec:20}).  Then for any $\xi\in \mathcal{H}_{\kappa}$, we can write $\xi=\int_Z \xi_zd\mu(z)$. In each $\rho_z$,   $W$ acts by multiplication by the non-zero constant $2\pi \mathbf{i}B_\lambda(X,Y)$ in each $\rho_z$. By Mackey theory, such irreducible unitary representations are all equivalent to a Schr\"{o}dinger model $(\tau,\mathcal H_\tau)$ on $\mathcal H_\tau=L^2(\mathbb R,dx)$ whose derived
representation satisfies:
\begin{align}
X\psi= \mathbf{i} x\psi, \quad Y\psi=-2\pi B_\lambda(X,Y)\text{\tiny$\frac{\partial}{\partial x}$}\psi\,\text{ and }\, W\psi=2\pi \mathbf{i}B_\lambda(X,Y)\psi
\end{align}
for any $\psi\in W^\infty(\mathcal{H}_{\tau})$.

We note that $Y\xi\in \mathcal{H}_{\kappa}$ implies that $Y\xi_z\in \mathcal{H}_{\tau}$ for almost all $z\in Z$. Next, we prove the following: for any $\xi_z\in\mathcal{H}_{\tau}$ with $Y\xi_z\in \mathcal{H}_{\tau}$, the equation
\begin{align*}
 |x|^r\omega_z=\xi_z
\end{align*}
has a solution $\omega_z\in\mathcal{H}_{\tau}$ with the estimate:
\begin{align*}
 \norm{\omega_z}\leq C_r\delta_\mathcal{O}(X,Y)^{-1}\norm{Y\xi_z}+ C\norm{\xi_z}.
\end{align*}
We set $\omega_z=\frac{\xi_z}{|x|^r}$. By Sobolev embedding theorem, we have
\begin{align}\label{for:52}
  \norm{\xi_z}_{C^0(\RR)}\leq C(\|\text{\tiny$\frac{\partial}{\partial x}$}\xi_z\|+\norm{\xi_z})= C\big(\delta_\mathcal{O}(X,Y)^{-1}\norm{Y\xi_z}+\norm{\xi_z}\big).
\end{align}
It follows that
\begin{align*}
\norm{\omega_z}^2&= \int_{-\infty}^\infty |\xi_z(x)|^2|x|^{-2r} dx=\int_{|x|\leq1} |\xi_z(x)|^2|x|^{-2r}dx+\int_{|x|>1} |\xi_z(x)|^2|x|^{-2r}dx\\
&\leq (\norm{\xi_z}_{C^0(\RR)})^2\int_{|x|\leq1} |x|^{-2r}dx+\int_{|x|>1} |\xi_z(x)|^2dx\leq C_r(\norm{\xi_z}_{C^0(\RR)})^2+\norm{\xi_z}^2\\
&\leq C_r\delta_\mathcal{O}(X,Y)^{-2}\norm{Y\xi_z}^2+ C_1\norm{\xi_z}^2,
\end{align*}
where the condition $0<r<\frac{1}{2}$ ensures that $\int_{|x|\leq 1}|x|^{-2r}dx$ converges.

Let $\omega=\int_Z \omega_zdz$. Then
\begin{align*}
 \norm{\omega}^2&=\int_Z \norm{\omega_z}^2dz\leq C_r\delta_\mathcal{O}(X,Y)^{-2}\int_Z \norm{Y\xi_z}^2dz+C\int_Z \norm{\xi_z}^2dz\\
 &\leq C_r\delta_\mathcal{O}(X,Y)^{-2}\norm{Y\xi}^2+C\norm{\xi}^2.
\end{align*}
This implies the result.

\eqref{for:51}: Firstly, we show that $W^\infty(\mathcal{H}_{\kappa})$ is dense in $W^{\infty,\mathbb{H}}(\mathcal{H}_{\kappa})$. For any $\xi\in \mathcal{H}_{\kappa}$ and $\varphi\in C^\infty_c(N)$, let
\begin{align*}
\kappa(\varphi)(\xi)=\int_N\varphi(n)\kappa(n)\xi d\varrho(n).
\end{align*}
where $\varrho$ is a left invariant Haar measure (see \eqref{for:32} of Section \ref{sec:33}).

Let $\mathcal{U}$ be the closure of $W^\infty(\mathcal{H}_{\kappa})$ in $W^{\infty,\mathbb{H}}(\mathcal{H}_{\kappa})$.  For any $\omega\in W^{\infty,\mathbb{H}}(\mathcal{H}_{\kappa})$, choose $v_n\in W^\infty(\mathcal{H}_{\kappa})$ such that $v_n\to \omega$ in $\mathcal{H}_{\kappa}$. Then
it is clear that $\kappa(\varphi)(v_n)\to \kappa(\varphi)(\omega)$ in $W^{\infty,\mathbb{H}}(\mathcal{H}_{\kappa})$ since $\varphi$ is $C_{c}^\infty$ on $N$. We note that $\kappa(\varphi)(v_n)\in W^\infty(\mathcal{H}_{\kappa})$ for all $n$. Letting $\varphi$ run through a $C^\infty_c(N)$
approximate identity, we get $\omega\in \mathcal{U}$. This implies the result.

Fix a measurable set $Z_1\subset Z$ with $0<\mu(Z_1)<\infty$.  For each $z\in Z_1$, let $f_z=h$, where
  $h\in C_c^\infty(\RR)$ with support inside the interval $[-2,2]$ and  $h([-1,1])=1$. It is clear that $h\in W^\infty(\mathcal{H}_{\tau})$. Let
  \[
  \tilde{\xi}=\int_Z \chi_{Z_1}(z)f_zd\mu(z)\,,
  \]
 where $\chi_{Z_1}$ is the characteristic function of $Z_1$. Then $\tilde{\xi}\in W^{\infty,\mathbb{H}}(\mathcal{H}_{\kappa})$, and by the previous argument, there is $v_n\in W^\infty(\mathcal{H}_{\kappa})$ such that $v_n\to \tilde{\xi}$ in $W^{\infty,\mathbb{H}}(\mathcal{H}_{\kappa})$. Choose $n$ such that
 \begin{align*}
  \norm{v_n-\tilde{\xi}}_{\mathbb{H},1}\leq \min\{\frac{1}{6C}\mu(Z_1)\delta_\mathcal{O}(X,Y),\frac{1}{6C}\mu(Z_1)\},
 \end{align*}
where $C$ is as in \eqref{for:52}, which implies that
 \begin{align*}
  \big(\int_{Z_1} \norm{f_z-(v_n)_z}_{\mathcal{H}_{\tau},1}^2d\mu(z)\big)^{\frac{1}{2}}\leq \min\{\frac{1}{6C}\mu(Z_1)\delta_\mathcal{O}(X,Y),\frac{1}{6C}\mu(Z_1)\}.
 \end{align*}
 Consequently,  there is $Z_2\subseteq Z_1$ with positive measure such that
\begin{align*}
 \norm{f_z-(v_n)_z}_{\mathcal{H}_{\tau},1}\leq \min\{\frac{1}{3C}\delta_\mathcal{O}(X,Y),\frac{1}{3C}\},\qquad \forall\,z\in Z_2.
\end{align*}
It follows from \eqref{for:52} that
\begin{align*}
 \norm{f_z-(v_n)_z}_{C^0(\RR)}\leq C\big(\delta_\mathcal{O}(X,Y)^{-1}\cdot \frac{1}{3C}\delta_\mathcal{O}(X,Y)+\frac{1}{3C}\big)\leq \frac{2}{3}.
\end{align*}
Since $f_z=h$ satisfies $h(x)=1$ on $[-1,1]$, we have
\begin{align*}
 (v_n)_z(x)\geq \frac{1}{3},\qquad \forall\,x\in[-1,1],\quad \forall\,z\in Z_2.
\end{align*}
Hence
\begin{align}\label{for:212}
 \int_{|x|\leq1} |(v_n)_z(x)|^2|x|^{-2r}dx \text{ is divergent for any }r\geq \frac{1}{2}\text{ and any }z\in Z_2.
\end{align}
Consequently, if we let $\xi=v_n$, then for any $r\geq\frac{1}{2}$, if there is $\omega\in\mathcal{H}_{\kappa}$ such that $|X|^r\omega=\xi$, then
we would have $\omega_z\in \mathcal{H}_{\tau}$ and $|x|^r\omega_z=(v_n)_z$ for almost all $z\in Z_2$, which contradicts \eqref{for:212}. Thus the equation $|X|^r\omega=\xi$ has no solution $\omega\in\mathcal{H}_{\kappa}$, proving part \eqref{for:51}.

\end{proof}

\begin{remark}
Lemma~\ref{le:7} shows that the a priori bound for a solution $\omega$ involves the factor
$\delta_{\mathcal O}(X,Y)^{-1}$, which depends on the coadjoint orbit $\mathcal O$.
Thus, if one solves the fractional equation $|X|^r\omega=\xi$ by decomposing
$L^2(\mathcal X)$ into irreducible subrepresentations, one must control this factor
uniformly along the representations that occur.

In our application, however, $X$ will range over the fixed finite set $\{v_1,\dots,v_{\dim V}\}$
and $Y$ will be taken in the fixed subspace $V_1\subset \mathfrak n_{k-1}$.
Accordingly, for each orbit $\mathcal O_\ell$ we introduce the quantity
\[
\tilde\delta_{\mathcal O_\ell}(v_i)
:=\max\{\delta_{\mathcal O_\ell}(v_i,Y): Y\in V_1,\ \|Y\|=1\},
\]
which records the best lower bound obtainable using only $Y\in V_1$.
Lemma~\ref{po:3} provides the required uniform control in terms of these
$\tilde\delta_{\mathcal O_\ell}(v_i)$.
\end{remark}

\subsubsection{Diophantine property of $V$}\label{sec:2}
For any $\ell\in \Sigma$, any $1\leq i\leq \tau$ (see \eqref{for:44} of Section \ref{sec:31}), define $\tilde{\delta}_{\mathcal{O}_\ell}(v_{i})$ as follows:
\begin{align}\label{for:31}
   \tilde{\delta}_{\mathcal{O}_\ell}(v_{i}):=\max\{\delta_{\mathcal{O}_\ell}(v_{i},Y)|\,Y\in V_1\text{ and }\norm{Y}=1\}\qquad\text{(see \eqref{for:60} of Section \ref{sec:31})}.
  \end{align}

\begin{lemma}\label{po:3}
For any $\ell\in \Sigma$, there exists $1\leq j\leq \tau$ such that  $\tilde{\delta}_{\mathcal{O}_\ell}(v_{j})>0$  and
\begin{align*}
 \tilde{\delta}_{\mathcal{O}_\ell}(v_{j})^{-1}\norm{\xi}\leq C_{v_1,\cdots,v_{\tau},V_1}\big\|\xi\big\|_{\exp(E^{[1]}),\,\dim E}
\end{align*}
for any $\xi\in W^{\dim E,\,\exp(E^{[1]})}(\mathcal{H}_{\ell})$.
\end{lemma}
The proof of Lemma \ref{po:3} relies on the following two lemmas. After proving them, we establish Lemma \ref{po:3}.

\begin{lemma}\label{le:15} For any $1\leq i\leq \tau$ and any $\ell\in \Sigma$, we have
\begin{align*}
   \tilde{\delta}_{\mathcal{O}_\ell}(v_{i})\leq \delta_{\mathcal{O}_\ell}(v_{i})\leq C_{V,V_1}\,\tilde{\delta}_{\mathcal{O}_\ell}(v_{i}).
  \end{align*}
\end{lemma}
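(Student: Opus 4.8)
The plan is to compare the two quantities $\tilde{\delta}_{\mathcal{O}_\ell}(v_i)$ and $\delta_{\mathcal{O}_\ell}(v_i)$ directly from their definitions in \eqref{for:31} and Section \ref{sec:5}. Recall that $\delta_{\mathcal{O}_\ell}(v_i)=\max\{|B_\lambda(v_i,Y)|:Y\in\mathfrak{n}_{k-1},\ \norm{Y}=1\}$, while $\tilde{\delta}_{\mathcal{O}_\ell}(v_i)=\max\{|B_\lambda(v_i,Y)|:Y\in V_1,\ \norm{Y}=1\}$, where $V_1\subseteq\mathfrak{n}_{k-1}$ is the minimal dual space of $V$. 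Since the maximum in $\tilde{\delta}$ is taken over the smaller set $V_1\subseteq\mathfrak{n}_{k-1}$, the inequality $\tilde{\delta}_{\mathcal{O}_\ell}(v_i)\leq\delta_{\mathcal{O}_\ell}(v_i)$ is immediate. The content of the lemma is the reverse inequality, with a constant depending only on $V$ (equivalently on the pair $(V,V_1)$, which is determined by $V$).

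For the reverse direction, the key structural fact is property \eqref{for:269} of Section \ref{sec:30}: there is a complementary subspace $W\subseteq\mathfrak{n}_{k-1}$ with $\mathfrak{n}_{k-1}=V_1\oplus W$, and by the Note following that definition, $[V,W]=0$. Hence for every $Y\in\mathfrak{n}_{k-1}$, writing $Y=Y_{V_1}+Y_W$ with $Y_{V_1}\in V_1$, $Y_W\in W$, we get $B_\lambda(v_i,Y)=\lambda([v_i,Y])=\lambda([v_i,Y_{V_1}])+\lambda([v_i,Y_W])=B_\lambda(v_i,Y_{V_1})$, since $[v_i,Y_W]=0$. Therefore $\delta_{\mathcal{O}_\ell}(v_i)=\max_{\norm{Y}=1}|B_\lambda(v_i,Y_{V_1})|$. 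Now I bound $\norm{Y_{V_1}}$ in terms of $\norm{Y}$: the projection $\mathfrak{n}_{k-1}\to V_1$ along $W$ is a fixed linear map depending only on $(V_1,W)$ (hence only on $V$), so $\norm{Y_{V_1}}\leq C_V\norm{Y}=C_V$ when $\norm{Y}=1$. Then $|B_\lambda(v_i,Y)|=|B_\lambda(v_i,Y_{V_1})|\leq C_V\,\tilde{\delta}_{\mathcal{O}_\ell}(v_i)$ by scaling $Y_{V_1}$ to a unit vector (or observing $B_\lambda(v_i,\cdot)$ is linear, so $|B_\lambda(v_i,Y_{V_1})|=\norm{Y_{V_1}}\cdot|B_\lambda(v_i,Y_{V_1}/\norm{Y_{V_1}})|\leq C_V\,\tilde{\delta}_{\mathcal{O}_\ell}(v_i)$; the degenerate case $Y_{V_1}=0$ gives $B_\lambda(v_i,Y)=0$ trivially). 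Taking the maximum over $\norm{Y}=1$ yields $\delta_{\mathcal{O}_\ell}(v_i)\leq C_V\,\tilde{\delta}_{\mathcal{O}_\ell}(v_i)$.

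I do not anticipate a genuine obstacle here; the only point requiring care is ensuring the constant $C_V$ genuinely depends only on $V$ and not on $\ell$ — this is guaranteed because $V_1$ and its complement $W$ are fixed once $V$ is fixed (using uniqueness of $W$ from \eqref{for:271} of Lemma \ref{le:16} and of $V_1$ from \eqref{for:272}), and the bilinear form $B_\lambda$ restricted to $\mathfrak{n}\times\mathfrak{n}_{k-1}$ depends only on the orbit $\mathcal{O}_\ell$, so no hidden $\ell$-dependence enters the projection estimate. One should also note that the case $\tilde{\delta}_{\mathcal{O}_\ell}(v_i)=0$ is consistent: if $\tilde{\delta}_{\mathcal{O}_\ell}(v_i)=0$ then $B_\lambda(v_i,Y_{V_1})=0$ for all $Y_{V_1}\in V_1$, hence $B_\lambda(v_i,Y)=0$ for all $Y\in\mathfrak{n}_{k-1}$, so $\delta_{\mathcal{O}_\ell}(v_i)=0$ as well, and the inequality holds with equality.
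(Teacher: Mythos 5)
Your proposal is correct and follows essentially the same route as the paper: both arguments reduce $B_\lambda(v_i,Y)$ to $B_\lambda(v_i,Y_{V_1})$ via the decomposition $\mathfrak{n}_{k-1}=V_1\oplus W$ with $[V,W]=0$, and then use the boundedness of the projection onto $V_1$ (with constant $C_V$ independent of $\ell$) to compare with $\tilde{\delta}_{\mathcal{O}_\ell}(v_i)$. The only cosmetic difference is that the paper applies this bound to a chosen maximizer $Y_{\ell,i}$ while you apply it uniformly to all unit vectors before taking the maximum; the substance is identical.
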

\begin{proof} By definition, we have
\begin{align}\label{for:261}
 \tilde{\delta}_{\mathcal{O}_\ell}(v_{i})\leq \delta_{\mathcal{O}_\ell}(v_{i}).
\end{align}
Next, we show that
\begin{align}\label{for:260}
 \delta_{\mathcal{O}_\ell}(v_{i})\leq C_{V,V_1}\tilde{\delta}_{\mathcal{O}_\ell}(v_{i}).
\end{align}
We recall that $\mathfrak{n}_{k-1} = V_1+ P$ (see \eqref{for:60} of Section \ref{sec:31}). Then there exists a subspace $V_{1}'\subseteq V_1$ such that $\mathfrak{n}_{k-1} = V_1'\oplus P$. Choose  $Y_{\ell,i}\in \mathfrak{n}_{k-1}$ with $\norm{Y_{\ell,i}}=1$ such that $\delta_{\mathcal{O}_\ell}(v_{i})=\delta_{\mathcal{O}_\ell}(v_{i},Y_{\ell,i})$.
We note that
\begin{align}\label{for:61}
  [v_{i},Y_{\ell,i}]\overset{\text{(1)}}{=}[v_{i},\,(Y_{\ell,i})_{V_1'} + (Y_{\ell,i})_{P}]\overset{\text{(2)}}{=}[v_{i},\,(Y_{\ell,i})_{V_1'} ].
\end{align}
We explain steps:
\begin{itemize}
  \item In $(1)$, \(Y_{\ell,i} = (Y_{\ell,i})_{V_1'} + (Y_{\ell,i})_{P}\) is the unique decomposition corresponding to the direct sum $\mathfrak{n}_{k-1} = V_1'\oplus P$.

  \item In $(2)$ we recall that $v_i\in V$ and $[V, P] =0$.

\end{itemize}
\eqref{for:61} gives
\begin{align*}
 \delta_{\mathcal{O}_\ell}(v_{i})&=\delta_{\mathcal{O}_\ell}\big(v_{i},(Y_{\ell,i})_{V_1'}\big)
 =\delta_{\mathcal{O}_\ell}\big(v_{i},\frac{(Y_{\ell,i})_{V_1'}}{\norm{(Y_{\ell,i})_{V_1'}}}\big)\cdot \norm{(Y_{\ell,i})_{V_1'}}\\
 &\overset{(x)}{\leq} \tilde{\delta}_{\mathcal{O}_\ell}(v_{i})\cdot \norm{(Y_{\ell,i})_{V_1'}}\overset{(y)}{\leq} C_{V,V_1}\,\tilde{\delta}_{\mathcal{O}_\ell}(v_{i}).
\end{align*}
(If $\delta_{\mathcal{O}_\ell}(v_i)=0$ there is nothing to prove; otherwise $(Y_{\ell,i})_{V_1'}\neq0$ by \eqref{for:61}.) Here in $(x)$ we note that the unit vector $\frac{(Y_{\ell,i})_{V_1'}}{\norm{(Y_{\ell,i})_{V_1'}}}$ lies in \(V_1'\). Then by the definition of \(\tilde{\delta}_{\mathcal{O}_\ell}(v_{i})\) we have
\[
 \delta_{\mathcal{O}_\ell}\Bigl(v_{i},\frac{(Y_{\ell,i})_{V_1'}}{\|(Y_{\ell,i})_{V_1'}\|}\Bigr) \leq \tilde{\delta}_{\mathcal{O}_\ell}(v_{i}).
\]
In $(y)$ we use the fact that
\begin{align*}
 \norm{u_{V_1'}}+\norm{u_{P}}\leq C_{V,V_1}\norm{u},\qquad \forall\,u\in \mathfrak{n}_{k-1}
\end{align*}
and note that \(\|Y_{\ell,i}\|=1\). Thus,  it follows that
\[
 \|(Y_{\ell,i})_{V_1'}\| \leq C_{V,V_1}\norm{Y_{\ell,i}}=C_{V,V_1}.
\]
Then we get \eqref{for:260}. \eqref{for:261} and \eqref{for:260} implies the result.

\end{proof}

\subsubsection{Proof of Lemma \ref{po:3}}\label{sec:22} We recall that:
 \begin{itemize}
   \item $\mathfrak{p}_1(V)=\mathfrak{p}_1(F)$ (see \eqref{for:44} of Section \ref{sec:31});
   \item and $\mathfrak{p}_1: V_{\mathcal{E}^1}\to \mathfrak{n}/[\mathfrak{n},\mathfrak{n}]$ (see Section \ref{sec:42}) is an isomorphism.
 \end{itemize}
Thus, we have:
\begin{enumerate}

  \item\label{for:63} For any $1\leq j\leq \tau$, there is a unique $u_j\in F$ such that $\mathfrak{p}_1(v_j)=\mathfrak{p}_1(u_j)$.

\item Since $\{v_1,\cdots, v_{\tau}\}$ spans $V$, $\{u_1,\cdots, u_{\tau}\}$ spans $F$. Hence, we may assume without loss of generality that $\{u_1,\cdots, u_{\dim F}\}$ is a basis of $F$.
\end{enumerate}
\eqref{for:63} shows that there exist $q_j\in [\mathfrak{n},\mathfrak{n}]$, $1\leq j\leq \tau$ such that $v_j=u_j+q_j$,
which gives
\begin{align}\label{for:262}
 \delta_{\mathcal{O}_\ell}(u_{j})=\delta_{\mathcal{O}_\ell}(v_{j}).
\end{align}
This together with Lemma \ref{le:15} shows that,  to prove Lemma \ref{po:3}, it suffices to prove the following:
\begin{lemma}\label{le:6} for any $\ell\in \Sigma$, there exists
an index $1\leq j\leq \dim F$ such that
\begin{align*}
  \delta_{\mathcal{O}_\ell}(u_{j})^{-1}\norm{\xi}\leq C_{v_1,\cdots,v_{\tau}}\big\|\xi\big\|_{\exp(E^{[1]}),\dim E}
\end{align*}
for any $\xi\in W^{\exp(E^{[1]}),\,\dim E}(\mathcal{H}_{\ell})$.

\end{lemma}
\begin{proof}

Let $\lambda\in \mathcal{O}_\ell$ and  let $\{W_1,\cdots,W_{\dim E} \}$ be an integer basis  of $E$.  Since $F$ is a subspace of $E$,
for the basis $\{u_1,\cdots, u_{\dim F}\}$ of $F$, we can write
\begin{align*}
 u_j=x_{j,1}W_1+x_{j,2}W_2\cdots+x_{j,\dim E}W_{\dim E},\qquad 1\leq j\leq \dim F.
\end{align*}
Since $F$ is also Diophantine, for any $z=(z_1,\cdots,z_{n_1})\in\ZZ^{\dim E}$, we have
\begin{align}\label{for:263}
 \sum_{j=1}^{\dim F}|z\cdot u_j|:=\sum_{j=1}^{\dim F}\big|\sum_{i=1}^{\dim E}z_i x_{j,i}\big|\geq C_{v_1,\cdots,v_{\dim F}}\norm{z}^{-\dim E}.
\end{align}
\emph{Step 1: We show that: } for any $Y\in \mathcal{E}^{k-1}$
\begin{align}\label{for:64}
  z_Y:=\big(B_\lambda(W_1,Y),\cdots,B_\lambda(W_{\dim E},Y)\big)
\end{align}
lies inside $\ZZ^{\dim E}$.

\smallskip
We recall that $\mathcal{E}^{k-1}\subset \mathfrak{n}_{k-1}$. Then for any $Y\in \mathcal{E}^{k-1}$ and any $1\leq i\leq \dim E$, we have $[W_i,\,Y]\in \mathfrak{n}_k$.
It follows from the Baker-Campbell-Hausdorff formula, we have
\begin{align*}
\exp[W_i,\,Y]= [\exp W_i,\exp Y]\in Z(\Gamma).
\end{align*}
This shows that $[W_i,\,Y]\in \log Z(\Gamma)$. Since $\lambda$ is weakly integral (see \eqref{for:56} of Section \ref{for:62}), we have
\begin{align*}
 B_\lambda(W_i,\,Y)= \lambda [W_i,\,Y]\in \ZZ,\quad \forall\,Y\in \mathcal{E}^{k-1},\quad \forall\, 1\leq i\leq \dim E.
\end{align*}
Then we get the result.

\smallskip

\emph{Step 2: We show that: } there exists $Y_0\in \mathcal{E}^{k-1}$  such that
\begin{align*}
 z_{Y_0}\in \ZZ^{\dim E}\backslash \{0\}.
\end{align*}
We recall that $\mathcal{E}^{k-1}\cup \mathcal{E}^{k}$ form a basis of $\mathfrak{n}_{k-1}$ and $\mathcal{E}^{k}$ form a basis of $\mathfrak{n}_{k}$ (see Section \ref{sec:42}). It is clear that
\begin{align*}
 B_\lambda(W_i,\,v)=0,\qquad \forall\, v\in \mathfrak{n}_{k}.
\end{align*}
If for all $Y\in \mathcal{E}^{k-1}$ and and all $1\leq i\leq \dim E$, we have
\begin{align*}
 B_\lambda(W_i,\,Y)=0,
\end{align*}
then the above discussion implies that
\begin{align*}
 B_\lambda(W_i,\,v)=0,\qquad \forall\, v\in \mathfrak{n}_{k-1}.
\end{align*}
Thus, $\lambda|_{E^{[1]}}$ is identically zero (recall that $E^{[1]}=[E, \mathfrak{n}_{k-1}]$),
contradicting the assumption that $\lambda$ respects $E^{[1]}$ (see \eqref{for:55} of Section \ref{for:62}). Then the claim  is proved.

\smallskip

\emph{Step 3: We show that: } there exists $1\leq j_0\leq \dim F$ such that
\begin{align}\label{for:38}
 \delta_\mathcal{O}(u_{j_0})\geq C_{v_1,\cdots,v_{\tau}}\norm{z_{Y_0}}^{-\dim E}.
\end{align}
In fact, we have
\begin{align*}
 \sum_{j=1}^{\dim F}\delta_\mathcal{O}(u_j)&\geq \sum_{j=1}^{\dim F}\delta_\mathcal{O}\big(u_j, \frac{Y_0}{\norm{Y_0}}\big)=\norm{Y_0}^{-1}\sum_{j=1}^{\dim F}|B_\lambda(u_j, Y_0)|\\
 &\overset{\text{(a)}}{=}\norm{Y_0}^{-1}\sum_{j=1}^{\dim F}|z_{Y_0}\cdot u_j|\overset{(b)}{\geq} C_{u_1,\cdots,u_{\dim F}}\norm{z_{Y_0}}^{-\dim E}.
\end{align*}
Here in $(a)$ we recall \eqref{for:64}; in $(b)$ we use \eqref{for:263}. The above inequality implies \eqref{for:38}.

We recall that $u_j$ is dependent on $v_j$, $1\leq j\leq \dim F$. Then we get the result.

\smallskip
\emph{Step 4 } Finally, set
\begin{align*}
\mathcal{P}=-(2\pi)^{-2}\sum_{i=1}^{\dim E}[W_i,Y_0]^2\in \mathcal{U}(E^{[1]}),
\end{align*}
where $\mathcal{U}(E^{[1]})$ denotes the universal enveloping algebra of $E^{[1]}$. Since
$[W_i,Y_0]\in Z(\mathfrak{n})$, $1\leq j\leq \dim E$,  we have
\begin{align*}
 \mathcal{P}(\xi)=\norm{z_{Y_0}}^2\xi,\qquad \forall\,\xi\in \mathcal{H}_{\ell}.
\end{align*}
It follows from \eqref{for:38} that
\begin{align*}
  \delta_\mathcal{O}(u_{j_0})^{-1}\norm{\xi}&\leq C_{u_1,\cdots,u_{\dim F}}\norm{z_{Y_0}}^{\dim E}\norm{\xi}=C_{u_1,\cdots,u_{\dim F}}\big\|\mathcal{P}^{\frac{\dim E}{2}}(\xi)\big\|\\
  &\leq C_{u_1,\cdots,u_{\dim F},1}\norm{\xi}_{\exp(E^{[1]}),\dim E}.
\end{align*}
for any $\xi\in W^{\exp(E^{[1]}),\,\dim E}(\mathcal{H}_{\ell})$. Then we complete the proof.
\end{proof}

\subsubsection{Proof of Proposition \ref{le:8}} We define a map
\begin{align*}
 \varsigma: \Sigma\to \{1,2,3,\cdots, \tau\}
\end{align*}
by assigning to each \(\ell\in \Sigma\) an index \(\varsigma(\ell)\) such that $v_{\varsigma(\ell)}$ is a vector  such that
\begin{align*}
 \tilde{\delta}_{\mathcal{O}_\ell}(v_{\varsigma(\ell)})=\max_{1\leq i\leq\tau} \tilde{\delta}_{\mathcal{O}_\ell}(v_{i}),\qquad \forall\,\ell\in \Sigma.
\end{align*}
\emph{Note}. Although this index $\varsigma(\ell)$ may not be unique for every \(\ell\), one may always choose one.

 \smallskip
It follows from Lemma \ref{po:3} that for any $\ell\in \Sigma$ and  any $\xi\in W^{\dim E,\,\exp(E^{[1]})}(\mathcal{H}_{\ell})$
\begin{align}\label{for:33}
 \tilde{\delta}_{\mathcal{O}_\ell}(v_{\varsigma(\ell)})^{-1}\norm{\xi}\leq C_{v_1,\cdots,v_{\tau},V_1}\big\|\xi\big\|_{\exp(E^{[1]}),\dim E}
\end{align}
Moreover, since \(\tilde{\delta}_{\mathcal O_\ell}(v_{\varsigma(\ell)})\) is defined as a maximum over the unit sphere in \(V_1\),
we may choose \(Y_\ell\in V_1\) with \(\|Y_\ell\|=1\) such that
\begin{align}\label{for:49}
   \tilde{\delta}_{\mathcal{O}_\ell}(v_{\varsigma(\ell)})=\delta_{\mathcal{O}_\ell}(v_{\varsigma(\ell)},Y_\ell),\qquad(\text{recall }\eqref{for:31}).
  \end{align}
 \eqref{for:33} and \eqref{for:49} show that
 \begin{align}\label{for:67}
  v_{\varsigma(\ell)}\in \mathfrak{n}\backslash \mathfrak{n}_{k-1}^\bot(\mathcal{O}_\ell)\quad (\text{see Section \ref{sec:5}})\quad\text{and}\quad\delta_{\mathcal{O}_\ell}(v_{\varsigma(\ell)},Y_\ell)>0.
 \end{align}
\eqref{for:20}: Recall the decomposition \eqref{for:266} of Section \ref{for:62}. We define formal vectors
\begin{align*}
  f_i=\oplus_{\ell\in \Sigma} (f_i)_\ell, \qquad 1\leq i\leq \tau,
\end{align*}
by setting,  for any $\ell\in \Sigma$
\[
  (f_i)_\ell =
  \begin{cases}
    (f_{E^{[1]},\bot})_\ell, &\text{if } \varsigma(\ell)=i, \\
    0,            &\text{otherwise}.
  \end{cases}
\]
 It is clear that
 \begin{align*}
  f_{E^{[1]},\bot}=\sum_{i=1}^{\tau}f_i\quad\text{and}\quad f_i\in L_{E^{[1]},\bot}, \quad 1\leq i\leq \tau.
 \end{align*}
 From the decomposition \eqref{for:267} of Section \ref{for:62}, for any $1\leq i\leq \tau$ we have
\begin{align}\label{for:251}
   \norm{f_i}_{\exp(V_2),\,\dim E+1}&\leq \norm{f_{E^{[1]},\bot}}_{\exp(V_2),\,\dim E+1}\leq \norm{f}_{\exp(V_2),\,\dim E+1}.
  \end{align}
 Then we get \eqref{for:20}.

\smallskip

\eqref{for:21}:   For any $1\leq i\leq \tau$ we solve the equation:
\begin{align}
  |v_{i}|^{r}\varphi_{i}=f_i
\end{align}
which is equivalent to solve:
\begin{align*}
|v_{i}|^{r}(\varphi_i)_\ell=(f_{i})_\ell, \qquad \ell\in \Sigma.
\end{align*}
If $(f_{i})_\ell=0$, let $(\varphi_i)_\ell=0$. Then it is clear that
\begin{align}\label{for:273}
|v_{i}|^{r}(\varphi_i)_\ell=(f_{i})_\ell
\end{align}
If $(f_{i})_\ell\neq 0$, by definition, $\varsigma(\ell)=i$. Then:
\begin{enumerate}
  \item \eqref{re:1} of Section \ref{sec:31}  and \eqref{for:55} of Section \ref{for:62} show that $\mathcal{O}_\ell$ has maximal rank;

  \smallskip
  \item \eqref{for:67} shows that $v_i\in \mathfrak{n}\backslash \mathfrak{n}_{k-1}^\bot(\mathcal{O}_\ell)$ and $\delta_{\mathcal{O}_\ell}(v_{i},Y_\ell)>0$.
  \end{enumerate}
These justify the application of Lemma \ref{le:7} to equation \eqref{for:273}: it has a solution $(\varphi_i)_\ell\in \mathcal{H}_\ell$ such that:
 \begin{align*}
 \norm{(\varphi_i)_\ell}_{\mathcal{H}_{\ell}}&\leq C_r\delta_{\mathcal{O}_\ell}(v_{i},Y_\ell)^{-1}\norm{Y_\ell(f_{i})_\ell}_{\mathcal{H}_{\ell}}+ C\norm{(f_{i})_\ell}_{\mathcal{H}_{\ell}}\\
 &\overset{\text{(1)}}{\leq} C_{v_1,\cdots,v_{\tau}, r}\,\norm{Y_\ell(f_{i})_\ell}_{\exp(E^{[1]}),\,\dim E,\,\mathcal{H}_{\ell}}+ C\norm{(f_{i})_\ell}_{\mathcal{H}_{\ell}}\notag\\
&\overset{\text{(2)}}{\leq} C_{v_1,\cdots,v_{\tau}, r}\,\norm{(f_{i})_\ell}_{\exp(V_2),\,\dim E+1, \,\mathcal{H}_{\ell}}+ C\norm{(f_{i})_\ell}_{\mathcal{H}_{\ell}}\notag\\
&\leq C_{v_1,\cdots,v_{\tau}, r,1}\,\norm{(f_{i})_\ell}_{\exp(V_2),\,\dim E+1,\,\mathcal{H}_{\ell}}.
\end{align*}
Here in $(1)$ we use  \eqref{for:33} and \eqref{for:49}; in $(2)$ we note that $Y_\ell\in V_1$. Set
\begin{align*}
  \varphi_i=\oplus_{\ell\in \Sigma}(\varphi_i)_\ell,\qquad 1\leq i\leq \tau.
\end{align*}
Consequently,  for any $1\leq i\leq \tau$, we have
\begin{align*}
 \norm{\varphi_i}^2&=\sum_{\ell \in \Sigma}(\norm{(\varphi_i)_\ell}_{\mathcal{H}_{\ell}})^2\leq \sum_{\ell \in \Sigma} C_{v_1,\cdots,v_{\tau}, r}\big(\norm{(f_{i})_\ell}_{\exp(V_2),\,\dim E+1,\mathcal{H}_{\ell}}\big)^2\\
 &= C_{v_1,\cdots,v_{\tau}, r}\big(\norm{f_{i}}_{\exp(V_2),\,\dim E+1}\big)^2\overset{\text{(1)}}{\leq}C_{v_1,\cdots,v_{\tau}, r}\big(\norm{f}_{\exp(V_2),\,\dim E+1}\big)^2.
\end{align*}
Here in $(1)$ we use \eqref{for:251}. It is clear that $\varphi_i\in L_{E^{[1]},\bot}$, $1\leq i\leq \tau$. Hence we get \eqref{for:21}.

\subsubsection{Proof of Lemma \ref{for:286}} Let $E= V_{\mathcal{E}^1}$. Then
$E^{[1]}=\mathfrak{n}_{k}$ (see \eqref{for:66} of Section \ref{sec:31}). Consider the decomposition of $L_{E^{[1]},\bot}$ into irreducibles as in \eqref{for:266}.
Let $V$ denote the subspace spanned by $v$. Since $V$ is Diophantine for $E$ of type $1$, there exists a
Diophantine subspace $F\subset E$ such that $\mathfrak p_1(F)=\mathfrak p_1(V)$. There is a unique $u\in F$ such that $\mathfrak{p}_1(v)=\mathfrak{p}_1(u)$ and thus $\delta_{\mathcal{O}_\ell}(u)=\delta_{\mathcal{O}_\ell}(v)$ for any $\ell\in \Sigma$ (see \eqref{for:262} of Section \ref{sec:22}). It follows from Lemma \ref{le:6} that $\delta_{\mathcal{O}_\ell}(u)\neq0$. Then $\delta_{\mathcal{O}_\ell}(v)\neq0$, which implies that $v\in \mathfrak{n}\backslash \mathfrak{n}_{k-1}^\bot(\mathcal{O}_\ell)$ (see Section \ref{sec:5}).
Fix $\ell\in \Sigma$. By \eqref{for:51} of Lemma \ref{le:7}, for any $r\geq \frac{1}{2}$, there exists  $\xi\in W^\infty(\mathcal{H}_{\ell})$ such that the fractional cohomological equation $|v|^r\varphi=\xi$ has no solution $\varphi\in \mathcal{H}_{\ell}$. Since $\xi\in L_{E^{[1]},\bot}$ and any constants belong to $\xi\in L_{E^{[1]},o}$, $\int_{\mathcal X} \xi\,d\varrho=0$. Hence we complete the proof.

\subsection{Proof of Theorem \ref{cor:6}}
   We consider the quotient \(N/(\exp(E)N_{i+1})\), which is simply connected of step \(i\) with descending central series $\mathfrak{n}_1/(E+\mathfrak{n}_{i+1}),\mathfrak{n}_2/(E+\mathfrak{n}_{i+1}),\cdots, \mathfrak{n}_{i}/(E+\mathfrak{n}_{i+1})$.
Let \(\mathfrak p:\mathfrak n\to\mathfrak n/(E+\mathfrak n_{i+1})\) be the projection. Let
\begin{align*}
 \mathcal{Y}_{i} := \big(N/(\exp(E)N_{i+1})\big)\bigl/ \bigl(\Gamma/((\exp(E)N_{i+1})\cap \Gamma)\bigr).
\end{align*}
Let $q:\mathcal U(\mathfrak n)\to\mathcal U(\mathfrak n/(E+\mathfrak n_{i+1}))$ denote the induced
algebra map and $\mathfrak{t}:\mathcal X\to\mathcal Y_i$ the quotient map.  Then:
\begin{enumerate}
\item $[\mathfrak{p}(V_{\mathcal{E}^1}), \mathfrak{p}(\mathfrak{n}_{i-1})]=\mathfrak{p}(\mathfrak{n}_{i})=:E^{[1]}$.

\item Because \(\xi\perp \mathrm{Inv}_{N_i}\), its descent \(\tilde\xi\) satisfies
\((\tilde\xi)_{E^{[1]},o}=0\), hence \((\tilde\xi)_{E^{[1]},\bot}=\tilde\xi\).

 \item $\mathfrak{p}(\mathcal{D})\subset \mathfrak{p}(\mathfrak{n}_1)$ is a Diophantine subspace of $\mathfrak{p}(V_{\mathcal{E}^1})$ of type $1$ and is spanned by $\{\mathfrak{p}(\mathfrak{u}_{1}),\cdots,\mathfrak{p}(\mathfrak{u}_{\dim \mathcal{D}})\}$.

\item \(\mathfrak p(\mathfrak n_{i-1})=\mathfrak p(U)+ \mathfrak p(Q)\) with
\(\mathfrak p(Q)=\{w\in \mathfrak p(\mathfrak n_{i-1}):[w,\mathfrak p(\mathcal D)]=0\}\).

\item\label{for:112} Any function $f\in \text{Inv}_{\exp(E)N_{i+1}}$ descends to \(\tilde f\in \mathcal L_i:=L^2(\mathcal Y_i,\lambda_i)\) with $\lambda_i:=(\mathfrak t)_*\varrho$,
   and \(q(\mathcal P)\tilde f=\widetilde{\mathcal P f}\) together with
   \begin{align*}
    \|\mathcal P f\|_{\mathcal H}=\|q(\mathcal P)\tilde f\|_{\mathcal L_i},\qquad \forall\,\mathcal P\in\mathcal U(\mathfrak n).
   \end{align*}
\end{enumerate}
Applying Proposition~\ref{le:8} on \(\mathcal Y_i\) with $E=\mathfrak{p}(V_{\mathcal{E}^1})$, \(E^{[1]}=\mathfrak p(\mathfrak n_i)\), $V=\mathfrak{p}(\mathcal{D})$, $V_1=\mathfrak p(U)$  and \(f_{E^{[1]},\bot}=\tilde\xi\), we have: for any $0<r<\frac{1}{2}$, there exist $\omega_{j,r}'\in\mathcal L_i$ such that
  \begin{align*}
   \sum_{j=1}^{\dim \mathcal{D}} |\mathfrak{p}(\mathfrak{u}_{j})|^{r}\,\omega_{j,r}=\tilde{\xi}
  \end{align*}
   with the estimates
  \begin{align*}
 \|\omega_{j,r}'\|_{\mathcal L_i}\ \le\ C_{\mathfrak{p}(U), \mathfrak{p}(\mathcal{D}), r}\norm{\tilde{\xi}}_{\exp(\mathfrak{p}(U)\oplus \mathfrak{p}(\mathfrak{n}_i)),\,\dim\mathfrak{n}+1,\mathcal L_i}.
\end{align*}
Pull back by \(\mathfrak t\): set \(\omega_{j,r}:=\omega'_{j,r}\circ \mathfrak t\). Then \( |\mathfrak p(\mathfrak u_j)|^r\) acts as \(|\mathfrak u_j|^r\) under pullback and the relevant Sobolev norms agree, yielding
\[
  \sum_{j=1}^{\dim\mathcal D} |\mathfrak u_j|^r\,\omega_{j,r}=\xi,\qquad
  \|\omega_{j,r}\|\ \le\ C_{U,\mathcal D,r}\,\big\|\xi\big\|_{\exp(U\oplus \mathfrak n_i),\,\dim\mathfrak n+1}.
\]

\section{Proof of Theorem \ref{th:14}}\label{sec:24} First, we show that for any $\xi\in W^\infty(\mathcal H)$,  there is a decomposition $\xi=\sum_{i=1}^k\xi_i$ with each $\xi_{i}$ is $N_{i+1}$-invariant and orthogonal to $\mathrm{Inv}_{N_i}$ and satisfying:
such that:
\begin{align}\label{for:87}
    \|\mathcal P(\xi_{i})\|\le \|\mathcal P \xi\|,\quad\forall\,\mathcal{P}\in \mathcal{U}(\mathfrak{n}).
    \end{align}
 \emph{Step 1: the inductive decomposition along the descending central series. }   Decompose $\xi$ with respect to $\TT_{\mathfrak n_k}$:
    \begin{align*}
   \xi=\xi_{\mathfrak{n}_k,\bot}+\xi_{\mathfrak{n}_k,o}:=\xi_{k}+\psi_k.
  \end{align*}
\eqref{for:54} of Lemma \ref{cor:2} shows  that $\xi_{k}$ is orthogonal to $\mathrm{Inv}_{N_k}$. Since constants are $N_k$-invariant, $\int_{\mathcal X}\xi_k\,d\varrho=0$, hence
$\int_{\mathcal X}\psi_k\,d\varrho=\int_{\mathcal X}\xi\,d\varrho=0$. From \eqref{for:83} and \eqref{for:84} of Section \ref{sec:7} we see that \eqref{for:87} is also satisfied for $i=k$ and
\begin{align*}
    \|\mathcal P(\psi_k)\|\le \|\mathcal P \xi\|,\quad\forall\,\mathcal{P}\in \mathcal{U}(\mathfrak{n}).
    \end{align*}
Assume inductively that for some $1\le j\le k$ we have $\xi=\sum_{i=j}^k \xi_i+\psi_j$,
where each $\xi_i$ is $N_{i+1}$-invariant and $\xi_i\perp \mathrm{Inv}_{N_i}$, \eqref{for:87} holds for all $i=j,\dots,k$, and
$\psi_j$ is $N_j$-invariant with
\begin{equation}\label{for:89}
  \|\mathcal P(\psi_j)\|\le \|\mathcal P\xi\|,\qquad \forall\,\mathcal P\in\mathcal U(\mathfrak n)\quad\text{and}\quad \int_{\mathcal X}\psi_j\,d\varrho=0.
\end{equation}
Pass to the quotient $N/N_j$ with space
$\mathcal{X}_{i} := (N/N_{j+1})\bigl/ \bigl(\Gamma/(\exp(\mathfrak{n}_{j+1})\cap \Gamma)\bigr)$, projection $\mathfrak p:\mathfrak n\to \mathfrak n/\mathfrak n_j$,   quotient map
$\mathfrak j:\mathcal X\to\mathcal X_j$ and $\varrho_j:=(\mathfrak j)_*\varrho$. Since $\psi_j\in \mathrm{Inv}_{N_j}$, it descends to $\widetilde\psi_j\in L^2(\mathcal X_j,\varrho_j)$.
Decompose $\widetilde\psi_j$ with respect to $\TT_{\mathfrak p(\mathfrak n_{j-1})}$:
\[
  \widetilde\psi_j
  =(\widetilde\psi_j)_{\mathfrak p(\mathfrak n_{j-1}),\bot}
   +(\widetilde\psi_j)_{\mathfrak p(\mathfrak n_{j-1}),o}.
\]
Pulling back,
\[
  \xi_{j-1}:=(\widetilde\psi_j)_{\mathfrak p(\mathfrak n_{j-1}),\bot}\circ \mathfrak j,\qquad
  \psi_{j-1}:=(\widetilde\psi_j)_{\mathfrak p(\mathfrak n_{j-1}),o}\circ \mathfrak j.
\]
Then $\xi_{j-1}$ is $N_j$-invariant and orthogonal to $\mathrm{Inv}_{N_{j-1}}$, and $\psi_{j-1}$ is $N_{j-1}$-invariant. Moreover, from \eqref{for:83} and \eqref{for:84} of Section \ref{sec:7} and \eqref{for:89} we see that \eqref{for:87} is satisfied for $i=j-1$ and
\begin{align*}
    \|\mathcal P(\psi_{j-1})\|\le \|\mathcal P \xi\|,\quad\forall\,\mathcal{P}\in \mathcal{U}(\mathfrak{n}).
    \end{align*}
Since $(\widetilde\psi_j)_{\mathfrak p(\mathfrak n_{j-1}),\bot}$ is orthogonal to constants on $\mathcal X_j$, we have
$\int_{\mathcal X_j}(\widetilde\psi_j)_{\mathfrak p(\mathfrak n_{j-1}),\bot}\,d\varrho_j=0$; because
$\int_{\mathcal X_j}\widetilde\psi_j\,d\varrho_j=0$, also
$\int_{\mathcal X_j}(\widetilde\psi_j)_{\mathfrak p(\mathfrak n_{j-1}),o}\,d\varrho_j=0$, hence
$\int_{\mathcal X}\psi_{j-1}\,d\varrho=0$. This closes the induction step.

Iterating down to $j=1$ yields $\xi=\sum_{i=1}^k \xi_i+\psi_1$, with
$\psi_1$ $N$-invariant and $\int_{\mathcal X}\psi_1\,d\varrho=0$.  Thus $\psi_1=0$.
Therefore $\xi=\sum_{i=1}^k \xi_i$ with the stated properties.

\smallskip
\noindent\emph{Step 2: splitting $\xi_i$ into $\xi_{i,1}+\xi_{i,2}$ and solving the two coboundary problems. }It follows from Theorem \ref{cor:3} that there is a decomposition $\xi_i=\xi_{i,1}+\xi_{i,2}$, $1\leq i\leq k$
such that for each $1\leq i\leq k$, the followings hold:
\begin{enumerate}
\item Both $\xi_{i,1}$ and $\xi_{i,2}$ are $N_{i+1}$-invariant and orthogonal to $\mathrm{Inv}_{N_i}$.

  \item\label{for:113} $\xi_{i,1}$ is $E_i$-invariant and $\xi_{2}$ is orthogonal to $\text{Inv}_{\exp(E_i)N_{i+1}}$, and
    \begin{align*}
    \|\mathcal P(\xi_{i,1})\|\le \|\mathcal P \xi_i\|\quad\text{and}\quad \|\mathcal P(\xi_{i,2})\|\le \|\mathcal P \xi_i\|,\quad\forall\,\mathcal{P}\in \mathcal{U}(\mathfrak{n}).
    \end{align*}
  \item For every $r>0$, there exist $\varpi_{i,j,r}\in\mathcal H$ such that
\begin{align}\label{for:85}
\sum_{j=1}^{\dim\mathcal{D}} |\mathfrak u_{i,j}|^{r}\,\varpi_{i,j,r}=\xi_{i,2}
\end{align}
with the estimates
\begin{align*}
 \|\varpi_{i,j,r}\|\ \le\ C_{r,V,\mathcal{D}}\,\|\xi_{i}\big\|_{\exp(V),\,s_i(r)},\qquad \forall\,1\le j\le \dim\mathcal D.
\end{align*}
If $r\in\NN$, then \eqref{for:85} holds with the fractional operators
replaced by integer powers of Lie derivatives: $\sum_{j=1}^{\dim\mathcal{D}} \mathfrak u_{j}^{r}\,\varpi_{j,r}=\xi_{i,2}$.

\end{enumerate}
We note that $\xi_{i,1}\in\text{Inv}_{\exp(E_i)N_{i+1}}$, $1\leq i\leq k$. Then $\xi_{1,1}\in\text{Inv}_{\exp(V_{\mathcal{E}^1})N_{2}}=\text{Inv}_{N}$ while also
$\xi_{1,1}\perp\mathrm{Inv}_N$, hence $\xi_{1,1}=0$. It follows from Theorem \ref{cor:6} that for  any $0<r<\frac{1}{2}$ and any $2\leq i\leq k$,
there exist $\omega_{i,j,r}\in\mathcal H$ such that
\begin{align*}
 \sum_{j=1}^{\dim\mathcal{D}} |\mathfrak{u}_{j}|^{r}\,\omega_{i,j,r}=\xi_{i,1}
\end{align*}
 with: for any $1\leq j\leq \dim\mathcal{D}$
 \begin{align*}
  \|\omega_{i,j,r}\|\ \le\ C_{U, \mathcal{D}, r}\norm{\xi_{i,1}}_{\exp(U\oplus \mathfrak{n}_i),\,\dim\mathfrak{n}+1}\overset{\text{(*)}}{\leq} C_{U, \mathcal{D}, r}\norm{\xi_{i}}_{\exp(U\oplus \mathfrak{n}_i),\,\dim\mathfrak{n}+1}.
 \end{align*}
 Here in $(*)$ we use \eqref{for:113}. Combining these identities for $\xi_{i,1}$ and $\xi_{i,2}$ (together with $\xi_{1,1}=0$)
yields the decomposition and the estimates stated in Theorem~\ref{th:14}.

\section{Exponential order-$2$ mixing}\label{sec:34}

\subsection{Notation} We use the notation collected in Section \ref{sec:38}. In particular, we retain the notation for $(\pi,\mathcal H)$, the quantities $\rho$, $\chi$ and $\epsilon$,
automorphisms of rational and irrational type, the spaces $W^{s,H}(\mathcal H)$, and the subgroups $H_{-,a}$, $H_{+,a}$, $H_{-0,a}$, and $H_{+0,a}$.

 For the proof of Theorem \ref{th:9}, we also use the notation from Section \ref{sec:38} concerning the descending central series $\mathfrak n_i$ and the corresponding subgroups $N_i$, the quantities $s_i(r)$ and $s(r)$, $\Gamma$-rational subspaces, the spaces $V_{\mathcal E^j}$, the vectors $E_i^j$, Diophantine subspaces of type $i$, the projections $\mathfrak p_i$, the spaces $W_{+,a}$, $W_{-,a}$, $W_{0,a}$,  the invariant spaces $\text{Inv}_{H}$ and $\mathfrak{u}$-invariance.

\subsection{Main results} Suppose $a\in\text{Aut}(\mathcal{X})$ is ergodic.

\begin{theorem}\label{th:9} For any  $r>0$,  any $m\in\ZZ$ and any
\begin{align*}
 \psi\in W^{\max\{r,\frac{1}{2}\},\,H_{-,a^m}}(\mathcal H),\qquad \xi\in W^{\max\{s(r),\,\dim\mathfrak n+1\},\,H_{+0,a^m}}(\mathcal H),
\end{align*}
we have
\begin{align*}
 \big|\langle \psi\circ a^m,\xi\rangle\big|&\leq C_{r,\epsilon}e^{-(\chi-\epsilon)\abs{m}r} \|\psi\|_{H_{-,a^m},r}\norm{\xi}_{H_{+0,a^m},\,s(r)}\notag\\
 &+\delta C_{\epsilon}e^{-(\frac{1}{2}\rho-\epsilon)|m|}
\norm{\psi}_{H_{-,a^m},\,\frac{1}{2}}\big\|\xi\big\|_{H_{+0,a^m},\,\dim\mathfrak n+1}
\end{align*}
where $\delta=0$ if $a$ is of irrational type and $\delta=1$ otherwise.
\end{theorem}
\begin{remark} We note that
\[
 (H_{+0,a^m},H_{-,a^m})=
 \begin{cases}
   (H_{+0,a},\,H_{-,a}), & m\ge 0,\\[2pt]
   (H_{-0,a},\,H_{+,a}), & m<0.
 \end{cases}
\]
\end{remark}

\begin{corollary}\label{cor:8} For any  $0<s<1$, there exists $\gamma(s)>0$ (see \eqref{for:362}) such that for any $m\in\ZZ$ and any
\begin{align*}
 \psi\in W^{s,\,H_{-,a^m}}(\mathcal H),\qquad \xi\in W^{s,\,H_{+0,a^m}}(\mathcal H),
\end{align*}
we have
\begin{align*}
 \big|\langle \psi\circ a^m,\xi\rangle\big|&\leq C_{s}e^{-\gamma\abs{m}} \|\psi\|_{H_{-,a^m},s}\norm{\xi}_{H_{+0,a^m},\,s}.
 \end{align*}

\end{corollary}

\begin{remark}
Corollary \ref{cor:8} shows that partially hyperbolic algebraic actions have exponential mixing for partial $s$-H\"older vectors. We give the explicit dependence between $s$ and $\gamma$.

\end{remark}

\begin{remark}
The statements of Theorem \ref{th:9} and Corollary \ref{cor:8} remain valid if $|\langle \psi\circ a^m,\xi\rangle|$
is replaced by $\left|\int_{\mathcal X}\psi\circ a^m\cdot \xi\,d\varrho\right|$.
Indeed, one simply applies the result to $\bar\xi$, using
\[
\langle \psi\circ a^m,\xi\rangle
=
\int_{\mathcal X}\psi\circ a^m\cdot \bar\xi\,d\varrho
\qquad\text{and}\qquad
\|\bar\xi\|_{H_{+0,a^m},s}=\|\xi\|_{H_{+0,a^m},s}.
\]
\end{remark}

\subsection{Proof strategy for Theorem~\ref{th:9}}
The first step is to \emph{match the correct Diophantine directions} with the
splitting of the $da$-action along the descending central series:
for each $i$ we isolate the $\Gamma$-rational subspace $E_i\subset V_{\mathcal E^i}$
and then choose Diophantine subspaces
\begin{gather*}
 \mathcal D\subset W_{+,a}\cap\!\!\bigoplus_{\chi\ge \rho}\!\mathds L_\chi,
\qquad
\mathcal D_i\subset W_{+,a}\cap \mathfrak n_i,\quad \text{together with }\\
V_i\subset \mathfrak n_i\cap (W_{-,a}\oplus W_{0,a}),\quad Q_i\supseteq \mathfrak{n}_{i-1}\cap (W_{0,a}\oplus W_{+,a}),\quad U_i\subseteq \mathfrak{n}_{i-1}\cap W_{-,a},
\end{gather*}
so that
$\mathfrak p_i(E_i)=\mathfrak p_i(V_i\oplus \mathcal D_i)$ and
$\mathfrak n_{i-1}=U_i\oplus Q_i$ (see Lemma~\ref{le:24}).
With these choices, Theorem~\ref{th:14} applies to decompose
$\xi=\sum_{i=1}^k(\xi_{i,1}+\xi_{i,2})$ where $\xi_{i,1}$ and $\xi_{i,2}$ solve
\emph{fractional coboundary} equations along $\mathcal D$ and $\mathcal D_i$,
respectively, and the corresponding transfer functions enjoy \emph{partial
Sobolev} bounds (in particular, controlled by
$\|\xi\|_{H_{-0,a},\,\dim\mathfrak n+1}$ and $\|\xi\|_{H_{-0,a},\,s(r)}$).

To pass from solvability to decay of correlations, we write each piece as
\[
\xi_{i,1}=\sum_j |\mathfrak u_j|^{\frac12-\varepsilon}\,\omega_{i,j,\varepsilon},
\qquad
\xi_{i,2}=\sum_j |\mathfrak u_{i,j}|^{r}\,\varpi_{i,j,r},
\]
move the fractional operators to the test function using unitarity and
self-adjointness, and then conjugate them through $U_m$ via
\eqref{for:169} of Lemma \ref{le:17} (see the proof).
Since $\mathcal D,\mathcal D_i\subset W_{+,a}$, the contraction of $(da)^{-m}$
on $W_{+,a}$ gives exponential bounds
$c_{m,u}\ll e^{-(\text{Lyap}-\varepsilon)m}$ for $u\in\mathcal D\cup\mathcal D_i$.
Combining these with the partial-norm estimates for
$\omega_{i,j,\varepsilon}$ and $\varpi_{i,j,r}$ yields the two-term estimate in
Theorem~\ref{th:9}, with the $\delta$-term appearing only in the rational-type
case (where the $\xi_{i,1}$ contributions may be nonzero).

\subsection{Proof of Theorem \ref{th:9}}
 We treat $m<0$; the case $m>0$ follows by replacing $a$ with $a^{-1}$ and
interchanging the stable/unstable objects. We have a decomposition of $\mathfrak{n}$ into $da$-invariant $\Gamma$-rational  subspaces
\[
  \mathfrak{n}=\mathfrak{n}^{(1)}\oplus\mathfrak{n}^{(2)},
\]
where $da|_{\mathfrak{n}^{(1)}}$ has no eigenvalues that are roots of unity and $da|_{\mathfrak{n}^{(2)}}$ has only roots of unity. If $a$ is of irrational type, then $\mathfrak{n}^{(2)}=0$.

Let $\chi_{1}<\cdots<\chi_{l}$ be the Lyapunov exponents of $da$ on $\mathfrak{n}$ and let
\[
 \mathfrak{n}=\mathds{L}_{\chi_1}\oplus\cdots \oplus\mathds{L}_{\chi_l}
\]
be the corresponding Lyapunov subspace decomposition.

For any $1\le i\le k$, \(da\) preserves \(\mathfrak n_i\) and the lattice $\log \Gamma$.
Hence \(da\) induces an automorphism on \(\mathfrak{p}_i(\mathfrak n_i)\), as well as on \(\mathfrak{p}_i(\mathfrak n_i\cap \mathfrak{n}^{(1)})\) and
\(\mathfrak{p}_i(\mathfrak n_i\cap \mathfrak{n}^{(2)})\). Recall that the restriction $\mathfrak{p}_{i}|_{V_{\mathcal{E}^i}}: V_{\mathcal{E}^i}\to \mathfrak{p}_i(\mathfrak n_i)$ is an isomorphism. Under this isomorphism, there is a unique $\Gamma$-rational subspace $E_i\subseteq V_{\mathcal{E}^i}$ such that
\begin{align}\label{for:107}
 \mathfrak{p}_i(E_i)=\mathfrak{p}_i(\mathfrak n_i\cap \mathfrak{n}^{(1)}).
\end{align}
The restriction of $da$ on \(\mathfrak{p}_i(\mathfrak n_i\cap \mathfrak{n}^{(1)})\) induces an automorphism $M_i$ on $E_i$, which is represented by an integer matrix, under a $\ZZ$-basis of the lattice $E_i\cap \operatorname{span}_{\ZZ}\{E_j^i\}$ (see Section~\ref{sec:42}). By Lemma \ref{le:9}, the subspaces
\begin{align}\label{for:105}
 \mathcal{L}_{\text{block}\max,\,M_i},\quad \mathcal{L}_{\text{block}\min,\,M_i}, \quad W_+(M_i), \quad W_-(M_i)
\end{align}
are Diophantine subspaces of $E_i$.
\begin{lemma}\label{le:24} We prove the following facts:
\begin{enumerate}

\item [$(\mathcal{O}_0)$]\namedlabel{for:119}{$\mathcal{O}_0$} If $a$ is of irrational type, then $E_i=V_{\mathcal{E}^i}$, $1\leq i\leq k$.

  \item [$(\mathcal{O}_1)$]\namedlabel{for:102}{$\mathcal{O}_1$} There exists a subspace $\mathcal{D}\subseteq (\oplus_{\chi_i\geq \rho}\mathds{L}_{\chi_i})\cap W_{+,a}$ such that  $\mathcal{D}$ is a Diophantine subspace for $E_1=V_{\mathcal{E}^1}$ of type $1$.
  \item [$(\mathcal{O}_2)$]\namedlabel{for:103}{$\mathcal{O}_2$} For each $1\leq i\leq k$ there exists a subspace $\mathcal{D}_i\subseteq W_{+,a}\cap \mathfrak{n}_i$ such that
       $\mathcal{D}_i$ is a Diophantine subspace of $E_i$ of type $i$.

  \item [$(\mathcal{O}_3)$]\namedlabel{for:106}{$\mathcal{O}_3$} For each $1\leq i\leq k$, there exists a subspace $V_i$ of $\mathfrak{n}_i\cap (W_{-,a}\oplus W_{0,a})$,  such that $\mathfrak{p}_i(E_i)=\mathfrak{p}_i(V_i\oplus \mathcal{D}_i)$.

  \item [$(\mathcal{O}_4)$] \namedlabel{for:104}{$\mathcal{O}_4$} Set $Q_i=\{w\in \mathfrak{n}_{i-1}:\ [w,\,\mathcal{D}]\in E_i+\mathfrak{n}_{i+1}\}$. Then there exists a subspace $U_i\subseteq \mathfrak{n}_{i-1}\cap W_{-,a}$ such that
        $\mathfrak{n}_{i-1}=U_i\oplus Q_i$.

            \item [$(\mathcal{O}_5)$] \namedlabel{for:111}{$\mathcal{O}_5$} For any $1\leq i\leq k$ and any $\varphi\in C^\infty(\mathcal X)$ which is $\exp(E_i)N_{i+1}$-invariant, we have
            \begin{align*}
            \norm{\varphi}_{\exp(U_i\oplus \mathfrak{n}_i),\,s}\leq \norm{\varphi}_{H_{-0,a},\,s}, \quad \forall\,s\geq0.
            \end{align*}

\end{enumerate}
\end{lemma}
\begin{proof}
\eqref{for:119}: This is clear from definition.

\eqref{for:102}: Firstly, we note that $E_1=V_{\mathcal{E}^1}$ as $da|_{\mathfrak{p}_1(\mathfrak n_1)}$ has no eigenvalues that are roots of unity.
We note that for each $i$ the Lyapunov decomposition of $da|_{\mathfrak{p}_i(\mathfrak n_i)}$ is
\[
 \mathfrak{p}_i(\mathfrak{n})=\mathfrak{p}_i(\mathds{L}_{\chi_1}\cap \mathfrak{n}_i)\oplus\cdots \oplus\mathfrak{p}_i(\mathds{L}_{\chi_l}\cap \mathfrak{n}_i).
\]
Moreover, $\mathfrak{p}_1(\mathcal{L}_{\text{block}\max,\,M_1})$ is a sum of positive Lyapunov subspaces of $da|_{\mathfrak{p}_1(\mathfrak n_1)}$ with Lyapunov exponents $\geq \rho$.
Since \(\mathfrak{p}_1(W_{+,a}\cap \mathfrak n_1)\) is precisely the sum of the positive Lyapunov subspaces of
\(da|_{\mathfrak{p}_1(\mathfrak n_1)}\), we can choose
\[
  \mathcal{D}\subseteq \Bigl(\bigoplus_{\chi\ge \rho}\mathds{L}_{\chi}\Bigr)\cap W_{+,a}
  \quad\text{with}\quad
  \mathfrak{p}_1(\mathcal{D})=\mathfrak{p}_1(\mathcal{L}_{\text{block}\max,M_1}).
\] Since $\mathcal{L}_{\text{block}\max,\,M_1}$ is
Diophantine in $E_1$ (see \eqref{for:105}), $\mathcal{D}$ is a Diophantine subspace of $E_1=V_{\mathcal{E}^1}$ of type $1$.

\eqref{for:103}: We note that for each $i$,
\begin{align*}
 \mathfrak{p}_i(W_+(M_i))=\mathfrak{p}_i(W_{+,a}\cap \mathfrak n_i\cap \mathfrak{n}^{(1)}).
\end{align*}
For each $i\ge1$ we can choose $\mathcal{D}_i\subseteq W_{+,a}\cap \mathfrak n_i\cap \mathfrak{n}^{(1)}$ so that
$\mathfrak{p}_i(\mathcal{D}_i)=\mathfrak{p}_i(W_+(M_i))$. Since $W_+(M_i)$ is Diophantine in $E_i$ (see \eqref{for:105}), $\mathcal{D}_i$ is of type $i$.

\eqref{for:106}: We note that
\begin{align*}
 \mathfrak{p}_{i}(\mathfrak n_i\cap \mathfrak{n}^{(1)})=\mathfrak{p}_i(W_{+,a}\cap \mathfrak n_i\cap \mathfrak{n}^{(1)})\oplus \mathfrak{p}_i\big((W_{-,a}\oplus W_{0,a})\cap \mathfrak n_i\cap \mathfrak{n}^{(1)}\big),\quad 1\leq i\leq k.
\end{align*}
For each $i\ge1$ we can choose $V_i\subseteq W_{-,a}\oplus W_{0,a}\cap \mathfrak n_i\cap \mathfrak{n}^{(1)}$ so that $\mathfrak{p}_{i}(\mathfrak n_i\cap \mathfrak{n}^{(1)})=\mathfrak{p}_i(V_i\oplus \mathcal{D}_i)$. This together with \eqref{for:107} gives $\mathfrak{p}_i(E_i)=\mathfrak{p}_i(V_i\oplus \mathcal{D}_i)$.

\eqref{for:104}:
First note that
\[
  \mathfrak{p}_{i}(\mathfrak{n}_i)=\mathfrak{p}_{i}(\mathfrak{n}^{(2)}\cap \mathfrak{n}_i)\oplus \mathfrak{p}_{i}(E_i).
\]
Then $da\big|(\mathfrak n_i/\mathfrak n_{i+1})/((E_i+\mathfrak{n}_{i+1})/\mathfrak{n}_{i+1})$ has only roots of unity.
We also note that
\[
         (\mathfrak n_i/\mathfrak n_{i+1})/((E_i+\mathfrak{n}_{i+1})/\mathfrak{n}_{i+1})= \mathfrak{n}_i/(E_i+\mathfrak n_{i+1}).
        \]
We see that: $(*)$ $da$ on $\mathfrak{n}_i/(E_i+\mathfrak n_{i+1})$ has only roots of unity (so all Lyapunov exponents are $0$).

Let \(\mathfrak p:\mathfrak n\to\mathfrak n/(E_i+\mathfrak n_{i+1})\) be the projection. The Lyapunov decomposition descends:
\[
  \mathfrak{p}(\mathfrak{n})=\mathfrak{p}(\mathds{L}_{\chi_1})\oplus\cdots \oplus\mathfrak{p}(\mathds{L}_{\chi_l}).
\]
Using weight additivity of brackets,
\[
  [\mathfrak{p}(\mathds{L}_{\chi_i}),\,\mathfrak{p}(\mathds{L}_{\chi_j})]
  \subseteq
  \begin{cases}
    \{0\}, & \chi_i+\chi_j\ \text{is not a Lyapunov exponent},\\
    \mathfrak{p}(\mathds{L}_{\chi_m}), & \chi_i+\chi_j=\chi_m.
  \end{cases}
\]
Since $\mathcal{D}\subset W_{+,a}$, $\mathfrak p(\mathcal{D})$ only has positive Lyapunov exponents, while on $\mathfrak p(\mathfrak n_i)$ only the exponent $0$ occurs (see $(*)$). Hence
 \[
  [\mathfrak{p}(\mathcal{D}),\,\mathfrak{p}(\mathfrak{n}_{i-1}\cap (W_{0,a}\oplus W_{+,a}))]=0.
\]
Thus $Q_i\supseteq \mathfrak{n}_{i-1}\cap (W_{0,a}\oplus W_{+,a})$ and we may choose a complementary
subspace $U_i\subseteq \mathfrak{n}_{i-1}\cap W_{-,a}$ with $\mathfrak{n}_{i-1}=U_i\oplus Q_i$.


\eqref{for:111}: We recall notations in the proof of Corollary \ref{cor:6}. Since $\varphi$ is
$\exp(E_i)N_{i+1}$-invariant, it descends to a function $\tilde{\varphi}$ on $\mathcal{Y}_{i}$. Then we have
 \begin{align*}
  \norm{\varphi}_{\exp(U_i\oplus \mathfrak{n}_i),\,s,\,\mathcal{H}}&\overset{\text{(1)}}{=} \|\tilde\varphi\|_{\exp\big(\mathfrak p(U_i)\oplus \mathfrak p(\mathfrak n_i)\big),\,s,\,\mathcal{L}_i}
  \overset{\text{(2)}}{\leq}\;
  \|\tilde\varphi\|_{\exp\big(\mathfrak p(W_{-,a}\oplus W_{0,a})\big),\,s,\,\mathcal{L}_i}
  \\
  &\overset{\text{(1)}}{=}\;
  \|\varphi\|_{H_{-0,a},\,s,\,\mathcal{H}}.
 \end{align*}
Here in $(1)$ we use \eqref{for:112} in the proof of Corollary \ref{cor:6}; in $(2)$ from $(*)$ we have \(\mathfrak p(\mathfrak n_i)\subseteq \mathfrak p(W_{0,a})\). Also \(U_i\subseteq \mathfrak n_{i-1}\cap W_{-,a}\), hence \(\mathfrak p(U_i)\subseteq \mathfrak p(W_{-,a})\).
Therefore $\mathfrak p(U_i)\oplus \mathfrak p(\mathfrak n_i)\;\subseteq\;\mathfrak p(W_{-,a}\oplus W_{0,a})$.
This gives $(2)$.
\end{proof}

For $1\leq i\leq k$, we fix a basis $\{\mathfrak{u}_{i,1},\cdots,\mathfrak{u}_{i,\dim \mathcal{D}_i}\}$ of $\mathcal{D}_i$. We also fix a basis $\{\mathfrak{u}_{1},\cdots,\mathfrak{u}_{\dim \mathcal{D}}\}$ of $\mathcal{D}$. From Lemma \ref{le:24},  we see that
  \begin{align}\label{for:118}
   \mathfrak u_{i,j}\in \mathcal{D}_i\subseteq W_{+,a}\quad\text{and}\quad \mathfrak u_{j}\in \mathcal{D}\subseteq (\oplus_{\chi_i\geq \rho}\mathds{L}_{\chi_i})\cap W_{+,a},
  \end{align}
  which are both invariant under $da$ and $(da)^{-1}$, then $\widetilde{\mathfrak u_{i,j}}$ and $\widetilde{\mathfrak u_{j}}$ (recall \eqref{for:169} of Lemma \ref{le:17}) are inside $W_{-,a^{-1}}$. Moreover,
since $m<0$,
\begin{align}\label{for:115}
 c_{m,\,\mathfrak u_{j}}=\|(da)^{m}(\mathfrak u_{j})\|\leq C_\epsilon e^{-(\rho-\epsilon)|m|},\quad c_{m,\,\mathfrak u_{i,j}}=\|(da)^{m}(\mathfrak u_{i,j})\|\leq C_\epsilon e^{-(\chi-\epsilon)|m|}
\end{align}
(see \eqref{for:59} of Section \ref{sec:33}).

\eqref{for:102} to  \eqref{for:104} allow us to apply Theorem \ref{th:14} to obtain:
there is a decomposition $\xi=\sum_{i=1}^k\xi_i$ with $\xi_i=\xi_{i,1}+\xi_{i,2}$
such that for any $r>0$ any sufficiently small $\epsilon>0$, we have:
 \begin{enumerate}

    \item\label{for:91} For each $1\leq i\leq k$,  the functions $\xi_{i,1}$ and $\xi_{i,2}$ are $N_{i+1}$\emph{-invariant} and orthogonal to $\mathrm{Inv}_{N_i}$, and for any $\mathcal{P}\in \mathcal{U}(\mathfrak{n})$, $\|\mathcal P(\xi_{i})\|\le \|\mathcal P \xi\|$.

    \item For each $1\leq i\leq k$,  $\xi_{i,1}$ is $E_i$-invariant  and for any $\frac{1}{2}-\epsilon$
there exist $\omega_{i,j,\epsilon}\in\mathcal H$ such that:
\begin{align}
 \sum_{j=1}^{\dim\mathcal{D}} |\mathfrak{u}_{j}|^{\frac{1}{2}-\epsilon}\omega_{i,j,\epsilon}=\xi_{i,1}; \label{for:92}
\end{align}
and for any $1\leq j\leq \dim\mathcal{D}$,
\begin{align}
\|\omega_{i,j,\epsilon}\|\ \le\ &C_{\epsilon}\norm{\xi_{i}}_{\exp(U_i\oplus \mathfrak{n}_i),\,\dim\mathfrak{n}+1}\overset{\text{(*)}}{\leq} C_{ \epsilon}\norm{\xi_{i}}_{H_{-0,a},\,\dim\mathfrak{n}+1}\overset{\text{(**)}}{\leq} C_{ \epsilon}\norm{\xi}_{H_{-0,a},\,\dim\mathfrak{n}+1}.\label{for:109}
\end{align}
Here in $(*)$ we use \eqref{for:111} of Lemma \ref{le:24}; in $(**)$ we use \eqref{for:91}.

    \item For each $1\leq i\leq k$, $\xi_{i,2}\,\bot \,\text{Inv}_{\exp(E_i)N_{i+1}}$, and for any $r>0$
there exist $\varpi_{i,j,r}\in\mathcal H$ such that
\begin{align}
 \sum_{j=1}^{\dim\mathcal{D}_i} |\mathfrak u_{i,j}|^{r}\,\varpi_{i,j,r}=\xi_{i,2}; \label{for:94}
\end{align}
and for any $1\leq j\leq \dim\mathcal{D}_i$
\begin{align}
\|\varpi_{i,j,r}\|&\le\ C_{r}\,\|\xi_{i,2}\big\|_{\exp(V_i),\,s_i(r)} \overset{\text{(*)}}{\leq} C_{r}\norm{\xi_{i}}_{H_{-0,a},\,s_i(r)}\overset{\text{(**)}}{\leq} C_{r}\norm{\xi}_{H_{-0,a},\,s_i(r)},\label{for:114}
\end{align}
where $s_i(r)= r(\dim\mathfrak n_i-\dim\mathfrak n_{i+1})$.  Here in $(*)$ we use \eqref{for:106} of Lemma \ref{le:24}; in $(**)$ we use \eqref{for:91}.

\end{enumerate}
Then for any $\psi\in W^\infty(\mathcal{H})$, write
\[
\langle \psi\circ a^m,\xi\rangle
\overset{\text{(0)}}{=}\sum_{i=1}^k \langle \psi\circ a^m,\xi_{i,2}\rangle
\;+\;\delta\sum_{i=1}^k \langle \psi\circ a^m,\xi_{i,1}\rangle
=: I_2+\delta I_1 .
\]
In $(0)$ from \eqref{for:119} of Lemma \ref{le:24} and  Remark \ref{re:5}, we see that if $a$ is of irrational type, then $\xi_{i,1}=0$, $1\leq i\leq k$.

Using \eqref{for:92} and \eqref{for:94} we expand
\[
I_1=\sum_{i=1}^k\sum_{j=1}^{\dim\mathcal D}
\Big\langle \psi\circ a^m,\;|\mathfrak u_j|^{\frac12-\epsilon}\omega_{i,j,\epsilon}\Big\rangle,
\qquad
I_2=\sum_{i=1}^k\sum_{j=1}^{\dim\mathcal D_i}
\Big\langle \psi\circ a^m,\;|\mathfrak u_{i,j}|^{r}\varpi_{i,j,r}\Big\rangle.
\]
Then we have
\begin{align*}
|I_1|&+\delta|I_2|\overset{\text{(1)}}{\leq}\delta\sum_{i=1}^k\sum_{j=1}^{\dim\mathcal{D}}\big|\langle |\mathfrak{u}_{j}|^{\frac{1}{2}-\epsilon}\psi\circ a^m, \,\omega_{i,j,\epsilon}\rangle\big|+\sum_{i=1}^k\sum_{j=1}^{\dim\mathcal{D}_i}\big|\langle |\mathfrak u_{i,j}|^{r}\psi\circ a^m, \,\varpi_{i,j,r}\rangle\big|\\
 &\overset{\text{(2)}}{=}\delta\sum_{i=1}^k\sum_{j=1}^{\dim\mathcal{D}}c_{m,\mathfrak{u}_{j}}^{\frac{1}{2}-\epsilon}\big|\langle (|\widetilde{\mathfrak{u}_{j}}|^{\frac{1}{2}-\epsilon}\psi)\circ a^m, \,\omega_{i,j,\epsilon}\rangle\big|+\sum_{i=1}^k\sum_{j=1}^{\dim\mathcal{D}_i}c_{m,\mathfrak u_{i,j}}^r\big|\langle (|\widetilde{\mathfrak u_{i,j}}|^{r}\psi)\circ a^m, \,\varpi_{i,j,r}\rangle\big|\\
 &\overset{\text{(3)}}{\leq}\delta\sum_{i=1}^k\sum_{j=1}^{\dim\mathcal{D}}c_{m,\mathfrak{u}_{j}}^{\frac{1}{2}-\epsilon}\big\||\widetilde{\mathfrak{u}_{j}}|^{\frac{1}{2}-\epsilon}\psi\big\|\cdot \|\omega_{i,j,\epsilon}\|+\sum_{i=1}^k\sum_{j=1}^{\dim\mathcal{D}_i}c_{m,\mathfrak u_{i,j}}^r\big\||\widetilde{\mathfrak u_{i,j}}|^{r}\psi \big\|\cdot \|\varpi_{i,j,r}\|\\
 &\overset{\text{(4)}}{\leq}\delta C_{\epsilon} e^{-(\frac{1}{2}-\epsilon)(\rho-\epsilon)|m|}
 \big\|\psi\big\|_{H_{+,a},\,\frac{1}{2}-\epsilon}\cdot \norm{\xi}_{H_{-0,a},\,\dim\mathfrak n+1}\\
 &+C_{r,\epsilon}e^{-(\chi-\epsilon)|m|r}\big\|\psi \big\|_{H_{+,a},\,r}\cdot \norm{\xi}_{H_{-0,a},\,s(r)}.
\end{align*}
We explain the steps as follows:
\begin{enumerate}
  \item [(i)] In $(1)$ we use \eqref{for:163} of Lemma \ref{le:17}.
  \item [(ii)] In $(2)$ we use \eqref{for:169} of Lemma \ref{le:17}.
  \item [(iii)] In $(3)$ we use Cauchy-Schwarz inequality and the fact that $\pi$ is unitary.
  \item [(iv)] In $(4)$: Firstly, we use \eqref{for:109} to estimate $\|\omega_{i,j,r}\|$ and \eqref{for:114} to estimate $\|\varpi_{i,j,r}\|$; and use \eqref{for:115} to estimate $c_{m,\mathfrak{u}_{j}}$ and $c_{m,\mathfrak{u}_{i,j}}$.  Secondly, we note that all $\mathfrak{u}_{j}$ and $\mathfrak{u}_{i,j}$ are inside $W_{+,a}$ (see \eqref{for:118}).

\end{enumerate}
\noindent\textit{Conclusion.} We recall that $W_{+,a}=W_{-,a^m}$ and $W_{-0,a}=W_{+0,a^m}$ for $m<0$.
After possibly shrinking \(\epsilon>0\), we have proved that for all \(\psi,\xi\in W^\infty(\mathcal H)\),
\begin{align*}
\big|\langle \psi\circ a^m, \,\xi\rangle\big|
&\le \delta\,C_{\epsilon}\,e^{-(\tfrac12\rho-\epsilon)\,|m|}\,
\|\psi\|_{H_{-,a^m},\,\tfrac12-\epsilon}\,\|\xi\|_{H_{+0,a^m},\,\dim\mathfrak n+1}\\
&\quad + C_{r,\epsilon}\,e^{-(\chi-\epsilon)\,|m|\,r}\,
\|\psi\|_{H_{-,a^m},\,r}\,\|\xi\|_{H_{+0,a^m},\,s(r)}.
\end{align*}
Since \(W^\infty(\mathcal H)\) is dense in \(W^{\max\{r,\frac{1}{2}\},\,H_{-,a^m}}(\mathcal H)\) and \(W^{\max\{s(r),\,\dim\mathfrak n+1\},\,H_{+0,a^m}}(\mathcal H)\), and the map
\((\psi,\xi)\mapsto \langle \psi\circ a^m,\xi\rangle\) is continuous with respect to these norms, the estimate extends to
\(\psi\in W^{\max\{r,\frac{1}{2}\},\,H_{-,a^m}}(\mathcal H)\) and \(\xi\in W^{\max\{s(r),\,\dim\mathfrak n+1\},\,H_{+0,a^m}}(\mathcal H)\).
This completes the proof.

\subsubsection{Proof of Corollary \ref{cor:8}}\label{sec:49} Set
\begin{align}\label{for:362}
 s_0=\dim\mathfrak n+1,\quad \rho_0=\min\{\frac{\chi}{2},\frac{\rho}{4}\}\quad \text{and}\quad \gamma(s)=\min\!\Big\{\frac{s\rho_0}{4s_0},\,\frac{\rho_0}{2}\Big\}.
\end{align}
We apply the smoothing operator $\mathfrak{s}_b$ (see Section \ref{sec:9}) to $\psi$ and $\xi$ for $(\pi|_{H_{+,a^m}},\,\mathcal{H})$ and $(\pi|_{H_{-0,a^m}},\,\mathcal{H})$ respectively. Then
 \begin{align*}
  \mathfrak{s}_b \psi\in W^{\infty,\,H_{-,a^m}}(\mathcal H)\quad\text{and}\quad \mathfrak{s}_b\xi\in W^{s,\,H_{+0,a^m}}(\mathcal H).
 \end{align*}
 It follows from Theorem \ref{th:9} that
\begin{align}\label{for:199}
 \big|\langle (\mathfrak{s}_b\psi)\circ a^m, \,\mathfrak{s}_b\xi\rangle\big|&\overset{\text{(1)}}{\leq} Ce^{-\rho_0\abs{m}} \|\mathfrak{s}_b\psi\|_{H_{-,a^m},s_0}\norm{\mathfrak{s}_b\xi}_{H_{+0,a^m},\,s_0}\notag\\
 &\overset{\text{(2)}}{\leq}C_{1}e^{-\rho_0\abs{m}} b^{2s_0}\|\psi\|\norm{\xi}.
 \end{align}
Here in $(1)$ we note that $s_0\ge \max\{1,\, s(1),\, \tfrac12\}$; in $(2)$ we use \eqref{for:197} of Section \ref{sec:9}.

On the other hand,
\begin{align}\label{for:201}
\big|\langle \psi\circ a^m, \,\xi\rangle\big|&=\big|\langle (\psi-\mathfrak{s}_b\psi)\circ a^m+(\mathfrak{s}_b\psi)\circ a^m, \,(\xi-\mathfrak{s}_b\xi)+\mathfrak{s}_b\xi\rangle\big|\notag\\
&\leq \big|\langle (\psi-\mathfrak{s}_b\psi)\circ a^m, \,\xi-\mathfrak{s}_b\xi\rangle\big|+\big|\langle (\psi-\mathfrak{s}_b\psi)\circ a^m, \,\mathfrak{s}_b\xi\rangle\big|\notag\\
&+\big|\langle (\mathfrak{s}_b\psi)\circ a^m, \,(\xi-\mathfrak{s}_b\xi)\rangle\big|+\big|\langle (\mathfrak{s}_b\psi)\circ a^m, \,\mathfrak{s}_b\xi\rangle\big|\notag\\
&\overset{\text{(1)}}{\leq} \|\psi-\mathfrak{s}_b\psi\|\|\xi-\mathfrak{s}_b\xi\|+\|\psi-\mathfrak{s}_b\psi\| \|\mathfrak{s}_b\xi\|\notag\\
&+\|\mathfrak{s}_b\psi\| \|\xi-\mathfrak{s}_b\xi\|+\big|\langle (\mathfrak{s}_b\psi)\circ a^m, \,\mathfrak{s}_b\xi\rangle\big|\notag\\
&\overset{\text{(2)}}{\leq}C_sb^{-2s}\|\psi\|_{H_{-,a^m},s}\norm{\xi}_{H_{+0,a^m},\,s}+C_sb^{-s}\|\psi\|_{H_{-,a^m},s}\norm{\xi}\notag\\
&+C_sb^{-s}\|\psi\|\norm{\xi}_{H_{+0,a^m},\,s}+Ce^{-\rho_0\abs{m}} b^{2s_0}\|\psi\|\norm{\xi}.
\end{align}
Here in $(1)$ we use Cauchy-Schwarz inequality and the fact that $\pi$ is unitary; in $(2)$ we use \eqref{for:197} and \eqref{for:198} of Section \ref{sec:9}  and \eqref{for:199}.

Let $b=e^{\frac{\rho_0}{4s_0}\abs{m}}$. Then \eqref{for:201} implies that
\begin{align*}
\big|\langle \psi\circ a^m, \,\xi\rangle\big|\leq C_se^{-\gamma\abs{m}}\|\psi\|_{H_{-,a^m},s}\norm{\xi}_{H_{+0,a^m},\,s}.
\end{align*}
Then we complete the proof.

\section{Higher order exponential mixing for rank-one actions}

\subsection{Notation} We use the notation collected in Section \ref{sec:38}. In particular, we retain the notation for $(\pi,\mathcal H)$, the quantities $\rho$ and $\chi$,
automorphisms of rational and irrational type, the quantity $s(r)$, the spaces $W^{s,H}(\mathcal H)$, and the subgroups $H_{-,a}$, $H_{+,a}$, $H_{-0,a}$, and $H_{+0,a}$.

\subsection{Main results} Suppose $a\in\text{Aut}(\mathcal{X})$ is ergodic.

\begin{theorem}\label{th:1} For any $r>0$, any $n\geq2$ and any $f_1,\cdots, f_n\in C^\infty(\mathcal{X})$ and any $z_1,\cdots, z_n\in\ZZ$, set $\gamma=\min_{1\leq i\neq j\leq n}\abs{z_i-z_j}$. Then:
\begin{enumerate}
  \item\label{for:124} With $\delta=0$ if $a$ is of irrational type and $\delta=1$ otherwise,
  \begin{align*}
   &\Big|\int_{\mathcal{X}} \Pi_{i=1}^{n} f_i\circ a^{z_i} \,d\varrho - \Pi_{i=1}^{n} \int_{\mathcal{X}} f_i \,d\varrho\Big|\notag\\
   &\le  \big(C_{r,\epsilon,n}e^{-r(\chi-\epsilon)\gamma} \Pi_{i=1}^{n}\norm{f_{i}}_{C^{s(r)}}+\delta C_{\epsilon,n}e^{-(\frac{1}{2}\rho-\epsilon)\gamma}\Pi_{i=1}^{n}\norm{f_{i}}_{C^{\dim\mathfrak n+1}}.
  \end{align*}

  \item\label{for:206}  Moreover, if   $\int_{\mathcal{X}} f_i \, d\varrho=0$, $1\leq i\leq n$, then

  \begin{enumerate}
    \item \label{for:317} Let $q\in \operatorname*{arg\,min}_{1\le i\le n} z_i$.  Then
  \begin{align*}
   \Big|\int_{\mathcal{X}} \Pi_{i=1}^{n} f_i\circ a^{z_i} \,d\varrho \Big|&\le  C_{r,\epsilon,n}e^{-r(\chi-\epsilon)\gamma} \big(\Pi_{i\neq q}\norm{f_i}_{H_{-,a}, C^r}\big)\norm{f_q}_{H_{+0,a}, C^{s(r)}}\notag\\
 &+\delta C_{\epsilon,n}e^{-(\frac{1}{2}\rho-\epsilon)\gamma}
\big(\Pi_{i\neq q}\norm{f_i}_{H_{-,a}, C^{\frac{1}{2}}}\big)\norm{f_q}_{H_{+0,a}, C^{\dim\mathfrak n+1}}.
  \end{align*}
    \item\label{for:342} If $n=3$, we have
   \begin{align*}
   \Big|\int_{\mathcal{X}} \Pi_{i=1}^{3}f_i\circ a^{z_i} \, d\varrho\Big|&\leq  C_{r,\epsilon}e^{-r\frac{1}{2}(\chi-\epsilon)\max_{1\leq i,j\leq 3}\abs{z_i-z_j}}\Pi_{i=1}^3\|f_i\|_{C^{s(r)}}\\
   &+ \delta C_{\epsilon}e^{-\frac{1}{2}(\frac{1}{2}\rho-\epsilon)\max_{1\leq i,j\leq 3}\abs{z_i-z_j}}\Pi_{i=1}^3\|f_i\|_{C^{\dim\mathfrak{n}+1}}.
  \end{align*}
\item\label{for:202} If $n\geq 4$ no uniform bound in terms of $\max_{i\ne j}|z_i-z_j|$ can hold in general.
  \end{enumerate}

\end{enumerate}

\end{theorem}

\subsection{Proof sketches}

Reorder the times so that $z_n=\min_i z_i$ and factor out the earliest time:
\[
\int_{\mathcal X}\Pi_{i=1}^n f_i\circ a^{z_i}\,d\varrho
=\int_{\mathcal X}\Big(\Pi_{i=1}^{n-1}f_i\circ a^{z_i-z_n}\Big)\cdot f_n\,d\varrho.
\]
Rewrite the product as
$\mathcal F\circ a^z$ with $\mathcal F=\prod_{i=1}^{n-1}f_i\circ a^{z_i-z_n-z}$, where $z=\min_{1\le i\le n-1}(z_i-z_n)$.
Then apply the two-point mixing estimate (Theorem \ref{th:9}) to
$\langle \mathcal F\circ a^z, \bar{f_n}\rangle$.
The key technical step is to control the partial Sobolev norms
$\|\mathcal F\|_{H_{-,a},t}$ by a Leibniz/Kato-Ponce estimate and the contraction
of $da^{m}$ on $W_{-,a}$, yielding a bound in terms of
$\Pi_{i\le n-1}\|f_i\|_{H_{-,a},C^t}$.
Finally, for general means, expand $\Pi_i f_i$ into mean-zero pieces
$f_i-\int f_i$ and apply the mean-zero bound to each nonempty subset.

\subsection{Proof of Theorem \ref{th:1}}\label{sec:46}

 For any $z_i\in \ZZ$ and $f_i\in C^\infty(\mathcal{X})$, $1\leq i\leq n$, set
\begin{align*}
  \mathfrak{m}\big(f_1\circ a^{z_1},f_2\circ a^{z_2},\cdots,f_n\circ a^{z_n} \big)=\int_{\mathcal{X}} \Pi_{i=1}^{n} \pi(a^{z_i})f_i \,d\varrho.
\end{align*}
\emph{Case I: $\int_{\mathcal{X}} f_i \, d\varrho=0$, $1\leq i\leq n$.} Without loss of generality, assume $z_{n}=\min_{1\leq j\leq n} z_j$.
Then we have
\begin{align}\label{for:341}
 &\mathfrak{m}\big(f_1\circ a^{z_1},f_2\circ a^{z_2},\cdots,f_n\circ a^{z_n} \big)=\mathfrak{m}(\mathcal{F}\circ a^z, f_{n}),
\end{align}
where
\begin{align*}
 z=\min_{1\leq i\leq n-1} (z_i-z_{n})>0\quad\text{and}\quad \mathcal{F}=\Pi_{i=1}^{n-1} f_i\circ a^{z_i-z_n-z}.
\end{align*}
It follows from Theorem \ref{th:9} (note $H_{-,a^z}=H_{-,a}$ and $H_{+0,a^z}=H_{+0,a}$) that for any $r\geq 0$
\begin{align}\label{for:340}
 \big|\mathfrak{m}(\mathcal{F}\circ a^z, f_{n})\big|&\leq C_{r,\epsilon}e^{-(\chi-\epsilon)zr} \|\mathcal{F}\|_{H_{-,a},r}\norm{f_{n}}_{H_{+0,a},\,s(r)}\notag\\
 &+\delta C_{\epsilon}e^{-(\frac{1}{2}\rho-\epsilon)z}
\norm{\mathcal{F}}_{H_{-,a},\,\frac{1}{2}}\big\|f_{n}\big\|_{H_{+0,a},\,\dim\mathfrak n+1}\notag\\
&\overset{\text{(1)}}{\leq} C_{r,\epsilon}e^{-(\chi-\epsilon)zr} \|\mathcal{F}\|_{H_{-,a},C^r}\norm{f_{n}}_{H_{+0,a},\,C^{s(r)}}\notag\\
 &+\delta C_{\epsilon}e^{-(\frac{1}{2}\rho-\epsilon)z}
\norm{\mathcal{F}}_{H_{-,a},\,C^{\frac{1}{2}}}\big\|f_{n}\big\|_{H_{+0,a},\,C^{\dim\mathfrak n+1}}.
\end{align}
Here in $(1)$ we use
\eqref{for:189} of Section \ref{sec:9}.

Since $W_{-,a}$ is $da$-invariant with the maximal Lyapunov exponent of $da|_{W_{-,a}}\leq-\chi$ (see \eqref{for:59} of Section \ref{sec:33}) and $z_i-z_n-z\ge0$,
\begin{align*}
 \big\|da^{z_i-z_{n}-z}|_{W_{-,a}}\big\|\leq C_\epsilon e^{-(\chi-\epsilon)(z_i-z_{n}-z)}\leq C, \qquad 1\leq i\leq n-1.
\end{align*}
It follows from \eqref{for:125} of Section \ref{sec:33} that for any $t\geq0$ and $1\leq i\leq n-1$
\begin{align}\label{for:131}
\|f_i\circ a^{z_i-z_n-z}\|_{H_{-,a},\,C^{t}}\leq C_{t}\|f_i\|_{H_{-,a},\,C^{t}}.
\end{align}
Thus, by Leibniz
(and Kato-Ponce for fractional orders)
\begin{align}\label{for:126}
\|\mathcal F\|_{H_{-,a},t}&\leq\|\mathcal F\|_{H_{-,a},C^t}\le C_{t,n}\Pi_{i=1}^{n-1} \|f_i\circ a^{z_i-z_n-z}\|_{H_{-,a},\,C^{t}}\notag\\
&\leq  C_{t,n,1}\Pi_{i=1}^{n-1}\|f_i\|_{H_{-,a},C^t}
\end{align}
for any $t\ge0$. This together with \eqref{for:341} and \eqref{for:340} give
\begin{align*}
 \big|\int_{\mathcal{X}} \Pi_{i=1}^{n} f_i\circ a^{z_i} \,d\varrho\big|&\leq C_{r,n,\epsilon}e^{-(\chi-\epsilon)zr} (\Pi_{i=1}^{n-1}\norm{f_{i}}_{H_{-,a}, C^r})\norm{f_{n}}_{H_{+0,a},\,C^{s(r)}}\notag\\
 &+\delta C_{n,\epsilon}e^{-(\frac{1}{2}\rho-\epsilon)z}
(\Pi_{i=1}^{n-1}\norm{f_{i}}_{H_{-,a}, C^{\frac{1}{2}}})\big\|f_{n}\big\|_{H_{+0,a},\,C^{\dim\mathfrak n+1}}.
\end{align*}
We note that $z\ge\gamma$. Thus, we proved \eqref{for:317}.

\smallskip

\emph{Case II: general means.}  Let $\widetilde{f_i}=f_i-c_i$, where $c_i=\int_{\mathcal{X}} f_i \, d\varrho$, $1\leq i\leq n$.
Then
\begin{align*}
\Pi_{i=1}^n f_i\circ a^{z_i}=\sum_{J\subseteq\{1,\dots,n\}}
\big(\Pi_{i\in J}\tilde f_i\circ a^{z_i}\big)
\big(\Pi_{i\notin J} c_i\big),\quad \int_{\mathcal{X}} \widetilde{f_i} \, d\varrho=0.
\end{align*}
The $J=\varnothing$ term equals $\Pi_i c_i=\Pi_i\int_{\mathcal X}f_i\,d\varrho$.
For every nonempty $J$,  by Case $I$ we have
\begin{align*}
&\Big|\int_{\mathcal X}\big(\Pi_{i\in J}\tilde f_i\circ a^{z_i}\big)
\big(\Pi_{i\notin J} c_i\big)\,d\varrho\Big|\\
&\le\;
\big(C_{r,n,\epsilon}\,e^{-r(\chi-\epsilon)\gamma}\Pi_{i\in J}\|\tilde f_i\|_{C^{s(r)}}+\delta\,C_{n,\epsilon}\,e^{-(\frac12\rho-\epsilon)\gamma}\Pi_{i\in J}\|\tilde f_i\|_{C^{\dim \mathfrak{n}+1}}\big)\,
\big(\Pi_{i\notin J} \norm{f_i}_{C^0}\big)\\
 &\le  C_{r,n,\epsilon}\,e^{-r(\chi-\epsilon)\gamma}\Pi_{i=1}^n\|f_i\|_{C^{s(r)}}+\delta\,C_{n,\epsilon}\,e^{-(\frac12\rho-\epsilon)\gamma}\Pi_{i=1}^n\|f_i\|_{C^{\dim \mathfrak{n}+1}}.
\end{align*}
Summing over $J\neq\varnothing$
yields \eqref{for:124}.

\smallskip

\emph{Case III: $\int_{\mathcal{X}} f_i \, d\varrho=0$, $1\leq i\leq 3$.} Without loss of generality, assume that $z_1\geq z_2\geq z_3$. Then $z_1-z_3=\max_{1\leq i\neq j\leq 3}|z_i-z_j|$.

$(a)$ : If $z_{2}-z_{3}>\frac{1}{2}(z_1-z_3)$, set $\mathcal{F}= \big(f_1\circ a^{z_1-z_2}\big) f_2$. Then
\begin{align}\label{for:343}
 \mathfrak{m}\big(f_1\circ a^{z_1},\,f_2\circ a^{z_2},\, f_{3}\circ a^{z_3}\big)=\mathfrak{m}\big(\mathcal{F}\circ a^{z_2-z_3},\,f_{3}\big).
\end{align}
For any $r>0$ it follows from Theorem \ref{th:9} that
\begin{align}\label{for:344}
 \big|\mathfrak{m}\big(\mathcal{F}\circ a^{z_2-z_3},&\,f_{3}\big)\big|\leq C_{r,\epsilon}e^{-(\chi-\epsilon)(z_2-z_3)r} \|\mathcal{F}\|_{H_{-,a},\,r}\norm{f_{3}}_{H_{+0,a},\,s(r)}\notag\\
 &+\delta C_{\epsilon}e^{-(\frac{1}{2}\rho-\epsilon)(z_2-z_3)}
\norm{\mathcal{F}}_{H_{-,a},\,\frac{1}{2}}\big\|f_{3}\big\|_{H_{+0,a},\,\dim\mathfrak{n}+1}\notag\\
 &\overset{\text{(1)}}{\leq} C_{r,\epsilon}e^{-\frac{1}{2}(\chi-\epsilon)(z_1-z_3)r} \|\mathcal{F}\|_{H_{-,a},\,C^r}\norm{f_{3}}_{C^{s(r)}}\notag\\
 &+\delta C_{\epsilon}e^{-\frac{1}{2}(\frac{1}{2}\rho-\epsilon)(z_1-z_3)}
\norm{\mathcal{F}}_{H_{-,a},\,C^{\frac{1}{2}}}\big\|f_{3}\big\|_{C^{\dim\mathfrak{n}+1}}.
\end{align}
Here in $(1)$ we use \eqref{for:189} of Section \ref{sec:9} and  we recall the assumption $z_{2}-z_{3}>\frac{1}{2}(z_1-z_3)$.

By the product estimate along \(W_{-,a}\) (i.e.\ \eqref{for:126} with \(n=2\)), we have
\[
\|\mathcal F\|_{H_{-,a},C^t}\le C_{t}\|f_1\|_{C^t}\|f_2\|_{C^t}\qquad (t\ge0).
\]
Then
\begin{align*}
 \big|\mathfrak{m}\big(\mathcal{F}\circ a^{z_2-z_3},&\,f_{3}\big)\big|\leq C_{r,\epsilon}e^{-\frac{1}{2}(\chi-\epsilon)(z_1-z_3)r} \Pi_{i=1}^3\|f_i\|_{C^{s(r)}}\notag\\
 &+\delta C_{\epsilon}e^{-\frac{1}{2}(\frac{1}{2}\rho-\epsilon)(z_1-z_3)}
\Pi_{i=1}^3\big\|f_{i}\big\|_{\dim\mathfrak{n}+1}.
\end{align*}
 $(b)$: If $z_{2}-z_{3}\leq\frac{1}{2}(z_1-z_3)$. Then
\begin{align}\label{for:345}
 z_1-z_2&=z_3-z_2+(z_1-z_3)>-\frac{1}{2}(z_1-z_3)+(z_1-z_3)=\frac{1}{2}(z_1-z_3)>0.
\end{align}
Set $\mathcal{G}=f_2 \big(f_3\circ a^{z_3-z_2}\big)$. Then
\begin{align}\label{for:347}
&\mathfrak{m}\big(f_1\circ a^{z_1},\,f_2\circ a^{z_2},\, f_{3}\circ a^{z_3}\big)=\mathfrak{m}\big(f_1\circ a^{z_1-z_2},\mathcal{G}\big).
\end{align}
For any $r>0$ it follows from Theorem \ref{th:9} that
\begin{align}\label{for:346}
 \big|\mathfrak{m}\big(f_1\circ a^{z_1-z_2},\mathcal{G}\big)\big|&\overset{\text{(1)}}{\leq} C_{r,\epsilon}e^{-(\chi-\frac{\epsilon}{2})(z_1-z_2)r} \|f_1\|_{H_{-,a},\,r}\norm{\mathcal{G}}_{H_{+0,a},\,s(r)}\notag\\
 &+\delta C_{\epsilon}e^{-(\frac{1}{2}\rho-\frac{\epsilon}{2})(z_1-z_2)}
\norm{f_1}_{H_{-,a},\,\frac{1}{2}}\big\|\mathcal{G}\big\|_{H_{+0,a},\,\dim\mathfrak{n}+1}\notag\\
 &\overset{\text{(2)}}{\leq} C_{r,\epsilon}e^{-\frac{1}{2}(\chi-\frac{\epsilon}{2})(z_1-z_3)r} \|f_1\|_{C^r}\norm{\mathcal{G}}_{H_{+0,a},\,C^{s(r)}}\notag\\
 &+\delta C_{\epsilon}e^{-\frac{1}{2}(\frac{1}{2}\rho-\frac{\epsilon}{2})(z_1-z_3)}
\norm{f_1}_{C^{\frac{1}{2}}}\big\|\mathcal{G}\big\|_{H_{+0,a},\,C^{\dim\mathfrak{n}+1}}.
\end{align}
Here in $(1)$ we shrink $\epsilon$ to $\frac{\epsilon}{2}$ to absorb the polynomial growth factor  from the bound of $\mathcal{G}$;
in $(2)$ we use \eqref{for:345} and \eqref{for:189} of Section \ref{sec:9}.

Since $W_{+0,a}$ is $da$-invariant with the maximal Lyapunov exponent of $da^{-1}|_{W_{+0,a}}\leq0$ and $z_3-z_{2}\leq 0$
\begin{align*}
 \big\|da^{z_3-z_2}|_{W_{+0,a}}\big\|\leq C(1+(z_2-z_3))^{\dim \mathfrak{n}}\overset{\text{(1)}}{\leq} C_1(1+(z_1-z_3))^{\dim \mathfrak{n}}.
\end{align*}
Here in $(1)$ we recall the assumption $0\leq z_{2}-z_{3}<\frac{1}{2}(z_1-z_3)$.

It follows from
\eqref{for:125} of Section \ref{sec:33} that for any $t\ge0$
\begin{align*}
 \|f_3\circ a^{z_3-z_2}\|_{H_{+0,a},C^t}\leq C_{t}(1+(z_1-z_3))^{t\dim \mathfrak{n}}\|f_3\|_{C^t}.
\end{align*}
Thus, by Leibniz
(and Kato-Ponce for fractional orders),
\[
\|\mathcal G\|_{H_{+0,a},C^t}\le C_{t}(1+(z_1-z_3))^{t\dim \mathfrak{n}}\|f_2\|_{C^t}\|f_3\|_{C^t}\qquad (t\ge0).
\]
This together with \eqref{for:346} gives
\begin{align*}
 &\big|\mathfrak{m}\big(f_1\circ a^{z_1-z_2},\mathcal{G}\big)\big|\\
 &\leq  C_{r,\epsilon}e^{-\frac{1}{2}(\chi-\frac{\epsilon}{2})(z_1-z_3)r} \|f_1\|_{C^r}(1+(z_1-z_3))^{s(r)\dim \mathfrak{n}}\|f_2\|_{C^{s(r)}}\|f_3\|_{C^{s(r)}}\notag\\
 &+\delta C_{\epsilon}e^{-\frac{1}{2}(\frac{1}{2}\rho-\frac{\epsilon}{2})(z_1-z_3)}
\norm{f_1}_{C^{\frac{1}{2}}}(1+(z_1-z_3))^{(\dim\mathfrak{n}+1)\dim \mathfrak{n}}\|f_2\|_{C^{\dim\mathfrak{n}+1}}\|f_3\|_{C^{\dim\mathfrak{n}+1}}\\
&\leq  C_{r,\epsilon,1}e^{-\frac{1}{2}(\chi-\epsilon)(z_1-z_3)r} \Pi_{i=1}^3\|f_2\|_{C^{s(r)}}+\delta C_{\epsilon,1}e^{-\frac{1}{2}(\frac{1}{2}\rho-\epsilon)(z_1-z_3)}
\Pi_{i=1}^3\|f_2\|_{C^{\dim\mathfrak{n}+1}}.
\end{align*}
Hence, we proved \eqref{for:342}.

\smallskip

\emph{Case V: }  Suppose $m\in\NN$. For any $n\geq2$ we can choose non-zero $f_1,\,f_2\in C^\infty(\mathcal{X})$ take real values with $\int_{\mathcal{X}} f_1 \, d\varrho=0$, $i=1,2$ and  $c:=\int_{\mathcal{X}} f_2^n \, d\varrho\neq0$. Then we have
\begin{align}\label{for:204}
 &\Big|\int_{\mathcal{X}} \big(f_1\circ a^{m}\big)^2 \big(f_2\circ a^{2m}\big)^n \,d\varrho \Big|=\Big|\int_{\mathcal{X}} \big(f_1\circ a^{m}\big)^2 (f_2^n)\circ a^{m} \,d\varrho \Big|\notag\\
 &=\Big|\int_{\mathcal{X}} \big(f_1\circ a^{m}\big)^2 \big((f_2^n-c)\circ a^{2m}\big) \,d\varrho +c\int_{\mathcal{X}} f_1^2 \, d\varrho\Big|\notag\\
 &\overset{\text{(1)}}{\geq} |c|\int_{\mathcal{X}} f_1^2 \, d\varrho\,-Ce^{-\rho_0 m}\|f_1\|_{C^{s_0}}^2\|f_2^n-c\|_{C^{s_0}}
\end{align}
Here in $(1)$ we use \eqref{for:342} and we set $s_0:=\dim\mathfrak n+1$ (so in particular $s_0\ge 1,\, s(1),\, \tfrac12$) and set
$\rho_0=\min\{\frac{\chi}{2},\frac{\rho}{4}\}$.

On the other hand, if this quantity were uniformly bounded in terms of $\max_{i\ne j}|z_i-z_j|$, i.e.,  if there were $\gamma>0$ with
\[
 \Big|\!\int_{\mathcal{X}} \big(f_1\circ a^{m}\big)^2 \big(f_2\circ a^{2m}\big)^n \,d\varrho \Big|
 \le C_{f_1,f_2}\,e^{-\gamma m}\qquad\forall \,m\in\mathbb N,
\]
then letting $m\to\infty$, we get a contraction.  The right-hand side tends to $0$ while $|c|\int f_1^2\,d\varrho>0$. This proves \eqref{for:202}.

\section{Higher order exponential mixing}

\subsection{Notation} We use the notation collected in Section \ref{sec:38}. In particular, we retain the notation for $\mathfrak{n}^{(z,2)}$, $\mathfrak{n}^{(2)}$,
the quantity $\epsilon$, automorphisms of rational and irrational type. We also the introduce following notations:
\begin{enumerate}
\item Let $\ell\in\mathbb N$ and $\alpha:\mathbb Z^\ell\to Aut(\mathcal X)$ be an action of $\mathbb Z^\ell$ by automorphisms on $\mathcal X$. For any function $f$ on $\mathcal{X}$ and $z\in\ZZ^\ell$,  for simplicity denote $f(\alpha(z)x)$ by $f\circ z$.

\item For $m\in\mathbb N$ let $\mathbb S_{m-1}\subset\mathbb R^{m}$ be the unit sphere.
  For $n\ge2$ and $z_i\in\mathbb Z^\ell$ ($1\le i\le n$), write
  $\mathfrak z=(z_1,\dots,z_n)\in\mathbb R^{n\ell}$ and define
  \[
    L:\big(\mathbb Z^{n\ell}\setminus\{0\}\big)\longrightarrow \mathbb S_{n\ell-1},\qquad
    L(\mathfrak z)=\frac{\mathfrak z}{\|\mathfrak z\|},
  \]
  where $\|\cdot\|$ is any fixed norm on $\mathbb R^{n\ell}$.
  We also set the gap $\gamma(\mathfrak z)\;=\;\min_{1\le p\ne j\le n}\|z_p-z_j\|$.
 For any set $\mathcal{R}\subseteq \mathbb{Z}^{n\ell}$ we define its asymptotic (counting) density (when the limit exists) by
\[
 \mathcal{AD}(\mathcal{R}):=\lim_{r\to\infty}
  \frac{\#\{\mathfrak{z}\in \mathcal{R}:\|\mathfrak{z}\|\le r\}}
       {\#\{\mathfrak{z}\in\mathbb{Z}^{n\ell}:\|\mathfrak{z}\|\le r\}}.
\]

\end{enumerate}

\subsection{Main results}\label{sec:44}  Let $\delta=0$ if $\alpha$ is of irrational type and $\delta=1$ otherwise.

\begin{theorem}\label{th:2} Suppose there is $z\in \ZZ^\ell$ such that $\alpha(z)$ is ergodic. Then:
 \begin{enumerate}
   \item\label{for:157} For any $n\geq2$, there exists a set $\mathcal{R}_n\subset\ZZ^{n\ell}$ with $\mathcal{AD}(\mathcal{R}_n)=1$
 and a function $\mathfrak r:L(\mathcal R_n)\to(0,\infty)$  such that for any $z_1,\cdots,z_n\in\ZZ^\ell$, if $\mathfrak{z}=(z_1,\cdots,z_n)\in \mathcal{R}_n$, then for any $r>0$ and any $f_1,\cdots, f_n\in C^\infty(\mathcal{X})$
\begin{align*}
 \Big|\int_{\mathcal{X}} \Pi_{i=1}^{n} f_i\circ z_i \,d\varrho - \Pi_{i=1}^{n} \int_{\mathcal{X}} f_i \,d\varrho\Big|&\leq C_{\frac{\mathfrak{z}}{\norm{\mathfrak{z}}},n,r} e^{-r\,\mathfrak{r}(\frac{\mathfrak{z}}{\norm{\mathfrak{z}}})\, \gamma(\mathfrak z)}\,\Pi_{i=1}^n\norm{f_i}_{C^{r\dim \mathfrak{n}}}\notag\\
&+\delta C_{\epsilon,n,\frac{\mathfrak{z}}{\norm{\mathfrak{z}}}} e^{-\mathfrak{r}(\frac{\mathfrak{z}}{\norm{\mathfrak{z}}})(\frac{1}{2}-\epsilon)\,\gamma(\mathfrak z)}\,\Pi_{i=1}^n\norm{f_i}_{C^{\dim\mathfrak{n}+1}}.
\end{align*}

   \item\label{for:158} For any $n\geq2$ and any $\varepsilon>0$, there exist $\mathfrak{r}(\varepsilon)>0$ and a set $\mathcal{R}_{n,\varepsilon}\subset\ZZ^{n\ell}$
   with $\mathcal{AD}(\mathcal{R}_{n,\varepsilon})\geq1-\varepsilon$ such that: for any $r>0$, any $f_1,\cdots, f_n\in C^\infty(\mathcal{X})$ and any $z_1,\cdots,z_n\in\ZZ^\ell$, if $\mathfrak{z}=(z_1,\cdots,z_n)\in\mathcal{R}_{n,\varepsilon}$, we have
\begin{align*}
 \Big|\int_{\mathcal{X}} \Pi_{i=1}^{n} f_i\circ z_i \,d\varrho - \Pi_{i=1}^{n} \int_{\mathcal{X}} f_i \,d\varrho\Big|&\leq C_{n,r} e^{-r\mathfrak{r}(\varepsilon)\,\gamma(\mathfrak z)}\,\Pi_{i=1}^n\norm{f_i}_{C^{r\dim \mathfrak{n}}}\notag\\
&+\delta C_{\epsilon,\varepsilon,n} e^{-\mathfrak{r}(\varepsilon)(\frac{1}{2}-\epsilon)\,\gamma(\mathfrak z)}\,\Pi_{i=1}^n\norm{f_i}_{C^{\dim\mathfrak{n}+1}}.
\end{align*}

 \end{enumerate}

\end{theorem}
\begin{remark}(Direction-dependent vs. uniform rates)
In \eqref{for:157}, the decay rate is ``anisotropic": it depends on the direction
$L(\mathfrak z)=\frac{\mathfrak z}{\|\mathfrak z\|}$ of the time vector through
$\mathfrak r(L(\mathfrak z))$ (and the constants $C_{n,r, \frac{\mathfrak z}{\|\mathfrak z\|}}$), so the rate is not uniform in $\mathfrak z$. Moreover, the decay happens for almost all time vectors.
By contrast, in \eqref{for:158}   the rate $\mathfrak r(\varepsilon)$ is \emph{uniform} over the large set
$\mathcal R_{n,\varepsilon}$ (independent of $\mathfrak z$), and the only $\mathfrak z$-dependence in the bound is through the gap $\gamma(\mathfrak z)$.
\end{remark}

\begin{remark} We point out  that  uniform (for all time vectors) estimates cannot, in general, be expected assuming the existence of a single ergodic $\alpha(z)$, see Section \ref{sec:52}.
\end{remark}

\subsection{Notations and basic facts}\label{sec:14} We list the notations and facts that will be used in the proofs.

\begin{nnnnnnnnsect}\label{for:165}
We have the Lyapunov subspace decomposition for $d\alpha$:
\begin{align}\label{for:75}
\mathfrak{n}=\oplus_{\chi\in \Lambda} \mathds{W}_{\chi}
\end{align}
such that for any  $\chi\in \Lambda$ and any $z\in \ZZ^\ell$, $\mathds{W}_{\chi}$ is a Lyapunov subspace of $d\alpha(z)$ with Lyapunov exponent  $\chi(z)$. For each $\chi\in \Lambda\backslash\{0\}$, $\ker \chi$ is called a Lyapunov hyperplane in $\RR^\ell$. For any
\(\mathfrak{z}=(z_1,\dots,z_n)\in\mathbb{Z}^{n\ell}\), where $z_i\in \ZZ^\ell$, $1\leq i\leq n$ with $\gamma(\mathfrak z)\neq0$,  define
\begin{align}\label{for:205}
 \Theta(\mathfrak{n}, \,d\alpha,\mathfrak{z})=\min_{\chi\in \Lambda\backslash \{0\},\,i\neq j}\Big|\chi(\frac{z_i-z_j}{\norm{z_i-z_j}})\Big|
\end{align}
We also have the Lyapunov subspace decomposition for $d\alpha|_{\mathfrak{n}/[\mathfrak{n},\mathfrak{n}]}$:
\begin{align}\label{for:352}
\mathfrak{n}/[\mathfrak{n},\mathfrak{n}]=\oplus_{\chi\in \Lambda_1} \overline{\mathds{W}}_{\chi}
\end{align}
where $\overline{\mathds{W}}_{\chi}=\mathds{W}_{\chi}+[\mathfrak{n},\mathfrak{n}]$. It is clear that $\Lambda_1\subseteq \Lambda$.
\end{nnnnnnnnsect}

\begin{nnnnnnnnsect}\label{for:382}
For $z\in \ZZ^\ell$ we say that $z$ is \emph{regular} if $z\notin \bigcup_{\chi\in \Lambda\backslash\{0\}}\ker \chi$ and $\chi_1(z)\neq \chi_2(z)$ for all $\chi_1\neq\chi_2\in \Lambda$. For any regular  $z$, the Lyapunov subspace decomposition of $d\alpha(z)$ (resp. $d\alpha(z)|_{\mathfrak{n}/[\mathfrak{n},\mathfrak{n}]}$) coincides with the decomposition  \eqref{for:75} (resp. \eqref{for:352}). We have the following result, whose proof is left for Appendix \ref{sec:40}:
\begin{lemma}\label{le:10} There exists a regular $z\in \ZZ^\ell$ such that $\mathfrak{n}^{(z,2)}=\mathfrak{n}^{(2)}$.
\end{lemma}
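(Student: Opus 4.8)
\textbf{Plan of proof of Lemma \ref{le:10}.}
The statement to establish is that there exists a \emph{regular} $z\in\ZZ^l$ (regular in the sense that $z$ avoids every Lyapunov hyperplane $\ker\chi$, $\chi\in\Lambda\setminus\{0\}$, and separates all Lyapunov functionals, i.e.\ $\chi_1(z)\neq\chi_2(z)$ for $\chi_1\neq\chi_2\in\Lambda$) with the additional property that
\[
\mathfrak{n}^{(z,2)}=\mathfrak{n}^{(2)}(d\alpha)=\bigcap_{w\in\ZZ^l}\mathfrak{n}^{(w,2)}.
\]
Recall that for each $w\in\ZZ^l$, $\mathfrak{n}^{(w,2)}$ is the subspace on which $d\alpha(w)$ has only roots of unity as eigenvalues; in terms of the common Lyapunov decomposition \eqref{for:75}, $d\alpha(w)$ acts on each $\mathcal{W}_\chi$ with eigenvalues of absolute value $e^{\chi(w)}$, so the ``root of unity'' part of $d\alpha(w)$ is contained in $\bigoplus_{\chi(w)=0}\mathcal{W}_\chi$ --- more precisely, $\mathfrak{n}^{(w,2)}$ is the maximal $d\alpha(w)$-invariant subspace of $\bigoplus_{\chi(w)=0}\mathcal{W}_\chi$ on which the eigenvalues are exactly roots of unity. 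The plan is therefore to choose $z$ so that (a) $z$ is regular, and (b) the ``zero set'' $\{\chi\in\Lambda:\chi(z)=0\}$ is as small as possible, equal to the set of $\chi$ that vanish identically on $\ZZ^l$ (equivalently on $\RR^l$); intersecting over all $w$ then forces $\mathfrak{n}^{(z,2)}$ down to $\mathfrak{n}^{(2)}(d\alpha)$.

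First I would set up the reduction to a counting/genericity argument in $\RR^l$. The conditions defining regularity exclude $z$ from the finite union of rational hyperplanes $\bigcup_{\chi\neq0}\ker\chi$ together with the finite union of rational hyperplanes $\{\chi_1=\chi_2\}$ for distinct pairs; since each such hyperplane is a proper rational subspace of $\RR^l$, its intersection with $\ZZ^l$ is a sublattice of rank $<l$, and the complement of their union in $\ZZ^l$ is nonempty (indeed Zariski-dense, and of full density). So a regular $z$ exists, and moreover regular elements form a ``large'' set. The real content is condition (b): I claim that for \emph{any} regular $z$ we automatically have $\{\chi\in\Lambda:\chi(z)=0\}=\{\chi\in\Lambda:\chi\equiv0\}$. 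Indeed, if $\chi(z)=0$ but $\chi\not\equiv0$, then $z\in\ker\chi$, contradicting regularity condition (1). Hence for a regular $z$, the neutral part $W_0(d\alpha(z))=\bigoplus_{\chi(z)=0}\mathcal{W}_\chi$ equals $\bigoplus_{\chi\equiv0}\mathcal{W}_\chi$, which is contained in $W_0(d\alpha(w))$ for every $w\in\ZZ^l$. This already shows $\mathfrak{n}^{(z,2)}$ is a candidate for the minimal one.

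Next, the key step: proving $\mathfrak{n}^{(z,2)}=\bigcap_w\mathfrak{n}^{(w,2)}$ for such $z$. The inclusion $\mathfrak{n}^{(z,2)}\supseteq\bigcap_w\mathfrak{n}^{(w,2)}$ is trivial. For the reverse, I would argue that since $\mathfrak{n}^{(z,2)}\subseteq\bigoplus_{\chi\equiv0}\mathcal{W}_\chi$ and this latter subspace is $d\alpha(w)$-invariant for all $w$, the restriction of $d\alpha(w)$ to $\mathfrak{n}^{(z,2)}$ has all eigenvalues of modulus one for every $w$; one must then upgrade ``modulus one'' to ``root of unity'', uniformly in $w$. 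Here I would use the fact that $d\alpha$ is an action by \emph{integer} matrices (automorphisms of the lattice $\Gamma$, acting on $\mathfrak{n}/[\mathfrak{n},\mathfrak{n}]$ and on the successive quotients of the descending central series via integer matrices as in Section \ref{sec:1}): the eigenvalues of $d\alpha(w)|_{\mathfrak{n}^{(z,2)}}$ are algebraic integers, all of whose Galois conjugates (which are eigenvalues of the same integer matrix, hence also lie among the eigenvalues) have modulus one --- because the Lyapunov structure is shared, every Galois conjugate again sits in a $\mathcal{W}_\chi$ with $\chi\equiv0$. By Kronecker's theorem such an algebraic integer is a root of unity. Thus $d\alpha(w)|_{\mathfrak{n}^{(z,2)}}$ has only roots of unity, i.e.\ $\mathfrak{n}^{(z,2)}\subseteq\mathfrak{n}^{(w,2)}$ for all $w$, giving $\mathfrak{n}^{(z,2)}\subseteq\bigcap_w\mathfrak{n}^{(w,2)}=\mathfrak{n}^{(2)}(d\alpha)$, and combined with the trivial inclusion we get equality.

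I expect the main obstacle to be the passage from ``$d\alpha(w)$ restricted to a shared neutral subspace has eigenvalues of modulus one'' to ``those eigenvalues are roots of unity.'' One has to be careful that $d\mathcal A$ on a non-abelian $\mathfrak{n}$ need not be semisimple, so one should work with generalized eigenspaces and note that the unipotent part contributes eigenvalue $1$; and one should make sure the Kronecker argument is applied to the correct integer matrix --- I would phrase it on $\mathfrak{n}/[\mathfrak{n},\mathfrak{n}]$ and on each graded piece $\mathfrak{n}_j/\mathfrak{n}_{j+1}$ of the descending central series (each of which carries a $d\alpha$-invariant rational structure by Section \ref{sec:1}), then reassemble. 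The regularity conditions (1) and (2) play only the role of guaranteeing that the neutral subspace of $d\alpha(z)$ is exactly $\bigoplus_{\chi\equiv0}\mathcal{W}_\chi$; condition (2) (separation of distinct functionals) is not strictly needed for the equality $\mathfrak{n}^{(z,2)}=\mathfrak{n}^{(2)}$ but is included because ``regular'' is a standing notion used elsewhere, and a density/counting remark shows regular elements exist in abundance, so one can impose (2) at no cost.
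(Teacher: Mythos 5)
Your argument is correct, but it takes a genuinely different route from the paper's, and in fact proves a stronger statement: that \emph{every} regular $z$ satisfies $\mathfrak{n}^{(z,2)}=\mathfrak{n}^{(2)}$. The paper instead argues in two stages: it first produces \emph{some} $z_0$ (not yet regular) with $\mathfrak{n}^{(z_0,2)}=\mathfrak{n}^{(2)}$ by an iterative descent --- given $z_1$ with $\mathfrak{n}^{(z_1,2)}\supsetneq\mathfrak{n}^{(2)}$, it finds $z_2'$ and shows that for all but finitely many $m$ one has $\mathfrak{n}^{(mz_2'+z_1,2)}=\mathfrak{n}^{(z_1,2)}\cap\mathfrak{n}^{(z_2',2)}$, since each eigenvalue character $\lambda$ with $\lambda(z_2')$ or $\lambda(z_1)$ not a root of unity makes $\lambda(z_2')^m\lambda(z_1)$ a root of unity for only finitely many $m$; the strictly decreasing chain of subspaces terminates --- and then perturbs $z_0$ to $nz_0+w$ with $w$ regular, excluding finitely many bad $n$ per condition. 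Your direct argument replaces all of this with the observation that regularity forces $\mathfrak{n}^{(z,2)}\subseteq\mathcal{W}_0:=\bigoplus_{\chi\equiv0}\mathcal{W}_\chi$, on which every $d\alpha(w)$ has all eigenvalues of modulus one, and then Kronecker's theorem (which the paper also invokes, in its descent step) upgrades this to roots of unity. This is shorter and yields the uniform statement over all regular $z$; what the paper's version buys is that its first stage makes no regularity hypothesis and manipulates the subspaces $\mathfrak{n}^{(w,2)}$ directly without needing to identify $\mathcal{W}_0$. Two details in your write-up should be made explicit for the Kronecker step to be legitimate: (i) $\mathfrak{n}^{(z,2)}$ is $d\alpha(w)$-invariant for every $w$ (it is a sum of generalized eigenspaces of $d\alpha(z)$, hence invariant under anything commuting with $d\alpha(z)$), and (ii) $\mathfrak{n}^{(z,2)}$ is a rational subspace (it is the kernel of $p(d\alpha(z))$ for $p$ the product of the cyclotomic factors of the characteristic polynomial), so that the characteristic polynomial of $d\alpha(w)|_{\mathfrak{n}^{(z,2)}}$ is monic integral and its root set is Galois-stable. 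Your phrase ``every Galois conjugate again sits in a $\mathcal{W}_\chi$ with $\chi\equiv0$'' is misleading as stated --- Galois conjugation does \emph{not} preserve Lyapunov subspaces in general --- and the correct justification is precisely (i)--(ii): the conjugates of an eigenvalue of the restriction are again eigenvalues of that same restriction, and all eigenvalues of the restriction have modulus one because the whole invariant rational subspace sits inside $\mathcal{W}_0$.
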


\end{nnnnnnnnsect}

\begin{nnnnnnnnsect}\label{for:355}
We fix a regular $z_0\in \ZZ^\ell$ such that $\mathfrak{n}^{(z_0,2)}=\mathfrak{n}^{(2)}$ (see Lemma \ref{le:10}). Denote by  $a=\alpha(z_0)$. We claim that if there is $z\in \ZZ^\ell$ such that $\alpha(z)$ is ergodic, then $a$ is ergodic as well.  We still use $a$ to denote the induced automorphism on $N/[N,N]$.   Consequently,
\begin{align*}
 (\mathfrak{n}/[\mathfrak{n},\mathfrak{n}])^{(2)}(d\alpha|_{\mathfrak{n}/[\mathfrak{n},\mathfrak{n}]})=(\mathfrak{n}/[\mathfrak{n},\mathfrak{n}])^{a,2}
\end{align*}
Since $\alpha(z)$ is ergodic, by Parry's criterion $d\alpha(z)|_{\mathfrak{n}/[\mathfrak{n},\mathfrak{n}]}$ is ergodic (see \eqref{for:101} of Section \ref{sec:33}).  Then
\begin{align*}
 (\mathfrak{n}/[\mathfrak{n},\mathfrak{n}])^{(2)}(d\alpha|_{\mathfrak{n}/[\mathfrak{n},\mathfrak{n}]})
 =\bigcap_{b\in\ZZ^\ell}(\mathfrak{n}/[\mathfrak{n},\mathfrak{n}])^{(b,2)}\subseteq (\mathfrak{n}/[\mathfrak{n},\mathfrak{n}])^{(z,2)}=\{0\}.
 \end{align*}
Thus, $(\mathfrak{n}/[\mathfrak{n},\mathfrak{n}])^{a,2}=0$ as well.  This implies that $da|_{\mathfrak n/[\mathfrak n,\mathfrak n]}$ is ergodic, and by Parry's criterion again, $a$ is ergodic.

 \end{nnnnnnnnsect}

\subsection{Proof of Theorem \ref{th:2}} A crucial step toward the proof of Theorem \ref{th:2} is the following result:
\begin{theorem}\label{th:10} Suppose there is $z\in \ZZ^\ell$ such that $\alpha(z)$ is ergodic. For any $f_1,\cdots, f_n\in C^\infty(\mathcal{X})$ and any $z_1,\cdots, z_n\in\ZZ^\ell$,  if there exists $\mathfrak{r}>0$ such that
\begin{align*}
 \Theta(\mathfrak{n}, \,d\alpha,\mathfrak{z})\geq \mathfrak{r} \,\,( \text{see }\eqref{for:205}),\qquad\text{where }\mathfrak{z}=(z_1,\dots,z_n)\in\mathbb{Z}^{n\ell},
\end{align*}
then for any $r\geq 0$ we have
\begin{align*}
 \Big|\int_{\mathcal{X}} \Pi_{i=1}^{n} f_i\circ z_i \,d\varrho - \Pi_{i=1}^{n} \int_{\mathcal{X}} f_i \,d\varrho\Big|&\leq C_{\epsilon,r,\mathfrak{r}} e^{-r(\mathfrak{r}-\epsilon)\, \gamma(\mathfrak z)}\,\Pi_{i=1}^n\norm{f_i}_{C^{r\dim \mathfrak{n}}}\\
&+\delta C_{\epsilon,\mathfrak{r}} e^{-\mathfrak{r}(\frac{1}{2}-\epsilon)\,\gamma(\mathfrak z)}\,\Pi_{i=1}^n\norm{f_i}_{C^{\dim\mathfrak{n}+1}}.
\end{align*}
\end{theorem}
Next, we give the proof of Theorem \ref{th:2} by assuming Theorem \ref{th:10} holds. We postpone the proof of  Theorem \ref{th:10} to Section \ref{sec:48}.
For any $\chi\in \Lambda\backslash\{0\}$, define
\begin{align*}
\widetilde{\ker\chi}=\{\mathfrak{z}=(z_1,\dots,z_n)\in\mathbb{Z}^{n\ell}:\,\text{there exist }i,\,j\text{ such that }z_i-z_j\in \ker\chi\}.
\end{align*}
Let $\mathcal M \;=\;\bigcup_{\chi\in\Lambda\setminus\{0\}}
    \widetilde{\ker\chi}$ and set
\[
  \mathcal{R}_n
  \;=\;
  \bigl\{\mathfrak{z}\in\mathbb{Z}^{n\ell}:\,
    \mathrm{dist}(L(\mathfrak{z}),\,L(\mathcal M))>0
  \bigr\}
\]
and, for each \(\varepsilon>0\) set
\[
  \mathcal{T}_{n,\,\varepsilon}
  \;=\;
  \bigl\{\mathfrak{z}\in\mathbb{Z}^{n\ell}:\,
    \mathrm{dist}(L(\mathfrak{z}),\,L(\mathcal M))\geq \varepsilon
  \bigr\}.
\]
Since $L(\mathcal M)$ is closed in $\mathbb S^{n\ell-1}$, the condition
$\mathrm{dist}(L(\mathfrak z),L(\mathcal M))>0$ is equivalent to $L(\mathfrak z)\notin L(\mathcal M)$.
\begin{lemma}\label{le:19} For any $n\geq2$ and any $\varepsilon>0$, there is $\delta>0$ such that
\begin{align}\label{for:379}
   \lim_{r\to\infty}
  \frac{\#\{\mathfrak{z}\in \mathcal{R}_n:\|\mathfrak{z}\|\le r\}}
       {\#\{\mathfrak{z}\in\mathbb{Z}^{n\ell}:\|\mathfrak{z}\|\le r\}}
  \;=1
  \end{align}
  and
   \begin{align}\label{for:380}
   \lim_{r\to\infty}
  \frac{\#\{\mathfrak{z}\in \mathcal{T}_{n,\delta}:\|\mathfrak{z}\|\le r\}}
       {\#\{\mathfrak{z}\in\mathbb{Z}^{n\ell}:\|\mathfrak{z}\|\le r\}}
  \;\geq 1-\varepsilon.
  \end{align}
\end{lemma}
\begin{proof}  We equip \(\mathbb{R}^{n\ell}\) with its standard Euclidean norm \(\|\cdot\|\) and denote by \(\sigma\) the surface measure on \(\mathbb{S}^{n\ell-1}\).
Let \(U\subset\mathbb{S}^{n\ell-1}\) be any Borel set with \(\sigma(\partial U)=0\), and consider the cone
\[
  R_U \;=\;
  \{\,x\in\RR^{n\ell}:\|x\|\le1,\; \frac{x}{\|x\|}\in U\}.
\]
Then for any \(r>0\), we have
\[
  \{\mathfrak{z}\in\ZZ^{n\ell}:\|\mathfrak{z}\|\le r,\; \frac{\mathfrak{z}}{\norm{\mathfrak{z}}}\in U\}
  \;=\;
  \ZZ^{n\ell}\;\cap\;(r\cdot R_U).
\]
  By the classical lattice point theorem for dilates of bounded sets with negligible boundary (see \cite[Theorem 21]{Clark})
\[
  \#\bigl(\ZZ^{n\ell}\cap (r\,R_U)\bigr)
  \;=\;
  \Vol(r\,R_U)\;+\;o\bigl(\Vol(r\,R_U)\bigr)
  \;=\;
  r^{n\ell}\,\Vol(R_U)\;+\;o(r^{n\ell}).
\]
On the other hand, in polar coordinates
\begin{align*}
  \Vol(R_U)
  \;=\;
  \int_{0}^{1} r^{n\ell-1}\,dr
  \;\int_{U}d\sigma(\omega)
  \;=\;
  \frac{1}{n\ell}\,\sigma(U)
\end{align*}
and
\[
 \Vol(B_r)
  \;=\;
  r^{n\ell}\,\Vol(B_1)
  \;=\;
  \frac{r^{n\ell}}{n\ell}\,\sigma\bigl(\mathbb{S}^{n\ell-1}\bigr).
\]
Hence
\begin{align*}
\#\{\mathfrak{z}\in\ZZ^{n\ell}:&\|\mathfrak{z}\|\le r,\; \frac{\mathfrak{z}}{\norm{\mathfrak{z}}}\in U\}=r^{n\ell}\,\frac{\sigma(U)}{n\ell}
  \;+\;o(r^{n\ell})\\
  &=\Vol(B_r)\,\frac{\sigma(U)}{\sigma(\mathbb{S}^{n\ell-1})}
  \;+\;o(r^{n\ell}).
\end{align*}
By taking $U=\mathbb{S}^{n\ell-1}$, we have
\[
  \#\{\mathfrak{z}\in\ZZ^{n\ell}:\|\mathfrak{z}\|\le r\}
  \;=\;
  \Vol(B_r)
  + o(r^{n\ell}).
\]
Hence, we have
\begin{align*}
 \lim_{r\to\infty}\frac{\#\{\mathfrak{z}\in\ZZ^{n\ell}:\|\mathfrak{z}\|\le r,\; \frac{\mathfrak{z}}{\norm{\mathfrak{z}}}\in U\}}
       {\#\{\mathfrak{z}\in\ZZ^{n\ell}:\|\mathfrak{z}\|\le r\}}= \frac{\sigma(U)}{\sigma(\mathbb{S}^{n\ell-1})}.
\end{align*}
Let $U=L(\mathcal{R}_n)$. We note  that \(L(\mathcal{M})\) is a finite union of great subspheres in \(\mathbb{S}^{n\ell-1}\), hence
\(\sigma\bigl(L(\mathcal{M})\bigr)=0\). Then $\sigma(U)=\sigma(\mathbb{S}^{n\ell-1})$. So the ratio equals \(1\). Thus, we get \eqref{for:379}.

To prove \eqref{for:380}, we choose \(\delta>0\) so that
\[
  \sigma\Bigl(\bigl\{\omega\in\mathbb{S}^{n\ell-1}:\mathrm{dist}(\omega,L(\mathcal M))\ge\delta\bigr\}\Bigr)\ \ge\ (1-\varepsilon)\,\sigma(\mathbb{S}^{n\ell-1}),
\]
and set \(U=L(\mathcal{T}_{n,\delta})\).  The same counting argument gives  \eqref{for:380}.
\end{proof}
We note that if $\mathfrak z\in\mathcal R_n$ then $\Theta(\mathfrak n,d\alpha,\mathfrak z)>0$.
Moreover, $\Theta$ depends only on the directions of the pairwise differences, hence
\[
  \Theta(\mathfrak n,d\alpha,\mathfrak z)
  \;=\;
  \Theta(\mathfrak n,d\alpha, L(\mathfrak z)).
\]
Define
\[
  \mathfrak r\!\big(L(\mathfrak z)\big)
  \;:=\;
  \Theta(\mathfrak n,d\alpha, L(\mathfrak z)) \;>\; 0 .
\]
Then item \eqref{for:157} of Theorem~\ref{th:2} follows from Theorem~\ref{th:10} together with \eqref{for:379} of Lemma~\ref{le:19}.

For any $\varepsilon>0$, let $\delta=\delta(\varepsilon)$ be given by Lemma~\ref{le:19} and set
$\mathcal R_{n,\varepsilon}:=\mathcal T_{n,\delta}$. Since $L(\mathcal R_{n,\varepsilon})\subset \mathbb S^{n\ell-1}$ is closed and at positive distance from $L(\mathcal M)$,
\begin{align*}
\mathfrak r(\varepsilon)
  \;:=\;
  \min_{\theta\in L(\mathcal R_{n,\varepsilon})}\Theta(\mathfrak n,d\alpha,\theta)
  \;>\;0.
\end{align*}
Applying Theorem~\ref{th:10} with this $\mathfrak r(\varepsilon)$ and using \eqref{for:380} of Lemma~\ref{le:19} yields item \eqref{for:158} of Theorem~\ref{th:2}.

\subsection{Preparatory step for the proof of Theorem \ref{th:10}}\label{sec:47} We list the notation used below:
\begin{enumerate}
 \item $V_{\mathcal{E}^j}$, $\Gamma$-rational subspace and $E_i^j$: see Section \ref{sec:42}.

\item $\mathfrak{n}_k$ and $N_k$: see Section \ref{sec:28}.

\item $\mathcal{E}^j$: see Section \ref{sec:42}.

\item $z_0$, $a$: see \eqref{for:355} of Section \ref{sec:14}.

  \item $\mathfrak{n}^{(z,1)}$, $\mathfrak{n}^{(z,2)}$: see \eqref{for:32} of Section \ref{sec:33}.

  \item $f_{E,o}$ and $f_{E,\bot}$: see Section \ref{sec:7}.

\item\label{for:178}  We also use the notation \(z_0\) and \(a=\alpha(z_0)\) from \eqref{for:355} of Section \ref{sec:14}. Set $\mathfrak{m}_{i}=\mathfrak{n}^{(z_0,i)}\cap\mathfrak{n}_k$, $i=1,2$. Both $\mathfrak{m}_{1}$ and $\mathfrak{m}_{2}$ are $d\alpha$-invariant  $\Gamma$-rational subspaces of $\mathfrak{n}_{k}$. We recall that $V_{\mathcal E^k}=\mathfrak{n}_k$.

\end{enumerate}
In this part,  we prove the following result, which plays an important role in the proof of Theorem \ref{th:10}:

\begin{proposition}\label{po:7}
For any $f_1,\cdots, f_n\in C^\infty(\mathcal{X})$ and any $z_1,\cdots, z_n\in\ZZ^\ell$,  if there exists $\mathfrak{r}>0$ such that
\begin{align*}
 \Theta(\mathfrak{n}, \,d\alpha,\mathfrak{z})\geq \mathfrak{r} \,\,( \text{see }\eqref{for:205}),\qquad\text{where }\mathfrak{z}=(z_1,\dots,z_n)\in\mathbb{Z}^{n\ell},
\end{align*} Then:
\begin{enumerate}

  \item\label{for:358} For any $r> 0$ we have
  \begin{align*}
&\Big|\int_{\mathcal{X}} \Pi_{j=1}^{n} f_j\circ z_j \,d\varrho -  \int_{\mathcal{X}} \Pi_{j=1}^{n}\big((f_j)_{\mathfrak{m}_{1},o}\big)\circ z_j \,d\varrho\Big|\\
&\leq C_{\epsilon,\mathfrak{r},r,n} e^{-r(\mathfrak{r}-\epsilon)\,\min_{1\leq m\neq j\leq n}\norm{z_m-z_j}}\,\Pi_{j=1}^n\norm{f_j}_{C^{r\dim \mathfrak{n}_k}}.
\end{align*}
   \item\label{for:370} We further have
  \begin{align*}
\Big|&\int_{\mathcal{X}} \Pi_{j=1}^{n}\big((f_j)_{\mathfrak{m}_{1},o}\big)\circ z_j \,d\varrho
-\int_{\mathcal{X}} \Pi_{j=1}^{n}\big((f_j)_{\mathfrak{n}_{k},o}\big)\circ z_j \,d\varrho\Big|\\
&\leq C_{\epsilon,\mathfrak{r}} e^{-(\frac{1}{2}\mathfrak{r}-\epsilon)\,\min_{1\leq m\neq j\leq n}\norm{z_m-z_j}}\,\Pi_{j=1}^n\norm{f_j}_{C^{\dim\mathfrak{n}+1}}.
\end{align*}

\item\label{for:194} Consequently:
 \begin{align*}
&\Big|\int_{\mathcal{X}} \Pi_{j=1}^{n} f_j\circ z_j \,d\varrho -  \int_{\mathcal{X}} \Pi_{j=1}^{n}\big((f_j)_{\mathfrak{n}_{k},o}\big)\circ z_j \,d\varrho\Big|\notag\\
&\leq C_{\epsilon,\mathfrak{r},r,n} e^{-r(\mathfrak{r}-\epsilon)\,\min_{1\leq m\neq j\leq n}\norm{z_m-z_j}}\,\Pi_{j=1}^n\norm{f_j}_{C^{r\dim \mathfrak{n}_k}}\notag\\
&+\delta C_{\epsilon,\mathfrak{r}} e^{-(\frac{1}{2}\mathfrak{r}-\epsilon)\,\min_{1\leq m\neq j\leq n}\norm{z_m-z_j}}\,\Pi_{j=1}^n\norm{f_j}_{C^{\dim\mathfrak{n}+1}}.
\end{align*}
\end{enumerate}

\end{proposition}

\subsubsection{Proof strategy for Theorem~\ref{th:10}}
Fix $n\ge2$, $z_1,\dots,z_n\in\ZZ^\ell$, $f_1,\dots,f_n\in C^\infty(\mathcal X)$, and assume
\[
\Theta(\mathfrak n,d\alpha,\mathfrak z)\ \ge\ \mathfrak r>0,
\qquad \mathfrak z=(z_1,\dots,z_n).
\]
The proof proceeds by induction on the nilpotency step $k$ of $N$.
The key reduction is Proposition~\ref{po:7}: under $\Theta(\mathfrak n,d\alpha,\mathfrak z)\ge\mathfrak r$,
the contribution of the $\mathfrak n_k$-orthogonal components is exponentially small in the gap
$\min_{m\ne j}\|z_m-z_j\|$. Once all functions are $\mathfrak n_k$-invariant, they descend to the
step-$(k-1)$ factor
\[
\mathcal Y := \bigl(N/\exp(\mathfrak n_k)\bigr)\Big/\Bigl(\Gamma/(\Gamma\cap \exp(\mathfrak n_k))\Bigr),
\]
and the Lyapunov spectrum on $\mathfrak n/\mathfrak n_k$ is a subset of that on $\mathfrak n$, hence
\[
\Theta(\mathfrak n/\mathfrak n_k,d\alpha_{\mathcal Y},\mathfrak z)
\ \ge\
\Theta(\mathfrak n,d\alpha,\mathfrak z)
\ \ge\ \mathfrak r.
\]
Therefore the same time-separation hypothesis holds on the quotient, making induction possible.
After Proposition~\ref{po:7}, the remainder of Theorem~\ref{th:10} is a clean induction on $k$.

\emph{Proof strategy for Proposition~\ref{po:7}}: The proof has the same conceptual backbone as Theorem~\ref{th:9}-namely,
\emph{matching the correct Diophantine directions to the $d\alpha$-splitting}, but
it is executed \emph{blockwise} and \emph{iteratively} along rational invariant
pieces rather than along a single element $a^m$.

\smallskip
\noindent \emph{1. Match Diophantine directions to rational blocks on the top layer. }
Work on $\mathfrak n_k$ and on the $\Gamma$-rational invariant subspace
$\mathfrak m_1=\mathfrak n^{(z_0,1)}\cap \mathfrak n_k$.
Decompose $\mathfrak m_1$ into primary rational blocks
\[
\mathfrak m_1=\bigoplus_{i=1}^{l_0}\mathcal F_i(M_k),
\qquad
\mathcal F_i(M_k)=\bigoplus_{t=1}^{L(i)}\mathcal L_{i,t}(M_k),
\]
where $\mathcal L_{i,t}(M_k)$ are Lyapunov subspaces for $da|_{\mathcal F_i(M_k)}$.
Lemma~\ref{le:9} ensures that the extremal subspaces $\mathcal L_{i,1}$ (and similarly $\mathcal L_{i,L(i)}$
when needed) are Diophantine inside $\mathcal F_i(M_k)$, providing the correct directions in which fractional
coboundary equations admit uniform estimates. This is the analogue of choosing $\mathcal D,\mathcal D_i$ in Theorem~\ref{th:9}:
the Diophantine directions are selected \emph{to align with the Lyapunov splitting}
and \emph{to be rationally visible}.

\smallskip
\noindent \emph{(2) Use Diophantine solvability to ``peel off" orthogonal components (telescoping). }
For each block $\mathcal F_s$, decompose
\[
f_j=(f_j)_{\mathcal F_i,o}+(f_j)_{\mathcal F_i,\bot}.
\]
Expand the difference between the full correlation and the $\mathcal F_i$-invariant
correlation by a telescoping identity (one index $m$ at a time), producing
\begin{align}\label{for:384}
 &\int_{\mathcal{X}} \Pi_{j=1}^{n} f_j\circ z_j \,d\varrho-\int_{\mathcal{X}} \Pi_{j=1}^{n} \big((f_j)_{\mathcal{F}_i,o}\big)\circ z_j \,d\varrho=\sum_{m=1}^n \big\langle \mathcal{G}_m,\, \Pi_{j=1}^{m-1}\bar{f_j}\circ (z_j-z_m)   \big\rangle,
\end{align}
(see \eqref{for:356}). Apply the analogue of Theorem~\ref{th:14}/Proposition~\ref{le:1} along $\mathcal L_{i,1}$ to solve
\[
\mathcal G_m \;=\; \sum_{q} |v_q|^{\,r}\,\omega_{m,q,r}.
\]
This is the multi-parameter version of writing
$\xi_{i,2}=\sum_j |\mathfrak u_{i,j}|^r\varpi_{i,j,r}$ in Theorem~\ref{th:9}:
the Diophantine property is what makes the fractional equation solvable, and the partial norm control of $\mathcal G_m$ contributes to the uniform $L^2$ bounds of $\omega_{m,q,r}$.

\smallskip
\noindent \emph{(3) Convert solvability into exponential decay using Lyapunov separation. }
Move $|v_{i,q}|^r$ to the other factor using unitarity/self-adjointness, and conjugate through $\pi(z_j-z_m)$:
\[
v\,\pi(\alpha(z))=\|d\alpha(z)^{-1}v\|\,\pi(\alpha(z))\,\widetilde v.
\]
Using $\Theta(\mathfrak n,d\alpha,\mathfrak z)\ge\mathfrak r$ and (after relabeling indices if needed)
ordering so that $\chi(i)(z_1)<\cdots<\chi(i)(z_n)$, one obtains uniform contraction on $\mathcal L_{i,1}$:
\[
\|d\alpha(z_j-z_m)|_{\mathcal L_{i,1}}\|\ \ll\ e^{-(\mathfrak r-\epsilon)\|z_m-z_j\|}.
\]
Leibniz (and Kato--Ponce for fractional $r$) then yields
\[
\big\||v_{i,q}|^{r}\big(\bar f_j\circ (z_j-z_m)\big)\big\|
\ \ll\
e^{-r(\mathfrak r-\epsilon)\|z_m-z_j\|}\,\|f_j\|_{C^{r}},
\]
and combining this with the bounds for $\omega_{m,i,q,r}$ gives exponential control of the blockwise error.
Iterating over $i=1,\dots,l_0$ and telescoping yields the reduction
$f_j\mapsto (f_j)_{\mathfrak m_1,o}$, i.e.\ \eqref{for:358}.

\smallskip
\noindent \emph{(4) Repeat the peeling at the $\frac12$-level to reach $\mathfrak n_k$-invariance. }
To upgrade $(f_j)_{\mathfrak m_1,o}$ to $(f_j)_{\mathfrak n_k,o}$ one repeats the same mechanism,
but now with the \emph{$\frac12$-type} fractional equation (the analogue of the $\xi_{i,1}$-part
in Theorem~\ref{th:9}), producing the second exponential decay error term,
which is precisely the $\delta$-term in \eqref{for:370} of Proposition~\ref{po:7}.
This is the point where the same dichotomy as in Theorem~\ref{th:9} appears:
the $\frac12$-term is present only when the relevant rational obstruction survives.

\smallskip
\noindent \emph{Summary. }Theorem~\ref{th:10} is proved by iterating the Theorem~\ref{th:9} philosophy:
\emph{choose Diophantine subspaces aligned with Lyapunov splittings}, solve the corresponding
fractional coboundary equations with \emph{partial Sobolev control}, extract exponential decay via
\emph{Lyapunov separation} (quantified by $\Theta\ge\mathfrak r$), then descend to the central
quotient and close by induction on the nilpotency step.

\subsubsection{Proof of \eqref{for:358} of Proposition \ref{po:7}} We further recall the following notations:
\begin{enumerate}

\item $\mathcal{L}_{i,j}(M)$, $\mathcal{F}_i(M)$: see Section \ref{sec:39}.

\item Diophantine subspace: see Section \ref{sec:6}.

\item $\mathds{W}_{\chi}$:  see \eqref{for:75} of Section \ref{sec:14}.

\end{enumerate}
The restriction of $da$ on $\mathfrak{m}_{1}$ induces an automorphism $M_k$ on $\mathfrak{m}_{1}$, which is represented by an integer matrix, under a $\ZZ$-basis of the lattice $\mathfrak{m}_{1}\cap \operatorname{span}_{\ZZ}\{E_j^k\}$ (see Section~\ref{sec:42}). Then
\begin{align}\label{for:359}
 \mathfrak{m}_{1}=\mathcal{F}_1(M_k)\oplus \mathcal{F}_2(M_k)\oplus\cdots \oplus \mathcal{F}_{l_0}(M_k).
\end{align}
We recall that on each $\mathcal{F}_i$, $1\leq i\leq l_0$, $\chi_{i,1}<\cdots<\chi_{i,L(i)}$ are the Lyapunov exponents of $da|_{\mathcal{F}_i}$ and
\begin{align*}
 \mathcal{F}_i=\mathcal{L}_{i,1}(M_k)\oplus\cdots \oplus\mathcal{L}_{i,L(i)}(M_k)
\end{align*}
is the corresponding Lyapunov subspace decomposition.

By Lemma \ref{le:9}, the subspaces: $1\leq i\leq l_0$, $1\leq j\leq L(i)$
\begin{align}\label{for:167}
 \mathcal{L}_{i,j} \text{ is a Diophantine subspace of } \mathcal{F}_i.
\end{align}
First, we prove the following result:
\begin{lemma}\label{po:6} For any $1\leq i\leq l_0$ and any $r> 0$ we have
\begin{align*}
&\Big|\int_{\mathcal{X}} \Pi_{j=1}^{n} f_j\circ z_j \,d\varrho-\int_{\mathcal{X}} \Pi_{j=1}^{n} \big((f_j)_{\mathcal{F}_i,o}\big)\circ z_j \,d\varrho\Big|\\
&\leq C_{\epsilon,r,n} e^{-r(\mathfrak{r}-\epsilon)\,\min_{1\leq m\neq j\leq n}\norm{z_m-z_j}}\,\Pi_{j=1}^n\norm{f_j}_{C^{r\dim \mathfrak{n}_k}}.
\end{align*}
\end{lemma}
\begin{proof}

Since
$da|_{\mathfrak{m}_1}$ is ergodic, for any $1\leq i\leq l_0$, $\chi_{i,1}\neq 0$. Since the Lyapunov subspace decomposition of $da$  coincides with the decomposition  \eqref{for:75} (see \eqref{for:382} of Section \ref{sec:14}), there is $\chi(i)\in \Lambda\backslash\{0\}$ such that
\begin{align*}
\mathcal{L}_{i,1}=\mathds{W}_{\chi(i)}\cap \mathfrak{n}_k,\qquad  1\leq i\leq l_0.
\end{align*}
Since $\Theta(\mathfrak{n}, \,d\alpha,\mathfrak{z})>0$,  $\chi(i)(z_{m})\neq \chi(i)(z_{p})$ if $1\leq m\neq p\leq n$. It is harmless to assume that
\begin{align*}
\chi(i)(z_{1})< \chi(i)(z_{2})<\cdots < \chi(i)(z_{n}),
\end{align*}
which  implies that
\begin{align}\label{for:357}
\chi(i)(z_p-z_m)\leq -\mathfrak{r}\norm{z_m-z_p},\qquad  \forall\,\,1\leq p<m\leq n.
\end{align}
We recall that $f_j=(f_j)_{\mathcal{F}_i,o}+(f_j)_{\mathcal{F}_i, \bot}$, $1\leq j\leq n$. Then we rewrite
\begin{align}\label{for:356}
 &\int_{\mathcal{X}} \Pi_{j=1}^{n} f_j\circ z_j \,d\varrho-\int_{\mathcal{X}} \Pi_{j=1}^{n} \big((f_j)_{\mathcal{F}_i,o}\big)\circ z_j \,d\varrho\notag\\
 &=\sum_{m=1}^n\int_{\mathcal{X}} \big(\Pi_{j=1}^{m-1} f_j\circ z_j\big)\big(((f_m)_{\mathcal{F}_i,\bot})\circ z_m\big)\big(\Pi_{j=m+1}^{n} ((f_j)_{\mathcal{F}_i,o})\circ z_j\big) \,d\varrho\notag\\
 &=\sum_{m=1}^n\int_{\mathcal{X}} \big(\Pi_{j=1}^{m-1} f_j\circ (z_j-z_m)\big)(f_m)_{\mathcal{F}_i,\bot}\big(\Pi_{j=m+1}^{n} ((f_j)_{\mathcal{F}_i,o})\circ (z_j-z_m)\big) \,d\varrho\notag\\
 &\overset{\text{(1)}}{=}\sum_{m=1}^n\int_{\mathcal{X}} \big(\Pi_{j=1}^{m-1} f_j\circ (z_j-z_m)\big)\mathcal{G}_m \,d\varrho\overset{\text{(2)}}{=}\sum_{m=1}^n \big\langle \mathcal{G}_m,\, \Pi_{j=1}^{m-1} \bar{f_j}\circ (z_j-z_m)   \big\rangle,
\end{align}
Here in $(1)$,
\begin{align*}
\phi_m:=\Pi_{j=m+1}^{n}((f_j)_{\mathcal F_i,o})\circ (z_j-z_m),
\qquad
\mathcal G_m:=(f_m)_{\mathcal F_i,\bot}\,\phi_m;
\end{align*}
in $(2)$ $\bar{f_j}$ is the complex conjugate of $f_j$.

We fix a basis $v_{i,1},\,v_{i,2},\cdots v_{i,\dim \mathcal{L}_{i,1}}$ of $\mathcal{L}_{i,1}$, $1\leq i\leq l_0$. We note that:
  \begin{enumerate}
    \item $\mathcal{L}_{i,1}$ is a Diophantine subspace of $\mathcal{F}_i$, see \eqref{for:167}. Then $\mathcal{L}_{i,1}$ is a Diophantine subspace of $\mathcal{F}_i$ of type $k$.

    \smallskip
    \item\label{for:361} $\mathcal{G}_m=(\mathcal{G}_m)_{\mathcal{F}_i,\bot}$.
    Since $\mathcal{F}_i$ is $d\alpha$-invariant, $\phi_m\in L_{\mathcal{F}_i,o}$ (see \eqref{for:78} of Section \ref{sec:7}). Then it follows from \eqref{for:42} of Lemma \ref{cor:2} that
    \begin{align*}
    \mathcal{G}_m=(f_m)_{\mathcal{F}_i,\bot}\cdot \phi_m=(\phi_m\cdot f_m)_{\mathcal{F}_i,\bot}.
    \end{align*}

    \end{enumerate}
This allows us to apply Proposition \ref{le:1} to $\mathcal{G}_m$ conclude that: for any $r>0$,
there exist $\omega_{m, i,j,r}\in L^2(\mathcal{X})$ such that:
\begin{align}\label{for:175}
\sum_{q=1}^{\dim \mathcal{L}_{i,1}} |v_{i,q}|^{r}\,\omega_{m,i,q,r}=\mathcal{G}_m
\end{align}
and
\begin{align}\label{for:172}
 \|\omega_{m,i,q,r}\|\ \le\ C_{r}\norm{\phi_m\cdot f_m}_{\exp(\mathcal{F}_i),\,r\dim\mathfrak{n}_k}.
\end{align}
 Using that each $(f_j)_{\mathcal{F}_i,o}$ is constant along $\mathcal{F}_i$ and that $\mathcal{F}_i$ is $d\alpha$-invariant, we get
    \begin{align*}
     \norm{\phi_m\cdot f_m}_{\exp(\mathcal{F}_i), \,r\dim \mathfrak{n}_k}
     &\leq \norm{f_m}_{\exp(\mathcal{F}_i), \,C^{r\dim \mathfrak{n}_k}} \norm{\phi_m}_{\exp(\mathcal{F}_i), \,C^{r\dim \mathfrak{n}_k}}\\
     &\leq\norm{f_m}_{C^{r\dim \mathfrak{n}_k}}(\Pi_{j=m+1}^n\norm{f_j}_{C^0})\leq \Pi_{j=m}^n\norm{f_j}_{C^{r\dim \mathfrak{n}_k}}.
    \end{align*}
    This together with \eqref{for:172} give
    \begin{align}\label{for:176}
 \|\omega_{m,i,q,r}\|\ \le\ C_{r}\Pi_{j=m}^n\norm{f_j}_{C^{r\dim \mathfrak{n}_k}}.
\end{align}
Then we have:
\begin{align}\label{for:177}
 \Big|\int_{\mathcal{X}} \Pi_{j=1}^{n} &f_j\circ z_j \,d\varrho-\int_{\mathcal{X}} \Pi_{j=1}^{n} ((f_j)_{\mathcal{F}_i,o}|)\circ z_j \,d\varrho\Big|\overset{\text{(1)}}{\leq} \sum_{m=1}^n\Big|\big\langle \mathcal{G}_m,\, \Pi_{j=1}^{m-1} \bar{f_j}\circ (z_j-z_m)   \big\rangle\Big|\notag\\
 &\overset{\text{(2)}}{=}\sum_{m=1}^n\Big|\sum_{q=1}^{\dim \mathcal{L}_{i,1}}\big\langle |v_{i,q}|^{r}\,\omega_{m,i,q,r},\, \Pi_{j=1}^{m-1} \bar{f_j}\circ (z_j-z_m)   \big\rangle\Big|\notag\\
&\overset{\text{(3)}}{=}\sum_{m=1}^n\Big|\sum_{q=1}^{\dim \mathcal{L}_{i,1}}\big\langle \omega_{m,i,q,r},\, |v_{i,q}|^{r}\big(\Pi_{j=1}^{m-1} \bar{f_j}\circ (z_j-z_m)\big)   \big\rangle\Big|\notag\\
&\overset{\text{(4)}}{\leq} \sum_{m=1}^n\sum_{q=1}^{\dim \mathcal{L}_{i,1}} \norm{\omega_{m,i,q,r}}\,\Big\| |v_{i,q}|^{r}\big(\Pi_{j=1}^{m-1} \bar{f_j}\circ (z_j-z_m)\big)   \Big\|\notag\\
&\overset{\text{(5)}}{\leq}C_{r}\sum_{m=1}^n\sum_{q=1}^{\dim \mathcal{L}_{i,1}} \big(\Pi_{j=m}^n\norm{f_j}_{C^{r\dim \mathfrak{n}_k}}\big)\Big\| |v_{i,q}|^{r}\big(\Pi_{j=1}^{m-1} \bar{f_j}\circ (z_j-z_m)\big)   \Big\|.
\end{align}
Here in $(1)$ we use \eqref{for:356}; in $(2)$ we use \eqref{for:175}; in $(3)$ we use \eqref{for:163} of Lemma \ref{le:17}; in $(4)$ we use Cauchy-Schwarz inequality; in $(5)$ we use \eqref{for:176}.

Next, we estimate $\Big\| |v_{i,q}|^{r}\big(\Pi_{j=1}^{m-1} \bar{f_j}\circ (z_j-z_m)\big)   \Big\|$. \eqref{for:357} implies that
\begin{align*}
 \big\|d\alpha(z_j-z_m)|_{\mathcal{L}_{i,1}}\big\|&\leq Ce^{-\mathfrak{r}\norm{z_m-z_j}}(1+\norm{z_m-z_j})^{\dim \mathfrak{n}}\leq C_{\mathfrak{r},\epsilon} e^{-(\mathfrak{r}-\epsilon)\norm{z_m-z_j}}.
\end{align*}
We recall that $\mathcal{L}_{i,1}$ is $d\alpha$-invariant.  Thus, by Leibniz
(and Kato-Ponce for fractional orders), we have
\begin{align}\label{for:383}
 \Big\| |v_{i,q}|^{r}&\big(\Pi_{j=1}^{m-1} \bar{f_j}\circ (z_j-z_m)\big)   \Big\|
 \leq C_{\epsilon,\mathfrak{r},r,m}e^{-r(\mathfrak{r}-\epsilon)\min_{1\leq j\leq m-1}\norm{z_m-z_j}  }\,\Pi_{j=1}^{m-1}\|f_j\|_{C^r}.
\end{align}
This together with \eqref{for:177} give
\begin{align*}
 &\Big|\int_{\mathcal{X}} \Pi_{j=1}^{n} f_j\circ z_j \,d\varrho-\int_{\mathcal{X}} \Pi_{j=1}^{n} ((f_j)_{\mathcal{F}_i,o})\circ z_j \,d\varrho\Big|\\
 &\leq C_r\sum_{m=1}^n\sum_{q=1}^{\dim \mathcal{L}_{i,1}} \big(\Pi_{j=m}^n\norm{f_j}_{C^{r\dim \mathfrak{n}_k}}\big)\\
 &\qquad\qquad\qquad \quad\cdot C_{\epsilon,\mathfrak{r},r,m}e^{-r(\mathfrak{r}-\epsilon)\min_{1\leq j\leq m-1}\norm{z_m-z_j}  }\,\Pi_{j=1}^{m-1}\|f_j\|_{C^r}\\
 &\leq C_{\epsilon,\mathfrak{r},r,n}e^{-r(\mathfrak{r}-\epsilon)\,\min_{1\leq m\neq j\leq n}\norm{z_m-z_j}}\,\Pi_{j=1}^n\norm{f_j}_{C^{r\dim \mathfrak{n}_k}}.
\end{align*}
Then we complete the proof.
\end{proof}
Now we proceed to the Proof of \eqref{for:358} of Proposition \ref{po:7}. Set $\mathcal{K}_{[0]}=\{0\}$ and
\begin{align*}
\mathcal{K}_{[i]}=\mathcal{F}_1\oplus\mathcal{F}_2\oplus\cdots\oplus\mathcal{F}_{i},\qquad 1\leq i\leq l_0.
\end{align*}
  Next, we define
\begin{align*}
 f^{\{0\}}=f\quad\text{and}\quad f^{\{i\}}=f_{\mathcal{K}_{[i]}, o},\qquad 1\leq i\leq l_0,\quad \forall\,f\in L^2(\mathcal{X}).
\end{align*}
It is clear that
\begin{align*}
  (f^{\{i\}})_{\mathcal{F}_{i+1},\,o}=f^{\{i+1\}},\qquad 0\leq i\leq l_0-1.
\end{align*}
For any $1\leq i\leq l_0$,  apply Lemma \ref{po:6} with each   $f_j$ replaced $f_j^{\{i-1\}}$. This gives, for any $r>0$,
\begin{align*}
 &\Big|\int_{\mathcal{X}} \Pi_{j=1}^{n} \pi(z_j)f_j^{\{i-1\}} \,d\varrho-\int_{\mathcal{X}} \Pi_{j=1}^{n} \pi(z_j)f_j^{\{i\}} \,d\varrho\Big|\\
 &\leq C_{\epsilon,\mathfrak{r},r,n} e^{-r(\mathfrak{r}-\epsilon)\,\min_{1\leq m\neq j\leq n}\norm{z_m-z_j}}\,\Pi_{j=1}^n\norm{f_j^{\{i-1\}}}_{C^{r\dim \mathfrak{n}_k}}\\
 &\leq C_{\epsilon,\mathfrak{r},r,n,1} e^{-r(\mathfrak{r}-\epsilon)\,\min_{1\leq m\neq j\leq n}\norm{z_m-z_j}}\,\Pi_{j=1}^n\norm{f_j}_{C^{r\dim \mathfrak{n}_k}}.
 \end{align*}
Summing these $l_0$ inequalities telescopes the difference
\begin{align*}
 \Big|\int_{\mathcal{X}} \Pi_{j=1}^{n} \pi(z_j)f_j^{\{i-1\}} \,d\varrho-\int_{\mathcal{X}} \Pi_{j=1}^{n} \pi(z_j)f_j^{\{l_0\}} \,d\varrho\Big|,
\end{align*}
and since $f_j^{\{l_0\}}=(f_j)_{\mathcal{K}_{[l_0]}, o}=(f_j)_{\mathfrak{m}_1,o}$ (see \eqref{for:359}), we have
\begin{align*}
&\Big|\int_{\mathcal{X}} \Pi_{j=1}^{n} \pi(z_j)f_j \,d\varrho -  \int_{\mathcal{X}}\Pi_{j=1}^{n} \pi(z_j)(f_j)_{\mathfrak{m}_{1},o} \,d\varrho\Big|\\
&\leq C_{\epsilon,\mathfrak{r},r,n} e^{-r(\mathfrak{r}-\epsilon)\,\min_{1\leq m\neq j\leq n}\norm{z_m-z_j}}\,\Pi_{j=1}^n\norm{f_j}_{C^{r\dim \mathfrak{n}_k}}.
\end{align*}
This completes the proof.

\subsubsection{Proof of \eqref{for:370} of Proposition \ref{po:7}} We further recall the following notation:
\begin{enumerate}

\item  $\mathcal{L}_{i,j}(M)$, $\mathcal{F}_i(M)$: see Section \ref{sec:39}.

\item $z_0$, $a$: see \eqref{for:355} of Section \ref{sec:14}.

\item $\mathfrak{n}_i$: see Section \ref{sec:28}.

  \item Diophantine subspace of type $i$ and $\mathfrak{p}_i$: see Section \ref{for:30}.

\end{enumerate}
$da$ descends to an automorphism $M_1$ on \(\mathfrak{p}_1(\mathfrak n_1)\). Since $\mathfrak{p}_{1}|_{V_{\mathcal{E}^1}}: V_{\mathcal{E}^1}\to \mathfrak{p}_1(\mathfrak n_1)$ is an isomorphism, $M_1$ can be viewed as an automorphism on $V_{\mathcal{E}^1}$. With respect to the $\mathbb Z$-basis of the lattice $\mathfrak p_1(\operatorname{span}_{\mathbb Z}\{E_j^1\})$ (see Section~\ref{sec:42}), $M_1$ is represented by an integer matrix. Then
\begin{align}
V_{\mathcal{E}^1}=\mathcal{F}_1(M_1)\oplus \mathcal{F}_2(M_1)\oplus\cdots \oplus \mathcal{F}_{l_0}(M_1).
\end{align}
We recall that on each $\mathcal{F}_i$, $1\leq i\leq l_0$, $\chi_{i,1}<\cdots<\chi_{i,L(i)}$ are the Lyapunov exponents of $da|_{\mathcal{F}_i}$ and
\begin{align*}
 \mathcal{F}_i=\mathcal{L}_{i,1}(M_1)\oplus\cdots \oplus\mathcal{L}_{i,L(i)}(M_1)
\end{align*}
is the corresponding Lyapunov subspace decomposition.

By Lemma \ref{le:9}, the subspaces: for any $1\leq i\leq l_0$, $1\leq j\leq L(i)$, $\mathcal{L}_{i,j}$ is a Diophantine subspace of $\mathcal{F}_i$.

From the Lyapunov subspace decomposition  \eqref{for:352} of Section \ref{sec:14}, there is $\chi(i)\in \Lambda\backslash\{0\}$ such that
\begin{align*}
\mathfrak{p}_{1}(\mathcal{L}_{i,1})=\overline{\mathds{W}}_{\chi(i)}\cap \mathfrak{p}_{1}(\mathcal{F}_i),\qquad  1\leq i\leq l_0.
\end{align*}
Since $\mathfrak p_1|_{\mathcal F_i}$ is an isomorphism onto $\mathfrak p_1(\mathcal F_i)$, we can choose a subspace
\[
  \mathcal{D}_i\subseteq \mathds{W}_{\chi(i)}
  \quad\text{with}\quad
  \mathfrak{p}_1(\mathcal{D}_i)=\mathfrak{p}_{1}(\mathcal{L}_{i,1}).
\]
Then $\mathcal{D}_i$ is a Diophantine subspace of $\mathcal{F}_i(M_1)$ of type $1$. Let $E_i=[\mathcal{F}_i(M_1),\,\mathfrak{n}_{k-1}]\subseteq \mathfrak{n}_k$, $1\leq i\leq l_0$. Then
\begin{align}\label{for:371}
 \mathfrak n_k=[V_{\mathcal E^1},\mathfrak n_{k-1}]
=\sum_{i=1}^{l_0}[\mathcal F_i(M_1),\mathfrak n_{k-1}]
=\sum_{i=1}^{l_0}E_i.
\end{align}

\begin{lemma}\label{le:12} Fix a  basis  $\{\mathfrak{u}_{i,1},\cdots,\mathfrak{u}_{i,\dim \mathcal{D}_i}\}$ of $\mathcal{D}_i$. Suppose  $\xi\in C^\infty(\mathcal X)$ and $\xi\in L_{\mathfrak{m}_{1},o}$. Let $U_i=(\oplus_{p\in\NN}\mathds{W}_{-p\chi(i)})\bigcap \mathfrak{n}_{k-1}$. Then  for  any $0<r<\frac{1}{2}$,
there exist $\omega_{i,j,r}\in\mathcal H$ such that
\begin{align*}
 \sum_{j=1}^{\dim\mathcal{D}_i} |\mathfrak{u}_{i,j}|^{r}\,\omega_{i,j,r}=\xi_{E_i,\bot}
\end{align*}
 with the Sobolev estimates: for any $1\leq j\leq \dim\mathcal{D}_i$
\begin{align*}
 \|\omega_{i,j,r}\|\ \le\ C_{r}\norm{\xi}_{\exp(U_i+ E_i),\,\dim \mathfrak{n}+1}.
\end{align*}
\end{lemma}
  \begin{proof} We consider the quotient \(N/(\exp(\mathfrak{m}_{1}))\) with descending central series $\mathfrak{n}_1/\mathfrak{m}_{1},\mathfrak{n}_2/\mathfrak{m}_{1},$ $\cdots, \mathfrak{n}_{k}/\mathfrak{m}_{1}$.
Let \(\mathfrak p:\mathfrak n\to\mathfrak n/\mathfrak{m}_{1}\) be the projection and
\begin{align*}
 \mathcal{Y} := \big(N/(\exp(\mathfrak{m}_{1}))\big)\bigl/ \bigl(\Gamma/\exp(\mathfrak{m}_{1})\cap \Gamma\bigr).
\end{align*}
Let $\mathfrak{q}:\mathcal U(\mathfrak n)\to\mathcal U(\mathfrak n/\mathfrak{m}_{1})$ denote the induced
algebra map and let $\mathfrak{t}:\mathcal X\to\mathcal Y$ denote the quotient map.  Then:
\begin{enumerate}
\item $[\mathfrak{p}(\mathcal{F}_i), \mathfrak{p}(\mathfrak{n}_{k-1})]=\mathfrak{p}(E_i)$.

 \item $\mathfrak{p}(\mathcal{D}_i)$ is a Diophantine subspace of $\mathfrak{p}(\mathcal{F}_i)$ of type $1$ which is spanned by $\{\mathfrak{p}(\mathfrak{u}_{i,1}),\cdots,\mathfrak{p}(\mathfrak{u}_{i,\dim \mathcal{D}_i})\}$.

     \item We note that $\mathfrak{p}(\mathfrak{n}_{k})=\mathfrak{p}(\mathfrak{m}_2)$ (see \eqref{for:178} of Section \ref{sec:47}). Thus, $d\alpha$ on $\mathfrak{p}(\mathfrak{n}_{k})$  has only $0$ Lyapunov exponents.

\item There is a subspace $V_1\subseteq \mathds{W}_{-\chi(i)}\bigcap \mathfrak{n}_{k-1}$ such that \(\mathfrak p(\mathfrak n_{k-1})=\mathfrak p(V_1)\oplus Q\), where
\(Q=\{w\in \mathfrak p(\mathfrak n_{k-1}):[w,\mathfrak p(\mathcal D_i)]=0\}\). The Lyapunov decomposition \eqref{for:75} descends:
\[
  \mathfrak{p}(\mathfrak{n})=\oplus_{\chi\in \Lambda} \,\mathfrak{p}(\mathds{W}_{\chi}).
\]
 Using weight additivity of brackets,
\[
  [\mathfrak{p}(\mathds{W}_{\chi_1}),\,\mathfrak{p}(\mathds{W}_{\chi_2})]
  \subseteq
  \begin{cases}
    \{0\}, & \chi_1+\chi_2\notin \Lambda,\\
    \mathfrak{p}(\mathds{W}_{\chi_1+\chi_2}), & \chi_1+\chi_2\in \Lambda.
  \end{cases}
\]
Since $\mathcal{D}_i\subseteq \mathds{W}_{\chi(i)}$ and only the zero exponent appears on $\mathfrak p(\mathfrak n_k)$, we have
 \[
  [\mathfrak{p}(\mathcal{D}_i),\,\oplus_{\chi\neq -\chi(i)} \,\mathfrak{p}(\mathds{W}_{\chi})]=0.
\]
Thus $Q\supseteq \mathfrak{p}\big(\mathfrak{n}_{k-1}\cap \oplus_{\chi\neq -\chi(i)} \,\mathds{W}_{\chi}\big)$ and we can choose a
subspace $V_1\subseteq \mathfrak{n}_{k-1}\cap \mathds{W}_{-\chi(i)}$ with \(\mathfrak p(\mathfrak n_{k-1})=\mathfrak p(V_1)\oplus Q\).

\item Denote by $V_2$ the subalgebra generated by $\mathfrak{p}(V_1)$ and $\mathfrak{p}(E_i)$. Since $V_1\subseteq \oplus_{p\in\NN}\mathds{W}_{-p\chi(i)}$,  $V_2\subseteq \mathfrak{p}(U+E_i)$. We note that $U+E_i$ is a subalgebra.

\item\label{for:112} Any function $f\in L_{\mathfrak{m}_{1},o}$ descends to \(\tilde f\in \mathcal{H}_\lambda:=L^2(\mathcal Y,\lambda)\) with $\lambda:=(\mathfrak t)_*\varrho$,
   and \(\mathfrak{q}(\mathcal P)\tilde f=\widetilde{\mathcal P f}\) together with
   \begin{align*}
    \|\mathcal P f\|_{L^2(\mathcal{X},\varrho)}=\|\mathfrak{q}(\mathcal P)\tilde f\|_{\mathcal{H}_\lambda},\qquad \forall\,\mathcal P\in\mathcal U(\mathfrak n).
   \end{align*}

\end{enumerate}
Applying Proposition~\ref{le:8} on \(\mathcal{Y}\) with \(E^{[1]}=\mathfrak{p}(E_i)\) and \(f=\tilde\xi\), we have: for any $0<r<\frac{1}{2}$, there exist $\omega_{i,j,r}'\in\mathcal{H}_\lambda$ such that
  \begin{align*}
   \sum_{j=1}^{\dim\mathcal{D}_i} |\mathfrak{p}(\mathfrak{u}_{i,j})|^{r}\,\omega'_{i,j,r}=(\tilde\xi)_{\mathfrak{p}(E_i),\bot}
  \end{align*}
   with the estimates
  \begin{align*}
 \|\omega_{i,j,r}'\|_{\mathcal{H}_\lambda}\ \le\ C_{r}\norm{\tilde{\xi}}_{\exp(V_2),\,\dim \mathfrak{p}(\mathcal{F}_i)+1,\mathcal{H}_\lambda}.
\end{align*}
Pull back by \(\mathfrak t\): set \(\omega_{i,j,r}:=\omega'_{i,j,r}\circ \mathfrak t\). Then \( |\mathfrak p(\mathfrak u_{i,j})|^r\) acts as \(|\mathfrak u_{i,j}|^r\) under pullback and the relevant Sobolev norms agree, yielding
\[
  \sum_{j=1}^{\dim\mathcal D_i} |\mathfrak u_{i,j}|^r\,\omega_{i,j,r}=\xi_{E_i,\bot},\qquad
  \|\omega_{i,j,r}\|\ \le\ C_{r}\,\big\|\xi\big\|_{\exp(U+ E_i),\,\dim \mathfrak{n}+1}.
\]
\end{proof}
Let
\begin{align}\label{for:187}
 g_i=(f_i)_{\mathfrak{m}_{1},o},\qquad 1\leq i\leq n.
\end{align}
 Next, we prove the following result:

\begin{lemma}\label{le:21}
For any $1\leq i\leq l_0$ we have
\begin{align*}
\Big|&\int_{\mathcal{X}} \Pi_{j=1}^{n} g_j\circ z_j \,d\varrho-\int_{\mathcal{X}} \Pi_{j=1}^{n} ((g_j)_{E_i,o})\circ z_j \,d\varrho\Big|\\
&\leq C_{\mathfrak{r},n,\epsilon}e^{-(\frac{1}{2}\mathfrak{r}-\epsilon)\,\min_{1\leq m\neq j\leq n}\norm{z_m-z_j}}\,\Pi_{j=1}^n\norm{g_j}_{C^{\dim\mathfrak{n}+1}}.
\end{align*}
\end{lemma}

\begin{proof} Since $\Theta(\mathfrak{n}, \,d\alpha,\mathfrak{z})>0$,  $\chi(i)(z_{l})\neq \chi(i)(z_{p})$ if $1\leq l\neq p\leq n$. Similar to \eqref{for:357}, we have
\begin{align}\label{for:179}
\chi(i)(z_p-z_l)\leq -\mathfrak{r}\norm{z_m-z_p},\qquad  \forall\,\,1\leq p<l\leq n.
\end{align}
Thus, similar to \eqref{for:356}, we have
\begin{align}\label{for:183}
 &\int_{\mathcal{X}} \Pi_{j=1}^{n} g_j\circ z_j \,d\varrho-\int_{\mathcal{X}} \Pi_{j=1}^{n} ((g_j)_{E_i,o})\circ z_j \,d\varrho\notag\\
 &=\sum_{m=1}^n \big\langle \mathcal{G}_m,\, \Pi_{j=1}^{m-1} \bar{g_j}\circ (z_j-z_m)   \big\rangle,
\end{align}
where
\begin{align*}
 \mathcal{G}_m=((g_m)_{E_i,\bot})    \phi_m,\quad \phi_m=\Pi_{j=m+1}^{n} \pi(z_j-z_m)(g_j)_{E_i,o},\qquad 1\leq m\leq n.
\end{align*}
Since both $\mathcal{F}_i(M_1)$ and $\mathfrak{n}_{k-1}$ are invariant under $d\alpha$, $E_i=[\mathcal{F}_i(M_1),\,\mathfrak{n}_{k-1}]$ is invariant under $d\alpha$. Then
$\phi_m\in L_{E_i,o}$. Similar to \eqref{for:361} in the proof of Lemma \ref{po:6}, we see that $\mathcal{G}_m=(\phi_m\cdot g_m)_{E_i,\bot}\in L_{E_i, \bot}$.
Since each $g_j\in L_{\mathfrak{m}_1,o}$ and $\mathfrak{m}_1$ is $d\alpha$-invariant, $\phi_m\cdot g_m\in L_{\mathfrak{m}_1,o}$. Then it follows from Lemma \ref{le:12} that for any $1\leq m\leq n$, there exist $\omega_{m,i,q,\epsilon}\in\mathcal H$ such that
\begin{align}\label{for:184}
 \sum_{q=1}^{\dim\mathcal{D}_i} |\mathfrak{u}_{i,q}|^{\frac{1}{2}-\epsilon}\,\omega_{m,i,q,\epsilon}=\mathcal{G}_m
\end{align}
 with the Sobolev estimates: for any $1\leq q\leq \dim\mathcal{D}_i$
 \begin{align}\label{for:182}
 \|\omega_{m,i,q,\epsilon}\|\ &\le\ C_{\epsilon}\norm{\phi_m\cdot g_m}_{\exp(U_i+ E_i),\,\dim\mathfrak{n}+1}\notag\\
 &\leq C_{\epsilon,1}\norm{\phi_m}_{\exp(U_i+ E_i),\,\dim\mathfrak{n}+1}\norm{g_m}_{\exp(U_i+ E_i),\,\dim\mathfrak{n}+1},
\end{align}
where $U_i=(\oplus_{p\in\NN}\mathds{W}_{-p\chi(i)})\bigcap \mathfrak{n}_{k-1}$.

 Next we estimate $\norm{\phi_m}_{\exp(U_i+ E_i),\,\dim\mathfrak{n}+1}$. Using that each $(g_j)_{E_i,o}$ is constant along $E_i$ and that $E_i$ is $d\alpha$-invariant, we get
 \begin{align}\label{for:180}
  \norm{\phi_m}_{\exp(E_i),\,\dim\mathfrak{n}+1}\leq \Pi_{j=m+1}^n\norm{g_j}_{C^0}.
 \end{align}
For any $j>m$ we have
\begin{align*}
 \big\|d\alpha(z_j-z_m)|_{U}\big\|&\leq \big\|d\alpha(z_j-z_m)|_{\oplus_{p\in\NN}\mathds{W}_{-p\chi(i)}}\big\|\\
 &\leq Ce^{-\chi(i)(z_j-z_m)}(1+\norm{z_m-z_j})^{\dim\mathfrak{n}}\\
 &\overset{\text{(1)}}{\leq} Ce^{-\mathfrak{r}\norm{z_m-z_j}}(1+\norm{z_m-z_j})^{\dim\mathfrak{n}} \leq C_\mathfrak{r}.
\end{align*}
Here in $(1)$ we use   \eqref{for:179}.

Since $U\subseteq \mathfrak{n}_{k-1}$ is $d\alpha$-invariant and  is a subalgebra, the above inequalities imply that
\begin{align}\label{for:181}
  \norm{\phi_m}_{\exp(U),\,\dim\mathfrak{n}+1}&\leq \Pi_{j=m+1}^n\norm{\pi(z_j-z_m)(g_j)_{E_i,o}}_{\exp(U_i),\,C^{\dim\mathfrak{n}+1}}\notag\\
  &\leq C_{\mathfrak{r},n}\Pi_{j=m+1}^n\norm{(g_j)_{E_i,o}}_{\exp(U_i),\,C^{\dim\mathfrak{n}+1}}\notag\\
  &\leq C_{\mathfrak{r},n,1}\Pi_{j=m+1}^n\norm{g_j}_{C^{\dim\mathfrak{n}+1}}.
 \end{align}
We note that $U_i$ and $E_i$ span the Lie algebra of $\exp(U_i+ E_i)$.  It follows from \eqref{for:180} and \eqref{for:181} and Theorem \ref{th:15} that
\begin{align*}
 \norm{\phi_m}_{\exp(U_i+ E_i),\,\dim\mathfrak{n}+1}\leq C_{\mathfrak{r},n}\Pi_{j=m+1}^n\norm{g_j}_{C^{\dim\mathfrak{n}+1}}.
\end{align*}
This together with \eqref{for:182} give
 \begin{align}\label{for:185}
 \|\omega_{m,i,q,\epsilon}\|&\leq C_{\mathfrak{r},n,\epsilon}(\Pi_{j=m}^n\norm{g_j}_{C^{\dim\mathfrak{n}+1}}).
\end{align}
By using \eqref{for:183}, we have
\begin{align}\label{for:186}
 \Big|&\int_{\mathcal{X}} \Pi_{j=1}^{n} g_j\circ z_j \,d\varrho-\int_{\mathcal{X}} \Pi_{j=1}^{n} ((g_j)_{E_i,o})\circ z_j \,d\varrho\Big|\notag\\
 &\leq \sum_{m=1}^n\Big|\big\langle \mathcal{G}_m,\, \Pi_{j=1}^{m-1} \bar{g_j}\circ (z_j-z_m)   \big\rangle\Big|\notag\\
 &\overset{\text{(1)}}{=}\sum_{m=1}^n\Big|\sum_{q=1}^{\dim\mathcal{D}_i}\big\langle |\mathfrak{u}_{i,q}|^{\frac{1}{2}-\epsilon}\,\omega_{m,i,q,\epsilon},\, \Pi_{j=1}^{m-1} \bar{g_j}\circ (z_j-z_m)   \big\rangle\Big|\notag\\
&\overset{\text{(2)}}{=}\sum_{m=1}^n\Big|\sum_{q=1}^{\dim \mathcal{D}_i}\big\langle \omega_{m,i,q,\epsilon},\, |\mathfrak{u}_{i,q}|^{\frac{1}{2}-\epsilon}\big(\Pi_{j=1}^{m-1} \bar{g_j}\circ (z_j-z_m)\big)   \big\rangle\Big|\notag\\
&\overset{\text{(3)}}{\leq} \sum_{m=1}^n\sum_{q=1}^{\dim \mathcal{D}_i} \norm{\omega_{m,i,q,\epsilon}}\,\big\| |\mathfrak{u}_{i,q}|^{\frac{1}{2}-\epsilon}\big(\Pi_{j=1}^{m-1} \bar{g_j}\circ (z_j-z_m)\big)   \big\|\notag\\
&\overset{\text{(4)}}{\leq}C_{\mathfrak{r},n,\epsilon}\sum_{m=1}^n\sum_{q=1}^{\dim \mathcal{D}_i} \big(\Pi_{j=m}^n\norm{g_j}_{C^{\dim \mathfrak{n}+1}}\big)\Big\| |\mathfrak{u}_{i,q}|^{\frac{1}{2}-\epsilon}\big(\Pi_{j=1}^{m-1} \bar{g_j}\circ (z_j-z_m)\big)   \Big\|.
\end{align}
Here in $(1)$ we use \eqref{for:184}; in $(2)$ we use \eqref{for:163} of Lemma \ref{le:17}; in $(3)$ we use Cauchy-Schwarz inequality; in $(4)$ we use \eqref{for:185}.

Next, we estimate $\Big\| |\mathfrak{u}_{i,q}|^{\frac{1}{2}-\epsilon}\big(\Pi_{j=1}^{m-1} \bar{g_j}\circ (z_j-z_m)\big)   \Big\|$. Let $\mathds{W}_i=\bigoplus_{p\in\NN} \mathds{W}_{p\chi(i)}$ and let $H_{i}$ denote the subgroup of $N$ with Lie algebra $\mathds{W}_i$. \eqref{for:179} implies that for any $j<m$
\begin{align*}
 \big\|d\alpha(z_j-z_m)|_{\mathds{W}_i}\big\|&\leq Ce^{-\mathfrak{r}\norm{z_m-z_j}}(1+\norm{z_m-z_j})^{\dim \mathfrak{n}}\leq C_\epsilon e^{-(\mathfrak{r}-\epsilon)\norm{z_m-z_j}}.
\end{align*}
 We recall that $\mathfrak{u}_{i,q}\in \mathcal{D}_i\subseteq \mathds{W}_{\chi(i)}\subseteq \mathds{W}_i$ and $\mathds{W}_i$ is $d\alpha$-invariant.  Thus, similar to \eqref{for:383}, we have
\begin{align*}
 \Big\| |\mathfrak{u}_{i,q}|^{\frac{1}{2}-\epsilon}&\big(\Pi_{j=1}^{m-1} \bar{g_j}\circ (z_j-z_m)\big)   \Big\|
 \leq C_{\epsilon,m,1}e^{-(\frac{1}{2}-\epsilon)(\mathfrak{r}-\epsilon)\min_{1\leq j\leq m-1}\norm{z_m-z_j}  }\,\Pi_{j=1}^{m-1}\|g_j\|_{C^{\frac{1}{2}-\epsilon}}.
\end{align*}
This together with \eqref{for:186} give
\begin{align*}
 &\Big|\int_{\mathcal{X}} \Pi_{j=1}^{n} g_j\circ z_j \,d\varrho-\int_{\mathcal{X}} \Pi_{j=1}^{n} ((g_j)_{E_i,o})\circ z_j \,d\varrho\Big|\notag\\
 &\leq C_{\mathfrak{r},n,\epsilon}\sum_{m=1}^n\sum_{q=1}^{\dim \mathcal{D}_{i}} \big(\Pi_{j=m}^n\norm{g_j}_{C^{\dim \mathfrak{n}+1}}\big)\\
 &\qquad\qquad\qquad \quad\cdot C_{\epsilon,m}e^{-(\frac{1}{2}-\epsilon)(\mathfrak{r}-\epsilon)\min_{1\leq j\leq m-1}\norm{z_m-z_j}  }\,\Pi_{j=1}^{m-1}\|g_j\|_{C^{\frac{1}{2}-\epsilon}}\\
 &\leq C_{\mathfrak{r},n,\epsilon,1}e^{-(\frac{1}{2}-\epsilon)(\mathfrak{r}-\epsilon)\,\min_{1\leq m\neq j\leq n}\norm{z_m-z_j}}\,\Pi_{j=1}^n\norm{g_j}_{C^{\dim \mathfrak{n}+1}}.
\end{align*}
By shrinking $\epsilon$ (we assume that $\epsilon>0$ is sufficiently small), we complete the proof.

\end{proof}
Now we proceed to the proof of \eqref{for:370} of Proposition \ref{po:7}. Set $E_{[0]}=\{0\}$ and
\begin{align*}
E_{[i]}=E_1+E_2+\cdots+E_{i},\qquad 1\leq i\leq l_0-1.
\end{align*}
We note that $E_{[l_0]}=\mathfrak{n}_k$ (see \eqref{for:371}). Next, we define
\begin{align*}
 f^{\{0\}}=f\quad\text{and}\quad f^{\{i\}}=f_{E_{[i]}, o},\qquad 1\leq i\leq l_0,\quad \forall \,f\in L^2(\mathcal{X}).
\end{align*}
It is clear that
\begin{align*}
  (f^{\{i\}})_{E_{i+1},\,o}=f^{\{i+1\}},\qquad 0\leq i\leq l_0-1.
\end{align*}
For any $1\leq i\leq l_0$,  apply Lemma \ref{le:21} with each   $g_j$ replaced $g_j^{\{i-1\}}$. This gives
\begin{align*}
&\Big|\int_{\mathcal{X}} \Pi_{j=1}^{n} (g_j^{\{i-1\}})\circ z_j \,d\varrho-\int_{\mathcal{X}} \Pi_{j=1}^{n} (g_j^{\{i\}})\circ z_j \,d\varrho\Big|\\
&\leq C_{\mathfrak{r},n,\epsilon}e^{-(\frac{1}{2}\mathfrak{r}-\epsilon)\,\min_{1\leq m\neq j\leq n}\norm{z_m-z_j}}\,\Pi_{j=1}^n\norm{g_j^{\{i-1\}}}_{C^{\dim\mathfrak{n}+1}}\\
&\leq C_{\mathfrak{r},n,\epsilon,1}e^{-(\frac{1}{2}\mathfrak{r}-\epsilon)\,\min_{1\leq m\neq j\leq n}\norm{z_m-z_j}}\,\Pi_{j=1}^n\norm{g_j}_{C^{\dim\mathfrak{n}+1}}.
\end{align*}
Summing these $l_0$ inequalities telescopes the difference
\begin{align*}
 \Big|\int_{\mathcal{X}} \Pi_{j=1}^{n} g_j\circ z_j \,d\varrho-\int_{\mathcal{X}} \Pi_{j=1}^{n} ((g_j)^{\{l_0\}})\circ z_j \,d\varrho\Big|,
\end{align*}
and since $g_j^{\{l_0\}}=(g_j)_{E_{[l_0]}, o}=\big((f_j)_{\mathfrak{m}_{1},o}\big)_{E_{[l_0]}, o}=(f_j)_{\mathfrak{n}_k,o}$ (see \eqref{for:187}), we have
\begin{align*}
\Big|&\int_{\mathcal{X}} \Pi_{j=1}^{n} g_j\circ z_j \,d\varrho-\int_{\mathcal{X}} \Pi_{j=1}^{n} ((f_j)_{\mathfrak{n}_k,o})\circ z_j \,d\varrho\Big|\\
&\leq C_{\mathfrak{r},n,\epsilon}e^{-(\frac{1}{2}\mathfrak{r}-\epsilon)\,\min_{1\leq m\neq j\leq n}\norm{z_m-z_j}}\,\Pi_{j=1}^n\norm{g_j}_{C^{\dim\mathfrak{n}+1}}\\
&\leq C_{\mathfrak{r},n,\epsilon,1}e^{-(\frac{1}{2}\mathfrak{r}-\epsilon)\,\min_{1\leq m\neq j\leq n}\norm{z_m-z_j}}\,\Pi_{j=1}^n\norm{f_j}_{C^{\dim\mathfrak{n}+1}}.
\end{align*}
This completes the proof.

\subsubsection{Proof of \eqref{for:194} of Proposition \ref{po:7}} If $\mathfrak{n}^{(2)}=\{0\}$, then $\mathfrak{m}_1=\mathfrak{n}_k$ and $\delta=0$. In this case, the result follows from \eqref{for:358}  of Proposition \ref{po:7}.  If $\mathfrak{n}^{(2)}\neq\{0\}$, then $\delta=1$. Then the result follows from \eqref{for:358} and \eqref{for:370} of Proposition \ref{po:7}.

\subsection{Proof of Theorem \ref{th:10}}\label{sec:48} Suppose
\begin{align*}
z_1,\cdots,z_{n}\in \ZZ^\ell\quad\text{ and }\quad f_1,\cdots, f_{n}\in C^{\infty}(\mathcal{X}).
\end{align*}
We prove by induction on the nilpotency step \(k\) of \(N\).

 \smallskip

\textbf{Base Case $k=1$:} If \(N\) has step \(1\), then \(N\cong\RR^d\) and \(\mathcal{X}=N/\Gamma\) is a torus. In this case, $\mathfrak{n}/[\mathfrak{n},\mathfrak{n}]=\mathfrak{n}$.  Then $da$ is ergodic on $\mathfrak{n}$ (see \ref{for:355} of Section \ref{sec:14}). In this case,  $\mathfrak{m}_1=\mathfrak{n}$ (see \eqref{for:178} of Section \ref{sec:47}) and thus
\begin{align*}
\int_{\mathcal{X}} \Pi_{j=1}^{n}((f_j)_{\mathfrak{m}_{1},o})\circ z_j \,d\varrho
=\Pi_{j=1}^{n}\int_{\mathcal{X}} f_j \,d\varrho.
\end{align*}
Then it follows from \eqref{for:358} of Proposition \ref{po:7} that
\begin{align*}
\Big|&\int_{\mathcal{X}} \Pi_{j=1}^{n} f_j\circ z_j \,d\varrho-\Pi_{j=1}^{n}\int_{\mathcal{X}} f_j \,d\varrho\Big|\\
&\leq C_{\epsilon,r,n} e^{-r(\mathfrak{r}-\epsilon)\,\min_{1\leq m\neq j\leq n}\norm{z_m-z_j}}\,\Pi_{j=1}^n\norm{f_j}_{C^{r\dim \mathfrak{n}}}.
\end{align*}
Thus the result holds when $k=1$.

\smallskip

\textbf{Inductive Step:} Suppose the result holds for all simply-connected nilpotent groups of step $\leq k-1$, and now let \(N\) have step \(k\ge2\). It follows from \eqref{for:194} of Proposition \ref{po:7} that for any $r>0$
 \begin{align}\label{for:190}
&\Big|\int_{\mathcal{X}} \Pi_{j=1}^{n} f_j\circ z_j \,d\varrho -  \int_{\mathcal{X}} \Pi_{j=1}^{n}((f_j)_{\mathfrak{n}_{k},o})\circ z_j \,d\varrho\Big|\notag\\
&\leq C_{\epsilon,r,n} e^{-r(\mathfrak{r}-\epsilon)\,\min_{1\leq m\neq j\leq n}\norm{z_m-z_j}}\,\Pi_{j=1}^n\norm{f_j}_{C^{r\dim \mathfrak{n}}}\notag\\
&+\delta C_{\epsilon,\mathfrak{r}} e^{-(\frac{1}{2}\mathfrak{r}-\epsilon)\,\min_{1\leq m\neq j\leq n}\norm{z_m-z_j}}\,\Pi_{j=1}^n\norm{f_j}_{C^{\dim\mathfrak{n}+1}}.
\end{align}

 We consider the quotient \(N/(\exp(\mathfrak{n}_{k}))\). It is clear that $N/\exp(\mathfrak{n}_k)$ is simply connected of step $k-1$.
Let \(\mathfrak p:\mathfrak n\to\mathfrak n/\mathfrak{n}_{k}\) be the projection and
\begin{align*}
 \mathcal{Y} := \big(N/(\exp(\mathfrak{n}_{k}))\big)\bigl/ \bigl(\Gamma/\exp(\mathfrak{n}_{k})\cap \Gamma\bigr)
\end{align*}
and let $\mathfrak{t}:\mathcal X\to\mathcal Y$ denote the quotient map.  Any function $f\in L^2(\mathcal X,\varrho)\bigcap L_{\mathfrak{n}_k,o}$ descends to \(\tilde f\in \mathcal{H}_\lambda:=L^2(\mathcal Y,\lambda)\) with $\lambda:=(\mathfrak t)_*\varrho$ and
\begin{align*}
 \|f\|_{L^2(\mathcal{X},\varrho)}=\|\tilde f\|_{\mathcal{H}_\lambda},
\end{align*}
 Consequently, we have
\begin{align*}
 \int_{\mathcal{Y}} \widetilde{f} \,d\lambda=\langle \widetilde{f}, \widetilde{1}\rangle_{\mathcal{H}_\lambda}=\langle f, 1\rangle_{\mathcal{H}}=\int_{\mathcal{X}} f \,d\varrho
\end{align*}
Thus, we have
\begin{align}\label{for:374}
 \Pi_{j=1}^{n}\int_{\mathcal{Y}} \widetilde{(f_j)_{\mathfrak{n}_{k},o} } \,d\lambda=\Pi_{j=1}^{n}\int_{\mathcal{X}} f_j \,d\varrho.
\end{align}
The action \(\alpha\) descends to a measure preserving action\(\alpha_{\mathcal{Y}}\) on \(\mathcal{Y}\). Then
\begin{align}\label{for:192}
 \int_{\mathcal{Y}} &\Pi_{j=1}^{n}\widetilde{(f_j)_{\mathfrak{n}_{k},o}}\circ \alpha_{\mathcal{Y}}(z_j) \,d\lambda=\int_{\mathcal X}\Pi_{j=1}^n \widetilde{(f_j)_{\mathfrak n_k,o}}\big(\mathfrak t(\alpha_{\mathcal X}(z_j)x)\big)\,d\varrho(x)\notag\\
&=\int_{\mathcal X}\Pi_{j=1}^n (f_j)_{\mathfrak n_k,o}\big(\alpha_{\mathcal X}(z_j)x\big)\,d\varrho(x).
\end{align}
Define \(\Theta(\mathfrak{n}/\mathfrak{n}_k, \,d\alpha_{\mathcal{Y}},\mathfrak{z})\) accordingly. The Lyapunov decomposition for \(d\alpha\) on \(\mathfrak{n}/\mathfrak{n}_k\) reads
\[
\mathfrak{n}/\mathfrak{n}_k=\bigoplus_{\chi\in \Lambda_1} \overline{\mathds{W}}_{\chi},
\quad\text{where}\quad
\overline{\mathds{W}}_{\chi}:=\mathfrak p(\mathds{W}_{\chi}),
\]
so \(\Lambda_1\subseteq \Lambda\) and therefore
\[
 \Theta(\mathfrak{n}/\mathfrak{n}_k, \,d\alpha_{\mathcal{Y}},\mathfrak{z})\geq \Theta(\mathfrak{n}, \,d\alpha,\mathfrak{z})\geq \mathfrak{r}.
\]
It follows from inductive assumption that for any $r>0$ we have
\begin{align}\label{for:373}
&\Big|\int_{\mathcal{Y}} \Pi_{j=1}^{n} \widetilde{(f_j)_{\mathfrak{n}_{k},o}}\circ \alpha_{\mathcal{Y}}(z_j) \,d\lambda -  \Pi_{j=1}^{n}\int_{\mathcal{Y}} \widetilde{(f_j)_{\mathfrak{n}_{k},o} } \,d\lambda\Big|\notag\\
&\leq C_{\epsilon,r,n} e^{-r(\mathfrak{r}-\epsilon)\,\min_{1\leq m\neq j\leq n}\norm{z_m-z_j}}\,\Pi_{j=1}^n\norm{\widetilde{(f_j)_{\mathfrak{n}_k,o}}}_{C^{r\dim \mathfrak{n}}}\notag\\
&+\delta C_{\epsilon,\mathfrak{r}} e^{-(\frac{1}{2}\mathfrak{r}-\epsilon)\,\min_{1\leq m\neq j\leq n}\norm{z_m-z_j}}\,\Pi_{j=1}^n\norm{\widetilde{(f_j)_{\mathfrak{n}_k,o}}}_{C^{\dim\mathfrak{n}+1}}\notag\\
&\leq C_{\epsilon,r,n} e^{-r(\mathfrak{r}-\epsilon)\,\min_{1\leq m\neq j\leq n}\norm{z_m-z_j}}\,\Pi_{j=1}^n\norm{f_j}_{C^{r\dim \mathfrak{n}}}\notag\\
&+\delta C_{\epsilon,\mathfrak{r}} e^{-(\frac{1}{2}\mathfrak{r}-\epsilon)\,\min_{1\leq m\neq j\leq n}\norm{z_m-z_j}}\,\Pi_{j=1}^n\norm{f_j}_{C^{\dim\mathfrak{n}+1}}.
\end{align}
Consequently, it follows from \eqref{for:190}, \eqref{for:373}, \eqref{for:192} and \eqref{for:374} that
\begin{align*}
 &\Big|\int_{\mathcal{X}} \Pi_{j=1}^{n} f_j\circ z_j \,d\varrho -\Pi_{j=1}^{n}\int_{\mathcal{X}} f_j \,d\varrho \Big|\notag\\
&\leq C_{\epsilon,r,n} e^{-r(\mathfrak{r}-\epsilon)\,\min_{1\leq m\neq j\leq n}\norm{z_m-z_j}}\,\Pi_{j=1}^n\norm{f_j}_{C^{r\dim \mathfrak{n}}}\notag\\
&+\delta C_{\epsilon,\mathfrak{r}} e^{-(\frac{1}{2}\mathfrak{r}-\epsilon)\,\min_{1\leq m\neq j\leq n}\norm{z_m-z_j}}\,\Pi_{j=1}^n\norm{f_j}_{C^{\dim\mathfrak{n}+1}}.
\end{align*}
 This completes the inductive step. Hence, we complete the proof.

\subsection{No uniform upperbounds}\label{sec:52} We show that only assuming the existence of an ergodic element, there is in general no uniform exponential upper bound valid for all directions $\mathfrak z=(z_1,\dots,z_n)\in\mathbb{Z}^{n\ell}$.

Fix an action $\alpha:\mathbb Z^\ell\to\mathrm{Aut}(\mathcal X)$ for which there exist $z,z_1\in\mathbb Z^\ell$ with $\alpha(z)$ ergodic and $\alpha(z_1)$ \emph{not} ergodic. Set
\[
A:=\alpha(z+z_1),\qquad B:=\alpha(z),\qquad F:=\alpha(z_1).
\]
Let $N_2=\exp([\mathfrak n,\mathfrak n])$ (see Section~\ref{sec:28}) and consider the abelianization
\[
\mathcal Y \;:=\; (N/N_2)\big/\bigl(\Gamma/(\Gamma\cap N_2)\bigr),
\]
which is a torus. Let $\mathfrak t:\mathcal X\to\mathcal Y$ be the quotient map and let $\lambda:=(\mathfrak t)_*\varrho$. The map $F$ induces a toral automorphism $\tilde F$ on $\mathcal Y$.

Since $F$ is not ergodic, $dF|_{\mathfrak{n}/[\mathfrak{n},\mathfrak{n}]}$ has roots of unity (see \eqref{for:101} of Section \ref{sec:33}). This implies that $\tilde{F}$ is not ergodic on $ \mathcal{Y}$. Hence there exists a nontrivial $\tilde f\in C^\infty(\mathcal Y)$ with $\int_{\mathcal Y}\tilde f\,d\lambda=0$ and $\tilde f\circ \tilde F=\tilde f$. Define
\[
f(x):=\tilde f\bigl(\mathfrak t(x)\bigr),\qquad x\in\mathcal X.
\]
Then $f\in C^\infty(\mathcal X)$, $\int_{\mathcal X}f\,d\varrho=0$, and $f\circ F = f$ (i.e., $\pi(F)f=f$).

Fix $n\ge2$.  Set $f_1=f$, $f_2=\bar{f}$ and $f_i\equiv 1$ for $3\le i\le n$. For $m\in\mathbb Z$, consider the time $n$-tuple
$\big(A^m,\,B^m,\,B^{2m},\,\dots,\,B^{(n-1)m}\big)$.
Then
\begin{align*}
&\Big|\int_{\mathcal X}\big(f_1\circ A^m\big)\big(f_2\circ B^m\big)\big(\Pi_{i=3}^n f_i\circ B^{im}\big)\,d\varrho
\;-\;\Pi_{i=1}^n\int_{\mathcal X}f_i\,d\varrho\Big|\\
&= \Big|\int_{\mathcal X} (f\circ A^m)\,(\bar{f}\circ B^m)\,d\varrho\Big|
\;=\; \Big|\int_{\mathcal X} \big(f\circ (B^mF^m)\big)\,(\bar{f}\circ B^m)\,d\varrho\Big|\\
&\overset{\text{(1)}}{=} \Big|\int_{\mathcal X} (f\circ B^m)\,(\bar{f}\circ B^m)\,d\varrho\Big|
\;\overset{\text{(2)}}{=}\; \int_{\mathcal X} |f|^2\,d\varrho>0
\end{align*}
Here in $(1)$ we use $f\circ F^m=f$; in $(2)$ we use the $B^m$-invariance of $\varrho$.

The left-hand side is bounded away from $0$ for all $m$, while the pairwise separations among the times  tend to $\infty$ as $|m|\to\infty$. This implies the result.

\appendix

\section{Proof of Lemma \ref{le:9}}\label{sec:11}
 Denote by $M^\tau$ the transpose of $M$. The characteristic polynomial of $M^\tau$ is also $\mathfrak{q}$. For $M^\tau$, similar to \eqref{for:293} and \eqref{for:294} we have the corresponding splitting $\RR^m=\oplus_{i=1}^{l_0} \mathcal{F}_i$ by noting that each $\mathcal{F}_i$ is also $M^\tau$-invariant.

On each $\mathcal{F}_i$, we have Lyapunov exponents
\(\chi_{i,1}<\cdots<\chi_{i,L(i)}\) and corresponding splitting
\begin{align*}
 \mathcal{F}_i=\tilde{\mathcal{L}}_{i,1}\oplus\cdots \oplus\tilde{\mathcal{L}}_{i,L(i)}.
\end{align*}
Similar to \eqref{for:296}, we set
\begin{gather*}
 E_+(M^\tau)=\oplus_{\chi_{i,j}>0}\tilde{\mathcal{L}}_{i,j},\quad E_0(M^\tau)=\oplus_{\chi_{i,j}=0}\tilde{\mathcal{L}}_{i,j},\quad E_-(M^\tau)=\oplus_{\chi_{i,j}<0}\tilde{\mathcal{L}}_{i,j}.
\end{gather*}
\eqref{for:292}: Denote by $W$ the following subspaces: $\mathcal{L}_{\text{block}\max,M}$, $\mathcal{L}_{\text{block}\min,M}$, $E_+(M)$, $E_-(M)$. Let
\[
W_1 =\begin{cases}\displaystyle \oplus_{i=1}^{l_0}\oplus_{j\neq L(i)}\tilde{\mathcal{L}}_{i,j} ,& \text{if } W=\mathcal{L}_{\max,M},\\[1mm]
\oplus_{i=1}^{l_0}\oplus_{j\neq 1}\tilde{\mathcal{L}}_{i,j} ,& \text{if } W=\mathcal{L}_{\min,M},\\[1mm]
E_0(M^\tau)\oplus E_-(M^\tau),& \text{if } W=E_+(M),\\[1mm]
E_0(M^\tau)\oplus E_+(M^\tau) ,& \text{if } W=E_-(M).
\end{cases}
\]
\emph{Step $1$:  we show that}
\begin{align}\label{for:46}
  W_1\cap \ZZ^{m}=\{0\}.
 \end{align}
  Otherwise, suppose $v$ is a non-zero integer vector inside $W_1$. Since $W_1$ is $M^\tau$ invariant, $v$ generates a rational $M^\tau$ invariant subspace $R_v$ inside $W_1$. Let $p_v$ be the characteristic polynomial of $M^\tau$ restricted to $R_v$. Since $p_v$ divides $\mathfrak{q}$,
 $p_v=\prod_{i=1}^{l_0}\mathfrak{q}_i^{d_i}$, $0\leq d_i\leq c_i$.

  \smallskip
\noindent\textbf{ The case of $W_1=\oplus_{j\neq L(i)}\tilde{\mathcal{L}}_{i,j}$ or $W_1=\oplus_{j\neq 1}\tilde{\mathcal{L}}_{i,j}$}:
  \smallskip

   Choose $1\leq i\leq l_0$ such that $d_i>0$. On one hand, the construction of $W_1$ shows that either $\chi_{i,1}$ or $\chi_{i,L(i)}$ is not a Lyapunov exponent of $M^\tau|_{R_v}$. On the other hand, irreducibility of
  $\mathfrak{q}_i$ implies that the Lyapunov exponents of $M^\tau|_{R_v}$  must match the Lyapunov exponents of $M^\tau$ on $\mathcal{F}_i$. Consequently, both $\chi_{i,1}$ and $\chi_{i,L(i)}$ are Lyapunov exponents of $M^\tau|_{R_v}$. Thus, we have a contradiction, which implies the result.

   \smallskip
\noindent\textbf{ The case of $W_1=E_0(M^\tau)\oplus E_-(M^\tau)$ or $W_1=E_0(M^\tau)\oplus E_+(M^\tau)$}:
  \smallskip

On one hand, since $M^\tau$ has no roots of unity, neither does $M^\tau|_{R_v}$. On the other hand, either  all the Lyapunov exponents of $M^\tau|_{R_v}$ are $\leq0$ or all the Lyapunov exponents of $M^\tau|_{R_v}$ are $\geq0$. Since $M^\tau|_{R_v}$ is represented as an integer matrix with determinant $1$ or $-1$, all the Lyapunov exponents of $M^\tau|_{R_v}$ are $0$. It follows from a result of Kronecker \cite{Kronecker} which
states that an integer matrix with all eigenvalues on the unit circle has to have
all eigenvalues roots of unity. Thus, we have a contradiction, which implies the result.

\emph{Step $2$: we show that}
\begin{align}\label{for:15}
 d(z,W_1)\geq C\norm{z}^{-m},\qquad \forall\, \,0\neq z\in\ZZ^{m}.
\end{align}
In fact, it follows from \eqref{for:46} and  from the following arithmetic
properties of ergodic toral automorphisms (see e.g. \cite[Lemma 4.1] {Damjanovic4} for a proof):
\begin{lemma}[Katznelson's Lemma]\label{le:3}
Let $B$ be an $n\times n$ matrix with integer coefficients. Assume that $\R^n$ splits as $\R^n=V_1\bigoplus V_2$ with $V_1$ and $V_2$ invariant under $B$
and such that $B|_{V_1}$ and $B|_{V_2}$ have no common eigenvalues. If $V_1\cap\Z^n=\{0\}$, then
there exists a constant $K$ such that
\begin{align*}
d(m,V_1)\geq K\norm{m}^{-n} \quad\text{for all } 0\ne m\in\Z^n,
\end{align*}
where $\norm{v}$ denotes
Euclidean norm and $d$ is Euclidean distance.
\end{lemma}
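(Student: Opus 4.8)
The plan is to reduce Katznelson's Lemma to a norm inequality for algebraic integers and then to exploit the Galois action on the characteristic polynomial of $B$. Write $q(x)=\det(xI-B)\in\ZZ[x]$, let $L\subset\CC$ be its splitting field over $\QQ$, and $G=\operatorname{Gal}(L/\QQ)$. The first point I would establish is structural: since $B|_{V_1}$ and $B|_{V_2}$ have no common eigenvalue, every generalized eigenspace of $B$ over $\CC$ is contained in $V_1\otimes\CC$ or in $V_2\otimes\CC$; hence $V_1,V_2$ are \emph{spectral} subspaces of $B$, and the projection $P$ onto $V_2$ along $V_1$ equals $h(B)$ for some $h=\sum_k h_kx^k\in L[x]$ with $\deg h<n$ (a Lagrange interpolation formula in the eigenvalues). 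Since $V_1$ and $V_2$ are transverse, the linear map $V_2\to\RR^n/V_1$ is an isomorphism, so there is $c=c(B)>0$ with $d(m,V_1)=d(Pm,V_1)\ge c\,\norm{Pm}$ for all $m$; moreover $Pm\ne 0$ whenever $m\in\ZZ^n\setminus\{0\}$, because $V_1\cap\ZZ^n=\{0\}$. It therefore suffices to bound $\norm{Pm}$ from below by a fixed power of $\norm{m}^{-1}$.

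\textbf{The arithmetic step.} Clearing denominators, I would pick $\Delta\in\ZZ_{>0}$, depending only on $q$, with $\Delta h\in\mathcal{O}_L[x]$; then $\Delta P=\Delta h(B)\in M_n(\mathcal{O}_L)$ since $B\in M_n(\ZZ)$. For $m\in\ZZ^n\setminus\{0\}$ set $v=\Delta Pm\in\mathcal{O}_L^{\,n}$, a nonzero vector. For $\sigma\in G$ let $P^\sigma=h^\sigma(B)$, obtained by applying $\sigma$ to the coefficients of $h$; since $m$ is rational, the conjugate vector $v^\sigma=(\sigma(v_i))_i$ is exactly $\Delta P^\sigma m$. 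One checks that each $P^\sigma$ is again idempotent and that $P^\sigma m=0\iff Pm=0\iff m\in V_1$, so $P^\sigma m\ne 0$ for every $\sigma$. Choosing a coordinate $i_0$ with $v_{i_0}\ne 0$, the algebraic integer $v_{i_0}$ has $N_{L/\QQ}(v_{i_0})\in\ZZ\setminus\{0\}$, whence $\prod_{\sigma\in G}\abs{\sigma(v_{i_0})}\ge 1$ and
\[
\prod_{\sigma\in G}\norm{P^\sigma m}\;=\;\Delta^{-[L:\QQ]}\prod_{\sigma\in G}\norm{v^\sigma}\;\ge\;\Delta^{-[L:\QQ]}\prod_{\sigma\in G}\abs{\sigma(v_{i_0})}\;\ge\;\Delta^{-[L:\QQ]}.
\]

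\textbf{Conclusion.} Since $B$ is rational, $\norm{P^\sigma}=\norm{\sum_k\sigma(h_k)B^k}\le C$ for a constant $C=C(B)$, so $\norm{P^\sigma m}\le C\norm{m}$. Isolating the identity element in the last product and using that all factors are positive gives
\[
\norm{Pm}\;\ge\;\Delta^{-[L:\QQ]}\Big/\!\!\prod_{\sigma\ne\mathrm{id}}\norm{P^\sigma m}\;\ge\;\frac{\Delta^{-[L:\QQ]}}{(C\norm{m})^{[L:\QQ]-1}},
\]
hence $d(m,V_1)\ge c\,\norm{Pm}\ge K\,\norm{m}^{-([L:\QQ]-1)}$ with $K=K(B)>0$. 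As $[L:\QQ]$ depends only on $B$, this is an estimate of the asserted shape; obtaining the precise exponent $n$ in place of $[L:\QQ]-1$ is a matter of sharpening the count (as in \cite[Lemma 4.1]{Damjanovic4}, e.g. by first passing to the cyclic $B$-subspace generated by $m$ and using a Liouville inequality there), and is irrelevant for the use of the lemma in \eqref{for:15}, where only a fixed polynomial rate enters.

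\textbf{Main obstacle.} The one genuinely substantive ingredient is the arithmetic step: the product inequality $\prod_{\sigma\in G}\abs{\sigma(v_{i_0})}\ge 1$ together with the fact that $P$ is a \emph{polynomial in $B$} with algebraic coefficients. This is exactly where the integrality of $B$ — not any metric feature of $V_1$ — must be used; if one only knew that $V_1$ is a transverse $B$-invariant subspace with $V_1\cap\ZZ^n=\{0\}$ (dropping the no-common-eigenvalue hypothesis) the projection would fail to be polynomial in $B$, $v$ would not be an integral vector, and no polynomial lower bound could hold. The remaining steps — the spectral-subspace description of $V_1,V_2$, the transversality estimate $d(m,V_1)\ge c\norm{Pm}$, and the clearing of denominators — are routine once this point is in place.
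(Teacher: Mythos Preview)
The paper does not actually prove Lemma~\ref{le:3}; it is quoted from \cite[Lemma~4.1]{Damjanovic4} and used as a black box in Appendix~\ref{sec:11}. So there is no ``paper's own proof'' to compare against, and your proposal should be judged on its own.

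Your argument is sound as far as it goes. The key observation---that the projection $P$ onto $V_2$ along $V_1$ is a polynomial in $B$ with coefficients in the splitting field $L$, so that $\Delta Pm$ has algebraic-integer coordinates and the product of its Galois conjugates is bounded below---is exactly the right arithmetic mechanism, and your justification that $P^\sigma m\neq 0$ (via $\sigma(v_i)=0\Leftrightarrow v_i=0$) is correct. This yields
\[
d(m,V_1)\;\ge\;K\,\|m\|^{-([L:\QQ]-1)},
\]
which is a valid Katznelson-type inequality.

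The one point to flag is the exponent. You obtain $[L:\QQ]-1$, which can be as large as $n!-1$, whereas the lemma (and \cite{Damjanovic4}) states the exponent $n$. You are right that for the \emph{qualitative} use in \eqref{for:15} any fixed polynomial rate suffices, but the specific exponent does propagate: in the paper the Diophantine exponent $\dim E$ feeds directly into the Sobolev order $n\dim E$ in Theorem~\ref{le:1} and thence into the explicit $C^{m\dim\mathfrak n}$ norms in Theorems~\ref{th:9} and~\ref{th:1}. With your exponent those Sobolev orders would be larger (still finite and explicit, so the theorems survive with modified constants). Your suggested sharpening---pass to the cyclic $\QQ[B]$-subspace generated by $m$---is the right idea, but note that the splitting field of $B$ restricted to that subspace can still equal $L$, so this alone does not reduce $[L:\QQ]$ to $n$; one needs instead a determinant/volume argument on the integer vectors $m,Bm,\dots,B^{n-1}m$ (as in the cited reference) rather than the full Galois product.
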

\emph{Step $3$:   we show that $W^\bot=W_1$. } We consider the polynomial $\mathfrak{y}$ defined by:
\begin{itemize}
  \item $W=\mathcal{L}_{\text{block}\max,M}$: \,\,$\mathfrak{y}=\prod_{i=1}^{l_0}\prod_{|\sigma|=e^{\chi_{i,L(i)}}}(x-\sigma)^{c_i}$. Here for each $i$,  the product is over all eigenvalues $\sigma$ of $M|_{\mathcal{F}_i}$ with $|\sigma|= e^{\chi_{i,L(i)}}$.

      \smallskip
  \item $W=\mathcal{L}_{\text{block}\min,M}$: \,\,$\mathfrak{y}=\prod_{i=1}^{l_0}\prod_{|\sigma|= e^{\chi_{i,1}}}(x-\sigma)^{c_i}$. Here for each $i$,  the product is over all eigenvalues $\sigma$ of $M|_{\mathcal{F}_i}$ with $|\sigma|= e^{\chi_{i,1}}$.

      \smallskip
  \item $W=E_+(M)$: \,\,$\mathfrak{y}=\prod_{i=1}^{l_0}\prod_{|\sigma|>1 }(x-\sigma)^{c_i}$. Here for each $i$,  the product is over all eigenvalues $\sigma$ of $M|_{\mathcal{F}_i}$ with $|\sigma|>1$.

      \smallskip

      \item $W=E_-(M)$: \,\,$\mathfrak{y}=\prod_{i=1}^{l_0}\prod_{|\sigma|<1}(x-\sigma)^{c_i}$. Here for each $i$,  the product is over all eigenvalues $\sigma$ of $M|_{\mathcal{F}_i}$ with $|\sigma|<1$.

\end{itemize}
 It is clear that $\mathfrak{y}$ is real and thus we have
\begin{align*}
W^\bot=(\ker\mathfrak{y}(M))^\bot=\text{range}\,\mathfrak{y}(M^\tau)=W_1.
\end{align*}
This completes the proof.

\emph{Step $4$: we show that $W$ is a Diophantine subspace of $\RR^m$. } We note that
\begin{align}\label{for:13}
 d(z,\,W^\bot)=d(z,\,W_1)\overset{\text{(1)}}{\geq} C\norm{z}^{-m},\qquad \forall\, \,0\neq z\in\ZZ^{m}.
\end{align}
Here in $(1)$ we use \eqref{for:15}.

We use $p_{W}$ to denote the projection from $\RR^m$ to $W$. Choose a basis $\{v_1,\cdots, v_{\dim W}\}$ of $W$. For any $0\neq z\in\ZZ^{m}$ we have
\begin{align*}
 \sum_{i=1}^{\dim W}|z\cdot v_i|&\geq C_{v_1,\cdots,v_{\dim W}}\norm{p_{W}(z)}=C_{v_1,\cdots,v_{\dim W}}d(z,\,W^\bot)\overset{\text{(1)}}{\geq} C_{v_1,\cdots,v_{\dim W}}\norm{z}^{-m}.
\end{align*}
Here in $(1)$ we use \eqref{for:13}. Hence, we finish the proof.

\medskip
\eqref{for:295}: For any $1\leq i\leq l_0$ and $1\leq j\leq L(i)$ set $\tilde{F}_{i,j}= \oplus_{k\neq j}\tilde{\mathcal{L}}_{i,k}$. \emph{Step $1$: we show that}
 \begin{align}\label{for:297}
  \tilde{F}_{i,j}\cap \ZZ^{\dim \mathcal{F}_i}=\{0\}.
 \end{align}
Otherwise, suppose $v$ is a non-zero integer vector inside $\tilde{F}_{i,j}$. Since $\tilde{F}_{i,j}$ is $M^\tau|_{\mathcal{F}_i}$ invariant, $v$ generates a rational $M^\tau$ invariant subspace $R_v$ inside $\mathcal{F}_i$. Let $p_v$ be the characteristic polynomial of $M^\tau|_{\mathcal{F}_i}$ restricted to $R_v$.
On one hand,  $\chi_{i,j}$ is not a Lyapunov exponent of $M^\tau|_{R_v}$.  On the other hand, we note that the characteristic polynomial of $M^\tau|_{\mathcal{F}_i}$ is $\mathfrak{q}_i^{c_i}$. Since $p_v$ divides $\mathfrak{q}_i^{c_i}$,
 $p_v=\mathfrak{q}_i^{d}$, $0<d\leq c_i$.
 This implies that the Lyapunov exponents of $M^\tau|_{R_v}$  must match the Lyapunov exponents of $M^\tau|_{\mathcal{F}_i}$. Consequently, $\chi_{i,j}$ is a Lyapunov exponent of $M^\tau|_{R_v}$. Thus, we have a contradiction, which implies the result.

\emph{Step $2$:   we show that the complement of $(\mathcal{L}_{i,j})^\bot \text{ in } \mathcal{F}_i \text{ is }\tilde{\mathcal{F}}_{i,j}$. } We consider the polynomial
\begin{align*}
 \mathfrak{y}=\prod_{|\sigma|= e^{\chi_{i,j}}}(x-\sigma)^{c_i}.
\end{align*}
Here the product is over all eigenvalues $\sigma$ of $M^\tau|_{\mathcal{F}_i}$ with $|\sigma|= e^{\chi_{i,j}}$. It is clear that $\mathfrak{y}$ is real and thus we have
\begin{align*}
(\mathcal{L}_{i,j})^\bot=(\ker\mathfrak{y}(M|_{\mathcal{F}_i}))^\bot=\text{range}\,\mathfrak{y}(M^\tau|_{\mathcal{F}_i})=\tilde{F}_{i,j}.
\end{align*}
This completes the proof.

\emph{Step $3$: we show that $\mathcal{L}_{i,j}$ is a Diophantine subspace of $\mathcal{F}_i$}.  It follows from \eqref{for:297} and Lemma \ref{le:3} that
\begin{align*}
 d(z,\,\tilde{F}_{i,j})\geq C\norm{z}^{-\dim \mathcal{F}_i},\qquad \forall\, \,0\neq z\in\ZZ^{\dim \mathcal{F}_i}.
\end{align*}
Then
\begin{align}\label{for:299}
 d(z,\,(\mathcal{L}_{i,j})^\bot)=d(z,\,\tilde{F}_{i,j})\geq C\norm{z}^{-\dim \mathcal{F}_i},\qquad \forall\, \,0\neq z\in\ZZ^{\dim \mathcal{F}_i}.
\end{align}
We use $p_{\mathcal{L}_{i,j}}$ to denote the projection from $\mathcal{F}_i$ to $\mathcal{L}_{i,j}$. Choose a basis $\{v_1,\cdots, v_{\dim \mathcal{L}_{i,j}}\}$ of $\mathcal{L}_{i,j}$. For any $0\neq z\in\ZZ^{\dim \mathcal{F}_i}$ we have
\begin{align*}
 \sum_{i=1}^{\dim \mathcal{L}_{i,j}}|z\cdot v_i|&\geq C_{v_1,\cdots,v_{\dim \mathcal{L}_{i,j}}}\norm{p_{\mathcal{L}_{i,j}}(z)}=C_{v_1,\cdots,v_{\dim \mathcal{L}_{i,j}}}d(z,\,(\mathcal{L}_{i,j})^\bot)\\
 &\overset{\text{(1)}}{\geq} C_{v_1,\cdots,v_{\dim \mathcal{L}_{i,j}}}\norm{z}^{-\dim \mathcal{F}_i}.
\end{align*}
Here in $(1)$ we use \eqref{for:299}. Hence, we finish the proof.

\section{Proof of Lemma \ref{le:10}}\label{sec:40}
\emph{Step $1$: we show that there is $z_0\in \ZZ^\ell$ such that $\mathfrak{n}^{(z_0,2)}=\mathfrak{n}^{(2)}$. } Choose an arbitrary  $z_1\in \ZZ^\ell\backslash\{0\}$. If $\mathfrak{n}^{(z_1,2)}=\mathfrak{n}^{(2)}$, then we are done.
Otherwise, we have
    \[
      \mathfrak{n}^{(2)}
      =\bigcap_{z\in\ZZ^\ell}\mathfrak{n}^{(z,2)}
      \subsetneq
      \mathfrak{n}^{(z_1,2)},
    \]
    so there exists \(z_2'\neq0\) with
    \begin{align}\label{for:350}
    \mathfrak{n}^{(z_1,2)}\cap\mathfrak{n}^{(z_2',2)}
      \subsetneq\mathfrak{n}^{(z_1,2)}.
    \end{align}
First, we claim that:  \emph{there is $m\in\NN$ such that $\mathfrak{n}^{(mz_2'+z_1, 2)}=\mathfrak{n}^{(mz_2', 2)}\cap \mathfrak{n}^{(z_1, 2)}$. }  It is clear that
\begin{align*}
 \mathfrak{n}^{(mz_2'+z_1, 2)}\supseteq \mathfrak{n}^{(mz_2', 2)}\cap \mathfrak{n}^{(z_1, 2)}\quad\text{and}\quad \mathfrak{n}^{(mz_2',2)}=\mathfrak{n}^{(z_2',2)}\quad \forall\,m\in\NN.
\end{align*}
On the other hand, we consider the generalized eigenspace decomposition for $d\alpha$ restricted to the $\ZZ^2$ subgroup generated by $z_2'$ and $z_1$:
\begin{align*}
\mathfrak{n}=\oplus_{\lambda\in \Theta} \mathcal{K}_{\lambda}
\end{align*}
(working in the complexification of $\mathfrak{n}$ if needed) such that for any $\lambda\in \Theta$, the eigenvalues of $d\alpha(mz_2'+z_1)|_{\mathcal{K}_{\lambda}}$ are $\lambda(z_2')^m\lambda(z_1)$.

It is easy to check that for any $\lambda\in \Theta$, if either $\lambda(z_2')$ or $\lambda(z_1)$ is not a root of unity, then there are at most finitely many $m\in\NN$ such that
$\lambda(z_2')^m\lambda(z_1)$ is a root of unity. By choosing \(m\) outside the union of these finitely many ``bad" integers (over all \(\lambda\)), we guarantee that $d\alpha(mz_2'+z_1)$
acquires \emph{no new} root of unity eigenvalues beyond those already in
\(\mathfrak{n}^{(z_1,2)}\cap\mathfrak{n}^{(z_2',2)}\). Hence
\begin{align}\label{for:351}
  \mathfrak{n}^{(mz_2'+z_1, 2)}\subseteq \mathfrak{n}^{(z_2', 2)}\cap \mathfrak{n}^{(z_1, 2)}=\mathfrak{n}^{(mz_2', 2)}\cap \mathfrak{n}^{(z_1, 2)}.
\end{align}
This proves the claim.

Let $z_2=mz_2'+z_1$. Then from \eqref{for:350} and \eqref{for:351} we have
\begin{align*}
 \mathfrak{n}^{(z_2, 2)}=\mathfrak{n}^{(mz_2'+z_1, 2)}=\mathfrak{n}^{(z_2', 2)}\cap \mathfrak{n}^{(z_1, 2)} \subsetneq \mathfrak{n}^{(z_1,2)}.
\end{align*}
If $\mathfrak{n}^{(z_2,2)}=\mathfrak{n}^{(2)}$, we are done. Otherwise, repeat the argument to obtain
$z_3\in\ZZ^\ell\backslash\{0\}$ with $\mathfrak{n}^{(z_3,2)}\subsetneq \mathfrak{n}^{(z_2,2)}$, and so on.
Since the $\mathfrak{n}^{(z_i,2)}$ form a strictly decreasing chain of finite-dimensional subspaces and
$\mathfrak{n}^{(2)}$ is their intersection, this process terminates after finitely many steps, yielding
$z_0\in\ZZ^\ell$ with $\mathfrak{n}^{(z_0,2)}=\mathfrak{n}^{(2)}$.

\emph{Step $2$: we show that there is a  regular $z\in \ZZ^\ell$ such that $\mathfrak{n}^{(z,2)}=\mathfrak{n}^{(2)}$. } Regularity means avoiding the finitely many Lyapunov hyperplanes
    \(\{\chi=0\}\) and coincidence hyperplanes \(\{\chi_1=\chi_2\}\).  Hence there
    are infinitely many regular vectors in \(\ZZ^\ell\); pick one \(w\). Let $z_0\in \ZZ^\ell$ such that $\mathfrak{n}^{(z_0,2)}=\mathfrak{n}^{(2)}$.
Consider \(z(n)=n\,z_0+w\).  Each forbidden condition (lying in a hyperplane, or lying in a coincidence hyperplane,
    or introducing a new root of unity eigenvalue on the complement of
    \(\mathfrak{n}^{(z_0,2)}\)) excludes at most finitely many integers \(n\).  Thus
    for some \(n\), \(z=z(n)\) is regular and still satisfies
    \(\mathfrak{n}^{(z,2)}=\mathfrak{n}^{(z_0,2)}=\mathfrak{n}^{(2)}\).

\end{document}